\documentclass[11pt,twoside]{article} 

\usepackage{eqnarray,amsmath}
\usepackage[utf8]{inputenc} 
\usepackage[T1]{fontenc}    
\usepackage{booktabs}       
\usepackage{amsfonts}       
\usepackage{nicefrac}       
\usepackage{microtype}      
\usepackage{xcolor}         
\usepackage{subcaption}
\expandafter\def\csname 
ver@subfig.sty\endcsname{}
\usepackage{eqnarray,amsmath}
\usepackage{epsf}
\usepackage{epsfig}
\usepackage{fancyhdr}
\usepackage{graphics}
\usepackage{graphicx}
\usepackage{psfrag}
\usepackage{fullpage}
\usepackage{pdfpages}

\usepackage{url}

\usepackage{color}

\usepackage{amsthm}
\usepackage{amsmath}
\usepackage{amssymb,bbm}
\usepackage[skip=0pt]{caption}  

\usepackage{textcomp}
\usepackage{siunitx}
\usepackage{wrapfig}
\usepackage{algorithm}

\usepackage{multirow}
\usepackage{multicol}
\usepackage{makecell}
\usepackage{colortbl} 
\usepackage{changepage}
\usepackage{tocloft}
\addtocontents{toc}{\protect\setcounter{tocdepth}{-1}}

\definecolor{customyellow}{HTML}{fedf8a}
\usepackage{eqnarray,amsmath}
\usepackage{epsf}
\usepackage{epsfig}
\usepackage{fancyhdr}
\usepackage{graphics}
\usepackage{graphicx}
\usepackage{psfrag}
\usepackage{fullpage}
\usepackage{pdfpages}
\usepackage{ragged2e}
\usepackage{comment}

\newtheorem{remark}{Remark}

\newtheorem{assumption}{Assumption}

\newtheorem{lemma}{Lemma}

\newtheorem{theorem}{Theorem}
\newtheorem{proposition}{Proposition}
\newtheorem{definition}{Definition}
\newtheorem{corollary}{Corollary}

\usepackage{eqnarray,amsmath}
\usepackage{booktabs}       
\usepackage{nicefrac}       
\renewcommand{\arraystretch}{1.35}

\usepackage{subfig}
\usepackage{epsf}
\usepackage{epsfig}
\usepackage{fancyhdr}
\usepackage{graphics}
\usepackage{graphicx}
\usepackage{epstopdf}
\usepackage{psfrag}
\usepackage{fullpage}
\usepackage{pdfpages}
\usepackage{ragged2e}

\usepackage{enumerate}   
\usepackage{multirow}
\usepackage{bm}

\usepackage{mathtools}

\usepackage{url}
\usepackage[colorlinks,linkcolor=black,citecolor=blue, pagebackref=true]{hyperref}
\renewcommand*{\backrefalt}[4]{%
    \ifcase #1 \footnotesize{(Not cited.)}%
    \or        \footnotesize{(Cited on page~#2.)}%
    \else      \footnotesize{(Cited on pages~#2.)}%
    \fi}

\usepackage{color}

\usepackage{amsthm}
\usepackage{amsmath}
\usepackage{amssymb,bbm}
\usepackage{caption}
\usepackage{algorithmic}
\usepackage{algorithm}
\usepackage{textcomp}
\usepackage{siunitx}
\usepackage{wrapfig}
\usepackage{algorithmic}
\usepackage{algorithm}
\usepackage{multirow}
\usepackage{multicol}
\usepackage{mathtools}

\def\1{\bm{1}}

\DeclareMathAlphabet{\mathsfit}{\encodingdefault}{\sfdefault}{m}{sl}
\SetMathAlphabet{\mathsfit}{bold}{\encodingdefault}{\sfdefault}{bx}{n}

\DeclareMathOperator*{\argmax}{arg\,max}

\usepackage{eqnarray,amsmath}

\usepackage{amsthm}
\usepackage{amsmath}
\usepackage{amssymb,bbm}

\allowdisplaybreaks

\begin{document}

\begin{center}

{\bf{\Large{On the Geometry of Separation in Finite Gaussian Mixtures}}}
  
\vspace*{.2in}
{\large{
\begin{tabular}{cccccc}
Huy Nguyen$^{\star}$ & Dung Le$^{\star}$ & Alessandro Rinaldo & Nhat Ho 
\end{tabular}
}}

\vspace*{.2in}

\begin{tabular}{c}
Department of Statistics and Data Sciences\\
The University of Texas at Austin
\end{tabular}

\vspace*{.2in}
\today

\begin{abstract}
    We study an open problem of understanding the effects of the minimum component separation on the convergence rates of parameter estimation in finite Gaussian mixtures. We address this by developing a unified geometric framework based on novel Hellinger lower bounds that directly relate discrepancies between mixture densities directly to Wasserstein distances between their underlying mixing measures, with explicit dependence on both the minimum separation and the minimum weight. Our approach combines carefully designed interpolation polynomials with confluent divided difference techniques to construct specialized moment-extraction test functions. When the number of components is known, these bounds uncover a localization phenomenon: the separation complexity is driven strictly by the spatial configuration of mixture components, namely, whether they are concentrated in a single cluster, partitioned into multiple clusters separated by a macroscopic gap, or arranged without any structural constraints. On the other hand, when the number of components becomes unknown and is over-specified, the separation complexity is slightly reduced, while the minimum mixture weight disappears entirely from the convergence rates due to a transition from first-order to second-order Wasserstein geometry. As a consequence, we obtain separation-dependent convergence rates that continuously interpolate between point-wise and uniform estimation regimes, thereby settling the fundamental limits of parameter recovery in finite Gaussian mixtures.
\end{abstract}
\end{center}
\let\thefootnote\relax\footnotetext{$\star$ Equal contribution.}

\section{Introduction}
\label{sec:introduction}
Finite mixture models (FMMs) \cite{McLachlan2004,Lindsay-1995} provide a principled and flexible approach for modeling heterogeneous data generated from multiple latent subpopulations. Rather than assuming that all observations arise from a single homogeneous distribution, mixture models posit that each data point is drawn from one of several underlying components, with the component identity treated as an unobserved (latent) variable. In particular, the data $X_1,X_2,\ldots, X_n \in\mathcal{X}\subseteq \mathbb{R}^d$ are assumed to be generated from a probability density function of the form
\begin{align}
    \label{eq:pdf_fmm}
    p_{G_*}(x) := \int_{\Theta} f(x \mid \theta)dG_*(\theta)=\sum_{i=1}^{k_*}\pi^*_i f(x\mid \theta^*_i),
\end{align}
where $G_*:=\sum_{i=1}^{k_*}\pi^*_i\delta_{\theta^*_i}$ is a probability mixing measure with $k_*$ atoms $\theta^*_{i}$ belonging to a parameter space $\Theta\subseteq\mathbb{R}^d$ and the mixing proportions $0\leq\pi_i^*\leq 1$ satisfying $\sum_{i=1}^{k_*}\pi_i^*=1$. Meanwhile, $f(\cdot \mid \theta)$ denotes a probability density kernel parameterized by $\theta\in\Theta$.
This formulation allows FMMs to naturally accommodate complex distributional features such as multi-modality, skewness, and varying dispersion, making mixture models a powerful tool for density estimation \cite{ghosal2001entropy} and clustering \cite{maitra2010clustering}. As a result, they have been adopted in a wide range of disciplines, including machine learning \cite{bishop2006pattern,murphy2012machine}, biology \cite{Patra2016testing}, economics \cite{compiani2016econometric}, and physical sciences \cite{Kuusela2012physics}.

\textbf{Density estimation and parameter estimation.} A fundamental problem in mixture models is the analysis of density estimation, namely, understanding how well a density estimator $p_{\widehat{G}_n}$ approximates the true data-generating density $p_{G_*}$. This perspective is natural, as the primary goal of statistical modeling is often to accurately capture the distribution of the observed data. Thanks to the flexibility of mixture models, a rich body of literature has established consistency and convergence rates for density estimators under various metrics, including Hellinger distance \cite{Vandegeer,Birge1986hellinger}, Total Variation distance \cite{Ho-Nguyen-Ann-16,li2017robust}, and Kullback-Leibler divergence \cite{nguyen2016latentmixing,LiBarron1999}. This line of work provides a comprehensive study on how quickly the estimated density approaches the truth as the sample size increases.

Beyond density estimation, another important yet more challenging problem in mixture models is parameter estimation \cite{kalai2010efficient,heinrich2018minimax,liu2023robust,moitra2010polynomial}, namely, recovering the underlying mixing measure $G_*$, which encodes the latent structure of the model. 
A key development for the parameter estimation problem in FMMs is the work of Nguyen \cite{nguyen2016latentmixing}, which establishes a fundamental connection between optimal transport \cite{Villani-03,Villani-09} and the convergence of latent mixing measures. In this framework, the discrepancies between mixing measures are quantified using Wasserstein distances, providing a natural and geometrically meaningful way to compare discrete or non-overlapping supports. More specifically, for any two probability mixing measures $G=\sum_{i=1}^{k}\pi_i\delta_{\theta_i}$ and $G'=\sum_{i=1}^{k'}\pi'_i\delta_{\theta'_i}$, the $r$-Wasserstein distance with the Euclidean norm between $G$ and $G'$ is defined as
\begin{align*}
    W_r(G,G'):=\left(\inf\sum_{i,j}q_{ij}\|\theta_i-\theta'_j\|_2^r\right)^{1/r},
\end{align*}
where the infimum is taken over all couplings $(q_{ij})_{ij}\in[0,1]^{k\times k'}$ such that $\sum_{i=1}^{k}q_{ij}=\pi'_j$, for any $1\leq j\leq k'$, and $\sum_{j=1}^{k'}q_{ij}=\pi_i$, for any $1\leq i\leq k$. Lemma 1 in \cite{nguyen2016latentmixing} indicates that, under mild continuity assumptions on the density kernel $f$,
the convergence of mixing measures implies that of densities through the inequality $d_{H}(p_G,p_{G_*}) \lesssim W_1(G, G_*)$, where $d_{H}(p_{G},p_{G_*}):=({\frac{1}{2}} \int_{\mathcal{X}}(\sqrt{p_{G}(x)}-\sqrt{p_{G_*}(x)})^2dx)^{1/2}$ stands for the Hellinger distance between $p_{G}$ and $p_{G_*}$. 
On the other hand, convergence at the density level does not necessarily transfer to convergence at the parameter level. In fact, significantly different mixing measures may only reflect in small discrepancies between the mixture densities. Indeed, it is well known that  distinct mixing measures may generate nearly indistinguishable densities, especially when components $\theta^*_i$ overlap or are weakly separated \cite{nguyen2016latentmixing}. In response to this problem, Ho and Nguyen \cite{Ho-Nguyen-EJS-16} demonstrated that, under appropriate regularity and strong identifiability conditions on the family of probability density kernels $\{f(\cdot\mid\theta):\theta\in\Theta)\}$, the Hellinger distance between densities is bounded below by the Wasserstein distance between mixing measures; that is,
\begin{align}
    \label{eq:old_Hellinger_bound}
    d_{H}(p_G,p_{G_*}) \geq C(G_*)W_1(G, G_*),
\end{align} 
where $C(G_*)$ is a positive quantity that depends on $G_*$. Notably, Ho and Nguyen \cite{Ho-Nguyen-EJS-16} combine the inequality \eqref{eq:old_Hellinger_bound} with empirical process results for density estimation (see, e.g., Chapter 7 of van de Geer \cite{Vandegeer}) to demonstrate a parametric estimation rate for the latent mixing measure in the distance $W_1$. In detail, the authors show that 
\begin{equation}\label{eq:MLE.rate}
W_1(\widehat{G}_n,G_*)=\mathcal{O}_P(\sqrt{\log(n)/n}),
\end{equation}
where $\widehat{G}_n$ is the maximum likelihood estimator (MLE) defined as
\begin{align}
    \label{eq:MLE}
    \widehat{G}_n:=\sum_{i=1}^{\widehat{k}_n}\widehat{\pi}_{n,i}\delta_{\widehat{\theta}_{n,i}}=\argmax_{G}\frac{1}{n}\sum_{i=1}^{n}\log(p_{G}(X_i)).
\end{align}
Above, when the true number of components $k_*$ is known, the $\argmax$ operator is subjected to $G\in\mathcal{E}_{k_*}(\Theta)$, where $\mathcal{E}_{k_*}(\Theta):=\{G=\sum_{i=1}^{k_*}\pi_i\delta_{\theta_i}: \theta\in\Theta\}$ stands for the set of mixing measures with $k_*$ atoms. On the other hand, when the true number of experts $k_*$ is unknown and over-specified by $k$, then the $\argmax$ operator is rather subject to $G\in\mathcal{G}_{k}(\Theta)$ with
$\mathcal{G}_{k}(\Theta):=\{G=\sum_{i=1}^{k'}\pi_i\delta_{\theta_i}:1\leq k'\leq k, \ \theta\in\Theta\}$ denoting the set of mixing measures with at most $k$ atoms, where $k>k_*$. Returning to the MLE rate, despite bridging a major gap between density estimation and parameter estimation, the constant $C(G_*)$ in the Hellinger lower bound in equation~\eqref{eq:old_Hellinger_bound} is left implicitly in the parameter estimation rate in equation~\eqref{eq:MLE.rate}. As result, the above MLE rate does not appropriately account for two critical factors impacting the difficulty of the estimation task, namely, (i) \emph{the separation among mixture components} $\Delta_{\mathrm{sep}} = \min_{j \neq l} \|\theta_j^* - \theta_l^*\|$ and (ii) \emph{the minimum mixture weight} $\pi_{\text{min}}^{*} = \min_{1 \leq j \leq k_{*}} \pi_{j}^{*}$. 
Therefore, we will fill in these gaps in this paper.


\textbf{The roles of separation and minimum weight.} Firstly, the separation $\Delta_{\mathrm{sep}}$ quantifies how distinguishable the latent subpopulations are from each other \cite{vempala2002spectral,arora2001gaussian}. When the separation is sufficiently large, FMMs exhibit several favorable statistical and computational properties. In particular, it improves the identifiability of the model, as each component contributes a clearly distinguishable signature to the observed density \cite{Teicher-63, Yakowitzspragins-1968}. This reduces ambiguity in assigning observations to latent subpopulations and leads to more stable parameter estimation and faster convergence rates \cite{Chen1992, Ho-Nguyen-Ann-16,wu2020moment}. 
From a computational perspective, optimization procedures, such as the EM algorithm \cite{dempster1977maximum}, rely on separation conditions in the analysis \cite{Kwon_minimax_EM,redner2016em} and often perform more reliably in well-separated regimes due to the reduced overlap among mixture components \cite{Xu_Jordan-1995,Siva_2017,hopkins2018robust,kwon2020wellseparated}. 
In contrast, when mixture components become weakly separated, their corresponding densities overlap substantially, making it increasingly difficult to distinguish individual components from the observed data. This phenomenon often results in (nearly) singular statistical structures, slower convergence rates, and increased sensitivity of estimators to perturbations in the data \cite{ho2022weakseparation}. At the same time, the minimum mixture weight $\pi^*_{\min}$ determines how much statistical information is available for recovering each latent subpopulation \cite{dasgupta1999gaussian}. If it is large enough, every component contributes a meaningful number of observations to the sample, making the corresponding parameters easier to identify and estimate accurately. Otherwise, the associated components become difficult to detect from finite samples \cite{rousseau2011mixture}, often leading to pathological behavior as mentioned in the case of weakly-separated mixtures.
To summarize, the statistical and computational difficulty of estimating the mixing measure is heavily impacted by the separation and minimum weight parameters, both of which should be explicitly accounted for.  

\textbf{Goal of the article and contributions.} 
The primary goal of this work is to provide a precise characterization of the convergence rate of the maximum likelihood estimator $\widehat{G}_{n}$ to the true mixing measure $G_{*}$ in terms of the separation among the components and the smallest weights of $G_{*}$ for finite  high-dimensional Gaussian location mixtures. Specifically, we will study mixture densities of the form 
\begin{align*}
    p_{G_*}(x) := \int_{\Theta} f(x \mid \theta, \Sigma)G_*(d\theta)=\sum_{i=1}^{k_*}\pi^*_i f(x\mid \theta^*_i,\Sigma),
\end{align*}
where the $\pi_i^*$'s are positive probability weights and $\{f(\cdot\mid\theta,\Sigma):\theta\in\Theta\}$ denotes the family of multivariate Gaussian density functions with {\it known, positive-definite} covariance matrix $\Sigma$ and mean vector $\theta$ in a compact subset $\Theta$ of $\mathbb{R}^d$. We seek to provide estimation rates for the mixing measure $G_*$ in the Wasserstein distance using the MLE $\widehat{G}_n$ from equation \eqref{eq:MLE}.
Gaussian mixtures are among the most widely studied and practically important classes of FMMs due to their flexibility and strong approximation capabilities \cite{Mclachlan-1988},  
thereby offering a natural blueprint for a refined analysis of estimation rates in FMMs. Though the study of finite Gaussian mixtures is a classic and heavily studied topic in the statistical literature, minimax estimation rates for the mixing measure in high dimensions have only been obtained recently by Doss et al. \cite{doss_optimal_2023}, without assuming any separation conditions on the mixing parameters. In this paper, we develop a set of analytic tools for studying finite high-dimensional Gaussian location mixtures and derive complementary results to those of \cite{doss_optimal_2023}, yielding {\it local} estimation rates of the MLE $\widehat{G}_n$, in the sense of exhibiting an explicit dependence on the separation and the minimum weight parameters of the true mixing measure $G_*$.    
 
As is customary in the analysis of mixture models, we will consider two scenarios: the \emph{exact-specified setting,} in which the number of components $k_{*}$ is known, and the
 \emph{over-specified setting,} in which the number of components $k_{*}$ is unknown and we fit a mis-specified mixture model with  $k$ components, where $k$ is known to be larger than  $k_{*}$.
The exact-specified setting provides a baseline for understanding the role of component separation and minimum weight in parameter estimation rates, while the over-specified setting captures the practical scenario in which the true number of mixture components is unknown. Although these two settings exhibit fundamentally different geometric behaviors, we show that they can be analyzed within a unified framework based on separation-dependent Hellinger lower bounds.


It is worth noting that existing results by Doss et al. \cite{doss_optimal_2023} characterize the relationship between density estimation and parameter estimation in ways that do not account for the local properties of the mixing measure, delivering  minimax, worst-case uniform estimation rates over the space of mixing measures. In contrast, we exhibit an array of local convergence rates of the MLE that depends on the spatial arrangements and the minimal probability weights of the true mixing components. Towards that goal, we analyze three distinct geometric regimes, depending on how clustered the points over which mixing measure is supported are. 
\begin{itemize}
     \item \emph{Single-cluster regime:} We first study the regime where all true components are concentrated within a cluster whose diameter is proportional to the minimum separation $\Delta_{\mathrm{sep}}$. We establish local and global Hellinger lower bounds with explicit dependence on both $\Delta_{\mathrm{sep}}$ and the minimum mixture weight $\pi_{\min}^*$, yielding a convergence rate of order $\Delta_{\mathrm{sep}}^{-(2k_*-1)}(\pi_{\min}^*)^{-1}d^{1/2}n^{-1/2}$  
     (up to some logarithmic factor). Our analysis reveals that the estimation difficulty deteriorates polynomially as the components become increasingly concentrated, with the exponent $2k_*-1$ quantifying the severity of the resulting geometric singularity. Moreover, our analysis also provides a transition between the point-wise convergence rate for parameter estimation established in this paper and the existing uniform rate derived by Wu et al.~\cite{wu2020moment} and Doss et al. \cite{doss_optimal_2023} based on the method of moments \cite{pearson1894,Hardt2015mixture}. 
    \item \emph{Multi-cluster regime:} We next consider the regime where the true components are partitioned into several tightly concentrated clusters separated by a fixed macroscopic gap. In this setting, we uncover a localization phenomenon showing that the sample complexity of parameter estimation is no longer governed by the total number of components $k_*$, but rather by the size $s_{\max}$ of the densest cluster. More specifically, we establish Hellinger lower bounds whose dependence on the minimum separation improves from $\Delta_{\mathrm{sep}}^{2k_*-1}$ to $\Delta_{\mathrm{sep}}^{2s_{\max}-1}$, leading to substantially sharper convergence rates whenever the cluster structure is relatively balanced. This result demonstrates that favorable cluster structure can substantially reduce the impact of component overlap and lead to significantly faster convergence rates than those predicted by the worst-case single-cluster analysis.
    \item \emph{Unstructured regime:} Finally, we investigate the most general setting in which no assumptions are imposed on the cluster structure of the true components. We therefore derive uniform Hellinger lower bounds and corresponding MLE convergence rates that remain valid across all possible component arrangements. The resulting factor $\Delta_{\mathrm{sep}}^{4k_*-3}$ characterizes the worst-case impact of component overlap on parameter estimation and serves as a universal benchmark for finite Gaussian mixtures. This establishes a complete picture of the estimation problem by complementing the sharper, structure-dependent guarantees obtained in the single-cluster and multi-cluster regimes.
\end{itemize}
We similarly analyze the over-specified setting and show that over-specification fundamentally changes the geometry of the parameter estimation problem. As expected, the presence of additional fitted components causes a slower convergence rate for the MLE,  from $n^{-1/2}$ up to $n^{-1/4}$. At the same time, it also reduces the separation complexity in both the single-cluster and multi-cluster regimes, with the separation exponents decreasing from $2k_*-1$ to $2k_*-2$ and from $2s_{\max}-1$ to $2s_{\max}-2$, respectively. Interestingly, this phenomenon does not occur in the unstructured regime, where the worst-case exponent $4k_*-3$ remains unchanged. Moreover, the minimum mixture weight $\pi_{\min}^*$ disappears entirely from the convergence rates, indicating that the dominant source of sample complexity under over-specification arises from geometric ambiguities created by component separation rather than from low-contributing components.

In terms of technical contributions, we develop a new suite of techniques based on carefully designed interpolation polynomials and moment-extraction test functions. The key idea is to decompose the Wasserstein distance between mixing measures into a collection of geometric discrepancies and then connect each discrepancy to the Hellinger distance through polynomial identities. This approach allows us to derive explicit Hellinger lower bounds whose constants are characterized in terms of the minimum separation and the minimum mixture weight. We remark that, while interpolating polynomials and moment matching arguments are commonly used in the analysis of Gaussian mixture models, in our analysis, we have significantly expanded the scope and uses of these techniques.  


\begin{table}[ht]
\centering
\renewcommand{\arraystretch}{1.5}
\caption{Impacts of the separation and the minimum weight on the MLE convergence rates under three different geometric regimes. For ease of presentation, we omit the terms $[d\log(n)/n]^{1/2}$ in the exact-specified setting and $[d\log(n)/n]^{1/4}$ in the over-specified setting.}
\begin{tabular}{lccc}
\toprule
 & Single-cluster & Multi-cluster & Unstructured \\
\midrule
Exact-specification -- Local ($W_1$) 
&
$\Delta_{\mathrm{sep}}^{-(2k_*-1)}$
&
$\Delta_{\mathrm{sep}}^{-(2s_{\max}-1)}$
&
$\Delta_{\mathrm{sep}}^{-(4k_{*}-3)}$
\\
Exact-specification -- Global ($W_1$) 
&
$(\pi^*_{\min})^{-1}\Delta_{\mathrm{sep}}^{-(2k_*-1)}$
&
$(\pi^*_{\min})^{-1}\Delta_{\mathrm{sep}}^{-(2s_{\max}-1)}$
&
$(\pi^*_{\min})^{-1}\Delta_{\mathrm{sep}}^{-(4k_{*}-3)}$
\\
Over-specification ($W_2$) 
&
$\Delta_{\mathrm{sep}}^{-(k_*-1)}$
&
$\Delta_{\mathrm{sep}}^{-(s_{\max}-1)}$
&
$\Delta_{\mathrm{sep}}^{-\left(2k_{*}-\frac{3}{2}\right)}$
\\
\bottomrule
\end{tabular}
\end{table}

\section{Preliminaries}
\label{sec:preliminaries}
In this section, we first introduce the notation and assumptions used throughout the paper, and then present some preliminary results that will be leveraged in our subsequent analysis. 

\textbf{Notation.}  For any $n\in\mathbb{N}$, we let $[n] = \{1,2,\ldots,n\}$. For any vector $v \in \mathbb{R}^{d}$, we let $\|v\|$ represent the $\ell_2$-norm of $v$. The cardinality of a set $S$ is denoted with $|S|$. For a sequence $(A_n)_{n\geq 1}$ of positive random variables, the notation $A_{n} = \mathcal{O}_{P}(b_{n})$ signifies $A_{n}/b_{n}$ is stochastically bounded, that is, for any $\epsilon>0$, there exists an $M>0$ such that $\mathbb{P}( A_{n}/b_{n} > M) < \epsilon $, for all $n$ large enough. 
For any test polynomial $P$ on a space $\mathcal{S}\subset\Theta$, we define
$\|P\|_{\infty}:=\sup_{\substack{x\in\mathcal{S},\|x\|\leq R}}|P(x)|$,
where $R$ is the radius defined in Assumption (A.1). 
Lastly, for any two real numbers $a$ and $b$, we denote $a\vee b:=\max\{a,b\}$ and $a\wedge b:=\min\{a,b\}$.


\textbf{Assumptions.} Throughout the paper, we will make the following universal assumptions unless stating otherwise:

\emph{(A.1) The parameter space $\Theta$ is compact. Additionally, $\|\theta_{i}^{*}\| \leq R$, for all $1 \leq i \leq k_{*}$.}

\emph{(A.2) $\Sigma\in\mathbb{S}_d^+(\lambda_{\min},\lambda_{\max})$, where $\mathbb{S}_d^+(\lambda_{\min},\lambda_{\max})$ denotes the set of symmetric positive-definite
matrices with eigenvalues bounded below by $\lambda_{\min}$ and above by $\lambda_{\max}$.}

\emph{(A.3) The separation and minimum weight are positive, that is, $\Delta_{\mathrm{sep}},\pi_{\min}^*>0$.}

Next, let us present some auxiliary results that serve as the foundation of our convergence analysis. We recall that the main results of these papers are a series of inequalities to upper bound the Wesserstein distance between mixing measures by multiplicative factors of the Hellinger distance between the corresponding mixture densities, so that parameter estimation rates can be directly derived from density estimation rates. The following proposition, whose proof is given in Appendix~\ref{sec:density_estimation_rate}, provides a high-probability rate for estimating the mixture density under the Hellinger distance.
\begin{proposition}
\label{prop:density_estimation_rate} Under Assumption (A.1) and given the MLE $\widehat{G}_n$ defined in equation~\eqref{eq:MLE}, the following holds:
    \begin{align*}
        \mathbb{P}\left(d_H(p_{\widehat{G}_n},p_{G_*})>C_{\mathrm{density}}[d\log(n)/n]^{1/2}\right)\lesssim \exp(-c_{\mathrm{density}}\log(n))= n^{-c_{\mathrm{density}}},
    \end{align*}
    for universal positive constants $C_{\mathrm{density}}$ and $c_{\mathrm{density}}$ that depend only on $\Theta$ and $k_*$.
\end{proposition}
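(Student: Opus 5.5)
The plan is to use the standard empirical-process route for the MLE (van de Geer, Chapter 7; Wong--Shen; Ghosal--van der Vaart), which turns control of the bracketing entropy of the mixture class into a Hellinger rate. Let $\mathcal{P}_k(\Theta)=\{p_G: G\in\mathcal{G}_k(\Theta)\}$, with $k=k_*$ in the exact-specified case and $k$ the fitted number of components in the over-specified case. Consider the class $\bar{\mathcal{P}}^{1/2}_k(\delta)=\{((p_G+p_{G_*})/2)^{1/2}: d_H((p_G+p_{G_*})/2,p_{G_*})\le\delta\}$ and its bracketing integral $\mathcal{J}_B(\delta)=\int_{\delta^2/2^{13}}^{\delta}H_B^{1/2}(u,\bar{\mathcal{P}}^{1/2}_k(u),\|\cdot\|_{L^2})du\vee\delta$. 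Theorem 7.4 of van de Geer gives the following. If $\Psi(\delta)\ge\mathcal{J}_B(\delta)$ with $\Psi(\delta)/\delta^2$ non-increasing, and $\sqrt{n}\delta_n^2\ge c\Psi(\delta_n)$, then $\mathbb{P}(d_H(p_{\widehat G_n},p_{G_*})>\delta)\le c\exp(-n\delta^2/c^2)$ for all $\delta\ge\delta_n$. Taking $\delta=C_{\mathrm{density}}[d\log(n)/n]^{1/2}$ gives $\exp(-c\, d\log n)\le n^{-c_{\mathrm{density}}}$, which is the claim.

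The key step is the entropy bound $H_B(\epsilon,\mathcal{P}_k(\Theta),d_H)\lesssim k d\log(1/\epsilon)$ for $\epsilon\in(0,1/2)$, with constants depending on $\Theta$ (through $R$ and $\mathrm{diam}\,\Theta$) and on $\lambda_{\min},\lambda_{\max}$.

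\emph{Building the envelopes.} Gaussian tails cannot be cut off by truncating to a compact set, so I would first construct envelope functions. For $\|x\|\le B$, with $B=\mathrm{diam}(\Theta)+\sqrt{\lambda_{\max}\log(1/\epsilon)}$ up to constants, the densities are bounded by $H(x)=(2\pi\lambda_{\min})^{-d/2}$. For $\|x\|>B$, use the bound $\sup_{\theta\in\Theta}f(x\mid\theta,\Sigma)\le(2\pi\lambda_{\min})^{-d/2}\exp(-(\|x\|-R')^2/(8\lambda_{\max}))$, where $R'$ bounds $\|\theta\|$ on $\Theta$.

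\emph{Building the brackets.} Take an $\eta$-net $\{G_1,\dots,G_N\}$ of $\mathcal{G}_k(\Theta)$: an $\eta$-net of $\Theta^k$ in each atom (size $(C/\eta)^{kd}$) combined with an $\eta$-net of the simplex (size $(C/\eta)^k$). Gaussian densities are Lipschitz in the mean, with $|f(x\mid\theta)-f(x\mid\theta')|\le\|\theta-\theta'\|\sup_\vartheta\|\Sigma^{-1}(x-\vartheta)\|f(x\mid\vartheta)$. This allows brackets $[\max(p_{G_i}-\eta\,\tilde H,0),\min(p_{G_i}+\eta\,\tilde H,H)]$, where $\tilde H$ is an integrable polynomial-times-Gaussian envelope. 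Their $L^1$ width is $\lesssim\eta\,\mathrm{poly}(B,d)+\epsilon$ (the second term from the tails), and the Hellinger width is bounded by the square root of the $L^1$ width. Choosing $\eta=\epsilon^2/\mathrm{poly}(\log(1/\epsilon),d)$ yields $\log N\lesssim kd\log(1/\epsilon)$.

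\emph{Concluding.} The entropy bound gives $\mathcal{J}_B(\delta)\lesssim\delta\sqrt{kd\log(1/\delta)}$. Taking $\Psi(\delta)=\delta\sqrt{kd\log(1/\delta)}$, which has $\Psi/\delta^2$ non-increasing, the condition $\sqrt{n}\delta_n^2\gtrsim\delta_n\sqrt{d\log(1/\delta_n)}$ holds with $\delta_n\asymp\sqrt{d\log n/n}$.

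The main obstacle is the bracketing entropy, in particular getting dependence on $d$ that is only linear. Polynomial factors in $d$ that appear in the Lipschitz and tail estimates must be absorbed inside the logarithm, since $\log(\mathrm{poly}(d)/\epsilon)\lesssim\log(1/\epsilon)$ once $\epsilon\lesssim 1/n$ and $d\lesssim n$. If this absorption fails, the fallback is to cite the known Gaussian-mixture entropy bound (Ghosal--van der Vaart 2001; Ho--Nguyen 2016) directly.
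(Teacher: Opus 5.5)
Your proposal follows essentially the same route as the paper. The paper also bounds the bracketing entropy by $d\log(1/\varepsilon)$ through a parameter net plus a Gaussian-tail envelope, though it uses constant-width brackets $[\max(p_i-\eta,0),\min(p_i+\eta,H)]$ from a sup-norm net rather than your Lipschitz-in-the-mean widths $\eta\tilde H$; it then derives $\mathcal{J}_B(\varepsilon)\lesssim\sqrt{n}\,\varepsilon^2$ for $\varepsilon\ge(d\log n/n)^{1/2}$ and carries out the peeling argument of van de Geer's Theorem 7.4 by hand via his Theorem 5.11.

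One caution: constants such as $(2\pi\lambda_{\min})^{-d/2}$, the volume of a ball of radius $B$, and $\|\tilde H\|_1$ for a global envelope are exponential rather than polynomial in $d$, so your absorption remark does not by itself give linear dependence on $d$ uniformly. This does not affect the proposition as stated, since its constants may depend on $\Theta\subset\mathbb{R}^d$, and the paper's proof is equally loose on this point. If you do want genuinely linear-in-$d$ entropy, localize the envelopes around each net point (e.g.\ take the supremum only over $\|\vartheta-\theta_i\|\le\eta$); their integrals are then polynomial in $d$ once $\eta\lesssim\sqrt{\lambda_{\min}/d}$.
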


\begin{remark}
Variants of Proposition~\ref{prop:density_estimation_rate} can be found in the literature: see, e.g., \cite{doss_optimal_2023, Ho-Nguyen-Ann-16, gassiat2013local}. We provide a proof for completeness, based on standard empirical process arguments involving the bracketing entropy for the densities of location Gaussian mixtures in the Hellinger distance (see, e.g., \cite{Vandegeer}).
We remark that Doss et al. \cite[Theorem~1.2]{doss_optimal_2023} provide a refined approach based on  moment comparison techniques that removes an additional logarithmic 
factor $\log(n)$ appearing in the expected convergence rate. Since we consider high-probability rates, for our purposes a calculation based on global bracketing entropy suffices. 
\end{remark}

Proposition~\ref{prop:density_estimation_rate} shows the MLE density estimator $p_{\widehat{G}_n}$ converges under the Hellinger distance to the true density $p_{G_*}$ at the parametric rate $[d\log(n)/n]^{1/2}$, up to some constant depending on the parameter space $\Theta$ and $k_*$. Given this density estimation rate, the problem of capturing the separation quantity and the minimum weight in parameter estimation rates boils down to deriving the Hellinger bound in equation~\eqref{eq:old_Hellinger_bound}, with the two terms $\Delta_{\mathrm{sep}}$ and $\pi^*_{\min}$ made explicit from the constant $C(G_*)$. 

Before proceeding to the main results, we present a simple reduction lemma showing that it is enough to consider Gaussian mixtures with identity covariance matrix: the Hellinger distances will be unaffected while the Wasserstein distances of any order $p \geq 1$ will only change by a multiplicative factor depending only on $\lambda_{\min}$ and $\lambda_{\max}$.  Therefore, without loss of generality, we assume that $\Sigma=I_{d}$ for the rest of the article. The proof of this result is presented in Appendix~\ref{sec:proof_of_preliminary_results}. 
\begin{lemma}
\label{lemma:general_to_identity}
    For any two probability mixing measures given by $G = \sum_{i = 1}^{k} \pi_{i} \delta_{\theta_{i}}$ and $G_{*} = \sum_{i = 1}^{k_{*}} \pi_{i}^{*} \delta_{\theta_{i}^{*}}$, we denote their transformations  as $\tilde{G} = \sum_{i = 1}^{k} \pi_{i} \delta_{\Sigma^{-1/2} \theta_{i}}$ and $\tilde{G}_{*}= \sum_{i = 1}^{k_{*}} \pi_{i}^{*} \delta_{\Sigma^{-1/2} \theta_{i}^{*}}$, respectively. Then, we obtain
    \begin{align}
        \label{eq:invariant_Hellinger}
        d_H(p_G, p_{G_*}) = d_H(p_{\tilde{G}}, p_{\tilde{G}_*}),
    \end{align}
    where $p_{\tilde{G}}(x) = \sum_{i = 1}^{k} \pi_{i}f(x|\Sigma^{-1/2} \theta_{i}, I_{d})$ and $p_{\tilde{G}_{*}}(x) = \sum_{i = 1}^{k} \pi_{i}^{*}f(x|\Sigma^{-1/2} \theta_{i}^{*}, I_{d})$. Additionally, under Assumption (A.2), we also have for any $p\geq 1$ that
    \begin{align}
    \lambda_{\min}W_p(G,G_*) \leq W_p(\tilde{G},\tilde{G}_*)\leq \lambda_{\max}W_p(G,G_*).  
\end{align}
\end{lemma}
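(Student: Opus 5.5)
The plan is to handle the two claims separately. Both follow from the linear change of variables $y=\Sigma^{-1/2}x$, which maps Gaussian location mixtures with covariance $\Sigma$ to Gaussian location mixtures with identity covariance.

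\textbf{Hellinger invariance.} The first step is the pointwise identity
\[
f(x\mid\theta,\Sigma)=\det(\Sigma)^{-1/2}\,f(\Sigma^{-1/2}x\mid\Sigma^{-1/2}\theta,I_d).
\]
To check it, write $(x-\theta)^\top\Sigma^{-1}(x-\theta)=\|\Sigma^{-1/2}x-\Sigma^{-1/2}\theta\|^2$ and compare the normalizing constants $(2\pi)^{-d/2}\det(\Sigma)^{-1/2}$ and $(2\pi)^{-d/2}$. Summing against the weights gives $p_G(x)=\det(\Sigma)^{-1/2}p_{\tilde G}(\Sigma^{-1/2}x)$, and likewise for $G_*$. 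Hence
\[
\bigl(\sqrt{p_G(x)}-\sqrt{p_{G_*}(x)}\bigr)^2=\det(\Sigma)^{-1/2}\bigl(\sqrt{p_{\tilde G}}-\sqrt{p_{\tilde G_*}}\bigr)^2(\Sigma^{-1/2}x).
\]
Next, substitute $y=\Sigma^{-1/2}x$ in the integral defining $d_H^2$. The Jacobian is $dx=\det(\Sigma)^{1/2}dy$, which cancels the prefactor exactly and yields equation~\eqref{eq:invariant_Hellinger}. No assumption on $\Sigma$ beyond positive-definiteness is needed here.

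\textbf{Wasserstein comparison.} The two measures $\tilde G$ and $\tilde G_*$ carry the same weights as $G$ and $G_*$, so the admissible couplings $(q_{ij})$ coincide. For every coupling, the transported cost changes only through $\|\Sigma^{-1/2}(\theta_i-\theta^*_j)\|$. Under Assumption (A.2), the singular values of $\Sigma^{-1/2}$ lie in $[\lambda_{\max}^{-1/2},\lambda_{\min}^{-1/2}]$. Therefore, for each pair $(i,j)$,
\[
\lambda_{\max}^{-1/2}\|\theta_i-\theta^*_j\|\le\|\Sigma^{-1/2}(\theta_i-\theta^*_j)\|\le\lambda_{\min}^{-1/2}\|\theta_i-\theta^*_j\|.
\]
Raise these to the $p$-th power and sum against $q_{ij}$. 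Then take the infimum over the common set of couplings; the infimum preserves the inequalities because the coupling set is the same on both sides. Finally, take $p$-th roots to obtain
\[
\lambda_{\max}^{-1/2}W_p(G,G_*)\le W_p(\tilde G,\tilde G_*)\le\lambda_{\min}^{-1/2}W_p(G,G_*).
\]

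\textbf{Main obstacle: matching the stated constants.} The argument gives multiplicative constants $\lambda_{\max}^{-1/2}$ and $\lambda_{\min}^{-1/2}$, not $\lambda_{\min}$ and $\lambda_{\max}$. These coincide with the stated form only under additional conditions. The lower constant needs $\lambda_{\min}\le\lambda_{\max}^{-1/2}$, and the upper constant needs $\lambda_{\min}^{-1/2}\le\lambda_{\max}$. Both hold, for instance, when $\lambda_{\min}\le 1\le\lambda_{\max}$ and $\lambda_{\min}\lambda_{\max}^{1/2}\le 1\le\lambda_{\max}\lambda_{\min}^{1/2}$, but they fail for a general eigenvalue window. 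So I would prove the version with $\lambda_{\max}^{-1/2}$ and $\lambda_{\min}^{-1/2}$, which is all that is used downstream: the constants depend only on $(\lambda_{\min},\lambda_{\max})$, and the reduction to $\Sigma=I_d$ only needs this. I would then state explicitly the extra normalization on $\lambda_{\min},\lambda_{\max}$ under which the displayed form with $\lambda_{\min},\lambda_{\max}$ holds.
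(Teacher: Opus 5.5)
Your argument is correct. It follows the same two steps as the paper:
\begin{itemize}
\item the change of variables $y=\Sigma^{-1/2}x$, whose Jacobian cancels the $\det(\Sigma)^{1/2}$ prefactor, for the Hellinger identity;
\item transferring couplings through a linear map for the Wasserstein comparison.
\end{itemize}

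Your diagnosis of the constants is also right, and the defect lies in the statement, not in your method. The paper's proof pushes the optimal coupling forward under $\Sigma$ itself. It therefore bounds $W_p(\Sigma_{\#}G,\Sigma_{\#}G_*)$ using $\|\Sigma\|_{\mathrm{op}}\le\lambda_{\max}$ and $\|\Sigma^{-1}\|_{\mathrm{op}}\le\lambda_{\min}^{-1}$, which is where $\lambda_{\min}$ and $\lambda_{\max}$ come from. But $\tilde G$ is the pushforward under $\Sigma^{-1/2}$, and for that map the correct factors are exactly your $\lambda_{\max}^{-1/2}$ and $\lambda_{\min}^{-1/2}$.

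The displayed inequality is in fact false in general, not merely unproved by this route. Take $\Sigma=4I_d$ with $\lambda_{\min}=\lambda_{\max}=4$. Then $W_p(\tilde G,\tilde G_*)=\tfrac12 W_p(G,G_*)$, which violates the claimed lower bound $4W_p(G,G_*)$ whenever $G\neq G_*$.

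So proving the version with $\lambda_{\max}^{-1/2}$ and $\lambda_{\min}^{-1/2}$ is the right call. It still yields factors depending only on the eigenvalue window, which is what the reduction to $\Sigma=I_d$ needs.
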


We conclude this section with a novel technical result, which may be of independent interest, showing that the Hellinger distance between two mixture densities $p_G$ and $p_{G_*}$ provides an upper bound for a broad class of moments of the signed measure $\nu = G - G_*$, up to some problem-dependent multiplicative quantities. As a result, once suitable interpolating polynomials are constructed to isolate specific geometric quantities of the mixing measures $G$ and $G_*$, such as mean and mass discrepancies, the Hellinger distance between the mixture densities will provide a way to bound them. 
\begin{lemma}
   \label{lemma:Hellinger_to_Polynomial}
    For any two  mixing probability measures $G = \sum_{i = 1}^{k} \pi_{i} \delta_{\theta_{i}}$ and $G_{*} = \sum_{i = 1}^{k_{*}} \pi_{i}^{*} \delta_{\theta_{i}^{*}}$, consider the signed measure  $\nu=G-G_*$ and let $$\mathcal{S} = \mathrm{span} \{\theta_1, \dots, \theta_k\} \cup \{\theta_1^*, \dots, \theta_{k_*}^*\}.$$
    Assume that $\Sigma = I_{d}$. Then, for any polynomial function $U: \mathcal{S}  \to \mathbb{R}$ with degree $D$, we obtain
    \begin{align*}
        \left|\int U(\theta)d\nu(\theta)\right|=\left| \sum_{i=1}^k \pi_i U(\theta_i) - \sum_{j=1}^{k_*} \pi_j^* U(\theta_j^*) \right|\leq C_{\mathrm{poly}} \|U\|_{\infty}d_H(p_G, p_{G_*}),
    \end{align*}
    where $\|U\|_\infty := \sup_{\theta \in \mathcal{S} \cap B(0, R)} |U(\theta)|$, and $C_{\mathrm{poly}}>0$ is some universal constant depending only on $R, \min\{k_{*}+k,d\}$, and $D$.
\end{lemma}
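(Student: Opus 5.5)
Our plan is to construct a test function $\phi$ on $\mathbb{R}^d$ with $\int \phi(x)f(x\mid\theta,I_d)\,dx = U(\theta)$ for every $\theta\in\mathcal{S}$. Then
\[
\int U\,d\nu=\int \phi(x)\,\big(p_G(x)-p_{G_*}(x)\big)\,dx ,
\]
and we bound the right-hand side by the Hellinger distance.

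\textbf{Reduction to the subspace $\mathcal{S}$.} We first reduce to $\mathcal{S}$, whose dimension satisfies $m:=\dim\mathcal{S}\le \min\{k+k_*,d\}$. Let $\Pi$ be the orthogonal projection onto $\mathcal{S}$. Every atom lies in $\mathcal{S}$, and $\Sigma=I_d$ makes the coordinates of $x$ along $\mathcal{S}$ and $\mathcal{S}^\perp$ independent. Hence the projected mixtures $\Pi_\#p_G$ and $\Pi_\#p_{G_*}$ are $m$-dimensional isotropic Gaussian mixtures with the same mixing measures. By the data-processing inequality, their Hellinger distance is at most $d_H(p_G,p_{G_*})$; in fact it is equal, since the orthogonal component factors out as a common product term. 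So we may work in $\mathbb{R}^m$ with $m\le\min\{k+k_*,d\}$. This is exactly why the constant depends on $\min\{k_*+k,d\}$ and not on $d$.

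\textbf{Hermite inversion.} In $\mathbb{R}^m$, write $U(\theta)=\sum_{|\alpha|\le D}c_\alpha\theta^\alpha$. For the probabilists' Hermite polynomials we have $\mathbb{E}_{X\sim N(\theta,I)}[He_\alpha(X)]=\theta^\alpha$. Therefore $\phi(x)=\sum_\alpha c_\alpha He_\alpha(x)$ satisfies the unbiasedness identity. Next we bound the coefficients. Polynomials of degree at most $D$ on $\mathbb{R}^m$ form a finite-dimensional space, and all norms on it are equivalent. In particular, $\max_\alpha|c_\alpha|\le C(m,D,R)\sup_{\|\theta\|\le R}|U(\theta)|$, because the sup over the ball $B(0,R)\cap\mathcal{S}$ is a norm on this space. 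This gives
\[
\int \phi^2\, p\,dx\le C(m,D,R)\,\|U\|_\infty^2
\]
for any mixture $p$ with atoms in $B(0,R+\text{const})$.

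\textbf{Passing to Hellinger.} Write $p-q=(\sqrt p-\sqrt q)(\sqrt p+\sqrt q)$ and apply Cauchy--Schwarz:
\[
\Big|\int\phi(p-q)\Big|\le \sqrt2\, d_H(p,q)\Big(\int\phi^2(\sqrt p+\sqrt q)^2\Big)^{1/2}\le 2\, d_H(p,q)\Big(\int\phi^2(p+q)\Big)^{1/2}.
\]
The final factor is a finite sum of Gaussian moments of $\phi^2$, a polynomial of degree $2D$, evaluated at atoms lying in a bounded set. Combining with the previous step gives the bound $C_{\mathrm{poly}}\|U\|_\infty d_H$.

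\textbf{Main obstacle.} The delicate point is the atoms $\theta_i$ of $G$. These need not lie in $B(0,R)$, since (A.1) bounds only the true atoms. The moments of $\phi^2$ under $p_G$ therefore depend on $\sup_{\theta\in\Theta}\|\theta\|$. I would absorb this through compactness of $\Theta$, taking $R$ large enough to contain $\Theta$. Otherwise one could truncate $\phi$ or use the norm equivalence on a larger ball, at the cost of a constant depending on $\Theta$. This needs a careful choice of the ball on which $\|U\|_\infty$ is taken.
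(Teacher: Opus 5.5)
Your proposal is correct and follows the paper's route. It uses the same reduction to $\mathcal{S}$ and the same test function: your Hermite expansion $\sum_\alpha c_\alpha He_\alpha$ is exactly the paper's $g=e^{-\Delta_L/2}U$. It also uses the same Cauchy--Schwarz step $|\int g(p-q)|\le 2d_H(\int g^2(p+q))^{1/2}$.

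The only difference is how $g$ is controlled. The paper bounds it explicitly through Markov-type and Chebyshev-growth polynomial inequalities. You instead invoke norm equivalence on the finite-dimensional polynomial space, which gives a non-explicit constant with the same dependence on $(m,D,R)$.

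The obstacle you flagged is real, and the paper resolves it the way you propose. It tacitly assumes all atoms of $G$ lie in $B(0,R)$, i.e.\ $\Theta\subseteq\bar B(0,R)$, when it bounds $\int g^2(p_{G,\mathcal S}+p_{G_*,\mathcal S})$ by $2\sup_{\theta\in\mathcal S\cap B(0,R)}\mathbb E_{N(\theta,I)}[g^2]$.
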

The proof of this result relies on the heat equation identity 
$U(\theta)=\mathbb{E}_{Z\sim N(0,I_d)}[g(Z+\theta)]$, where 
$g=e^{-\frac{\Delta_L}{2}}U$ and $\Delta_L$ denotes the Laplace operator \footnote{The Laplace operator in $\mathbb{R}^d$ is defined as $\Delta_L = \sum_{i=1}^d\frac{\partial^2}{\partial x_i^2}$. }. 
Using this identity, the integral of $U$ with respect to the signed measure 
$\nu$ can be transformed into the integral of $g$ against the difference 
between two Gaussian mixture densities corresponding to $G$ and $G'$
\begin{equation*}
    \int U(\theta)d\nu(\theta) = \int_{\mathbb{R}^d}g(x)(p_{G}(x) - p_{G_*}(x))dx.
\end{equation*}
Then, by writing $$g(x)(p_{G}(x) - p_{G_*}(x)) = (\sqrt{p_{G}(x)} - \sqrt{p_{G_*}(x)})\left[g(x)(\sqrt{p_{G}(x)} + \sqrt{p_{G_*}(x)})\right],$$ we apply the Cauchy-Schwartz inequality as 
\begin{align*}
    &\left|\int_{\mathbb{R}^d}g(x)(p_{G}(x) - p_{G_*}(x))dx\right| \\
    &\hspace{1cm} \leq \left(\int_{\mathbb{R}^d}(g(x))^2(\sqrt{p_G(x)} + \sqrt{p_{G_*}(x)})^2dx\right)^{1/2}\left(\int_{\mathbb{R}^d}(\sqrt{p_{G}(x)} - \sqrt{p_{G_*}(x)})dx\right)^{1/2},
\end{align*}
where the integral of $(\sqrt{p_{G}(x)} - \sqrt{p_{G_*}(x)})^2$ become Hellinger distance. For the integral of $\left[g(x)(\sqrt{p_{G}(x)} + \sqrt{p_{G_*}(x)})\right]^2$, we find a bound of $|g|$ in $B(0,R)$ based on $\|U\|_{\infty}$ and achieve the conclusion. 
The proof of the main results below about the connection between the Hellinger distance and Wasserstein distance, given in Appendix~\ref{sec:proof_of_preliminary_results},  relies on constructing suitable interpolating polynomials to extract the reallocation errors encoded by the moment discrepancies of the signed measure $\nu=G-G_*$. 
 With Lemma~\ref{lemma:Hellinger_to_Polynomial} in hand, we can now proceed to establish the aforementioned  lower bound on the Hellinger distance $d_H(p_{G},p_{G_*})$ between the  $G_*$, true mixing measure, and any other approximating mixing measure $G$ in $\mathcal{E}_{k_*}(\Theta)$, the primary example being the MLE $\widehat{G}_n$. 


\section{Exact-specified Setting}
\label{sec:exact-specified}
We begin by studying the convergence behavior of the maximum likelihood estimator under the exact-specified setting, where the number of true components $k_*$ is known. In this case, the likelihood optimization problem in equation~\eqref{eq:MLE} is carried out over the class $\mathcal{E}_{k^*} (\Theta)$ of Gaussian mixtures with exactly $k^*$ components. Our primary objective is to characterize how the minimum separation $\Delta_{\mathrm{sep}}$ and the minimum mixture weight $\pi_{\min}^*$ influence the convergence rate of parameter estimation. To achieve this goal, we derive lower bounds for the Hellinger distance $d_H(p_G,p_{G_*})$ in terms of the 1-Wasserstein distance $W_1(G,G_*)$, with explicit dependence on both $\Delta_{\mathrm{sep}}$ and $\pi_{\min}^*$.

A central theme of our analysis is that the statistical complexity of parameter estimation depends crucially on the geometric arrangement of the {\it  mixture components}  $\theta_1^*,\theta_2^*,\ldots,\theta_{k_*}^*$, support points of the underlying mixing measure $G_*$. Specifically, the degree by which the points $\theta^*_i$'s cluster has a significant impact on the sample complexity of parameter estimation. To capture this phenomenon accurately, we consider the three clustering scenarios, formulated in terms of the separation parameter $\Delta_{\mathrm{sep}}$.
\begin{itemize}
    \item[(i)] \emph{Single-cluster regime} - All true components make up a single cluster;
    \item[(ii)] \emph{Multi-cluster regime} -  All true components are  partitioned into several clusters;
    \item[(iii)] \emph{Unstructured regime} - No assumptions on the clustering structure of the true components is made. 
\end{itemize}
We formally define these geometric regimes and establish the corresponding convergence results in the following subsections.

\subsection{Single-cluster Regime}
We begin by performing our analysis in the first regime where all true components merge into a single cluster, which is formally defined as follows.

\begin{definition}
    All true components $(\theta^*_i)_{i=1}^{k_*}$ are said to be partitioned into one cluster of diameter $C_0 \Delta_{\mathrm{sep}}$, where $C_0 \ge 1$, if $\max_{1 \leq j \neq l \leq k_{*}} \|\theta_j^* - \theta_l^*\| \le C_0 \Delta_{\mathrm{sep}}$.
\end{definition}
This regime corresponds to the case in which all true components lies in a ball of small diameter,  proportional to the minimum separation quantity $\Delta_{\mathrm{sep}}$. Although the components remain distinct (due to the assumption that $\Delta_{\mathrm{sep}} > 0$), they are all confined to the same {\it microscopic} spatial scale. As a consequence, the corresponding mixture densities exhibit substantial overlap, making the parameter estimation problem highly sensitive to the local geometric configuration of the true components.
To capture this phenomenon,  we relate the Wasserstein distance between the MLE and the true mixing measure, $W_1(\widehat{G}_n,G_*)$, to the Hellinger distance between the corresponding mixture densities, $d_H(p_{\widehat{G}_n},p_{G_*})$. This relationship is crucial because it allows us to transfer the parametric density estimation rate in Proposition~\ref{prop:density_estimation_rate} to the convergence rate for the latent mixing measure. 

Since finite mixtures are identifiable only up to permutations of their components \cite{Teicher-63}, a direct comparison between the atoms of a fitted mixing measure $G$ and those of the true mixing measure $G_*$ is generally not well defined. To resolve this ambiguity, we employ a Voronoi-cell construction in \cite{manole22refined} that assigns each fitted atom to its nearest true component. This construction provides a canonical partition of the fitted atoms and will be used throughout the paper to analyze the convergence of mixing measures.
\begin{definition}
    Given a mixing measure $G$ with $k$ atoms $(\theta_i)_{i=1}^{k}$, a set of corresponding Voronoi cells $\mathcal{V}_{j}\equiv\mathcal{V}_j(G)$ generated by the true atoms $\theta_{j}^{*}$ of $G_*$ is defined as 
    \begin{align}
        \mathcal{V}_{j}\equiv\mathcal{V}_j(G) : = 
  \{i \in [k]: \|\theta_{i} - \theta_{j}^{*}\| \leq \|\theta_{i} - \theta_{\ell}^{*}\|, \ \forall \ell \neq j
  \}, \label{eq_Voronoi_definition}
    \end{align}
    for all $1 \leq j \leq k_{*}$.
\end{definition}

To provide refined results,  we distinguish two cases, depending on whether the support of $G$ is very close to that of $G_*$ relatively to  $\Delta_{\mathrm{sep}}$ or not. We refer to the first type of bounds, in which each point in the support of $G$ is $\Delta_{\mathrm{sep}}/4$-close to some point in the support of $G_*$ as {\it local bounds} and to the second one, in which no proximity assumption between the two supports is imposed, as {\it global bounds}. As the next result demonstrates, global bounds are worse by a multiplicative amount proportional to $\pi_{\mathrm{min}}^{*}$.  

\begin{theorem}
\label{theorem:exact_one_group_univariate}
Suppose that all true components are partitioned into one cluster of diameter $C_0\Delta_{\mathrm{sep}}$, then there exist positive universal constants $C_{\mathrm{local},1}$ and $C_{\mathrm{global},1}$ depending only on $R$, $k_{*}$, $\Sigma$, and $C_{0}$ such that

\noindent
(a) (Local bound) For $G \in \mathcal{E}_{k_{*}}(\Theta)$ such that $\|\theta_{i} - \theta_{j}^{*}\| \leq \frac{\Delta_{\mathrm{sep}}}{4}$, for all $i \in \mathcal{V}_{j}$ and $j\in[k_*]$,
\begin{align} 
d_H(p_{G},p_{G_*}) \ge C_{\mathrm{local},1} \cdot \Delta_{\mathrm{sep}}^{2k_* - 1} \cdot W_1(G, G_*).
\end{align}
(b) (Global bound) For general $G \in \mathcal{E}_{k_{*}}(\Theta)$, we have
\begin{align} 
d_H(p_{G},p_{G_*}) \ge C_{\mathrm{global},1} \cdot \pi_{\mathrm{min}}^{*} \cdot \Delta_{\mathrm{sep}}^{2k_* - 1} \cdot W_1(G, G_*).
\end{align}
\end{theorem}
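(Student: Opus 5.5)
Both parts will be reduced to Lemma~\ref{lemma:Hellinger_to_Polynomial}. By Lemma~\ref{lemma:general_to_identity} we may take $\Sigma=I_d$, paying only a constant in $\lambda_{\min},\lambda_{\max}$. Since $k=k_*$, the span $\mathcal{S}$ has dimension at most $2k_*$, so $C_{\mathrm{poly}}$ depends only on $R$, $k_*$ and the degree. It therefore suffices to construct, for each geometric quantity making up $W_1(G,G_*)$, a polynomial $U$ of degree $O(k_*)$ with $\int U\,d\nu$ equal to (or bounding) that quantity and with $\|U\|_\infty\lesssim \Delta_{\mathrm{sep}}^{-(2k_*-1)}$.

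\textbf{Local bound.} Each Voronoi cell then contains exactly one fitted atom. Indeed, balls of radius $\Delta_{\mathrm{sep}}/4$ around distinct $\theta^*_j$ are disjoint, and a cell with two atoms would leave another cell empty, after which counting contradicts $k=k_*$. Relabel so that $\theta_j\in\mathcal{V}_j$, and write $\Delta\theta_j=\theta_j-\theta^*_j$, $\Delta\pi_j=\pi_j-\pi^*_j$. Then
\[
W_1(G,G_*)\lesssim \sum_j\bigl(\pi^*_j\|\Delta\theta_j\|+|\Delta\pi_j|\bigr),
\]
with a constant absorbing the diameter, since $\|\theta_j-\theta^*_l\|\le (C_0+1)\Delta_{\mathrm{sep}}\lesssim R$. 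For a fixed $j$ and unit vector $u$ (chosen along $\Delta\theta_j$), I will use confluent Hermite interpolation. The polynomial
\[
L_j(\theta)=\prod_{l\ne j}\frac{\|\theta-\theta^*_l\|^2}{\|\theta^*_j-\theta^*_l\|^2}
\]
has degree $2k_*-2$, equals $1$ at $\theta^*_j$, and vanishes to second order at every other $\theta^*_l$.

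For the mass discrepancy, take $U=L_j$. Then
\[
\int U\,d\nu=\Delta\pi_j+\sum_l \pi_l\bigl(L_j(\theta_l)-L_j(\theta^*_l)\bigr).
\]
For the mean discrepancy, take $U(\theta)=\langle u,\theta-\theta^*_j\rangle L_j(\theta)$, of degree $2k_*-1$. This isolates $\pi_j\langle u,\Delta\theta_j\rangle$ plus error terms. The errors are Taylor remainders: first-order terms cancel at $l\neq j$ because of the double zeros, so the error is $O(\|\Delta\theta_l\|^2\,\|\nabla^2U\|)$. The sup norms satisfy $\|L_j\|_\infty\lesssim (C_0\Delta_{\mathrm{sep}}+R)^{2k_*-2}/\Delta_{\mathrm{sep}}^{2k_*-2}$ and $\|U\|_\infty\lesssim\Delta_{\mathrm{sep}}^{-(2k_*-2)}$. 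This is $\Delta_{\mathrm{sep}}^{-(2k_*-2)}$ rather than $\Delta_{\mathrm{sep}}^{-(2k_*-1)}$, and the extra factor $\Delta_{\mathrm{sep}}^{-1}$ will be spent on the remainder terms.

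The main obstacle is controlling these quadratic remainders against the linear quantities uniformly. A pure perturbation argument only works once $W_1$ is small. I plan to handle the remainders by writing $U(\theta_l)-U(\theta^*_l)$ exactly as a divided difference along the segment, replacing the Taylor expansion. In the regime $\|\Delta\theta_l\|\le\Delta_{\mathrm{sep}}/4$, the derivatives of $L_j$ on the segment are bounded by $\Delta_{\mathrm{sep}}^{-1}$ times its sup norm. I will then sum over $j$ and over $u$ ranging through $d$ coordinate directions, though it is preferable to use only $u=\Delta\theta_j/\|\Delta\theta_j\|$ to avoid any dependence on $d$. This yields a linear system
\[
(\text{vector of discrepancies})\le C\,\Delta_{\mathrm{sep}}^{-(2k_*-1)}d_H+\tfrac12(\text{discrepancies}).
\]
The factor $\tfrac12$ will be obtained by choosing $\tfrac14$ as the proximity radius, possibly after splitting into the cases $W_1\ge c\Delta_{\mathrm{sep}}$ and $W_1<c\Delta_{\mathrm{sep}}$. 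In the large case, the single test function $L_j$ already detects a mass of order $\pi^*$ misplaced by a distance of order $\Delta_{\mathrm{sep}}$. Absorbing the $\tfrac12$ term proves (a).

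\textbf{Global bound.} If some fitted atom lies at distance $>\Delta_{\mathrm{sep}}/4$ from its Voronoi centre, or a cell is empty, then $W_1\lesssim R$ trivially. It therefore suffices to prove
\[
d_H\gtrsim \pi^*_{\min}\Delta_{\mathrm{sep}}^{2k_*-1}.
\]
An empty cell forces a true atom $\theta^*_j$ to have no fitted atom within $\Delta_{\mathrm{sep}}/4$, and so does an atom that has wandered far, by counting. For the empty-cell atom I will take $U=L_j\cdot\prod_i\|\theta-\theta_i\|^2/\|\theta^*_j-\theta_i\|^2$, restricted to the relevant atoms near $\mathcal{S}$, with degree capped near $2k_*-1$ by using only the $k_*-1$ other true atoms and one bump. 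The aim is that $\int U\,d\nu\approx-\pi^*_j$ while $\|U\|_\infty\lesssim\Delta_{\mathrm{sep}}^{-(2k_*-1)}$, which gives $d_H\gtrsim\pi^*_{\min}\Delta_{\mathrm{sep}}^{2k_*-1}$ by Lemma~\ref{lemma:Hellinger_to_Polynomial}. Keeping this degree bound while controlling the signs is the delicate step. If it fails, I would fall back on a sign-controlled nonnegative test polynomial $L_j^2$-type, accepting the constant loss.
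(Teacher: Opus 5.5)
Your local argument has two problems. First, ``exactly one atom per cell'' does not follow by counting. Two atoms in $\mathcal{V}_1$ and none in $\mathcal{V}_2$ is compatible with $k=k_*$ and with the hypothesis of (a). The paper takes this property from the small-$W_1$ regime (Lemma~\ref{lemma:small_wasserstein_implies_center_in_voronoi_cell}), and it genuinely must be assumed. Take $\Delta_{\mathrm{sep}}=1$, $G_*=(1-p)\delta_0+p\,\delta_1$, and $G$ with mass $\tfrac12$ at each of $p\pm\sqrt{p(1-p)}$. The first two moments agree and all others differ by $O(p)$, so $d_H\lesssim p$, while $W_1\gtrsim\sqrt{p}$.

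Second, and more seriously, the absorption step fails. In the $j$-th mean equation, each term with $l\neq j$ is only bounded by $c\,\Delta_{\mathrm{sep}}^{-1}\pi_l\|\theta_l-\theta^*_l\|^2\le\tfrac{c}{4}\pi_l\|\theta_l-\theta^*_l\|$, where $c$ is a rescaled Hessian constant depending on $k_*$ and $C_0$. There are $k_*-1$ such terms. The diagonal factor $L_j(\theta_j)$ is only bounded below by $(3/4)^{2k_*-2}$. The radius $\Delta_{\mathrm{sep}}/4$ is fixed by the hypothesis, so you cannot choose it to produce the factor $\tfrac12$. Already for $d=1$ and $\theta^*=(0,\Delta_{\mathrm{sep}},2\Delta_{\mathrm{sep}})$, the point $\theta_1=-\Delta_{\mathrm{sep}}/4$ contributes about $1.58\,\pi_1|\theta_1-\theta^*_1|$ to the middle atom's equation. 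Splitting on the size of $W_1$ does not help, since small $W_1$ does not force small displacements of light atoms unless you divide by $\pi^*_{\min}$, which the local constant may not involve.

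What you are missing is a sign-definite control of the second moment. The paper uses $P_{\mathrm{var}}(\theta)=\prod_l\|\theta-\theta^*_l\|^2$. It vanishes on the true atoms, has $\|P_{\mathrm{var}}\|_\infty\le(2R)^{2k_*}$, and satisfies $P_{\mathrm{var}}(\theta_l)\ge(3\Delta_{\mathrm{sep}}/4)^{2k_*-2}\|\theta_l-\theta^*_l\|^2$. Lemma~\ref{lemma:Hellinger_to_Polynomial} then gives $\sum_l\pi_l\|\theta_l-\theta^*_l\|^2\lesssim\Delta_{\mathrm{sep}}^{-(2k_*-2)}d_H$ outright. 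The remainders cost $\Delta_{\mathrm{sep}}^{-1}$ (mean) or $\Delta_{\mathrm{sep}}^{-2}$ (mass) times this, with nothing to absorb. On that route the mass discrepancy is only $O(\Delta_{\mathrm{sep}}^{-2k_*}d_H)$. For your $L_j$ this loss comes from the first-order term $\pi_j\langle\nabla L_j(\theta^*_j),\theta_j-\theta^*_j\rangle$, which has size $\Delta_{\mathrm{sep}}^{-1}\pi_j\|\theta_j-\theta^*_j\|$. So your $W_1$ decomposition must keep the factor $C_0\Delta_{\mathrm{sep}}$ in front of $\sum_j|\pi_j-\pi^*_j|$, rather than a constant of order $R$.

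For (b), once you bound $W_1\lesssim R$ you need $d_H\gtrsim\pi^*_{\min}\Delta_{\mathrm{sep}}^{2k_*-1}$ whenever some true atom has no fitted atom within $\Delta_{\mathrm{sep}}/4$. This is stronger than anything the paper proves, and none of your candidates delivers it:
\begin{itemize}
\item The polynomial $L_j\prod_i\|\theta-\theta_i\|^2/\|\theta^*_j-\theta_i\|^2$ has degree $4k_*-2$. Its denominators are only $\ge\Delta_{\mathrm{sep}}/4$, so its sup-norm can be of order $\Delta_{\mathrm{sep}}^{-(4k_*-2)}$.
\item An $L_j^2$-type fallback likewise changes the exponent, not just the constant.
\item $L_j$ alone fails: a fitted atom just outside the ball around $\theta^*_j$, with suitable mass of order $\pi^*_j$, makes $\int L_j\,d\nu\approx0$.
\end{itemize}
The paper only needs $\pi^*_{\min}\lesssim\Delta_{\mathrm{sep}}^{-2k_*}d_H$. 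It gets this from the mass extractor $E_j$, whose gradient vanishes at every true atom. Near errors are controlled by the $P_{\mathrm{var}}$ second-moment bound. Far atoms ($\|\theta_i-\theta^*_{c(i)}\|>\Delta_{\mathrm{sep}}/4$) are controlled by $\sum_{\mathrm{far}}\pi_i\|\theta_i-\theta^*_{c(i)}\|^{2k_*}\le\int P_{\mathrm{var}}\,dG\lesssim d_H$. The paper then compensates the extra $\Delta_{\mathrm{sep}}^{-1}$ by bounding the near displacements in $W_1$ by $\Delta_{\mathrm{sep}}/4$ instead of $2R$. The same high-moment bound handles the far part of $W_1$. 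It gives far mean displacement $\lesssim\Delta_{\mathrm{sep}}^{-(2k_*-1)}d_H$ and far mass $\lesssim\Delta_{\mathrm{sep}}^{-2k_*}d_H$, the latter multiplied by $C_0\Delta_{\mathrm{sep}}$.
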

The proof of Theorem \ref{theorem:exact_one_group_univariate} is deferred to Appendix~\ref{sec:proof_theorem:exact_one_group_univariate}. Crucially, in both the local and global bounds in Theorem~\ref{theorem:exact_one_group_univariate}, the relationship between density estimation and parameter estimation deteriorates polynomially as the the minimum separation $\Delta_{\mathrm{sep}}$ decreases (given the compactness assumption on $\Theta$, the interesting scenario is that of a decreasing). In particular, the factor $\Delta_{\mathrm{sep}}^{2k_*-1}$ shows that the statistical complexity of recovering the latent mixing measure $G_*$ grows rapidly when the true components become increasingly concentrated. Technically, the exponent $2k_*-1$ originates from the Hermite interpolation structure required to isolate the mean and mass discrepancies in the Wasserstein decomposition. It intuitively reflects the geometric singularity induced by severe component overlap. More specifically, since all true components lie within a single cluster, they will merge together when the separation vanishes. In that case, a Wasserstein perturbation of the mixing measure may correspond to only minor changes in the observed density, thereby making parameter estimation substantially more difficult than density estimation.
Consequently, the Gaussian mixture begins to lose the ability to uniquely resolve the underlying latent structure from the observed data. 

On the other hand, the minimum mixture weight $\pi_{\min}^*$ appears only in the global lower bound but not in the local bound. This distinction stems from the fact that the local regime already preserves the correspondence between fitted and true components by requiring each fitted atom to remain sufficiently close to its associated true component. Consequently, low-weight components cannot disappear or migrate to distant regions of the parameter space, and the estimation error is governed primarily by the local overlap geometry and cancellation effects among nearby components.
In contrast, the global regime allows fitted atoms to move arbitrarily far from the true configuration. In this setting, components with very small mixture weights contribute only weakly to the observed density and therefore become substantially harder to detect statistically. As a result, the additional dependence on the minimum mixture weight in the global Hellinger lower bound indicates that weak components further increase the statistical complexity of the parameter estimation problem. 

Combining the Hellinger lower bounds in Theorem~\ref{theorem:exact_one_group_univariate} with the high-probability density estimation rate from Proposition~\ref{prop:density_estimation_rate}, we immediately obtain the MLE convergence rate in the single-cluster regime of the exact-specified setting. 

\begin{corollary}
    \label{corolarry:MLE_exact_I}
Under the assumptions of Proposition~\ref{prop:density_estimation_rate} and Theorem~\ref{theorem:exact_one_group_univariate}, the following holds:
    
\noindent
(a) (Global rate) For any $n \geq 1$, we have
    \begin{align*}
\mathbb{P}\left(W_1(\widehat{G}_n,G_*)>\frac{C_{\mathrm{density}}}{C_{\mathrm{global,1}}}\left(\frac{d\log(n)}{n}\right)^{\frac{1}{2}}(\pi_{\min}^{*})^{-1} \Delta_{\mathrm{sep}}^{-(2k_* - 1)}\right)\lesssim \exp(-c_{\mathrm{density}}\log(n)).
    \end{align*}
(b) (Local rate) There exists $N_1\in\mathbb{N}$ depending on $G_{*}$ such that for all $n \geq N_1$, we have 
\begin{align*}
&\mathbb{P}\left(W_1(\widehat{G}_n,G_*)>\frac{C_{\mathrm{density}}}{C_{\mathrm{local,1}}}\left(\frac{d\log(n)}{n}\right)^{\frac{1}{2}}\Delta_{\mathrm{sep}}^{-(2k_* - 1)}\right) \lesssim   
\exp(-c_{\mathrm{density}}\log(n)).
\end{align*}
\end{corollary}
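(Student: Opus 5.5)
The corollary follows by combining Theorem~\ref{theorem:exact_one_group_univariate} with Proposition~\ref{prop:density_estimation_rate} through inclusion of events. Throughout we use Lemma~\ref{lemma:general_to_identity} to pass to $\Sigma=I_d$; this changes constants only through $\lambda_{\min},\lambda_{\max}$, which are absorbed into $C_{\mathrm{global},1}$ and $C_{\mathrm{local},1}$. Set $\epsilon_n:=C_{\mathrm{density}}[d\log(n)/n]^{1/2}$ and let $A_n:=\{d_H(p_{\widehat{G}_n},p_{G_*})\le \epsilon_n\}$. By Proposition~\ref{prop:density_estimation_rate}, $\mathbb{P}(A_n^c)\lesssim n^{-c_{\mathrm{density}}}$.

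For part (a), note that $\widehat{G}_n\in\mathcal{E}_{k_*}(\Theta)$ deterministically. The global bound in Theorem~\ref{theorem:exact_one_group_univariate}(b) therefore applies to $\widehat{G}_n$ with no proximity requirement, giving
\[
W_1(\widehat{G}_n,G_*)\le \frac{d_H(p_{\widehat{G}_n},p_{G_*})}{C_{\mathrm{global},1}\,\pi^*_{\min}\,\Delta_{\mathrm{sep}}^{2k_*-1}}.
\]
Hence the event in (a) is contained in $A_n^c$. The bound then holds for every $n\ge1$, since the Proposition's probability statement holds for all $n$ (for small $n$, enlarge the implicit constant in $\lesssim$).

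For part (b), the local bound requires $\widehat{G}_n$ to satisfy the proximity condition that every atom lies within $\Delta_{\mathrm{sep}}/4$ of its Voronoi center. The plan is to show this condition holds on $A_n$ once $n$ is large. On $A_n$, part (a) gives
\[
W_1(\widehat{G}_n,G_*)\le \frac{\epsilon_n}{C_{\mathrm{global},1}\pi^*_{\min}\Delta_{\mathrm{sep}}^{2k_*-1}}=:\eta_n,
\]
and $\eta_n\to0$. Now suppose some fitted atom $\widehat{\theta}_{n,i}$ with $i\in\mathcal{V}_j$ satisfies $\|\widehat{\theta}_{n,i}-\theta^*_j\|>\Delta_{\mathrm{sep}}/4$. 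The Voronoi property then puts it at distance greater than $\Delta_{\mathrm{sep}}/4$ from every true atom. That alone does not contradict small $W_1$, because the atom's weight could be tiny.

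The main obstacle is exactly this: an atom with negligible weight can sit far from the true support without raising $W_1$. The fix is a counting argument. Suppose every true atom $\theta^*_j$ receives transported mass at least $\pi^*_{\min}$ from within distance $\Delta_{\mathrm{sep}}/4$. This forces each Voronoi cell, restricted to the ball $B(\theta^*_j,\Delta_{\mathrm{sep}}/4)$, to be nonempty once $\eta_n<\pi^*_{\min}\Delta_{\mathrm{sep}}/8$. Indeed, if no fitted atom lay within $\Delta_{\mathrm{sep}}/4$ of $\theta^*_j$, the mass $\pi^*_j$ would have to travel more than $\Delta_{\mathrm{sep}}/4$, giving $W_1\ge \pi^*_{\min}\Delta_{\mathrm{sep}}/4$. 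Since $\widehat{G}_n$ has exactly $k_*$ atoms and these $k_*$ balls are disjoint (they have radius $\Delta_{\mathrm{sep}}/4$ around points at least $\Delta_{\mathrm{sep}}$ apart), each ball contains exactly one atom. No atom remains far away, and each atom lies in the Voronoi cell of its ball's center. So we choose $N_1$, depending on $G_*$ through $\pi^*_{\min},\Delta_{\mathrm{sep}},k_*,d$, such that $\eta_n<\pi^*_{\min}\Delta_{\mathrm{sep}}/8$ for all $n\ge N_1$. Then on $A_n$ the hypothesis of Theorem~\ref{theorem:exact_one_group_univariate}(a) holds, and
\[
W_1(\widehat{G}_n,G_*)\le \frac{\epsilon_n}{C_{\mathrm{local},1}\,\Delta_{\mathrm{sep}}^{2k_*-1}}.
\]
The event in (b) is therefore contained in $A_n^c$ for $n\ge N_1$, which gives the claimed tail bound.
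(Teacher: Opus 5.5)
Your proposal is correct and follows essentially the same route as the paper (Remark~\ref{remark:uni_cluster_exact_fit}(1)). Part (a) comes by event inclusion from the global bound and Proposition~\ref{prop:density_estimation_rate}. Part (b) uses the global rate to force $W_1(\widehat{G}_n,G_*)<\pi^*_{\min}\Delta_{\mathrm{sep}}/4$; via the disjoint-balls pigeonhole argument (the paper's Lemma~\ref{lemma:small_wasserstein_implies_center_in_voronoi_cell}, which you re-prove inline) this gives the proximity hypothesis of the local bound. The only cosmetic difference is that you work on the single event $\{d_H(p_{\widehat{G}_n},p_{G_*})\le\epsilon_n\}$, whereas the paper splits $\mathbb{P}(\mathcal{A}_n)\le\mathbb{P}(\mathcal{A}_n\cap\mathcal{B}_n)+\mathbb{P}(\mathcal{B}_n^c)$.
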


\begin{remark} 
\label{remark:uni_cluster_exact_fit}
A few remarks concerning the MLE rates in Corollary~\ref{corolarry:MLE_exact_I} are in order.

(1) While the global rate is a direct consequence of Proposition \ref{prop:density_estimation_rate} and the global statement of Theorem \ref{theorem:exact_one_group_univariate}, the local rate requires the evaluation of $N_1$. 
Consider the events $\mathcal{A}_n = \{ W_1(\widehat{G}_n,G_*)>\frac{C_{\mathrm{density}}}{C_{\mathrm{local,1}}}\left(\frac{d\log(n)}{n}\right)^{\frac{1}{2}}\Delta_{\mathrm{sep}}^{-(2k_* - 1)} \}$ and $\mathcal{B}_n = \{ W_1(\widehat{G}_n,G_*) \leq \pi^*_{\min}\Delta_{\mathrm{sep}}/4\}$. Setting $$N_1 = \min \left\{ n\in\mathbb{N} \colon n/\log(n) \geq d \left(4\cdot \frac{C_{\mathrm{density}}}{C_{\mathrm{global,1}}}(\pi_{\min}^{*})^{-2}\Delta_{\mathrm{sep}}^{-2k_*}\right)^2 \right\},$$ 
by part (a) we see that, for all $n\geq N_1$, 
\begin{align*}
    \mathbb{P}(\mathcal{B}^c_n) & = \mathbb{P} \left( W_1(\widehat{G}_n,G_*) > \pi^*_{\min}\Delta_{\mathrm{sep}}/4 \right)\\
    & \leq \mathbb{P}\left(W_1(\widehat{G}_n,G_*)>\frac{C_{\mathrm{density}}}{C_{\mathrm{local,1}}}\left(\frac{d\log(n)}{n}\right)^{\frac{1}{2}}(\pi^*_{\mathrm{min}})^{-1} \Delta_{\mathrm{sep}}^{-(2k_* - 1)}\right) \\
    & \lesssim \exp(-c_{\mathrm{density}}\log(n)). 
\end{align*}
In addition, according to Lemma \ref{lemma:small_wasserstein_implies_center_in_voronoi_cell}, the event $\mathcal{B}_n$ implies that, for each $i \in [k_*]$, the fitted component $\widehat{\theta}_{n,i}$ belongs to a Voronoi cell generated by some true component, say $\theta_j^*$, and, furthermore, $\| \widehat{\theta}_{n,i} - \theta^*_j \| \leq \Delta_{\mathrm{sep}}/4$. Thus, using Proposition \ref{prop:density_estimation_rate} and the local statement of Theorem \ref{theorem:exact_one_group_univariate}, it follows that $\mathbb{P}(\mathcal{A}_n \cap \mathcal{B}_n) \lesssim \exp(-c_{\mathrm{density}}\log(n))$. As a result, we obtain
\begin{equation*}
    \mathbb{P}(\mathcal{A}_n) \leq \mathbb{P}(\mathcal{A}_n \cap \mathcal{B}_n) + \mathbb{P}(\mathcal{B}_n^c) \lesssim \exp(-c_{\mathrm{density}}\log(n)),
\end{equation*}
proving part (b) of Corollary~\ref{corolarry:MLE_exact_I}.


(2) Corollary~\ref{corolarry:MLE_exact_I} shows how weak separation and small mixture weights jointly degrade the statistical convergence rate of the MLE. In particular, when either the separation or the minimum weight becomes small, the MLE convergence rate decreases rapidly, reflecting the increasing complexity of estimating overlapping or low-weight components. As a consequence, significantly more samples are required to accurately recover the latent mixing measure, even when the density itself can already be estimated efficiently. To the best of our knowledge, this is the first work in the finite mixture literature that explicitly captures both the minimum separation and the minimum weight in the convergence rate of MLE.

(3) It is instructive to compare the estimation rate from Corollary~\ref{corolarry:MLE_exact_I} with the minimax convergence rate obtained by Doss et al. \cite[Theorem 1.1]{doss_optimal_2023}, which is given by 
\begin{equation}
    \label{eq:minimax.rate}
\inf_{\widetilde{G}_n} \sup_{G_* \in \mathcal{E}_{k_*}(\Theta)} \mathbb{E}_{G_*} \left[ W_1(\widetilde{G}_n,G_*) \right] \asymp_{k^*}
\left( \frac{d}{n} \right)^{1/4} \wedge 1 + \left( \frac{1}{n} \right)^{1/(4 k_* - 2)},
\end{equation}
where the notation $\asymp_{k^*}$ indicates that both sides match up a multiplicative factor depending (polynomially, in this case) on $k_*$. Interestingly, the minimax rate consists of two terms, only one of which depends on the dimension $d$. Therefore, and as noted in \cite{doss_optimal_2023}, there exists a threshold value $d_{\mathrm{thr}} = n^{\frac{2 k_* - 3}{2k_*-1}}$ for the dimension $d$ at which the two terms on the right hand side of the equation~\eqref{eq:minimax.rate} making up the minimax rate share the same order of $n^{-\frac{1}{4k_*-2}}$. Interestingly, this rate also coincides with the minimax convergence rate for minimum distance estimators established by Heinrich and Kahn \cite{heinrich2018minimax} when components are partitioned into one cluster. In addition, when $d > d_{\mathrm{thr}}$, the minimax rate is driven by the first, dimension-dependent term.

Of course, the minimax rate in equation~\eqref{eq:minimax.rate} is, by its very definition, a rate that holds {\it uniformly} over all possible mixing measures $G_*$ in $\mathcal{E}_{k_*}(\Theta)$. In contrast,  the (high-probability) convergence rate for the MLE $\widehat{G}_n$ from Corollary~\ref{corolarry:MLE_exact_I}, namely $\left(\frac{d\log(n)}{n}\right)^{\frac{1}{2}}\Delta_{\mathrm{sep}}^{-(2k_* - 1)}$, is {\it point-wise}, in the sense that it depends directly on $G_*$. The two rates are, of course, inherently different and thus not comparable, as they capture different features of the statistical task at hand: the minimax rate~\eqref{eq:minimax.rate} provides the worst possible rate achieved by the best possible estimator, while the local or pointwise rates in Corollary~\ref{corolarry:MLE_exact_I} for $\widehat{G}_n$ quantify the performance of one specific estimator, namely the MLE, at a particular mixing measure $G_*$.

In fact, the minimax estimator constructed by Doss et al. \cite{doss_optimal_2023} is {\it not} the MLE. However, we note that, in the present setting, the MLE enjoys a near parametric estimation rate of order $\left( \frac{d}{n} \right)^{1/2}$ (ignoring logarithmic terms) for all sufficiently regular mixing measures $G_*$, i.e., mixing measures for which the separation 
 $\Delta_{\mathrm{sep}}$, viewed as a quantity  changing with $n$, remains bounded away from zero. This is a much faster estimation rate than the minimax rate in equation \eqref{eq:minimax.rate}.  On the other hand, if the minimal weight and separation parameters of $G_*$ vanish (as a function of $n$) too fast, then the estimation error of the MLE can be made to match the minimax rate and, in fact, to be arbitrarily slow. To illustrate, a simple calculation shows that, under the settings of Theorem~\ref{theorem:exact_one_group_univariate} and assuming $\pi^*_{\mathrm{\min}}$ bounded away from $0$, the MLE $\widehat{G}_n$ will achieve the minimax rate whenever $\Delta_{\mathrm{sep}}\asymp n^{-\frac{k_*-1}{(2k_*-1)^2}} d^{\frac{1}{2(2k_* - 1)}}$ if $d \leq d_{\mathrm{thr}}$ and $\Delta_{\mathrm{sep}}\asymp  \left( \frac{d}{n}\right)^{\frac{1}{4(2k_* - 1)}}$ otherwise, and will converge faster whenever $\Delta_{\mathrm{sep}}$ vanishes at slower rates.
\end{remark}

\subsection{Multi-cluster Regime}
We now move to a more general geometric regime where the $k_*$ true components are partitioned into $k_0$ disjoint 
clusters, defined as follows.
\begin{definition}
\label{def:multi.cluster}
    All true components are said to be partitioned into $k_0$ disjoint clusters denoted by $\mathcal{C}_1, \dots, \mathcal{C}_{k_0}$ if three following assumptions are satisfied:
    \begin{itemize}
    \item \textit{Intra-cluster assumption}: For any two components $\theta_j^* \neq \theta_q^* \in \mathcal{C}_m$, their spatial separation is bounded by the cluster diameter $C_0 \Delta_{\mathrm{sep}}$, where $C_0\geq 1$, that is, $\|\theta_j^* - \theta_q^*\| \le C_0 \Delta_{\mathrm{sep}}$.
    \item \textit{Inter-cluster assumption}: For any $\theta_j^* \in \mathcal{C}_m$ and $\theta_p^* \notin \mathcal{C}_m$, their separation is strictly bounded below by the {\it macroscopic gap} parameter $D_0>0$, that is, $\|\theta_j^* - \theta_p^*\| \ge D_0$.
    \item \textit{Rigid topological ordering}: To ensure that the clusters are strictly unambiguously isolated from one another, we further assume that $\Delta_{\mathrm{sep}} \le \frac{D_0}{4 C_0}$. 
\end{itemize}

\end{definition}
Compared to the single-cluster regime studied in the previous section, this regime exhibits a more structured geometry: although strong overlap may still occur among components within the same cluster, different clusters remain macroscopically separated from one another. In this case we will show that the statistical complexity of estimating the mixing measure $G_*$ is no longer governed by the total number of components $k_*$, but rather by the local geometric complexity within each cluster.

Intuitively, components belonging to different clusters become substantially easier to distinguish due to the macroscopic separation between clusters. Thus, the cancellation effects described in the previous subsection occur primarily within individual clusters rather than globally across all components. As a result, the convergence behavior of the MLE is expected to depend mainly on the size of the densest cluster, which represents the most statistically singular local region of the mixture model. To make this precise, for $m\in[k_0]$, we denote by $s_m = |\mathcal{C}_m|$ the number of true components in the cluster $\mathcal{C}_m$,  and let
$$s_{\max} = \max_{1 \le m \le k_0} s_m$$
be the size of the largest cluster. As we will see next, this quantity captures the geometric complexity of estimating the mixing measure. 

\begin{theorem}
\label{theorem:exact_multi_group}
Assume that $G_{*}$ satisfies the multi-cluster condition of Definition~\ref{def:multi.cluster}. Then, there exist positive universal constants $C_{\mathrm{local,2}},C_{\mathrm{global,2}}$ depending only on $R$, $k_{*}$, $\Sigma$, $C_{0}$, and $D_{0}$ such that
\noindent

(a) (Local bound) If $G \in \mathcal{E}_{k_{*}}(\Theta)$ is such that $\|\theta_{i} - \theta_{j}^{*}\| \leq \frac{\Delta_{\mathrm{sep}}}{4}$, for all $i \in \mathcal{V}_{j}$ and $j\in[k_*]$, then
\begin{align} 
d_H(p_{G},p_{G_*}) \ge C_{\mathrm{local,2}} \cdot \Delta_{\mathrm{sep}}^{2s_{\max} - 1} \cdot W_1(G, G_*).
\end{align}
(b) (Global bound) For any $G \in \mathcal{E}_{k_{*}}(\Theta)$, it holds that
\begin{align} 
d_H(p_{G},p_{G_*}) \ge C_{\mathrm{global,2}} \cdot \pi_{\min}^{*} \cdot \Delta_{\mathrm{sep}}^{2s_{\max} - 1} \cdot W_1(G, G_*).
\end{align}
\end{theorem}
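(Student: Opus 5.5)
The proof follows the same interpolation-polynomial strategy as Theorem~\ref{theorem:exact_one_group_univariate}, but localized to one cluster at a time. The macroscopic gap $D_0$ keeps polynomials of bounded degree and bounded sup-norm, so the full degree $2k_*-1$ construction is never needed. By Lemma~\ref{lemma:general_to_identity} we take $\Sigma=I_d$. We work on the span $\mathcal{S}$ of all atoms, which has dimension at most $2k_*$, so Lemma~\ref{lemma:Hellinger_to_Polynomial} applies with constants depending only on $R,k_*$ and the degree.

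\textbf{Local bound.} Under the local hypothesis, each cell $\mathcal{V}_j$ collects fitted atoms within $\Delta_{\mathrm{sep}}/4$ of $\theta_j^*$. Set $\Delta\pi_j=\sum_{i\in\mathcal{V}_j}\pi_i-\pi_j^*$. A standard Voronoi decomposition gives
\begin{align*}
W_1(G,G_*)\lesssim \sum_{j}\Bigl(|\Delta\pi_j|+\Bigl\|\sum_{i\in\mathcal{V}_j}\pi_i(\theta_i-\theta_j^*)\Bigr\|+\sum_{i\in\mathcal{V}_j}\pi_i\|\theta_i-\theta_j^*\|^2/\Delta_{\mathrm{sep}}\Bigr),
\end{align*}
or the analogous linear-in-distance decomposition. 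It therefore suffices to bound each term by $\Delta_{\mathrm{sep}}^{-(2s_{\max}-1)}d_H$.

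Fix a cluster $\mathcal{C}_m$ and an atom $\theta_j^*\in\mathcal{C}_m$. Along a unit direction $u$, I build a Hermite interpolation polynomial $U$ of degree $2s_m-1$ with two kinds of conditions:
\begin{itemize}
\item on the true atoms of $\mathcal{C}_m$, $U$ and its directional derivative take prescribed values: $U(\theta_q^*)=\mathbf{1}\{q=j\}$ with vanishing derivative, to extract $\Delta\pi_j$; or $U$ vanishes at all of them and $\nabla U(\theta_j^*)=u$, to extract the mean discrepancy;
\item $U$ is multiplied by a cutoff factor $\prod_{p\notin\mathcal{C}_m}\bigl(\|\theta-\theta_p^*\|^2/\cdot\bigr)^{2}$, or a polynomial bump, that vanishes to second order at all atoms outside $\mathcal{C}_m$.
\end{itemize}
The cutoff has degree bounded by a function of $k_*$. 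Because its roots lie at distance at least $D_0$, its size and derivatives near $\mathcal{C}_m$ are of order constants depending on $D_0$, and dividing by it is harmless.

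The confluent divided-difference formula for the Hermite coefficients, with nodes spaced at least $\Delta_{\mathrm{sep}}$ apart and contained in a ball of radius $C_0\Delta_{\mathrm{sep}}$, gives $\|U\|_\infty\lesssim \Delta_{\mathrm{sep}}^{-(2s_m-1)}$. Expanding $\int U\,d\nu$ by Taylor's theorem around each $\theta_q^*$ isolates the target quantity, and the second-order remainders are controlled by $\|U''\|\sum\pi_i\|\theta_i-\theta_q^*\|^2$. Lemma~\ref{lemma:Hellinger_to_Polynomial} then gives the target bounded by $C\Delta_{\mathrm{sep}}^{-(2s_{\max}-1)}d_H$ plus remainder terms.

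\textbf{Main obstacle.} The hard step is absorbing the quadratic remainders. Their coefficients scale like $\Delta_{\mathrm{sep}}^{-2s_m}$, which would cost an extra factor of $\Delta_{\mathrm{sep}}^{-1}$. I would handle this as in the single-cluster proof. One option is to use that $\|\theta_i-\theta_q^*\|\le\Delta_{\mathrm{sep}}/4$, so that $\|\theta_i-\theta_q^*\|^2\Delta_{\mathrm{sep}}^{-2s_m}\le\tfrac14\|\theta_i-\theta_q^*\|\Delta_{\mathrm{sep}}^{-(2s_m-1)}$. The other is to add a separate squared test polynomial, $\|\theta-\theta_j^*\|^2$ times a Hermite factor of degree $2s_m-2$, to extract $\sum_{i}\pi_i\|\theta_i-\theta_j^*\|$ directly with the same $\Delta_{\mathrm{sep}}$-power.

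\textbf{Global bound.} I would reduce to the local case by a dichotomy. If $W_1(G,G_*)\le c\,\pi^*_{\min}\Delta_{\mathrm{sep}}$, then Lemma~\ref{lemma:small_wasserstein_implies_center_in_voronoi_cell} places every fitted atom within $\Delta_{\mathrm{sep}}/4$ of a distinct true atom, and part (a) applies. Otherwise some true atom $\theta_j^*$ in some cluster is poorly covered by $G$. In that case I test with the weight-extraction polynomial for $\theta_j^*$, now of degree $2s_m-1$ on $\mathcal{C}_m$ times the cutoff, multiplied further by factors vanishing at the fitted atoms near other true atoms. This yields $d_H\gtrsim\pi^*_{\min}\Delta_{\mathrm{sep}}^{2s_{\max}-1}$. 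Since $W_1\le 2R$, this gives $d_H\gtrsim \pi^*_{\min}\Delta_{\mathrm{sep}}^{2s_{\max}-1}W_1$.

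An alternative for the far case is a compactness argument: Theorem~\ref{theorem:exact_one_group_univariate}(b) already gives a bound with exponent $2k_*-1$, but that exponent is too weak here. So I would instead rely on the explicit mass-extraction polynomial, noting that fitted atoms lying far from every true atom can be handled by the factor $\pi^*_{\min}$.
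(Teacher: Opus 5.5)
\textbf{Local bound: the mass term.} Your decomposition charges each $|\Delta\pi_j|$ at an $O(1)$ transport cost, so you need $|\Delta\pi_j|\lesssim\Delta_{\mathrm{sep}}^{-(2s_{\max}-1)}d_H$, and your method does not deliver this. The weight-extraction polynomial has Hessian of order $\Delta_{\mathrm{sep}}^{-2}$ near the atoms of $\mathcal{C}_m$. The available second-moment control is the $P_{\mathrm{var}}$ bound $\sum_i\pi_i\|\theta_i-\theta^*_{c(i)}\|^2\lesssim\Delta_{\mathrm{sep}}^{-(2s_{\max}-2)}d_H$. So the Taylor remainder is of order $\Delta_{\mathrm{sep}}^{-2s_{\max}}d_H$, and neither of your remedies removes it:
\begin{itemize}
\item Replacing $\|\theta_i-\theta_q^*\|^2$ by $\tfrac{\Delta_{\mathrm{sep}}}{4}\|\theta_i-\theta_q^*\|$ only turns the remainder into $\Delta_{\mathrm{sep}}^{-1}$ times the mean discrepancy. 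You control that at order $\Delta_{\mathrm{sep}}^{-(2s_{\max}-1)}d_H$, so you are back at $\Delta_{\mathrm{sep}}^{-2s_{\max}}$.
\item A polynomial $\|\theta-\theta_j^*\|^2h(\theta)$ extracts a second moment, which is just the variance bound again. It never gives the first absolute moment linearly.
\end{itemize}
The missing idea is a two-scale transport plan. First rebalance mass inside each cluster at cost $C_0\Delta_{\mathrm{sep}}$; there $|\Delta\pi_j|\lesssim\Delta_{\mathrm{sep}}^{-2s_{\max}}d_H$ suffices. Then move only the cluster-level imbalance $\Delta\Pi_m=\sum_{j\in\mathcal{C}_m}\Delta\pi_j$ across clusters at cost $2R$.

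Controlling $\Delta\Pi_m$ needs a polynomial equal to $\mathbf{1}\{q\in\mathcal{C}_m\}$ at every true atom, with vanishing gradient, whose sup-norm and Hessian are bounded independently of $\Delta_{\mathrm{sep}}$. The paper takes $P_m(\theta)=Q_m(\|\theta-\theta_1^*\|^2)$ with $\theta_1^*\in\mathcal{C}_m$, where $Q_m$ is the confluent Hermite interpolant of a smooth step $\Phi$. Its divided differences are bounded by derivatives of $\Phi$, which gives $|\Delta\Pi_m|\lesssim\Delta_{\mathrm{sep}}^{-(2s_{\max}-2)}d_H$.

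Separately, a cutoff vanishing only to second order at outside atoms is too weak. The in-cluster factor has size $\Delta_{\mathrm{sep}}^{-(2s_m-1)}$ at fitted atoms near other clusters. The cutoff must vanish to order about $2s_{\max}$ (the paper uses power $2s_{\max}$) to reduce those cross terms to $\Delta_{\mathrm{sep}}^{-1}\|\theta-\theta_l^*\|^2$.

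\textbf{Global bound.} The dichotomy is misstated. $W_1(G,G_*)>c\,\pi^*_{\min}\Delta_{\mathrm{sep}}$ does not imply some true atom is uncovered: a large weight mismatch with every atom well placed already has large $W_1$. Split instead on whether some ball $B(\theta_j^*,\Delta_{\mathrm{sep}}/4)$ is empty; the other case is part (a).

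More seriously, in the uncovered case you need $d_H\gtrsim\pi^*_{\min}\Delta_{\mathrm{sep}}^{2s_{\max}-1}$, and this does not follow.
\begin{itemize}
\item Weight extraction gives only $\pi_j^*=|\Delta\pi_j|\lesssim\Delta_{\mathrm{sep}}^{-2s_{\max}}d_H$. This is the same remainder problem, and macro and far fitted atoms must also be handled through high-moment bounds from $P_{\mathrm{var}}$.
\item Multiplying by factors vanishing at fitted atoms near the other atoms of $\mathcal{C}_m$ makes the extracted quantity $\pi_j^*\prod_i\|\theta_j^*-\theta_i\|^2$. That loses roughly $\Delta_{\mathrm{sep}}^{2}$ per such atom.
\end{itemize}
Combined with $W_1\le 2R$, you reach exponent $2s_{\max}$ at best.

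The paper never needs such a lower bound. It uses $\pi^*_{\min}\lesssim\Delta_{\mathrm{sep}}^{-2s_{\max}}d_H$ only against the near first-moment term, which is trivially at most $\Delta_{\mathrm{sep}}/4$. Every other term of the micro/macro/far Wasserstein decomposition is bounded directly by the test-function estimates.
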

The proof of Theorem~\ref{theorem:exact_multi_group} is given in Appendix~\ref{sec:proof_theorem:exact_multi_group_univariate}. Compared to single-cluster regime, the multi-cluster regime exhibits a a fundamentally different geometric behavior, as captured by Theorem~\ref{theorem:exact_multi_group}. Although the minimum separation $\Delta_{\mathrm{sep}}$ still controls the deterioration of the Hellinger lower bound, the exponent now depends the size  $s_{\max}$ of the densest cluster, instead of the total number of components $k_*$. This indicates that the statistical sample complexity of parameter estimation is driven by the most concentrated local region of the mixture rather than by the entire global configuration of components.

The rate improvement over Theorem~\ref{theorem:exact_one_group_univariate} can be understood through the role of the inter-cluster separation condition. Since different clusters are separated by a macroscopic distance $D_0$, components within one cluster have only limited interaction with components from other clusters. Therefore, the strong overlap phenomenon becomes localized inside individual clusters, preventing the large-scale geometric degeneracy observed when all components collapse into a single group. In particular, each cluster behaves approximately as an independent local singular structure, while the overall mixture retains a globally distinguishable structure.

In addition, just as with the single-cluster setting, the global bound also depends on the minimum mixture weight $\pi_{\min}^*$. However, compared to the separation quantity, the exponent of the minimum mixture weight is unaffected by the change in the cluster structure.
This reflects the fact that low-weight components remain statistically difficult to recover even when the cluster geometry becomes more favorable. 


Next, we combined Proposition~\ref{prop:density_estimation_rate} and Theorem~\ref{theorem:exact_multi_group}  to provide convergence rates for the MLE under this multi-cluster regime.

\begin{corollary}
    \label{corolarry:MLE_exact_II}
    Under the assumptions of Theorem~\ref{theorem:exact_multi_group} and Proposition~\ref{prop:density_estimation_rate}, we have
    \noindent
(a) (Global rate) For any $n\geq 1$, we have 
    \begin{align*}
\mathbb{P}\left(W_1(\widehat{G}_n,G_*)>\frac{C_{\mathrm{density}}}{C_{\mathrm{global,2}}}\left(\frac{d\log(n)}{n}\right)^{\frac{1}{2}}(\pi_{\min}^{*})^{-1} \Delta_{\mathrm{sep}}^{-(2s_{\max} - 1)}\right)\lesssim \exp(-c_{\mathrm{density}}\log(n)).
    \end{align*}
(b) (Local rate) There exists $N_2\in\mathbb{N}$ depending on $G_*$ such that for $n \geq N_2$, we have 
    \begin{align*}
\mathbb{P}\left(W_1(\widehat{G}_n,G_*)>\frac{C_{\mathrm{density}}}{C_{\mathrm{local,2}}}\left(\frac{d\log(n)}{n}\right)^{\frac{1}{2}}\Delta_{\mathrm{sep}}^{-(2s_{\max} - 1)}\right)
\lesssim \exp(-c_{\mathrm{density}}\log(n)).
    \end{align*}
\end{corollary}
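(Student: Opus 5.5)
The plan mirrors the argument for Corollary~\ref{corolarry:MLE_exact_I} in Remark~\ref{remark:uni_cluster_exact_fit}, with Theorem~\ref{theorem:exact_multi_group} in place of Theorem~\ref{theorem:exact_one_group_univariate}. For part (a), intersect with the event $\mathcal{E}_n=\{d_H(p_{\widehat{G}_n},p_{G_*})\le C_{\mathrm{density}}[d\log(n)/n]^{1/2}\}$. Since $\widehat{G}_n\in\mathcal{E}_{k_*}(\Theta)$, the global bound of Theorem~\ref{theorem:exact_multi_group}(b) applies deterministically. On $\mathcal{E}_n$ it gives
\begin{align*}
W_1(\widehat{G}_n,G_*)\le \frac{d_H(p_{\widehat{G}_n},p_{G_*})}{C_{\mathrm{global,2}}\,\pi^*_{\min}\Delta_{\mathrm{sep}}^{2s_{\max}-1}}\le \frac{C_{\mathrm{density}}}{C_{\mathrm{global,2}}}\Big(\frac{d\log(n)}{n}\Big)^{1/2}(\pi^*_{\min})^{-1}\Delta_{\mathrm{sep}}^{-(2s_{\max}-1)}.
\end{align*}
The event in (a) is therefore contained in $\mathcal{E}_n^c$, and Proposition~\ref{prop:density_estimation_rate} bounds its probability by $\lesssim n^{-c_{\mathrm{density}}}$ for every $n\ge1$.

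For part (b), set $\mathcal{A}_n$ to be the event in (b) and $\mathcal{B}_n=\{W_1(\widehat{G}_n,G_*)\le \pi^*_{\min}\Delta_{\mathrm{sep}}/4\}$. Choose
\begin{align*}
N_2=\min\Big\{n: n/\log(n)\ge d\big(4\tfrac{C_{\mathrm{density}}}{C_{\mathrm{global,2}}}(\pi^*_{\min})^{-2}\Delta_{\mathrm{sep}}^{-2s_{\max}}\big)^2\Big\}.
\end{align*}
For $n\ge N_2$, the threshold in part (a) is at most $\pi^*_{\min}\Delta_{\mathrm{sep}}/4$, so $\mathcal{B}_n^c$ is contained in the event of (a). Hence $\mathbb{P}(\mathcal{B}_n^c)\lesssim n^{-c_{\mathrm{density}}}$.

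On $\mathcal{B}_n$, the auxiliary Lemma~\ref{lemma:small_wasserstein_implies_center_in_voronoi_cell} (invoked in Remark~\ref{remark:uni_cluster_exact_fit}) places every fitted atom within $\Delta_{\mathrm{sep}}/4$ of the true atom generating its Voronoi cell. This is exactly the hypothesis of the local bound in Theorem~\ref{theorem:exact_multi_group}(a). On $\mathcal{B}_n\cap\mathcal{E}_n$ the same division as in part (a), now without the $\pi^*_{\min}$ factor, shows that $\mathcal{A}_n$ fails. It follows that $\mathbb{P}(\mathcal{A}_n)\le\mathbb{P}(\mathcal{E}_n^c)+\mathbb{P}(\mathcal{B}_n^c)\lesssim n^{-c_{\mathrm{density}}}$.

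The one step that needs care is the localization implication: that $W_1\le\pi^*_{\min}\Delta_{\mathrm{sep}}/4$ forces each fitted atom to lie in the right neighborhood. The argument is as follows. Each true atom carries mass at least $\pi^*_{\min}$. If no fitted atom lay within $\Delta_{\mathrm{sep}}/4$ of some $\theta^*_j$, the transport cost of moving that mass would exceed $\pi^*_{\min}\Delta_{\mathrm{sep}}/4$. Since there are exactly $k_*$ fitted atoms and the balls of radius $\Delta_{\mathrm{sep}}/4$ around the true atoms are disjoint, each ball must then contain exactly one fitted atom. This holds regardless of cluster structure, so the lemma transfers verbatim from the single-cluster case. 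With it, the corollary follows; the constants $C_{\mathrm{local,2}}$ and $C_{\mathrm{global,2}}$ carry their dependence on $D_0$ from Theorem~\ref{theorem:exact_multi_group}.
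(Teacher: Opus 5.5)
Your proposal is correct and is essentially the paper's own argument. Part (a) is the direct combination of Proposition~\ref{prop:density_estimation_rate} with Theorem~\ref{theorem:exact_multi_group}(b). Part (b) uses the same $N_2$ and the same $\mathcal{A}_n$/$\mathcal{B}_n$ split via Lemma~\ref{lemma:small_wasserstein_implies_center_in_voronoi_cell} that Remark~\ref{remark:multi_cluster_exact_fit} borrows from Remark~\ref{remark:uni_cluster_exact_fit}. One cosmetic point, shared with the paper: $n/\log(n)$ is increasing only for $n\ge 3$, so $N_2$ should be taken as the least $n\ge 3$ satisfying the inequality, so that the inequality persists for all larger $n$.
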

\begin{remark}
\label{remark:multi_cluster_exact_fit} 
Below we illustrate the significance and implications of of Corollary~\ref{corolarry:MLE_exact_II}. 

(1) For the local rate, using the same argument as in Part (1) of Remark \ref{remark:uni_cluster_exact_fit}, we can choose 
$$N_2 = \min \left\{ n\in\mathbb{N} \colon n/\log(n) \geq d \left(4\cdot \frac{C_{\mathrm{density}}}{C_{\mathrm{global,2}}}(\pi_{\min}^{*})^{-2}\Delta_{\mathrm{sep}}^{-2s_{\max}}\right)^2 \right\}$$
such that for $n\geq N_2$, the local rate is guaranteed. 
(2) From Corollary~\ref{corolarry:MLE_exact_II}, we see that the convergence behavior of the MLE is governed by the size of the largest local cluster rather than the total number of components, as in the single-cluster regime. This leads to a significant improvement in the convergence rate. In particular, the MLE achieves a substantially faster rate when the number of clusters $k_0$ is large, corresponding to the regime where $s_{\max}\ll k_*$. This phenomenon can be illustrated through the following two contrasting scenarios.
\begin{itemize}
    \item \emph{When $k_0$ is small:} 
    In this unfavorable multi-cluster regime, $s_{\max}$ is necessarily large, and the convergence rate of the MLE is heavily impacted by the separation parameter through the term $\Delta_{\mathrm{sep}}^{- (2s_{\max} - 1)}$. In the most extreme case where $k_0 = 1$ (and therefore $s_{\max} = k_*$), the above rates reduce to those established in the single-cluster regime. 
    \item \emph{When $k_0$ is large and the clusters are relatively balanced:} By contrast, the most balanced configuration occurs when the components are distributed approximately uniformly across the $k_0$ clusters. Here, the singularity within each cluster is substantially reduced because no local region contains too many overlapping components. When $k_0$ is large (relative to its upper bound $k_*$), $s_{\max}$ is necessarily small, resulting in a faster convergence rate for the MLE. In the most separated case of one cluster per mixture component (i.e. $k_0 = k_*$, or equivalently $s_{\max} = 1$), the convergence rate of the MLE depends linearly --as opposed to polynomially --  on the inverse of the minimal separation. This is the most favorable case, where the MLE $\widehat{G}_n$ exhibits the fastest rates. Indeed, compared to the minimax rate \eqref{eq:minimax.rate} (and for simplicity ignoring logarithmic terms and assuming $\pi^*_{\mathrm{\min}}$ bounded way from zero), we see that in this case the MLE will enjoy a faster rate than the minimax rate as long as $\Delta_{\mathrm{min}}$ vanishes at a slower rate than $\left( \frac{d}{n} \right)^{1/4}$ (resp. $ \sqrt{d} n^{- k_*/(2k_* - 1)}$) when $d > d_{\mathrm{thr}}$ (resp.   $d \leq d_{\mathrm{thr}}$).
    
\end{itemize}
The phenomenon illustrated above demonstrates that the geometry of finite Gaussian mixtures possesses an intrinsic multiscale structure: local overlap determines the degree of near singularity of mixture, thus negatively impacting the estimation task, whereas global separation stabilizes the sample complexity of parameter estimation.

(3) For comparison, let us recall the minimax adaptive rate established by Wu et al.~\cite[Theorem~2 and Remark~4]{wu2020moment} for the multi-cluster regime in the univariate case: 
\begin{equation}
\label{eqn:adaptive_minimax_rate}
\inf_{\widetilde{G}_n}
\sup_{G_*\in \mathcal{E}_{k_*,k_0,\gamma,\omega}}
W_1(\widetilde{G}_n,G_*)
\asymp_{k^*}
\gamma^{-\frac{2k_0-2}{2(k_*-k_0)+1}}
\left(\frac{1}{n}\right)^{\frac{1}{4(k_*-k_0)+2}}.
\end{equation}
Here, $\mathcal{E}_{k_*,k_0,\gamma,\omega}$ denotes the class of mixing measures with respect to $k_*$-component Gaussian mixtures with $k_0$ $(\gamma,\omega)$-separated clusters\footnote{The Gaussian mixture has $k_0$ $(\gamma,\omega)$-separated clusters if there exists a partition $S_1,\ldots,S_{k_0}$ of $[k_*]$ such that: (i) $\|\mu_i-\mu_{i'}\|\geq \gamma$ for any $i\in S_{\ell}$ and $i'\in S_{\ell'}$ such that $\ell\neq\ell'$; and (ii) $\sum_{i\in S_{\ell}}\omega_i\geq\omega$ for each $\ell$ \cite{wu2020moment}.} defined in Definition~1 in 
\cite{wu2020moment}. 
Note that the above minimax rate also matches the rate of the minimum distance estimator derived by Heinrich and Kahn~\cite[Theorem~3.3]{heinrich2018minimax} (in the univariate case) when ignoring the inter-cluster 
separation $\gamma$.

As discussed previously, the minimax rate and our pointwise estimation rate are 
not directly comparable, as they characterize different aspects of the 
estimation problem. The former describes the worst-case performance over a 
prescribed class of mixing measures, while the latter captures the local 
geometry of a particular mixing measure $G_*$. Nevertheless, comparing these 
two rates provides useful insights into the role of different separation 
quantities.

First, the analysis in \cite{wu2020moment} focuses on the inter-cluster 
separation $\gamma$ and identifies its influence on the minimax convergence 
rate. In contrast, our framework treats the inter-cluster separation $D_0$ as a 
macroscopic quantity and instead focuses on the effect of the intra-cluster 
separation $\Delta_{\mathrm{sep}}$. In the univariate case, ignoring 
logarithmic factors and assuming that the minimum mass $\pi_{\min}^*$ remains 
bounded away from zero, the MLE rate matches the minimax rate in 
\cite[Theorem~2]{wu2020moment} when
\begin{equation*}
    \Delta_{\mathrm{sep}}
    \asymp
    \left(
    n^{-\frac{k_*-k_0}{2(k_*-k_0)+1}}
    \gamma^{\frac{2k_0-2}{2(k_*-k_0)+1}}
    \right)^{\frac{1}{2s_{\max}-1}} 
\end{equation*}
and is faster when $ \Delta_{\mathrm{sep}}$ is of larger order.

Second, our result reveals the role of the largest local cluster size 
$s_{\max}$ in determining the convergence behavior. On the other hand, the 
minimax analysis in \cite[Remark~4]{wu2020moment} corresponds to the worst 
configuration, where the largest possible cluster contains $k_*-k_0+1$ components. 
Although $s_{\max}$ is generally unknown and depends on the local configuration 
of $G_*$, it precisely captures the benefit gained from favorable separation 
structures: mixing measures with smaller local clusters can achieve faster 
pointwise convergence rates.

(4) Finally, as pointed out in \cite{doss_optimal_2023}, extending separation-based 
arguments to the multidimensional setting is non-trivial due to the fact that projection may collapse originally well-separated components into nearby points 
after this procedure. Our polynomial-testing framework avoids this difficulty by 
working directly in the original parameter space. Consequently, the proposed 
approach naturally extends the separation-dependent analysis to arbitrary 
dimensions.
\end{remark}


\subsection{Unstructured Regime} We now turn to the most general regime where no structural assumptions are imposed on the geometric configuration of the true components. In contrast to the previous two regimes, the components are no longer assumed to form either a single localized cluster or multiple well-separated clusters. Consequently, the Gaussian mixture may contain several overlapping regions at different spatial scales, leading to substantially more complicated geometric interactions among components.

The absence of an explicit cluster structure creates significant challenges to relate the Hellinger distance to the Wasserstein distance. In particular, since nearby components may interact across multiple local scales, the cancellation effects induced by component overlap can no longer be localized within a single cluster. As a result, the mixture geometry becomes considerably more singular than in the previous regimes, and the induced density may become substantially less sensitive to perturbations of the latent mixing measure.

To overcome this difficulty, our analysis must simultaneously control both local overlap phenomena and long-range interactions among components without relying on any predefined cluster decomposition. This leads to more conservative yet general lower bounds as provided in the following theorem, whose dependence on the separation parameter expresses the increased geometric complexity of the unrestricted regime.
\begin{theorem}
\label{theorem:exact_no_group}
Under the unstructured regime, there exist positive universal constants $C_{\mathrm{local,3}}$ and $C_{\mathrm{global,3}}$ depending only on $R$, $k_{*}$, and $\Sigma$ such that

\noindent
(a) (Local bound) If $G \in \mathcal{E}_{k_{*}}(\Theta)$ is such that $\|\theta_{i} - \theta_{j}^{*}\| \leq \frac{\Delta_{\mathrm{sep}}}{4}$ for all $i \in \mathcal{V}_{j}$ and $j\in[k_*]$, then
\begin{align} 
d_H(p_{G},p_{G_*}) \ge C_{\mathrm{local,3}} \cdot \Delta_{\mathrm{sep}}^{4k_* - 3} \cdot W_1(G, G_*).
\end{align}
(b) (Global bound) For any $G \in \mathcal{E}_{k_{*}}(\Theta)$, it holds that
\begin{align} 
d_H(p_{G},p_{G_*}) \ge C_{\mathrm{global,3}} \cdot \pi_{\mathrm{min}}^{*} \cdot \Delta_{\mathrm{sep}}^{4k_{*} - 3} \cdot W_1(G, G_*).
\end{align}
\end{theorem}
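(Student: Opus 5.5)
We follow the same template as Theorems~\ref{theorem:exact_one_group_univariate} and~\ref{theorem:exact_multi_group}, with Lemma~\ref{lemma:Hellinger_to_Polynomial} as the only bridge to the Hellinger distance. First reduce to $\Sigma=I_d$ via Lemma~\ref{lemma:general_to_identity}. For the local bound, each Voronoi cell $\mathcal{V}_j$ contains exactly one fitted atom, because every fitted atom lies within $\Delta_{\mathrm{sep}}/4$ of some true atom and there are $k_*$ of each; relabel so that $\theta_j\in\mathcal{V}_j$. Then
\[
W_1(G,G_*)\lesssim \sum_{j}\Big(|\pi_j-\pi_j^*| + \pi_j\|\theta_j-\theta_j^*\|\Big),
\]
so it suffices to bound each mass discrepancy $|\pi_j-\pi_j^*|$ and each mean discrepancy $\|\theta_j-\theta_j^*\|$ by $\Delta_{\mathrm{sep}}^{-(4k_*-3)}d_H(p_G,p_{G_*})$. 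The main step is to construct, for each $j$, test polynomials on $\mathcal{S}$ that isolate these quantities. For the masses we use a Lagrange-type polynomial
\[
U_j(\theta)=\prod_{l\neq j}\frac{\|\theta-\theta_l^*\|^2}{\|\theta_j^*-\theta_l^*\|^2}.
\]
For the means we use a Hermite-type polynomial $V_j(\theta)=\langle \theta-\theta_j^*,u\rangle U_j(\theta)^2$ or a variant, with $u$ the unit direction of $\theta_j-\theta_j^*$, so that $V_j$ vanishes to second order at the other true atoms and its gradient at $\theta_j^*$ is $u$.

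Next we Taylor-expand $\int U\,d\nu$ around the true atoms. Since $U_j$ is squared-distance based, it vanishes to second order at $\theta_l^*$ for $l\neq j$. The contribution of atom $l$ is therefore $O(\|\theta_l-\theta_l^*\|^2\,\|\nabla^2 U_j\|)$, which is absorbed into the left-hand side by a standard argument: assume the theorem fails along a sequence with $d_H/W_1\to 0$, or use the smallness of $W_1$ relative to $\Delta_{\mathrm{sep}}$. The contribution of atom $j$ is $\pi_j-\pi_j^*+O(\|\theta_j-\theta_j^*\|)$. Lemma~\ref{lemma:Hellinger_to_Polynomial} then gives
\[
|\pi_j-\pi_j^*|\lesssim \|U_j\|_\infty d_H+\text{(mean terms)}.
\]
The key quantitative point is the sup-norm. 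In the unstructured regime the true atoms can sit at distances ranging from $\Delta_{\mathrm{sep}}$ up to $2R$, so the only available bound is $|U_j|\le \prod_{l\neq j}(2R)^2/\Delta_{\mathrm{sep}}^2\lesssim \Delta_{\mathrm{sep}}^{-(2k_*-2)}$. For $V_j$, with the squared factor and the linear term, together with the division needed to normalize the derivative, the bound becomes $\Delta_{\mathrm{sep}}^{-(4k_*-4)}$. Combining the two extraction steps, first the means and then the masses, which pick up one further inverse factor of $\Delta_{\mathrm{sep}}$, yields the exponent $4k_*-3$. By contrast, in the clustered regimes the factor $\|\theta-\theta_l^*\|/\|\theta_j^*-\theta_l^*\|$ is $O(1)$ for far clusters, which is why the exponents there are smaller.

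I expect the main obstacle to be the bookkeeping of the exponent. Squaring the Lagrange factors to obtain second-order vanishing at the other atoms is what produces roughly $4k_*$ rather than $2k_*$. The cross terms must be controlled so that the error terms in the mean extraction do not lose further powers of $\Delta_{\mathrm{sep}}$, which requires $\|\theta_j-\theta_j^*\|\le\Delta_{\mathrm{sep}}/4$ to bound higher-order Taylor terms by $\Delta_{\mathrm{sep}}^{-1}$ times the first-order ones. If a direct squared construction overshoots, the first thing I would try is the mixed form $\langle\theta-\theta_j^*,u\rangle\prod_{l\ne j}\|\theta-\theta_l^*\|^2$, that is, Hermite interpolation with confluent divided differences, which interpolates value zero and gradient zero at each $\theta_l^*$ with degree $2k_*-1$.

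For the global bound we reduce to the local case. If some fitted atom is farther than $\Delta_{\mathrm{sep}}/4$ from every true atom, or some cell is empty, we apply the mass polynomial $U_j$, shrunk to bump scale $\Delta_{\mathrm{sep}}/4$, to an empty cell's true atom $\theta_j^*$. This shows $d_H\gtrsim \pi_{\min}^*\Delta_{\mathrm{sep}}^{4k_*-3}$ up to constants, while $W_1\le 2R$ trivially, which gives the bound. Otherwise the configuration is local and part (a) applies. This case split produces the extra $\pi^*_{\min}$ factor, exactly as in Theorem~\ref{theorem:exact_one_group_univariate}.
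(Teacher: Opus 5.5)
The gap is in how you handle the second-order Taylor remainders, and these terms are exactly where the exponent $4k_*-3$ comes from.

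\textbf{The remainder terms.} You dispose of the terms $\pi_l\|\theta_l-\theta_l^*\|^2\|\nabla^2 U\|$ by ``a standard argument'', but neither option you offer works.
\begin{itemize}
\item A contradiction argument along a sequence with $d_H/W_1\to 0$ only yields a constant depending on $G_*$, valid on an unquantified neighbourhood. That is the implicit $C(G_*)$ the theorem is meant to make explicit, and it says nothing about the dependence on $\Delta_{\mathrm{sep}}$.
\item Absorbing via $\|\theta_l-\theta_l^*\|\le\Delta_{\mathrm{sep}}/4$ would require $\Delta_{\mathrm{sep}}\|\nabla^2U\|\ll 1$ near every true atom. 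This fails in the unstructured regime. Near $\theta_l^*$, $l\neq j$, the Hessian of the extractors can be of order $\Delta_{\mathrm{sep}}^{-(2k_*-4)}$: put $\theta_l^*$ at distance $\asymp R$ from a $\Delta_{\mathrm{sep}}$-scale cluster containing $\theta_j^*$ and the remaining atoms. So nothing is small once $k_*\ge 3$.
\end{itemize}

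\textbf{The missing idea} is the variance test polynomial $P_{\mathrm{var}}(\theta)=\prod_{l}\|\theta-\theta_l^*\|^2$. It vanishes on $\mathrm{supp}\,G_*$. Since $\|\theta_i-\theta_l^*\|\ge\Delta_{\mathrm{sep}}/2$ for $l\neq c(i)$, it satisfies $P_{\mathrm{var}}(\theta_i)\ge(\Delta_{\mathrm{sep}}/2)^{2k_*-2}\|\theta_i-\theta^*_{c(i)}\|^2$ for every atom. Lemma~\ref{lemma:Hellinger_to_Polynomial} then gives $\sum_i\pi_i\|\theta_i-\theta^*_{c(i)}\|^2\lesssim\Delta_{\mathrm{sep}}^{-(2k_*-2)}d_H(p_G,p_{G_*})$ directly.

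Each remainder is then bounded by the Hessian of the extractor on $\bar B(0,R)$ times this variance bound:
\begin{itemize}
\item $\Delta_{\mathrm{sep}}^{-(2k_*-1)}\cdot\Delta_{\mathrm{sep}}^{-(2k_*-2)}$ for the Hermite mass extractor $E_j$;
\item $\Delta_{\mathrm{sep}}^{-(2k_*-2)}\cdot\Delta_{\mathrm{sep}}^{-(2k_*-2)}$ for the mean extractor $H_{j,i}=\ell_j^2\langle v_{j,i},\theta-\theta_j^*\rangle$.
\end{itemize}
So $4k_*-3=(2k_*-1)+(2k_*-2)$ is a remainder effect, not a sup-norm effect. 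These extractors have degree $2k_*-1$ and the same sup-norm bounds as in the single-cluster case. There, the better exponent comes from the local $O(\Delta_{\mathrm{sep}}^{-2})$ Hessian bound after rescaling, which is unavailable here.

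Your exponent bookkeeping via $\|V_j\|_\infty$ therefore misidentifies the dominant terms. Squaring $U_j=\ell_j^2$ again is also unnecessary, since $\ell_j^2$ already vanishes to second order at $\theta_l^*$; doing so only inflates the bounds.

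Once the variance bound is available, your uncorrected mass polynomial $U_j$ suffices in the local case. Its first-order term costs $\|\nabla U_j(\theta_j^*)\|\lesssim\Delta_{\mathrm{sep}}^{-1}$ times the mean bound, which reproduces $\Delta_{\mathrm{sep}}^{-(4k_*-3)}$. In the global case, however, the Hermite correction in $E_j$ matters. Having zero gradient at every $\theta_l^*$ is what makes the mass bound hold without any prior mean bound, and the empty-cell argument needs exactly that.

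\textbf{The global reduction} also fails as written. A polynomial of bounded degree cannot be ``shrunk to bump scale $\Delta_{\mathrm{sep}}/4$'': it cannot equal $1$ at $\theta_j^*$ while being uniformly small outside $B(\theta_j^*,\Delta_{\mathrm{sep}}/4)$. Fitted atoms may sit just outside that ball, where their values of $U_j$ can cancel $\pi_j^*$ in $\int U_j\,d\nu$.

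The paper proceeds differently.
\begin{itemize}
\item Both the variance bound and the Hessian bounds for $E_j$ and $H_{j,i}$ on $\bar B(0,R)$ hold for every atom with no proximity assumption. Taylor expansion about $\theta^*_{c(i)}$ therefore gives $\bigl|\sum_{i\in\mathcal{V}_j}\pi_i-\pi_j^*\bigr|\lesssim\Delta_{\mathrm{sep}}^{-(4k_*-3)}d_H(p_G,p_{G_*})$ for arbitrary $G\in\mathcal{E}_{k_*}(\Theta)$.
\item If some Voronoi cell contains two atoms, another cell is empty. Hence $\pi^*_{\min}\lesssim\Delta_{\mathrm{sep}}^{-(4k_*-3)}d_H$, while the location term is at most $2R$.
\item If every cell contains exactly one atom, the mean extraction goes through globally, without needing $\|\theta_i-\theta^*_{c(i)}\|\le\Delta_{\mathrm{sep}}/4$.
\end{itemize}
The factor $\pi^*_{\min}$ enters only through the empty-cell case.
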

The proof of Theorem~\ref{theorem:exact_no_group} is provided in Appendix~\ref{sec:proof_of_exact_no_group}.  Theorem~\ref{theorem:exact_no_group} addresses the fully unstructured regime where the true components may organize themselves in an arbitrary geometric manner. In this regime, the convergence behavior becomes markedly more fragile, as evidence by the increased separation factor $\Delta_{\mathrm{sep}}^{4k_*-3}$. Compared with the previous subsections, the exponent nearly doubles, indicating a substantial increase in the sensitivity of the estimation problem to small component separations.
This phenomenon arises because, without an explicit cluster structure, the latent structure is allowed to approach highly irregular configurations in which several local overlap mechanisms coexist.
Such configurations generate substantially stronger degeneracies than those observed in the structured cluster settings, thereby forcing a more conservative dependence on the minimum separation parameter. In particular, the exponent $4k_*-3$ of the minimum separation reflects the additional complexity required to control these overlap interactions without relying on any macroscopic separation assumptions. As a result, the induced density becomes significantly less informative about perturbations of the underlying mixing measure.

Finally, the appearance of the minimum mixture weight $\pi_{\min}^*$ in the global bound again highlights the intrinsic difficulty of recovering components that contribute only weakly to the observed density. 
This effect becomes even more pronounced in the unrestricted regime, where low-weight components may participate in complicated overlapping configurations.  

Before ending this subsection, let us highlight the MLE convergence rates implied by the local and global lower bounds in Theorem~\ref{theorem:exact_no_group}.

\begin{corollary}
    \label{corollary:MLE_exact_III}
    Under the assumptions of Proposition~\ref{prop:density_estimation_rate}, we have
\noindent

(a) (Global rate) For any $n\geq 1$, we have 
\begin{align*}
\mathbb{P}\left(W_1(\widehat{G}_n,G_*)>\frac{C_{\mathrm{density}}}{C_{\mathrm{global,3}}}\left(\frac{d\log(n)}{n}\right)^{\frac{1}{2}}(\pi_{\min}^{*})^{-1} \Delta_{\mathrm{sep}}^{-(4k_* - 3)}\right)\lesssim \exp(-c_{\mathrm{density}}\log(n)).
    \end{align*}
(b) (Local rate) There exists $N_3\in\mathbb{N}$ depending only on $G_*$ such that for $n\geq N_3$, we have 
\begin{align*}
\mathbb{P}\left(W_1(\widehat{G}_n,G_*)>\frac{C_{\mathrm{density}}}{C_{\mathrm{global,3}}}\left(\frac{d\log(n)}{n}\right)^{\frac{1}{2}}\Delta_{\mathrm{sep}}^{-(4k_* - 3)}\right) \lesssim   
\exp(-c_{\mathrm{density}}\log(n)).
    \end{align*}
    
\end{corollary}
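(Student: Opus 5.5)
The plan follows the same two-step template as Corollaries~\ref{corolarry:MLE_exact_I} and~\ref{corolarry:MLE_exact_II}, now with Theorem~\ref{theorem:exact_no_group} supplying the Hellinger lower bound.

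\textbf{Part (a).} The global rate is immediate. Since $\widehat{G}_n\in\mathcal{E}_{k_*}(\Theta)$, part (b) of Theorem~\ref{theorem:exact_no_group} gives, deterministically,
\[
W_1(\widehat{G}_n,G_*)\le \bigl(C_{\mathrm{global},3}\,\pi^*_{\min}\,\Delta_{\mathrm{sep}}^{4k_*-3}\bigr)^{-1} d_H(p_{\widehat{G}_n},p_{G_*}).
\]
Hence the event in (a) is contained in $\{d_H(p_{\widehat{G}_n},p_{G_*})>C_{\mathrm{density}}[d\log(n)/n]^{1/2}\}$. Proposition~\ref{prop:density_estimation_rate} bounds the probability of that event by $n^{-c_{\mathrm{density}}}$.

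\textbf{Part (b).} We localize exactly as in Remark~\ref{remark:uni_cluster_exact_fit}(1). Let $\mathcal{A}_n$ be the event in (b), and set
\[
\mathcal{B}_n=\{W_1(\widehat{G}_n,G_*)\le \pi^*_{\min}\Delta_{\mathrm{sep}}/4\}.
\]
Choose
\[
N_3=\min\Bigl\{n\in\mathbb{N}: n/\log n\ge d\bigl(4C_{\mathrm{density}}C_{\mathrm{global},3}^{-1}(\pi^*_{\min})^{-2}\Delta_{\mathrm{sep}}^{-(4k_*-2)}\bigr)^2\Bigr\}.
\]
For $n\ge N_3$ the global threshold from part (a) is at most $\pi^*_{\min}\Delta_{\mathrm{sep}}/4$, so part (a) yields $\mathbb{P}(\mathcal{B}_n^c)\lesssim n^{-c_{\mathrm{density}}}$.

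On $\mathcal{B}_n$ we invoke the lemma used in Remark~\ref{remark:uni_cluster_exact_fit} (Lemma~\ref{lemma:small_wasserstein_implies_center_in_voronoi_cell}). It shows that every fitted atom lies in some Voronoi cell $\mathcal{V}_j$ and within $\Delta_{\mathrm{sep}}/4$ of $\theta^*_j$. The mechanism is that each true atom carries mass at least $\pi^*_{\min}$, so a transport cost at most $\pi^*_{\min}\Delta_{\mathrm{sep}}/4$ forces a matched atom nearby. With $k_*$ atoms on each side, every cell then contains exactly one atom.

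Under this condition part (a) of Theorem~\ref{theorem:exact_no_group} applies and gives
\[
W_1\le \bigl(C_{\mathrm{local},3}\Delta_{\mathrm{sep}}^{4k_*-3}\bigr)^{-1}d_H.
\]
Since $C_{\mathrm{global},3}\le C_{\mathrm{local},3}$ can be assumed without loss of generality (shrink the global constant if needed), the stated threshold with $C_{\mathrm{global},3}$ in the denominator is even larger. Thus $\mathcal{A}_n\cap\mathcal{B}_n$ forces $d_H>C_{\mathrm{density}}[d\log n/n]^{1/2}$, whose probability Proposition~\ref{prop:density_estimation_rate} controls. Combining,
\[
\mathbb{P}(\mathcal{A}_n)\le\mathbb{P}(\mathcal{A}_n\cap\mathcal{B}_n)+\mathbb{P}(\mathcal{B}_n^c)\lesssim n^{-c_{\mathrm{density}}}.
\]

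\textbf{Main obstacle.} The only nontrivial step is the localization $\mathcal{B}_n\Rightarrow$ the proximity hypothesis of the local bound. In particular we must check that a small $W_1$ excludes both a fitted atom far from all true atoms and two fitted atoms sharing one cell. A fitted atom far from every true atom could carry only tiny mass, so the argument has to use the equal atom count $k_*$ together with the $\pi^*_{\min}$ mass lower bound. We will rely on the cited lemma for this. If its statement only guarantees proximity for atoms of non-negligible weight, we would sharpen the threshold in $\mathcal{B}_n$ by a further constant factor, still depending only on $G_*$, which merely changes $N_3$.
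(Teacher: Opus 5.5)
Your proposal is correct and follows the paper's own argument. Part (a) combines Proposition~\ref{prop:density_estimation_rate} with the global bound of Theorem~\ref{theorem:exact_no_group}, and part (b) is the localization of Remark~\ref{remark:uni_cluster_exact_fit}(1) with exactly the paper's choice of $N_3$ and Lemma~\ref{lemma:small_wasserstein_implies_center_in_voronoi_cell}, whose empty-ball argument already forces exactly one fitted atom per ball, so your fallback is unnecessary; your remark that one may take $C_{\mathrm{global},3}\le C_{\mathrm{local},3}$ also correctly explains why the local rate can be stated with $C_{\mathrm{global},3}$, a point the paper leaves implicit.
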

\begin{remark} 
\label{remark:unstructured_exact_fit}
(1) To derive the local rate, we employ similar arguments as in Part (1) of Remark \ref{remark:uni_cluster_exact_fit} by choosing 
$$N_3 = \min \left\{ n\in\mathbb{N} \colon n/\log(n) \geq d \left(4\cdot \frac{C_{\mathrm{density}}}{C_{\mathrm{global,3}}}(\pi_{\min}^{*})^{-2}\Delta_{\mathrm{sep}}^{-(4k_*-2)}\right)^2 \right\}.$$
(2) As in Remark \ref{remark:uni_cluster_exact_fit}, while minimax rate and point-wise convergence rate reflects different nature, it is informative to compare our result with the minimax rate stated in \cite{doss_optimal_2023} and reformulated in equation \eqref{eq:minimax.rate}. Recall that when $d > d_{\mathrm{thr}}:=n^{\frac{2k_*-3}{2k_*-1}}$, the mimimax rate is determined by dimension-dependent factor $\left(\frac{d}{n}\right)^{\frac{1}{4}}$, while this rate is dominated by $\left(\frac{1}{n}\right)^{\frac{1}{4k_*-2}}$ when $d < d_{\mathrm{thr}}$. If the separation $\Delta_{\mathrm{sep}}$ and the minimal probability $\pi^*_{\mathrm{min}}$ remain bounded away from zero, then the MLE achieves the standard parametric rate of order $\left(\frac{d \log(n)}{n}\right)^{\frac{1}{2}}$, which is much faster than the minimax lower bound rate in equation \eqref{eq:minimax.rate}. On the other hand, when the separation $\Delta_{\mathrm{sep}}$ vanishes at the rate of order $\Delta_{\mathrm{sep}} \asymp n^{-\frac{k_*-1}{(2k_*-1)(4k_*-3)}} d^{\frac{1}{2(4k_* - 3)}}$ when $d \leq d_{\mathrm{thr}}$ and $\Delta_{\mathrm{sep}} \asymp (\frac{d}{n})^{\frac{1}{4(4k_*-3)}}$ otherwise, then the above MLE rate matches the minimax rate. 

(3) The above corollary yields a fully general convergence rate for the MLE without requiring any assumptions on the cluster structure of the true components. Compared to the single-cluster regime, where the convergence rate depends on the factor $\Delta_{\mathrm{sep}}^{-(2k_*-1)}$, the unrestricted regime exhibits a substantially more severe dependence on the minimum separation through the factor $\Delta_{\mathrm{sep}}^{-(4k_*-3)}$. The deterioration becomes even more pronounced relative to the multi-cluster regime, where the corresponding exponent reduces further to $-(2s_{\max}-1)$.
These comparisons reveal that geometric structure plays a fundamental role in stabilizing parameter estimation in finite Gaussian mixtures. Although density estimation still achieves the standard parametric rate on the sample size under the Hellinger distance, recovering the latent mixing measure requires substantially more samples in the absence of geometric constraints.
\end{remark}
\begin{table}[t!]
\centering
\renewcommand{\arraystretch}{1.5}
\caption{Transition of the MLE convergence rates in the exact-specified setting under different scales of the minimum separation $\Delta_{\mathrm{sep}}$ in single-cluster (S), multi-cluster (M), and unstructured (U) regimes. Below, the notation (*) indicates the point-wise rate (up to a logarithmic factor) characterized in our paper, while (**) refers to the minimax rates established by Doss et al. \cite{doss_optimal_2023} and Wu et al. \cite{wu2020moment}. Note that $k_0=1$ and $\gamma=1$ for the single-cluster and unstructured regimes. Also, $d_{\mathrm{thr}}=n^{\frac{2 k_* - 3}{2k_*-1}}$.}
\begin{tabular}{cccc}
\toprule
 & $\left(\frac{d}{n}\right)^{\frac{1}{2}}$ (*)
&
$\gamma^{-\frac{2k_0-2}{2(k_*-k_0)+1}}\left(\frac{1}{n}\right)^{\frac{1}{4(k_*-k_0)+2}}$ (**)
&
$\left(\frac{d}{n}\right)^{\frac{1}{4}}$ (**)
 \\
\midrule
S&$\Delta_{\mathrm{sep}}\asymp 1$  & $\Delta_{\mathrm{sep}}\asymp n^{-\frac{k_*-1}{(2k_*-1)^2}} d^{\frac{1}{2(2k_* - 1)}}$, $d\leq d_{\mathrm{thr}}$ & $\Delta_{\mathrm{sep}}\asymp  \left( \frac{d}{n}\right)^{\frac{1}{4(2k_* - 1)}}$, $d>d_{\mathrm{thr}}$
\\
\midrule
M&$\Delta_{\mathrm{sep}}\asymp 1$  & $\Delta_{\mathrm{sep}}\asymp \left(
    n^{-\frac{k_*-k_0}{2(k_*-k_0)+1}}
    \gamma^{\frac{2k_0-2}{2(k_*-k_0)+1}}
    \right)^{\frac{1}{2s_{\max}-1}}$  & N/A
\\
\midrule
U&$\Delta_{\mathrm{sep}}\asymp 1$  & $\Delta_{\mathrm{sep}}\asymp n^{-\frac{k_*-1}{(2k_*-1)(4k_*-3)}} d^{\frac{1}{2(4k_* - 3)}}$, $d\leq d_{\mathrm{thr}}$ & $\Delta_{\mathrm{sep}}\asymp  \left( \frac{d}{n}\right)^{\frac{1}{4(4k_* - 3)}}$, $d>d_{\mathrm{thr}}$
\\
\bottomrule
\end{tabular}
\end{table}

\section{Over-specified Setting}
\label{sec:over-specified}
In this section, we study the convergence behavior of the MLE under the over-specified setting, where the number of true components $k_*$ is unknown and the MLE in equation~\eqref{eq:MLE} is computed under mis-specified setting using  $k >  k_*$ components. 
To be clear, while the number of true components $k_*$ is unknown, the geometric arrangement of the true components still plays a fundamental role in determining the MLE convergence behavior under this setting. Thus, we will investigate the same three geometric regimes introduced in Section~\ref{sec:exact-specified}. 

Compared with the exact-specified setting, the analysis becomes considerably more delicate in the over-specified setting due to the presence of extra fitted components. In particular, a single true component may be approximated by multiple fitted components, creating additional flexibility in the latent representation and fundamentally altering the relationship between density estimation and parameter estimation. As a consequence, the 1-Wasserstein distance employed in Section~\ref{sec:exact-specified} is no longer appropriate to characterize the convergence behavior of the fitted mixing measure. Instead, our analysis is based on the 2-Wasserstein distance, which is naturally tuned to capture the Euclidean geometry induced by additional fitted components. This is not a new phenomenon:  previous studies of over-specified finite mixtures already rely on the 2-Wasserstein distance; see, e.g., \cite{Ho-Nguyen-EJS-16}

The transition from $W_1$ to $W_2$ also leads to several notable differences from the exact-specified setting. First, the resulting Hellinger lower bound and MLE convergence rate no longer explicitly hinge on the minimum mixture weight $\pi_{\min}^*$. Secondly, the dependence on the minimum separation $\Delta_{\mathrm{sep}}$ -- or, more precisely, the exponents -- also changes

Note that the main ideas required for establishing local bounds have already been introduced in the exact-specified setting and are implicitly embedded in the proofs of the global bounds presented below. Thus, for brevity, we restrict our attention to only global bounds in the sequel.

\subsection{Single-cluster Regime}
We first evaluate the single-cluster case, where all the $k_*$ true components clustered together. 
The proof is in Appendix~\ref{sec:proof_theorem:one_group_multivariate_global}.

\begin{theorem}
\label{theorem:one_group_univariate}
Suppose that all true components are partitioned into one cluster of diameter $C_0\Delta_{\mathrm{sep}}$, where $C_0 \ge 1$, that is, $\max_{j, l} \|\theta_j^* - \theta_l^*\| \le C_0 \Delta_{\mathrm{sep}}$. Then there exists a universal constant $C_{\mathrm{global,4}}$ depending only on $R$, $d$, $k_{*}$, $\Sigma$, and $C_{0}$ such that
\begin{align} 
d_H(p_{G},p_{G_*}) \ge C_{\mathrm{global,4}} \cdot \Delta_{\mathrm{sep}}^{2k_* - 2} \cdot W_2^2(G, G_*),
\end{align}
uniformly for all mixing measures $G$ with at most $k$ components. As a consequence, we have
\begin{align*}
    \mathbb{P}\left(W_2(\widehat{G}_n,G_*)>\left(\frac{C_{\mathrm{density}}}{C_{\mathrm{global,4}}}\right)^{\frac{1}{2}}\left(\frac{d\log(n)}{n}\right)^{\frac{1}{4}} \Delta_{\mathrm{sep}}^{-(k_* - 1)}\right)\lesssim \exp(-c_{\mathrm{density}}\log(n)).
\end{align*}
\end{theorem}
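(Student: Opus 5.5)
The plan is to reduce to $\Sigma=I_d$ via Lemma~\ref{lemma:general_to_identity}, and then to apply Lemma~\ref{lemma:Hellinger_to_Polynomial} to a family of test polynomials on $\mathcal{S}$. Each polynomial will be built so that $\int U\,d\nu$ dominates one piece of a Voronoi decomposition of $W_2^2(G,G_*)$. Write $\nu=G-G_*$ and use the Voronoi cells $\mathcal{V}_j$ of the fitted atoms. The standard over-specified bound gives
\[
W_2^2(G,G_*)\lesssim \sum_{j=1}^{k_*}\Big|\sum_{i\in\mathcal{V}_j}\pi_i-\pi_j^*\Big|+\sum_{j=1}^{k_*}\sum_{i\in\mathcal{V}_j}\pi_i\|\theta_i-\theta_j^*\|^2 ,
\]
so it suffices to control the mass discrepancies $M_j:=|\sum_{i\in\mathcal{V}_j}\pi_i-\pi_j^*|$ and the second-order dispersions $Q_j:=\sum_{i\in\mathcal{V}_j}\pi_i\|\theta_i-\theta_j^*\|^2$. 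In both cases the target is a bound of the form $\Delta_{\mathrm{sep}}^{2k_*-2}(\cdot)\lesssim d_H$.

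For the $Q_j$, fix $j$ and use the nonnegative test polynomial
\[
U_j(\theta)=\|\theta-\theta_j^*\|^2\prod_{l\neq j}\frac{\|\theta-\theta_l^*\|^2}{\|\theta_j^*-\theta_l^*\|^2},
\]
which has degree $2k_*$ and vanishes at every true atom. Hence $\int U_j\,d\nu=\sum_i\pi_iU_j(\theta_i)\ge 0$, and all terms are nonnegative, so there is no cancellation to worry about.

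For $i\in\mathcal{V}_j$, the Voronoi property gives $\|\theta_i-\theta_l^*\|\ge\|\theta_j^*-\theta_l^*\|/2$ for every $l\ne j$. Each factor of the product is therefore at least $1/4$, and $U_j(\theta_i)\gtrsim\|\theta_i-\theta_j^*\|^2$. Summing over $i\in\mathcal{V}_j$ yields $Q_j\lesssim\int U_j\,d\nu$.

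The sup norm of $U_j$ on $\mathcal{S}\cap B(0,R)$ is at most $(2R)^{2k_*}\Delta_{\mathrm{sep}}^{-2(k_*-1)}$, since each denominator is at least $\Delta_{\mathrm{sep}}^2$. Lemma~\ref{lemma:Hellinger_to_Polynomial} then gives $Q_j\lesssim \Delta_{\mathrm{sep}}^{-(2k_*-2)}d_H$, which is the claimed exponent. The constant here depends on $k_*+k$ and $d$ through $C_{\mathrm{poly}}$, consistent with the statement allowing dependence on $d$.

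For the mass terms, use the Lagrange polynomials $L_j(\theta)=\prod_{l\ne j}\|\theta-\theta_l^*\|^2/\|\theta_j^*-\theta_l^*\|^2$, of degree $2k_*-2$. They satisfy $L_j(\theta_l^*)=\delta_{jl}$ and $\|L_j\|_\infty\lesssim\Delta_{\mathrm{sep}}^{-(2k_*-2)}$. Then
\[
\int L_j\,d\nu=\sum_{i\in\mathcal{V}_j}\pi_i-\pi_j^*+\sum_i\pi_i\big(L_j(\theta_i)-L_j(\theta_{j(i)}^*)\big),
\]
where $j(i)$ is the cell containing $i$. The remainder has to be absorbed by the $Q$'s. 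A first-order Taylor expansion produces linear terms $\pi_i\langle\nabla L_j(\theta^*_{j(i)}),\theta_i-\theta^*_{j(i)}\rangle$, and these are the real obstacle, since they are first order in the displacement whereas only second-order quantities $Q$ are controlled.

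I would try two fixes for this. The first is to enlarge the interpolation to Hermite interpolation: replace $L_j$ by $\tilde L_j$ with $\tilde L_j(\theta_l^*)=\delta_{jl}$ and $\nabla\tilde L_j(\theta_l^*)=0$ for all $l$. For example one can take $\tilde L_j=L_j\cdot\big(1-\langle\nabla L_j(\theta_j^*),\theta-\theta_j^*\rangle\big)$ and correct the gradients at the other atoms using the squared factors, which already vanish to second order there. Once all gradients at the true atoms vanish, $|\tilde L_j(\theta_i)-\tilde L_j(\theta_{j(i)}^*)|\lesssim\|\nabla^2\tilde L_j\|\,\|\theta_i-\theta^*_{j(i)}\|^2$, so the remainder is bounded by the $Q$'s. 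The second fix is to avoid gradients altogether by working with $L_j$ directly: $L_j$ is a product of squared distances, so near $\theta_l^*$ with $l\ne j$ it is already quadratic, and the problematic linear term then occurs only in cell $j$.

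The main difficulty is bookkeeping the $\Delta_{\mathrm{sep}}$ powers. Derivatives of the interpolants scale like $\Delta_{\mathrm{sep}}^{-(2k_*-2)-2}$ on the cluster of diameter $C_0\Delta_{\mathrm{sep}}$, which would lose two extra powers. My plan to avoid this is to handle $Q$ first and then note that atoms far from the cluster (at distance $\gg\Delta_{\mathrm{sep}}$) contribute to $Q$ with weight already bounded by $\Delta_{\mathrm{sep}}^{-(2k_*-2)}d_H$.

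Finally, combining the bounds on $M_j$ and $Q_j$ gives $\Delta_{\mathrm{sep}}^{2k_*-2}W_2^2\lesssim d_H$, which is the Hellinger inequality in the statement. Plugging in Proposition~\ref{prop:density_estimation_rate} and taking square roots then gives the stated rate $(d\log n/n)^{1/4}\Delta_{\mathrm{sep}}^{-(k_*-1)}$.
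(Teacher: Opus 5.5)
Your dispersion bound is fine: $U_j$ is a normalized $P_{\mathrm{var}}$, and combined with $\|\theta_i-\theta_l^*\|\ge\|\theta_j^*-\theta_l^*\|/2$ on $\mathcal{V}_j$ it gives $Q_j\lesssim\Delta_{\mathrm{sep}}^{-(2k_*-2)}d_H$, as in the paper.

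\textbf{The gap is in the mass terms.} Your decomposition charges $\sum_j M_j$ with a $\Delta_{\mathrm{sep}}$-free constant, so you set yourself the target $M_j\lesssim\Delta_{\mathrm{sep}}^{-(2k_*-2)}d_H$. That bound is false, so no choice of interpolant can deliver it.

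Here is a counterexample. Take $d=1$, $k_*=2$, $k=3$, $\Delta=\Delta_{\mathrm{sep}}$, $\theta^*_{1,2}=\mp\Delta/2$ and $\pi_1^*=\pi_2^*=\tfrac12$. Let $G_\epsilon$ put mass $\tfrac12(\tfrac12+\epsilon)$ at each of $\Delta(-\tfrac12+e_1\pm\sigma)$ and mass $\tfrac12-\epsilon$ at $\Delta(\tfrac12+e_2)$.
\begin{itemize}
\item Matching the first three moments is a nonsingular system in $(e_1,e_2,\sigma^2)$, solved by $(\tfrac43\epsilon,\tfrac23\epsilon,\tfrac23\epsilon)+O(\epsilon^2)$.
\item The fourth moments then differ by $\tfrac13\epsilon\Delta^4(1+O(\epsilon))$.
\item The expansion $p_G-p_{G_*}=\phi\sum_{r\ge 4}\frac{m_r(G)-m_r(G_*)}{r!}He_r$ therefore gives $d_H(p_{G_\epsilon},p_{G_*})\lesssim\epsilon\Delta^4$.
\item But $M_1=\epsilon$, so $M_1\gtrsim\Delta_{\mathrm{sep}}^{-2k_*}d_H$.
\end{itemize}
The two powers you worry about are genuinely lost, not an artifact of the interpolants. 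Your ``handle $Q$ first'' remedy cannot help, because here every fitted atom lies within $O(\sqrt{\epsilon}\,\Delta)$ of a true atom.

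The missing idea is where the single-cluster hypothesis actually enters. Misallocated mass only travels between true atoms, which are within $C_0\Delta_{\mathrm{sep}}$ of each other. Pushing $G$ onto its nearest true atoms therefore gives
\[
W_2^2(G,G_*)\le 2\sum_j Q_j+C_0^2\Delta_{\mathrm{sep}}^2\sum_j M_j .
\]
So $M_j\lesssim\Delta_{\mathrm{sep}}^{-2k_*}d_H$ suffices, and the factor $\Delta_{\mathrm{sep}}^2$ pays exactly for the loss. In the example $W_2^2(G_\epsilon,G_*)\asymp\epsilon\Delta^2$, so the theorem is sharp there. This is how the paper's proof is organized.

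With the corrected target your first fix works ($\tilde L_j$ is the paper's $E_j$), but two estimates must be stated.
\begin{itemize}
\item \emph{Near atoms.} Rescaling $z=(\theta-\theta_l^*)/\Delta_{\mathrm{sep}}$ on the balls $\|\theta-\theta_l^*\|\le\Delta_{\mathrm{sep}}/4$ gives $\|D^2E_j\|_{\mathrm{op}}\lesssim\Delta_{\mathrm{sep}}^{-2}$ there, not $\Delta_{\mathrm{sep}}^{-2k_*}$. Together with $\|E_j\|_\infty\lesssim\Delta_{\mathrm{sep}}^{-(2k_*-1)}$ and your $Q$ bound, this handles atoms close to a true atom.
\item \emph{Far atoms.} For an atom at distance $r_i>\Delta_{\mathrm{sep}}/4$ from every true atom you only have $|E_j(\theta_i)|+1\lesssim(r_i/\Delta_{\mathrm{sep}})^{2k_*-1}$; this again uses the cluster hypothesis via $\|\theta_i-\theta_q^*\|\le(1+4C_0)r_i$. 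Absorbing this through $Q$ alone yields only $\Delta_{\mathrm{sep}}^{-(4k_*-3)}d_H$. You instead need $(r_i/\Delta_{\mathrm{sep}})^{2k_*-1}\le 4(r_i/\Delta_{\mathrm{sep}})^{2k_*}$ together with $\sum_i\pi_i r_i^{2k_*}\le\int P_{\mathrm{var}}\,dG\lesssim d_H$. The latter holds because $P_{\mathrm{var}}(\theta)\ge\mathrm{dist}(\theta,\{\theta_l^*\})^{2k_*}$.
\end{itemize}

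Your second fix does not work. With the plain $L_j$, the leftover term $\sum_{i\in\mathcal{V}_j}\pi_i\langle\nabla L_j(\theta_j^*),\theta_i-\theta_j^*\rangle$ has a coefficient of size $\Delta_{\mathrm{sep}}^{-1}$. The $Q_j$ control it only via Cauchy--Schwarz, i.e.\ at order $\sqrt{d_H}$, not linearly in $d_H$.

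Once repaired, your route is slightly leaner than the paper's. You target $W_2^2$ directly rather than the finer quantity $S_1$, so you do not need the mean extractors $H_{j,i}$.
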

\begin{remark} 
\label{remark:unicluster_overfit}
Below, we remark on several notable features of the above bounds.

(1) Assuming a fixed separation $\Delta_{\mathrm{min}}$, the convergence rate of the MLE $\widehat{G}_n$ to $G_*$ is of order $[d \log(n)/n]^{1/4}$, which is slower than the standard parametric rate $[d \log(n)/n]^{1/2}$ holding in the exact-specified setting. This slowdown occurs due to the existence of a true component approximated by more than one fitted component.

(2) The impact on the rate of the minimum separation parameter $\Delta_{\mathrm{min}}$ is also different. In particular, the exponent changes from $-(2k_*-1)$ in the exact-specified setting to $-(k_*-1)$ in the over-specified setting. Consequently, although the overall convergence rate becomes slower, the deterioration caused by decreasing separation is substantially less severe than in the exact-specified setting. This change reflects the different geometric structure induced by additional fitted components. Unlike the exact-specified setting, the convergence rate no longer depends on the minimum mixture weight $\pi_{\min}^*$. 

(3)  The convergence behavior of fitted components is determined by the local complexity of their associated Voronoi cells. More precisely, the rate depends on the number of fitted components assigned to the same true component. In the special case where a Voronoi cell contains a single fitted component, we obtain the parametric convergence rate $[d\log(n)/n]^{1/2}$. In particular, to distinguish the different convergence behaviors among fitted centers, we introduce the Voronoi-cell-based distance $S_1$ defined as 
\begin{align*}
    S_1(G,G_*) &= \frac{\Delta_{\mathrm{sep}}}{2}\sum_{j:|\bar{\mathcal{V}}_j|=1,\mathcal{V}_j = \{i\}} \pi_i\|\theta_i - \theta^*_{j}\| +  2\sum_{j:|\bar{\mathcal{V}_j}|>1}\sum_{i \in \bar{\mathcal{V}}_j}\pi_i\|\theta_i - \theta_j^*\|^2 \\
&+ 2\sum_{i \in \mathcal{M}_{\mathrm{far}}} \pi_i\|\theta_i - \theta^*_{c(i)}\|^2+ C_0^2 \Delta_{\mathrm{sep}}^2 \left(\sum_{j=1}^{k_*} |\Delta \pi_j| + \pi_{\mathrm{far}}\right),
\end{align*}
where 
$\bar{\mathcal{V}}_j$ denotes the sub-Voronoi cell of near point $\theta_i$ at true center $\theta_j^*$, i.e. $\|\theta_i -\theta^*_{c(i)}\|\leq \Delta_{\mathrm{sep}}/4$, and the quantities $\Delta\pi_j$ and $\pi_{\mathrm{far}}$ correspond to the 
mass allocation terms and will be formally defined later in 
Section~\ref{sec:proof_theorem:one_group_multivariate_global}. We establish the inequality 
\begin{equation*}
    d_H(p_{G},p_{G_*}) \geq  C_{\mathrm{global,4}} \cdot \Delta_{\mathrm{sep}}^{2k_* - 2} \cdot S_1(G, G_*)  \ge C_{\mathrm{global,4}} \cdot \Delta_{\mathrm{sep}}^{2k_* - 2} \cdot W_2^2(G, G_*). 
\end{equation*}
This result shows that, for any $i \in \bar{\mathcal{V}}_j$ such that 
$|\bar{\mathcal{V}}_j|=1$, the fitted parameter $\widehat{\theta}_{n,i}$ enjoys a much faster 
convergence rate of order $[d \log(n)/n]^{1/2}$. 

(4) Combining the bound $W_2(G,G_*) \geq W_1(G,G_*)$ with Theorem 1.1 in 
\cite{doss_optimal_2023}, we obtain the minimax rate for univariate situation as 
\begin{equation}
    \label{eq:minimax_rate_overfit}
    \inf_{\widetilde{G}_n \in \mathcal{G}_{k}(\Theta)}
    \sup_{G_* \in \mathcal{E}_{k_*}(\Theta)}
    \mathbb{E}_{G_*}
    \left[W_2(\widetilde{G}_n,G_*)\right]
    \gtrsim_{k^*}
    \left( \frac{d}{n} \right)^{1/4} \wedge 1
    +
    \left( \frac{1}{n} \right)^{1/(4k_{*}-2)},
\end{equation}
where the notation $\gtrsim_{k^*}$ indicates that the left-hand side is 
bounded below by the right-hand side up to a multiplicative constant depending only on $k^*$. Although the minimax rate and the pointwise convergence rate established in 
our work capture different statistical behaviors, it is still instructive to 
compare our result with equation \eqref{eq:minimax_rate_overfit}. In particular, when 
the minimum separation $\Delta_{\mathrm{sep}}$ is bounded away from zero, our  estimator achieves the rate of order $\left(d/n\right)^{1/4}$ (ignoring the logarithmic factor). On the other hand, when $\Delta_{\mathrm{sep}}$ vanishes, our rate can adapt to the more challenging regime and match the minimax lower bound. More precisely, denote $d_{\mathrm{thr}} = n^{\frac{2k_*-3}{2k_*-1}}$, the MLE estimator $\widehat{G}_n$ achieves the minimax rate in equation \eqref{eq:minimax_rate_overfit} whenever $\Delta_{\mathrm{sep}}\asymp n^{-\frac{2k_*-3}{4(2k_*-1)(k_*-1)}} d^{\frac{1}{4(k_*-1)}}$, for $d\leq d_{\mathrm{thr}}$, or 
$\Delta_{\mathrm{sep}}\asymp 1$, for
$d>d_{\mathrm{thr}}$.
\end{remark}

\subsection{Multi-cluster Regime}
We now move to the setting when  the $k_*$ true components are partitioned into $k_0$ disjoint topological clusters $\mathcal{C}_1, \dots, \mathcal{C}_{k_0}$, that is
 \begin{itemize}
    \item \textit{Intra-cluster assumption}: For any two components $\theta_j^* \neq \theta_q^* \in \mathcal{C}_m$, their spatial separation is bounded by the cluster diameter $C_0 \Delta_{\mathrm{sep}}$, where $C_0\geq 1$, that is, $|\theta_j^* - \theta_q^*| \le C_0 \Delta_{\mathrm{sep}}$.
    \item \textit{Inter-cluster assumption}: For any $\theta_j^* \in \mathcal{C}_m$ and $\theta_p^* \notin \mathcal{C}_m$, their separation is strictly bounded below by the macroscopic gap $D_0>0$, that is, $|\theta_j^* - \theta_p^*| \ge D_0$.
\end{itemize}
To guarantee that the clusters remain well separated from one another, we further assume $\Delta_{\mathrm{sep}} \le \frac{D_0}{4 C_0}$. Next, we demonstrate that in this multi-cluster regime, 
the dependence of the MLE convergence rate on the separation is governed by the densest local region of the mixture $s_{\max} = \max_{1 \le m \le k_0} s_m$ rather than by the total number of true components $k_*$.

\begin{theorem}
\label{theorem:one_group_univariate_multi_cluster}
Given the intra-cluster and inter-cluster assumptions on $G_{*}$, there exists a positive universal constant $C_{\mathrm{global,5}}$ depending only on $R$, $k_{*}$, $\Sigma$, $C_{0}$, and $D_{0}$ such that
\begin{align} 
d_H(p_{G},p_{G_*}) \ge C_{\mathrm{global,5}} \cdot \Delta_{\mathrm{sep}}^{2s_{\max} - 2} \cdot W_2^2(G, G_*).
\end{align}
uniformly for all $G$ with at most $k$ components. As a consequence, we have
\begin{align*}
    \mathbb{P}\left(W_2(\widehat{G}_n,G_*)>\left(\frac{C_{\mathrm{density}}}{C_{\mathrm{global,5}}}\right)^{\frac{1}{2}}\left(\frac{d\log(n)}{n}\right)^{\frac{1}{4}} \Delta_{\mathrm{sep}}^{-(s_{\max} - 1)}\right)\lesssim \exp(-c_{\mathrm{density}}\log(n)).
\end{align*}
\end{theorem}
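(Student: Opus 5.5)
The plan is to follow the architecture of the proof of Theorem~\ref{theorem:one_group_univariate}, but to localize the moment-extraction polynomials cluster by cluster, as in the proof of Theorem~\ref{theorem:exact_multi_group}. By Lemma~\ref{lemma:general_to_identity} we may take $\Sigma=I_d$. We first split the atoms of $G$ into two groups. The \emph{near} atoms are those with $\|\theta_i-\theta^*_{c(i)}\|\le\Delta_{\mathrm{sep}}/4$, where $c(i)$ is the Voronoi index; they are grouped into sub-Voronoi cells $\bar{\mathcal{V}}_j$. The remaining atoms are \emph{far} atoms, with index set $\mathcal{M}_{\mathrm{far}}$ and total mass $\pi_{\mathrm{far}}$. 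We write $\Delta\pi_j=\sum_{i\in\bar{\mathcal V}_j}\pi_i-\pi^*_j$. We then introduce the cluster-localized analogue of $S_1$:
\[
S_5(G,G_*)=\tfrac{\Delta_{\mathrm{sep}}}{2}\sum_{|\bar{\mathcal V}_j|=1}\pi_i\|\theta_i-\theta^*_j\|+2\sum_{|\bar{\mathcal V}_j|>1}\sum_{i\in\bar{\mathcal V}_j}\pi_i\|\theta_i-\theta^*_j\|^2+2\sum_{i\in\mathcal M_{\mathrm{far}}}\pi_i\|\theta_i-\theta^*_{c(i)}\|^2+C_0^2\Delta_{\mathrm{sep}}^2\Bigl(\sum_j|\Delta\pi_j|+\pi_{\mathrm{far}}\Bigr).
\]
Building an explicit coupling shows that $S_5\gtrsim W_2^2(G,G_*)$. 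This coupling matches each cell to its true atom, sends the excess mass $|\Delta\pi_j|$ and $\pi_{\mathrm{far}}$ over distances of order $R$, and controls the far terms by compactness. It therefore suffices to prove
\[
d_H(p_G,p_{G_*})\gtrsim\Delta_{\mathrm{sep}}^{2s_{\max}-2}S_5(G,G_*).
\]

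The core step is to bound each summand of $S_5$ by $|\int U\,d\nu|$ for a suitable test polynomial $U$ of degree bounded in terms of $k$ and $k_*$, and then apply Lemma~\ref{lemma:Hellinger_to_Polynomial}. Fix a true atom $\theta^*_j$ in cluster $\mathcal C_m$. We take
\[
U_j(\theta)=L_j(\theta)\,Q_m(\theta),\qquad Q_m(\theta)=\prod_{p\notin\mathcal C_m}\|\theta-\theta^*_p\|^{2\ell},
\]
where $\ell$ is a multiplicity large enough (of order $k$) to kill first-order and second-order contributions from near atoms of other clusters. The factor $L_j$ is an intra-cluster Hermite-type factor, $\prod_{q\in\mathcal C_m,q\ne j}\|\theta-\theta^*_q\|^2$, multiplied by the linear or quadratic probe $\langle\theta-\theta^*_j,v\rangle$ or $\|\theta-\theta^*_j\|^2$ as needed.

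On atoms near $\mathcal C_m$, the inter-cluster assumption $\|\theta^*_j-\theta^*_p\|\ge D_0$ together with $\Delta_{\mathrm{sep}}\le D_0/(4C_0)$ makes $Q_m$ comparable to a constant depending only on $D_0$ and $R$. The factor $L_j$ then behaves like $\Delta_{\mathrm{sep}}^{2(s_m-1)}$ near $\theta^*_j$, and it vanishes to second order at the other atoms of $\mathcal C_m$. This gives the exponent $2s_{\max}-2$ rather than $2k_*-2$. Atoms near other clusters are annihilated to the required order by $Q_m$, and $L_j$ is only $O(1)$ there.

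We then control the cross terms using Taylor expansions of $U_j$ around each $\theta^*_q$. Linear terms are handled either by the choice of probe direction $v$ or by pairing them with the mass discrepancy terms. For single-atom cells we use the linear probe and absorb the resulting $W_1$-type term into the factor $\Delta_{\mathrm{sep}}/2$; this is the same mechanism as in Theorem~\ref{theorem:one_group_univariate}.

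For the mass terms $\Delta\pi_j$ and the far mass, we use the interpolating polynomial $L_jQ_m$ without a probe. Such a polynomial equals $1$ at $\theta^*_j$ and $0$ at the other true atoms, and an overall factor $\Delta_{\mathrm{sep}}^{2}$ is then lost relative to the quadratic terms. For far atoms we use a nonnegative polynomial in $\|\theta-\theta^*_{c(i)}\|^2$ that is bounded below on the far region. Summing the estimates over $j$, with $\|U_j\|_\infty$ bounded by a constant after normalization, gives the desired inequality. Combining it with Proposition~\ref{prop:density_estimation_rate} yields the probability statement.

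The main obstacle is the cancellation bookkeeping within a single cell containing several fitted atoms. There the first-order term $\sum_{i\in\bar{\mathcal V}_j}\pi_i(\theta_i-\theta^*_j)$ can cancel, so one must show that the quadratic probe captures $\sum\pi_i\|\theta_i-\theta^*_j\|^2$ and is not overwhelmed by linear and mass errors weighted by $\Delta_{\mathrm{sep}}^{-1}$. We plan to handle this by a two-tier estimate. First, the linear-probe identity bounds $\bigl\|\sum_{i\in\bar{\mathcal V}_j}\pi_i(\theta_i-\theta^*_j)\bigr\|$. This bound is then substituted into the quadratic-probe expansion, with the $\Delta_{\mathrm{sep}}^{2}$ weighting on mass terms in $S_5$ chosen precisely so that these error terms close.
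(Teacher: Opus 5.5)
There is a genuine gap in your Wasserstein decomposition. Your $S_5$ is the single-cluster quantity $S_1$ almost verbatim. In the single-cluster regime $S_1\gtrsim W_2^2$ holds only because every pair of true atoms is within $C_0\Delta_{\mathrm{sep}}$, so excess mass always travels at cost $C_0^2\Delta_{\mathrm{sep}}^2$ per unit. In the multi-cluster regime this fails. Take $G$ with exactly the atoms of $G_*$, but with mass $\epsilon$ shifted from an atom of $\mathcal{C}_1$ to an atom of $\mathcal{C}_2$. All cells are then singletons with zero displacement and $\pi_{\mathrm{far}}=0$, so $S_5=2C_0^2\Delta_{\mathrm{sep}}^2\epsilon$. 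Yet any coupling must carry mass $\epsilon$ across a gap of length at least $D_0$, so $W_2^2(G,G_*)\ge \epsilon D_0^2$. Your own description of the coupling, which ships $|\Delta\pi_j|$ over distances of order $R$, pays $R^2|\Delta\pi_j|$ rather than the weight in $S_5$.

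Raising that weight to order $R^2$ does not rescue the exponent. You would then need $|\Delta\pi_j|\lesssim\Delta_{\mathrm{sep}}^{-(2s_{\max}-2)}d_H$, but your mass extractor only gives $\Delta_{\mathrm{sep}}^{-2s_{\max}}d_H$, and the loss is real. Take $d=1$ and a cluster $\{0,\Delta_{\mathrm{sep}}\}$, with the other clusters unchanged. Shift mass $e$ from $0$ to $\Delta_{\mathrm{sep}}$, split the atom at $0$ into $\pm\eta$ with $\eta^2\asymp e\Delta_{\mathrm{sep}}^2$, and move the other atom by $\delta\asymp -e\Delta_{\mathrm{sep}}$, so that the moments of $\nu$ of orders $0,1,2$ vanish. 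A Hermite/$\chi^2$ expansion then gives $d_H\lesssim e\Delta_{\mathrm{sep}}^3$, while $|\Delta\pi_1|=e$. This exceeds $\Delta_{\mathrm{sep}}^{-2}d_H$ although $s_{\max}=2$.

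The missing idea is to split the mass reallocation into two parts.
\begin{itemize}
\item An intra-cluster part, weighted by $C_0^2\Delta_{\mathrm{sep}}^2$ and controlled by the pointwise discrepancies.
\item An inter-cluster part $4R^2\sum_m|\Delta\Pi_m|$, where $\Delta\Pi_m=\sum_{j\in\mathcal{C}_m}\tilde{\Delta}\pi_j$ is the mass of $G$ whose nearest true atom lies in $\mathcal{C}_m$, minus $\sum_{j\in\mathcal{C}_m}\pi_j^*$.
\end{itemize}
You then need the sharper bound $|\Delta\Pi_m|\lesssim\Delta_{\mathrm{sep}}^{-(2s_{\max}-2)}d_H$. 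This requires a cluster-indicator polynomial $P_m$ with $P_m(\theta_q^*)=\mathbf{1}\{q\in\mathcal{C}_m\}$ and $\nabla P_m(\theta_q^*)=0$, whose $\|P_m\|_\infty$ and $\sup_{B(0,R)}\|D^2P_m\|_{\mathrm{op}}$ are bounded \emph{independently of} $\Delta_{\mathrm{sep}}$. A second-order Taylor expansion, combined with the variance bound $\sum_i\pi_i\|\theta_i-\theta^*_{c(i)}\|^2\lesssim\Delta_{\mathrm{sep}}^{-(2s_{\max}-2)}d_H$ and the far-mass bound, then gives the claim.

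Summing your pointwise extractors over $j\in\mathcal{C}_m$ does not obviously give such a $P_m$. Each term has norm of order $\Delta_{\mathrm{sep}}^{-(2s_m-1)}$, and you would have to prove the cancellation. The paper instead takes $P_m(\theta)=Q(\|\theta-\theta_1^*\|^2)$, where $Q$ Hermite-interpolates a smooth bump $\Phi$ at the squared radii $\|\theta_q^*-\theta_1^*\|^2$; $\Phi$ equals $1$ on $[0,D_0^2/2]$ and $0$ on $[D_0^2,\infty)$. The confluent divided differences are bounded by derivatives of $\Phi$ via Rolle's theorem, uniformly in the node spacing.

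Separately, the ``main obstacle'' you describe is not one. A probe such as $\prod_{q\in\mathcal{C}_m}\|\theta-\theta_q^*\|^2\,Q_m(\theta)$, or the paper's $P_{\mathrm{var}}$, is nonnegative and vanishes at every true atom. It therefore bounds each within-cell second moment directly, with no linear or mass error terms to close.
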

The proof of Theorem~\ref{theorem:one_group_univariate_multi_cluster} can be found in Section~\ref{sec:proof:theorem:one_group_univariate_multi_cluster}. 
\begin{remark} 
\label{remark:unicluster_overfit_2}
(1) Compared to the single-cluster regime in the over-specified setting, we first observe that the MLE $\widehat{G}_n$ still converges to the true mixing measure $G_*$ at the rate of order $[\log(n)/n]^{1/4}$.

(2) The convergence rate deteriorates polynomially as the minimum separation decreases, reflecting the fact that stronger component overlap leads to a more challenging parameter estimation problem. However, the dependence on the minimum separation is significantly improved, from $\Delta_{\mathrm{sep}}^{-(k_*-1)}$ in the single-cluster regime to $\Delta_{\mathrm{sep}}^{-(s_{\max}-1)}$ in the multi-cluster regime. Consequently, the advantage of exploiting the multi-cluster structure becomes more pronounced as the number of clusters increases. For example, when $k_0=\mathcal{O}(1)$, the largest cluster size remains of order $\mathcal{O}(k_*)$, and hence the exponent associated with the minimum separation is approximately $-\mathcal{O}(k_*)$. In this case, the resulting convergence rate is comparable to that of the single-cluster regime. In contrast, when $k_0=\mathcal{O}(k_*)$ and the components are distributed approximately evenly among clusters, we have $s_{\max}=\mathcal{O}(1)$. Therefore, the dependence on the minimum separation reduces to only a constant-order exponent, leading to a substantial improvement in the convergence rate. These observations demonstrate that, under the over-specified setting, the effective complexity of parameter estimation is determined not only by the total number of components but also by the local cluster structure. A detailed proof of this result is provided in Section~\ref{sec:proof:theorem:one_group_univariate_multi_cluster}.

(3) As in Theorem~\ref{theorem:one_group_univariate}, the convergence behavior of each fitted center depends on the number of fitted centers contained in its corresponding Voronoi cell. To better characterize the asymptotic behavior of individual centers, we introduce the Voronoi-based distance $S_2$ as 
\begin{align*}
     S_2(G,G_*) &= \frac{\Delta_{\mathrm{sep}}}{2}\sum_{j:|\bar{\mathcal{V}}_j|=1,\mathcal{V}_j = \{i\}} \pi_i\|\theta_i - \theta^*_{j}\| +  2\sum_{j:|\bar{\mathcal{V}_j}|>1}\sum_{i \in \bar{\mathcal{V}}_j}\pi_i\|\theta_i - \theta_j^*\|^2\\
    &\hspace{0.5cm} + 2\sum_{i \in \mathcal{M}_{\mathrm{mac}}} \pi_i\|\theta_i - \theta^*_{c(i)}\|^2 + 2\sum_{i \in \mathcal{M}_{\mathrm{far}}} \pi_i\|\theta_i - \theta^*_{c(i)}\|^2\\
    &\hspace{0.5cm} +2\left(C_0^2\Delta^2_{\mathrm{sep}}\left( \pi_{\mathrm{far}} +\pi_{\mathrm{mac}}+\sum_{j=1}^k|\Delta\pi_j| \right) + 4R^2\sum_{m=1}^{k_0}|\Delta\Pi_m|\right).
\end{align*}
where $\mathcal{V}_j$ denotes the Voronoi cell at true center $\theta_j^*$, $\bar{\mathcal{V}}_j$ denotes the sub-Voronoi cell of near point $\theta_i$ at true center $\theta_j^*$, i.e. $\|\theta_i -\theta^*_{c(i)}\|\leq \Delta_{\mathrm{sep}}/4$,  $\Delta\pi_j$, $\Delta\Pi_m$, $\pi_{\mathrm{far}}$, and $\pi_{\mathrm{mac}}$ capture the corresponding 
mass discrepancies, whose precise definitions are deferred to 
Section~\ref{sec:proof:theorem:one_group_univariate_multi_cluster}. We show that 
\begin{equation*}
    d_H(p_{G},p_{G_*}) \ge C_{\mathrm{global,5}} \cdot \Delta_{\mathrm{sep}}^{2s_{\max} - 2} \cdot W_2^2(G, G_*) \geq C_{\mathrm{global,5}} \cdot \Delta_{\mathrm{sep}}^{2s_{\max} - 2} \cdot S_2(G,G_*), 
\end{equation*}
which implies that for any $i \in \bar{\mathcal{V}}_j$ with 
$|\bar{\mathcal{V}}_j|=1$, the fitted center $\theta_i$ exhibits a significantly 
faster convergence behavior, attaining the rate $[\log(n)/n]^{1/2}$.

(4) By combining the inequality $W_2(G,G_*) \geq W_1(G,G_*)$ with 
\cite[Theorem 2 and Remark 4]{wu2020moment}, we obtain a $W_2$-version of 
the minimax rate in equation \eqref{eqn:adaptive_minimax_rate} for the multi-cluster 
regime, which is given by
\begin{equation}
\label{eqn:multicluster_overspecified_adaptive_rate}
    \inf_{\widetilde{G}_n \in \mathcal{G}_k(\Theta)}
    \sup_{G_*\in \mathcal{E}_{k_*,k_0,\gamma,\omega}}
    W_2(\widetilde{G}_n,G_*)
    \gtrsim_{k^*}
    \gamma^{-\frac{2k_0-2}{2(k_*-k_0)+1}}
    \left(\frac{1}{n}\right)^{\frac{1}{4(k_*-k_0)+2}}.
\end{equation}
As discussed previously in Remark \ref{remark:multi_cluster_exact_fit}, 
the main difference between our analysis and that of \cite{wu2020moment} lies in 
the interpretation of the inter-cluster separation. More specifically, 
Wu et al. \cite{wu2020moment} characterize the explicit contribution of the 
inter-cluster separation $\gamma$ to the convergence rate, whereas our work 
treats this quantity as a macroscopic factor for controlling interactions between 
different clusters. 

In addition, our result reveals the role of the largest local cluster size 
$s_{\max}$ in determining the local convergence behavior. Furthermore, we observe that, 
ignoring logarithmic factors, our rate matches the minimax rate in equation~\eqref{eqn:multicluster_overspecified_adaptive_rate} when
\begin{equation*}
    \Delta_{\mathrm{sep}} 
    \asymp 
    \left(
    n^{-\frac{2(k_*-k_0)-1}{8(k_*-k_0)+4}}
    \cdot\gamma^{\frac{2k_0-2}{2(k_*-k_0)+1}}
    \right)^{\frac{1}{s_{\max}-1}}.
\end{equation*}
\end{remark}

\subsection{Unstructured Regime} We finally consider the unstructured regime, where no assumptions are imposed on the cluster structure of the true components. In contrast to the previous two regimes, the component configuration may simultaneously exhibit both highly concentrated local groups and isolated components at multiple geometric scales. Consequently, neither the single-cluster analysis nor the multi-cluster decomposition can be applied directly. In particular, the absence of a well-defined cluster structure prevents us from isolating the dominant local singularity responsible for the statistical complexity of parameter estimation.

Under the over-specified setting, this challenge becomes even more pronounced. The extra fitted components may interact with the true components across multiple scales, creating significantly more intricate geometric configurations than those encountered in the single-cluster and multi-cluster regimes. As a result, the Hellinger lower bound must account for the worst possible arrangement of component locations without exploiting any additional geometric structure. The following theorem establishes the corresponding global Hellinger lower bound and the resulting convergence rate of the MLE.

\begin{theorem}
\label{theorem:no_group_overspecified}
Under the unstructured regime, there exists a positive universal constant $C_{\mathrm{global,6}}$ depending only on $R$, $k_{*}$, and $\Sigma$ such that
\begin{align} 
d_H(p_{G},p_{G_*}) \ge C_{\mathrm{global,6}}  \cdot \Delta_{\mathrm{sep}}^{4k_{*} - 3} \cdot W_2^2(G, G_*).
\end{align}
uniformly for all $G$ with at most $k$ components. As a consequence, we have
\begin{align*}
    \mathbb{P}\left(W_2(\widehat{G}_n,G_*)>\left(\frac{C_{\mathrm{density}}}{C_{\mathrm{global,6}}}\right)^{\frac{1}{2}}\left(\frac{d\log(n)}{n}\right)^{\frac{1}{4}} \Delta_{\mathrm{sep}}^{-\left(2k_*-\frac{3}{2}\right)}\right)\lesssim \exp(-c_{\mathrm{density}}\log(n)).
\end{align*}
\end{theorem}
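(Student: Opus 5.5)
The plan is to reduce to $\Sigma=I_d$ via Lemma~\ref{lemma:general_to_identity}, which changes $W_2$ only by constants depending on $\Sigma$. We then control $W_2^2(G,G_*)$ by a Voronoi-type loss and bound each of its pieces by $\int U\,d\nu$ for suitable test polynomials $U$. Lemma~\ref{lemma:Hellinger_to_Polynomial} then gives $|\int U\,d\nu|\le C_{\mathrm{poly}}\|U\|_\infty d_H(p_G,p_{G_*})$.

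\textbf{Decomposition.} For $G$ with at most $k$ atoms, split the fitted atoms into near atoms, with $\|\theta_i-\theta^*_{c(i)}\|\le \Delta_{\mathrm{sep}}/4$, collected in sub-Voronoi cells $\bar{\mathcal V}_j$, and far atoms $\mathcal M_{\mathrm{far}}$. Define $\Delta\pi_j=\sum_{i\in\bar{\mathcal V}_j}\pi_i-\pi_j^*$ and $\pi_{\mathrm{far}}=\sum_{i\in\mathcal M_{\mathrm{far}}}\pi_i$. Exactly as for $S_1$ in Remark~\ref{remark:unicluster_overfit}, a coupling argument gives
\[
W_2^2(G,G_*)\lesssim \sum_j\sum_{i\in\bar{\mathcal V}_j}\pi_i\|\theta_i-\theta_j^*\|^2+\sum_{i\in\mathcal M_{\mathrm{far}}}\pi_i\|\theta_i-\theta^*_{c(i)}\|^2+R^2\Bigl(\sum_j|\Delta\pi_j|+\pi_{\mathrm{far}}\Bigr).
\]
Singleton cells need only their first-order terms. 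It therefore suffices to bound each of these terms by $\Delta_{\mathrm{sep}}^{-(4k_*-3)}d_H$.

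\textbf{Test polynomials.} For each $j$ build a Hermite-type interpolation polynomial on $\mathcal S$, using the one-dimensional projection $\langle \theta-\theta_j^*, u\rangle$ along directions $u$ and products of squared distances $\|\theta-\theta_l^*\|^2$, $l\neq j$. For the mass term take
\[
U_j(\theta)=\prod_{l\ne j}\frac{\|\theta-\theta_l^*\|^2}{\|\theta_j^*-\theta_l^*\|^2},
\]
which vanishes to second order at the other centers and equals $1$ at $\theta_j^*$. Its denominator is at least $\Delta_{\mathrm{sep}}^{2(k_*-1)}$ and its numerator is at most $O(R^{2k_*-2})$, so $\|U_j\|_\infty\lesssim \Delta_{\mathrm{sep}}^{-(2k_*-2)}$. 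To extract the second-moment term $\sum_{i\in\bar{\mathcal V}_j}\pi_i\|\theta_i-\theta_j^*\|^2$, multiply by $\|\theta-\theta_j^*\|^2$. To extract first-order terms, multiply by $\langle\theta-\theta_j^*,u\rangle$, and correct the derivative at $\theta_j^*$ with a confluent divided difference factor. That correction costs an extra $\Delta_{\mathrm{sep}}^{-(2k_*-1)}$, since its coefficients are reciprocals of products of separations. Each correction multiplies the sup-norm scaling, giving total $\|U\|_\infty\lesssim\Delta_{\mathrm{sep}}^{-(2k_*-2)-(2k_*-1)}=\Delta_{\mathrm{sep}}^{-(4k_*-3)}$. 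This is consistent with the stated exponent, which coincides with the exact-specified unstructured one.

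\textbf{Error terms.} The obstacle is that $\int U\,d\nu$ evaluated at the fitted atoms does not isolate a single term. Near atoms of other cells contribute $\pi_i U(\theta_i)$ with $U$ vanishing to second order at $\theta_l^*$, so their contribution is $\lesssim \Delta_{\mathrm{sep}}^{-c}\pi_i\|\theta_i-\theta_l^*\|^2$. Those terms are of the same type as the target terms, not higher order, because the over-specified $W_2^2$ is itself quadratic. Far atoms contribute through $\pi_{\mathrm{far}}$ and their distances.

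I would handle this by nonnegativity. Choose the squared-product polynomials so that they are nonnegative everywhere, and sum them over $j$ together with a term like $\prod_l\|\theta-\theta_l^*\|^2$ normalized by $\Delta_{\mathrm{sep}}$ powers. The resulting single $U$ is then $\gtrsim$ the whole loss pointwise on the atoms of $G$, and vanishes on the atoms of $G_*$ up to the signed mass terms. Those mass terms are handled separately by $U_j$ using the already-controlled quadratic terms.

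The hardest step is verifying this pointwise lower bound for the near atoms lying between centers at multiple scales, where the unstructured geometry prevents a clean cluster split. Here I would use that every atom is either $\Delta_{\mathrm{sep}}/4$-close to a unique center or at distance $\ge\Delta_{\mathrm{sep}}/4$ from all centers. The latter gives a lower bound $\gtrsim \Delta_{\mathrm{sep}}^{2k_*}$ on the product, which may force the slightly worse exponent.

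Finally, combine the resulting $d_H\gtrsim\Delta_{\mathrm{sep}}^{4k_*-3}W_2^2$ with Proposition~\ref{prop:density_estimation_rate}. Taking square roots gives the stated $W_2$ rate with $\Delta_{\mathrm{sep}}^{-(2k_*-3/2)}$.
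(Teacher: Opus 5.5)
The mass step fails as written, and the exponent $4k_*-3$ is never actually derived. Your outline is otherwise the right one: reduce to $\Sigma=I_d$, bound $W_2^2$ by quadratic location terms plus $R^2$ times mass discrepancies, control the quadratic terms with $\prod_l\|\theta-\theta_l^*\|^2$, and extract masses with Hermite-type polynomials via Lemma~\ref{lemma:Hellinger_to_Polynomial}.

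Your mass polynomial $U_j=\ell_j^2$ satisfies $U_j(\theta_l^*)=\delta_{jl}$. But in general $\nabla U_j(\theta_j^*)=2\sum_{q\neq j}(\theta_j^*-\theta_q^*)/\|\theta_j^*-\theta_q^*\|^2\neq 0$. So $\int U_j\,d\nu$ contains the first-order term $\sum_{i:c(i)=j}\pi_i\langle\nabla U_j(\theta_j^*),\theta_i-\theta_j^*\rangle$. Under over-specification this cannot be absorbed by ``the already-controlled quadratic terms''. Cauchy--Schwarz only gives $O(\Delta_{\mathrm{sep}}^{-k_*}d_H^{1/2})$, which destroys the linear dependence on $d_H$ and would give only an $n^{-1/8}$-type rate. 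You need $\nabla E_j(\theta_l^*)=0$ for every $l$, including $l=j$, i.e.\ $E_j=\ell_j^2\,[1-2\langle\nabla\ell_j(\theta_j^*),\theta-\theta_j^*\rangle]$. Equivalently, subtract the signed first moment extracted by $\ell_j^2\langle u,\theta-\theta_j^*\rangle$. Your derivative correction is attached to the wrong polynomial: $\ell_j^2\langle u,\theta-\theta_j^*\rangle$ already has gradient $u$ at $\theta_j^*$ and needs none.

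The exponent count is also not a derivation. Since $\|\nabla\ell_j(\theta_j^*)\|\le (k_*-1)/\Delta_{\mathrm{sep}}$, the corrected $E_j$ has $\|E_j\|_\infty\lesssim\Delta_{\mathrm{sep}}^{-(2k_*-1)}$. No test polynomial here has sup-norm $\Delta_{\mathrm{sep}}^{-(4k_*-3)}$.

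The exponent actually sits in the unspecified $\Delta_{\mathrm{sep}}^{-c}$ of your error paragraph, i.e.\ in the Taylor remainder $\frac12\sum_i\pi_i(\theta_i-\theta^*_{c(i)})^\top D^2E_j(\xi_i)(\theta_i-\theta^*_{c(i)})$. Without cluster structure there is no local $\Delta_{\mathrm{sep}}^{-2}$ Hessian bound near the other centers. So one uses the global bound $\sup_{\bar B(0,R)}\|D^2E_j\|_{\mathrm{op}}\lesssim\Delta_{\mathrm{sep}}^{-(2k_*-1)}$, since all coefficients of $E_j$ are $O(\Delta_{\mathrm{sep}}^{-(2k_*-1)})$. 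Combine it with $\sum_i\pi_i\|\theta_i-\theta^*_{c(i)}\|^2\lesssim\Delta_{\mathrm{sep}}^{-(2k_*-2)}d_H$. This gives $|\tilde\Delta\pi_j|\lesssim\Delta_{\mathrm{sep}}^{-(4k_*-3)}d_H$ for the full-cell discrepancies $\tilde\Delta\pi_j=\sum_{c(i)=j}\pi_i-\pi_j^*$.

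This requires the variance bound for all atoms, near or far, and the step you call hardest takes two lines. For any atom and any $l\neq c(i)$, $\|\theta^*_{c(i)}-\theta_l^*\|\le\|\theta_i-\theta^*_{c(i)}\|+\|\theta_i-\theta_l^*\|\le 2\|\theta_i-\theta_l^*\|$. Hence $P_{\mathrm{var}}(\theta_i)\ge(\Delta_{\mathrm{sep}}/2)^{2k_*-2}\|\theta_i-\theta^*_{c(i)}\|^2$, with no near/far split and no multi-scale issue. Far atoms must then be Taylor-expanded around $\theta^*_{c(i)}$ like the others. Bounding their contribution instead by $\|E_j\|_\infty\pi_{\mathrm{far}}$, with $\pi_{\mathrm{far}}\lesssim\Delta_{\mathrm{sep}}^{-2k_*}d_H$, would only give $\Delta_{\mathrm{sep}}^{-(4k_*-1)}$.
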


A complete proof of this theorem is presented in Section~\ref{sec:proof_theorem:no_group_overspecified}. 
\begin{remark}

(1) Unlike the single-cluster and multi-cluster regimes in the over-specified settings, the exponent of $\Delta_{\mathrm{sep}}$ in the Hellinger lower bound remains unchanged from that of the exact-specified unstructured regime. The primary effect of model over-specification therefore manifests itself through the transition from $W_1$ to $W_2^2$, which yields the slower MLE convergence rate of order $[d\log(n)/n]^{1/4}$, while preserving the same separation exponent $\Delta_{\mathrm{sep}}^{4k_*-3}$.

(2) However, when turning to the MLE convergence, the dependence on the minimum separation becomes substantially less favorable than in the previous geometric regimes. In particular, the exponent of the minimum separation increases from $k_*-1$ and $s_{\max}-1$ to $2k_*-\frac{3}{2}$, respectively. This deterioration reflects the absence of exploitable geometric structure in the unstructured regime. Without a well-defined cluster decomposition, the analysis must simultaneously accommodate all possible local and global interactions among components, leading to a substantially more conservative dependence on the minimum separation.

In addition, as in the previous over-specified setting, the convergence rate remains independent of the minimum mixture weight $\pi_{\min}^*$, indicating that the dominant statistical complexity is driven by geometric degeneracies rather than by low-weight components.

(3) As in the single- and multi-cluster regimes, we also observe the phenomenon that fitted centers associated with singleton Voronoi cells enjoy a faster convergence rate of order $[d\log(n)/n]^{1/2}$. 
This behavior is characterized through the Voronoi-based distance $S_3$ defined as
\begin{equation*}
    S_3(G,G_*) := \frac{\Delta_{\mathrm{sep}}}{2} \sum_{j:|\bar{\mathcal{V}}_j|=1,\bar{\mathcal{V}}_j=\{i\}}\pi_i\|\theta_i-\theta^*_{c(i)}\| + 2\sum_{j:|\bar{\mathcal{V}}_j|\neq 1}\sum_{i \in \mathcal{V}_j}\pi_i\|\theta_i - \theta_{c(i)}^*\|^2+8R^2 \sum_{i=1}^{k_*} |\tilde{\Delta} \pi_i|, 
\end{equation*}
where 
$\tilde{\Delta}\pi_j$ represents a mass reallocation quantity that will be specified later in 
Section~\ref{sec:proof_theorem:no_group_overspecified}. We finally can show that 
\begin{equation*}
    d_H(p_{G},p_{G_*}) \ge C_{\mathrm{global,6}}  \cdot \Delta_{\mathrm{sep}}^{4k_{*} - 3} \cdot W_2^2(G, G_*) \geq C_{\mathrm{global,6}}  \cdot \Delta_{\mathrm{sep}}^{4k_{*} - 3} \cdot S_3^2(G, G_*), 
\end{equation*}
which demonstrates that the fitted parameter $\widehat{\theta}_{n,i}$ admits the convergence rate of order $[\log(n)/n]^{1/2}$, for any $i \in \bar{\mathcal{V}}_j$ such that $|\bar{\mathcal{V}}_j| = 1$. 

(4) It is still instructive to compare this result with the minimax rate adapted 
from \cite{doss_optimal_2023} and reformulated in equation 
\eqref{eq:minimax_rate_overfit}. Recall that when 
$d \geq d_{\mathrm{thr}}:=n^{\frac{2k_*-3}{2k_*-1}}$, the minimax rate is 
determined by the term $(d/n)^{1/4}$, whereas it is dominated by 
$(1/n)^{1/(4k_*-2)}$ when $d<d_{\mathrm{thr}}$.

If the separation factor $\Delta_{\mathrm{sep}}$ remains bounded away from 
zero, the MLE achieves the standard parametric rate of order $(d/n)^{1/4}$, 
which is substantially faster than the minimax lower bound in 
\eqref{eq:minimax_rate_overfit}. On the other hand, when the separation 
factor vanishes at the rate $\Delta_{\mathrm{sep}}\asymp n^{-\frac{2k_*-3}{2(2k_*-1)(4k_*-3)}}d^{\frac{1}{2(4k_*-3)}}$,
for $d\leq d_{\mathrm{thr}}$ (and remains of constant order when 
$d>d_{\mathrm{thr}}$), the above MLE convergence rate matches the minimax 
rate.
\end{remark}

\begin{table}[t!]
\centering
\renewcommand{\arraystretch}{1.5}
\caption{Transition of the MLE convergence rates in the over-specified setting under different scales of the minimum separation $\Delta_{\mathrm{sep}}$ in single-cluster (S), multi-cluster (M), and unstructured (U) regimes. Below, the notation (*) indicates the point-wise rate (up to a logarithmic factor) characterized in our paper, while (**) refers to the minimax rates established by Doss et al. \cite{doss_optimal_2023} and Wu et al. \cite{wu2020moment}. Note that $k_0=1$ and $\gamma=1$ for the single-cluster and unstructured regimes. Also, $d_{\mathrm{thr}}=n^{\frac{2 k_* - 3}{2k_*-1}}$.}
\begin{tabular}{ccc}
\toprule
 & $\left(\frac{d}{n}\right)^{\frac{1}{4}}$ (*), (**)
&
$\gamma^{-\frac{2k_0-2}{2(k_*-k_0)+1}}\left(\frac{1}{n}\right)^{\frac{1}{4(k_*-k_0)+2}}$ (**)

 \\
\midrule
S&$\Delta_{\mathrm{sep}}\asymp 1$  & $\Delta_{\mathrm{sep}}\asymp n^{-\frac{2k_*-3}{4(2k_*-1)(k_*-1)}} d^{\frac{1}{4(k_* - 1)}}$, $d\leq d_{\mathrm{thr}}$ 
\\
\midrule
M&$\Delta_{\mathrm{sep}}\asymp 1$  & $\Delta_{\mathrm{sep}}\asymp \left(
    n^{-\frac{2(k_*-k_0)-1}{8(k_*-k_0)+4}}
    \gamma^{\frac{2k_0-2}{2(k_*-k_0)+1}}
    \right)^{\frac{1}{s_{\max}-1}}$  
\\
\midrule
U&$\Delta_{\mathrm{sep}}\asymp 1$  & $\Delta_{\mathrm{sep}}\asymp n^{-\frac{k_*-1}{2(2k_*-1)(4k_*-3)}} d^{\frac{1}{2(4k_* - 3)}}$, $d\leq d_{\mathrm{thr}}$ 
\\
\bottomrule
\end{tabular}
\end{table}
\section{Proof Sketches}
\label{sec:proof_sketch}
The main idea underlying all proofs is to decompose the Wasserstein distance between $G$ and the true measure $G_*$ according to the geometric location of each atom of $G$ relative to its associated reference atom. 
\begin{equation*}
    W_p^p(G,G') \leq \underbrace{\sum_{i}\pi_i \|\theta_i - \theta^*_{c(i)}\|^p}_{\text{location reallocation}} + \underbrace{\sum_{j}|\sum_{i:c(i)=j}\pi_i - \pi^*_{j}|\cdot \text{dist}^p_j}_{\text{mass reallocation}}, 
\end{equation*}
where $c(i)$ denotes the index of the nearest true center $\theta^*_{j}$ to $\theta_i$, and $\text{dist}_j$ denotes an upper bound on the transportation cost of moving mass from the center $\theta_j^*$ to another center. This decomposition corresponds to a natural transportation scheme in which each atom $\theta_i$ is first transported to its nearest reference center, after which the remaining mass discrepancy is redistributed among appropriate centers. In the multi-cluster setting (Theorems~\ref{theorem:exact_multi_group} and \ref{theorem:one_group_univariate_multi_cluster}), as transportation within clusters is less costly we prioritize transportation within each cluster, reallocating mass locally whenever possible before transporting any remaining mass across different clusters.
$$W^p_p(G, G_*) \le \sum_{i} \pi_{i}\|\theta_{i} - \theta_{c(i)}^{*}\|^p_{} + \underbrace{C^p_{0} \Delta^p_{\mathrm{sep}}\sum_{m=1}^{k_0} \sum_{j \in \mathcal{C}_m} |\sum_{i: c(i)=j}\pi_i- \pi_{j}^{*}|}_{\text{intra-cluster mass reallocation}} + \underbrace{(2R)^p \sum_{m = 1}^{k_{0}} |\Delta \Pi_{m}|}_{{\text{inter-cluster mass reallocation}}},$$
where $\Delta \Pi_{m}$ denotes the mass discrepancy between $G$ and $G'$ within cluster $\mathcal{C}_m$, denoted by
$$\Delta \Pi_{m} = \sum_{i:c(i) \in \mathcal{C}_m}\pi_i - \sum_{j\in \mathcal{C}_m}\pi^*_j.$$

To estimate the reallocation terms in a neighborhood of a given true center $\theta_j^*$, including both mass and location discrepancies, we construct a suitable multivariate polynomial $P$ in $d$ variables. For an atom $\theta_i$ that is associated with the true center $\theta^*_{c(i)} = \theta^*_j$, Taylor's expansion yields
\begin{equation*}
    P(\theta_i) = P(\theta^*_j) + \nabla P(\theta^*_j)(\theta_i-\theta^*_j) + \frac{1}{2}
    (\theta_i - \theta^*_j)^{\top}
    D^2P(\xi_i)
    (\theta_i-\theta^*_j),
\end{equation*}
where $\xi_i$ lies on the line segment joining $\theta_i$ and $\theta^*_j$.
To quantify the local discrepancy around $\theta_j^*$, we perform a second-order Taylor expansion of $P$ at $\theta_j^*$ for all centers $\theta_i$ lying in a neighborhood of $\theta_j^*$. Integrating the resulting expansion with respect to the signed measure $\nu := G-G_*$, we obtain
\begin{align*}
    \int Pd\nu &= \underbrace{\sum_{i:c(i)=j}\pi_i(\theta_i - \theta_j^*)^\top\nabla P(\theta_j^*) + P(\theta^*_{j})\left(\sum_{i:c(i)=j}\pi_i - \pi_j^*\right)}_{\text{extracting mass/center reallocation}}\\
    &+ \underbrace{\frac{1}{2}\sum_{i:c(i)=j}\pi_i(\theta_i - \theta^*_{j})^{\top}D^2P(\xi_i)(\theta_i - \theta^*_{j})}_{\text{quadratic remainder}} + \underbrace{\sum_{l:\theta_l \text{ far from } \theta_j^*}P(\theta_l) - \sum_{l\neq j}\pi^*_jP(\theta_j^*)}_{\text{far component}}. 
\end{align*}

To isolate quantities of interest, such as the mass and center reallocation errors, it suffices to construct a suitable polynomial $P$ whose values and derivatives at the true centers $\theta_l^*$ are prescribed so that the zeroth- and first-order terms in the Taylor expansion recover the desired quantity. For instance, to extract the mass reallocation error associated with $\theta_j^*$, we choose $P$ such that
$$P(\theta_l^*)=\delta_{jl},
\qquad
\nabla P(\theta_l^*)=0,$$
for all $l$. On the other hand, to extract the center reallocation error at $\theta_j^*$, we impose
$$P(\theta_l^*)=0,
\qquad
\nabla P(\theta_l^*)=0 \quad \text{for } l\neq j,$$
while prescribing $\nabla P(\theta_j^*)$ to be a suitable vector that will be specified later.

Once such a polynomial has been constructed, the remaining task is to estimate the integral $\int P\,d\nu$,
quadratic remainder, and far component. This estimate is obtained by combining three ingredients: 
Lemma~\ref{lemma:Hellinger_to_Polynomial}, which establishes a direct connection between the polynomial moment discrepancy and the Hellinger distance $d_H(p_G,p_{G_*})$ between mixture densities; the control of the quadratic remainder term arising from the Taylor expansion (which motivates the need for variance estimates); and the bound on the contribution from the far components. The main technical challenge lies in controlling the magnitude of $P$ and its Hessian matrix. Since the construction of $P$ depends on the configuration of the true centers $\{\theta_l^*\}$, these quantities are intimately related to the separation complexity $\Delta_{\mathrm{sep}}$. From an approximation-theoretic perspective, this phenomenon reflects the inflation of interpolation coefficients as interpolation points become increasingly close. From a statistical perspective, it captures the intrinsic difficulty of accurately distinguishing and estimating parameters that are nearly indistinguishable from one another.

In addition, we also observe the contribution of the minimum mass $\pi_{\min}^*$. 
This term naturally appears in the exact-fit setting when a mass imbalance occurs: one Voronoi cell is overfitted by containing multiple centers, while another Voronoi cell contains no center. 
In this situation, transporting mass to compensate for the empty Voronoi cell incurs an additional cost. Consequently, the Wasserstein distance is significantly influenced by the amount of missing mass associated with this empty cell, which is controlled by $\pi_{\min}^*$. In the local regime, where the Wasserstein distance is sufficiently small, Lemma~\ref{lemma:small_wasserstein_implies_center_in_voronoi_cell} shows that such a phenomenon cannot occur, since every center is guaranteed to lie inside its corresponding sub-Voronoi cell.

For the over-specified setting, we follow the same proof strategy as above. The main challenge in this setting arises from the fact that a single true component can be approximated by multiple fitted components. This phenomenon alters the relationship between density estimation and parameter estimation and cannot be fully captured by the $W_1$ distance. This motivates the use of the $W_2$ distance. From a technical perspective, consider a second-order Taylor expansion of a test function $U$ around a true center $\theta_j^*$:
$$U(\theta )= U(\theta_j^*)+(\theta-\theta_j^*)^\top\nabla U(\theta_j^*)+\frac{1}{2}(\theta-\theta_j^*)^\top D^2U(\xi_j)
(\theta-\theta_j^*).$$
If one works with the $W_1$ distance, the analysis essentially relies on controlling the first-order displacement terms. However, when several fitted centers collapse toward the same true center $\theta_j^*$, these first-order contributions may cancel after taking the weighted summation. More precisely, the aggregated first-order deviation can vanish even though the individual fitted centers remain separated from $\theta_j^*$. This cancellation reflects the merging behavior of multiple fitted components around a single true component, making the discrepancy between mixing measures significantly harder to quantify.

To overcome this difficulty, we exploit the variance test function
\[
P_{\mathrm{var}}(\theta)
=
\prod_{i=1}^{k_*}
\|\theta-\theta_i^*\|^2 .
\]
For a fitted center $\theta_i$ associated with the true center $\theta_j^*$, this construction yields a lower bound of the form
\[
P_{\mathrm{var}}(\theta_i)
\geq
C\,
\Delta_{\mathrm{sep}}^{\mathcal{O}(k_*)}
\|\theta_i-\theta_j^*\|^2 .
\]
where $\mathcal{O}(k_*)$ denotes a linear factor in $k_*$ whose constant depends on the underlying separation structure of the mixing measure, including the single-cluster, multi-cluster, and unstructured regimes. By integrating this test function with respect to the signed measure $\nu=G-G_*$ and applying the above lower bound, we can recover the second-order center reallocation error
\[
\sum_{i=1}^{k}
\pi_i
\|\theta_i-\theta_{c(i)}^*\|^2,
\]
which provides the necessary control of the $W_2$ discrepancy.

\subsection{Proof Sketch of Theorem~\ref{theorem:exact_one_group_univariate}}
\begin{proof}[Proof Sketch of Theorem~\ref{theorem:exact_one_group_univariate}]
For ease of presentation, we will sketch the proof for the univariate setting only, that is, when $d=1$, while full proofs of univariate and multivariate settings will be provided in the appendices. 

We proceed to derive the global bound in six main steps. For every fitted atom $\theta_i$, let $c(i) = \text{argmin}_j |\theta_i - \theta_j^*|$ be the index of true center that is closest to that fitted atom. To further facilitate the arguments, we unconditionally partition the components of $G$ into two disjoint sets based on a strictly enforced boundary of $\Delta_{\mathrm{sep}}/4$:
\begin{itemize}
    \item The near set $\mathcal{M}_{\mathrm{near}} := \{ i \in [k_*] : |\theta_{i} - \theta_{c(i)}^{*}| \le \Delta_{\mathrm{sep}} / 4 \}$. We denote $\bar{\mathcal{V}}_j = \{i \in \mathcal{M}_{\mathrm{near}} : c(i) = j\}$ as a subset of the Voronoi cells $\mathcal{V}_{j}$ and $\Delta \pi_j : = \left( \sum_{i \in \bar{\mathcal{V}}_j} \pi_i \right) - \pi_j^*$.
    \item The far set $\mathcal{M}_{\mathrm{far}} := \{ i \in [k_*] : |\theta_{i} - \theta_{c(i)}^{*}| > \Delta_{\mathrm{sep}} / 4 \}$. We denote the total far mass as $\pi_{\mathrm{far}} = \sum_{i \in \mathcal{M}_{\mathrm{far}}} \pi_i$.
\end{itemize}
\emph{Step 1 - Wasserstein decomposition.} By involving an intermediate mixing measure $\widetilde{G} = \sum_{i=1}^k \pi_i \delta_{\theta_{c(i)}^*}$ and with a suitable choice of transportation map, we can decompose the Wasserstein distance $W_1(G,G_*)$ as 
\begin{align*}
W_1(G, G_*) \le \sum_{i \in \mathcal{M}_{\mathrm{near}}} \pi_i |\theta_{i} - \theta_{c(i)}^{*}| + \sum_{i \in \mathcal{M}_{\mathrm{far}}}  \pi_i |\theta_{i} - \theta_{c(i)}^{*}| + \frac{1}{2} C_0 \Delta_{\mathrm{sep}} \left( \sum_{j=1}^{k_*} |\Delta \pi_j| + \pi_{\mathcal{F}} \right). 
\end{align*}

\emph{Step 2 - Bounding near variance and far high moment for variance. } By using test function $P_{\mathrm{var}}(\theta) : = \prod_{l=1}^{k_*} (\theta - \theta_l^*)^2$ and noting that $|\theta_i -\theta^*_{j}| \geq 3\Delta_{\mathrm{sep}}/4$ for $j \neq c(i)$, we have 
\begin{equation}
\label{eqn:dung_proof_sketch_3_2_variance_bound}
    \sum_{i\in \mathcal{M}_{\mathrm{near}}} \pi_i(\theta_i-\theta^*_{c(i)})^2 \leq C_{\mathrm{var},2}\Delta^{-(2k_*-2)}_{\mathrm{sep}}d_H(p_G,p_{G_*}).
\end{equation}

Furthermore, recall that $P_{\mathrm{var}}(\theta) : = \prod_{l=1}^{k_*} (\theta - \theta_l^*)^2$, we have
\begin{align}
    \label{eq:exact_one_sketch_7}
    \sum_{i \in \mathcal{M}_{\mathrm{far}}} \pi_i (\theta_{i} - \theta_{c(i)}^{*})^{2k_*}&\leq \sum_{i \in \mathcal{M}_{\mathrm{far}}} \pi_iP_{\mathrm{var}}(\theta_i)\nonumber\\
    &\leq \int P_{\mathrm{var}}(\theta) dG(\theta) = \int P_{\mathrm{var}}(\theta) d\nu(\theta)\leq C_{\mathrm{var}}C_{\mathrm{poly}} d_H(p_{G},p_{G_*}),
\end{align}
where the last inequality occurs due to Lemmas~\ref{lemma:Hellinger_to_Polynomial} and \ref{lemma:variance_test_function}.

\emph{Step 3 - Bounding far parameters.} For the far mean discrepancy, we have 
$$ \sum_{i \in \mathcal{M}_{\mathrm{far}}}\pi_i|\theta_{i} - \theta_{c(i)}^{*}| =  \sum_{i \in \mathcal{M}_{\mathrm{far}}}\pi_i\cdot\frac{|\theta_{i} - \theta_{c(i)}^{*}|^{2k_*}}{ |\theta_{i} - \theta_{c(i)}^{*}|^{2k_*-1}} < \left(\frac{4}{\Delta_{\mathrm{sep}}}\right)^{2k_*-1} \sum_{i \in \mathcal{M}_{\mathrm{far}}} \pi_i (\theta_{i} - \theta_{c(i)}^{*})^{2k_*}.$$

 As a result, we get
\begin{align*}
    \sum_{i \in \mathcal{M}_{\mathrm{far}}}\pi_i|\theta_{i} - \theta_{c(i)}^{*}|\leq C_{\mathrm{far,mean,1}}\Delta_{\mathrm{sep}}^{-(2k_*-1)} d_H(p_{G},p_{G_*}).
\end{align*}
For the total far mass $\pi_{\mathrm{far}}$, for any index $i \in \mathcal{M}_{\mathrm{far}}$, we have $1 = (\theta_{i} - \theta_{c(i)}^{*})^{2k_*} (\theta_{i} - \theta_{c(i)}^{*})^{-2k_*} < (\theta_{i} - \theta_{c(i)}^{*})^{2k_*} \left(\frac{4}{\Delta_{\mathrm{sep}}}\right)^{2k_*}$, which follows that
\begin{align*}
    \pi_{\mathrm{far}} = \sum_{i \in \mathcal{M}_{\mathrm{far}}} \pi_i \le \left(\frac{4}{\Delta_{\mathrm{sep}}}\right)^{2k_*} \sum_{i \in \mathcal{M}_{\mathrm{far}}} \pi_i (\theta_{i} - \theta_{c(i)}^{*})^{2k_*} \le C_{\mathrm{far,mass,1}} \Delta_{\mathrm{sep}}^{-2k_*} d_H(p_{G},p_{G_*}). 
\end{align*} 
\emph{Step 4 - Bounding the near mass discrepancy.} To extract the near mass discrepancy, we consider the Hermite polynomial function $E_j(\theta)$ defined as 
\begin{align*}
    E_j(\theta) : = \ell_j^2(\theta) \left[ \bar{A}_j + \bar{B}_j(\theta - \theta_j^*)\right], \quad \text{where} \quad \ell_j(\theta) = \prod_{q \neq j} \frac{\theta - \theta_q^*}{\theta_j^* - \theta_q^*},
\end{align*}
and $\bar{A}_j = 1$, $\bar{B}_j = - 2\ell'_j(\theta_j^*)$.
Since $E_j(\theta^*_l)=\delta_{jl}$ and $E_j'(\theta^*_l)=0$, for all $l\in[k_*]$, we find that
\begin{align*}
    \int E_j(\theta) d\nu(\theta) &=\sum_{i=1}^{k_*} \pi_{i} E_j(\theta_{i}) - \pi_j^*=\sum_{i \in \mathcal{M}_{\mathrm{near}}} \pi_i E_j(\theta_i)- \pi_j^*+\sum_{i \in \mathcal{M}_{\mathrm{far}}} \pi_i E_j(\theta_i)\\
    &=\Delta \pi_j + \frac{1}{2}\sum_{i \in \mathcal{M}_{\mathrm{near}}} \pi_{i} E_j''(\xi_i) (\theta_{i} - \theta_{c(i)}^{*})^2 + \sum_{i \in \mathcal{M}_{\mathrm{far}}} \pi_i E_j(\theta_i),
\end{align*}
where the last equality is obtained by performing the Taylor expansion for $E_j(\theta^*_i)$ around $\theta^*_{c(i)}$, for  $i\in\mathcal{M}_{\mathrm{near}}$. Using Lemma \ref{lemma:Hellinger_to_Polynomial} and Lemma \ref{lemma:mass_test_function} to bound the integral with respect to $E_j$ and Lemma \ref{lemma:mass_test_function} and estimation \eqref{eqn:dung_proof_sketch_3_2_variance_bound} to estimate $\sum_{i \in \mathcal{M}_{\mathrm{near}}} \pi_{i} E_j''(\xi_i) (\theta_{i} - \theta_{c(i)}^{*})^2$, we have 
\begin{align*}
    \left|\int E_j(\theta) d\nu(\theta)\right|+\frac{1}{2}\sum_{i \in \mathcal{M}_{\mathrm{near}}} \pi_{i} |E_j''(\xi_i)| (\theta_{i} - \theta_{c(i)}^{*})^2\leq \bar{C}_{\text{mass}}\Delta_{\mathrm{sep}}^{-2k_*} d_H(p_{G},p_{G_*}),
\end{align*}
where $\bar{C}_{\text{mass}} = 2R C_{\mathrm{poly}}C_{\mathrm{norm},E} + \frac{1}{2} C_{E,1} C_{\mathrm{var},2}$. Thus, it is sufficient to bound $$\sum_{i \in \mathcal{M}_{\mathrm{far}}} \pi_i |E_j(\theta_i)|.$$ For any $i\in\mathcal{M}_{\mathrm{far}}$, we have $\Delta_{\mathrm{sep}}<4|\theta_i - \theta_{c(i)}^*|$.
Based on this property, we find that
$$|E_j(\theta_i)|\leq C_{\mathrm{far,E,1}}\Delta_{\mathrm{sep}}^{-(2k_*-1)}|\theta_i - \theta_{c(i)}^*|^{2k_* - 1} \leq 4 C_{\mathrm{far,E,1}}\Delta_{\mathrm{sep}}^{-2k_*}|\theta_i - \theta_{c(i)}^*|^{2k_*},$$ where $C_{\mathrm{far,E,1}}>0$ is some universal constant. This leads to
\begin{align*}
\sum_{i \in \mathcal{M}_{\mathrm{far}}} \pi_i |E_j(\theta_i)| &\le 4C_{\mathrm{far,E,1}} \Delta_{\mathrm{sep}}^{-2k_*} \sum_{i \in \mathcal{M}_{\mathrm{far}}} \pi_i |\theta_i - \theta_{c(i)}^*|^{2k_*} \nonumber\\
& \leq 4 C_{\mathrm{far,E,1}} C_{\mathrm{var}}C_{\mathrm{poly}} \Delta_{\mathrm{sep}}^{-2k_*} d_H(p_{G},p_{G_*}), 
\end{align*}
where the last inequality comes from equation~\eqref{eq:exact_one_sketch_7}. As a consequence, we obtain
\begin{align}
    \label{eq:exact_one_sketch_9}
    |\Delta \pi_j| & \le  C_{\mathrm{mass},2}\Delta_{\mathrm{sep}}^{-2k_*} d_H(p_{G},p_{G_*}). 
\end{align}
\emph{Step 5 - Bounding the near mean discrepancy.} To extract the near mass discrepancy, we utilize the Hermite polynomial function $H_j(\theta)$ given by 
\begin{align*}
    H_{j}(\theta) : = \ell_j^2(\theta) (\theta - \theta_j^*),  \quad \text{where} \quad \ell_j(\theta) : = \prod_{q \neq j} \frac{\theta - \theta_q^*}{\theta_j^* - \theta_q^*}, 
\end{align*}
This polynomial satisfies $H_j(\theta_l^*) = 0$ and $H'_j(\theta_l^*) = \delta_{jl}$. By taking the integral of $H_j$ with respect to sign measure $\nu$ and using Taylor expansion for $H_j$ at $\theta_i$ near its true center $\theta^*_{c(i)}$, we have 
\begin{align*}
    \int H_j(\theta) d\nu(\theta) = \sum_{i \in \bar{\mathcal{V}}_{j}} \pi_{i}(\theta_{i} - \theta_{j}^{*}) + \frac{1}{2} \sum_{l=1}^{k_*} \sum_{i \in \bar{\mathcal{V}}_{l}} \pi_{l} H_{j}''(\xi_l) (\theta_{i} - \theta_{l}^{*})^2.
\end{align*}
Note that a sub-Voronoi cell $\bar{\mathcal{V}}_j$ in this global setting may contain more than one element. Thus, we consider two cases as follows.

\emph{Case 5.1.} There exists a sub-Voronoi cell that has more than one element. Since $G$ and $G_*$ have the same number of centers, there must exist at least one Voronoi cell of $G_*$ that contains no center of $G$. Without loss of generality, assume that this cell is $\bar{\mathcal{V}}_1$. This assumption together with equation~\eqref{eq:exact_one_sketch_9} implies
\begin{align*}
    \pi^*_{\min}\leq \pi_1^*=|\Delta\pi_1|\leq C_{\mathrm{mass},2}\Delta_{\mathrm{sep}}^{-2k_*} d_H(p_{G},p_{G_*}). 
\end{align*}
Since $|\theta_{i} - \theta_{c(i)}^{*}| \leq \Delta_{\mathrm{sep}}/4$ for all $i \in \mathcal{M}_{\mathrm{near}}$, we have
\begin{align*}
    \sum_{i \in \mathcal{M}_{\mathrm{near}}} \pi_i |\theta_{i} - \theta_{c(i)}^{*}| \leq \dfrac{\Delta_{\mathrm{sep}}}{4} & \leq \dfrac{\Delta_{\mathrm{sep}}}{4} \cdot \dfrac{C_{\mathrm{mass},2} \Delta_{\mathrm{sep}}^{-2k_*} d_H(p_{G},p_{G_*})}{\pi_{\text{min}}^{*}} \nonumber \\
    & = C_{\mathrm{near,mean,1}} (\pi_{\text{min}}^{*})^{-1}\Delta_{\mathrm{sep}}^{-(2k_* - 1)}d_H(p_{G},p_{G_*}). 
\end{align*}

\emph{Case 5.2.} There are no sub-Voronoi cells $\bar{\mathcal{V}}_{j}$ that have more than one element. If there exists some empty sub-Voronoi cell $\bar{\mathcal{V}}_{j}$, we can argue in a similar fashion to Case 5.1. Therefore, it is sufficient to assume that all the sub-Voronoi cells $\bar{\mathcal{V}}_{j}$ have exactly one element. Without loss of generality, we can suppose that $\theta_j \in \bar{\mathcal{V}}_j$ for all $j$.  In this case, we have 
\begin{equation}
\label{eqn:dung_proof_sketch_thm_3_1_mean_discrepancy}
    \pi_i(\theta_i - \theta^*_{i}) = \int H_j(\theta)d\nu(\theta) - \frac{1}{2}\sum_{j=1}^{k_*}\pi_jH''_j(\xi_j)(\theta_j-\theta_j^*)^2. 
\end{equation}

Note that we can employ the results of Lemma~\ref{lemma:Hellinger_to_Polynomial} and Part (a) of Lemma~\ref{lemma:mean_test_function} to bound the first term in RHS of \eqref{eqn:dung_proof_sketch_thm_3_1_mean_discrepancy} as
\begin{align}
    \label{eq:exact_one_sketch_2}
    \left|\int H_j(\theta) d\nu(\theta)\right| \leq C_{\mathrm{poly}}\|H_j\|_{\infty}d_H(p_{G},p_{G_*})\leq C_{\mathrm{poly}}C_{\mathrm{norm},H}\Delta_{\mathrm{sep}}^{-(2k_*-2)} d_H(p_{G},p_{G_*}).
\end{align}
where $C_{\mathrm{poly}},C_{\mathrm{norm},H}>0$ are some universal constants. Regarding the second term in RHS of \eqref{eqn:dung_proof_sketch_thm_3_1_mean_discrepancy}, we use the variance discrepancy in \eqref{eqn:dung_proof_sketch_3_2_variance_bound}. In addition, Part (a) of Lemma~\ref{lemma:mean_test_function} also gives us that $\max_{\theta \in \mathcal{V}_{l}: |\theta - \theta_{l}^{*}| \leq \Delta_{\mathrm{sep}}/4} |H_j''(\theta)|\leq \Delta_{\mathrm{sep}}^{-1} C_{H,1}$. Finally, we get 
$\sum_{i = 1}^{k_*} \pi_{i}|\theta_{i} - \theta_{i}^{*}| \leq C_{\mathrm{mean},2}\Delta_{\mathrm{sep}}^{-(2k_*-1)} d_H(p_{G},p_{G_*})$. This together with the fact that $1\leq(\pi_{\text{min}}^{*})^{-1}$ implies
\begin{align*}
    \sum_{i \in \mathcal{M}_{\mathrm{near}}} \pi_i |\theta_{i} - \theta_{c(i)}^{*}| &= \sum_{j = 1}^{k_{*}} \sum_{i \in \bar{\mathcal{V}}_{j}} \pi_{i}|\theta_{i} - \theta_{j}^{*}| \leq C_{\mathrm{near,mean,2}}(\pi_{\text{min}}^{*})^{-1} \Delta_{\mathrm{sep}}^{-(2k_* - 1)}d_H(p_{G},p_{G_*}).
\end{align*}
\emph{Step 6 - Bounding $W_1(G,G_*)$ and conclusion.} Putting the results of the above five steps, we reach the global bound.
\end{proof}

\subsection{Proof Sketch of Theorem~\ref{theorem:exact_multi_group}}
\begin{proof}[Proof Sketch of Theorem~\ref{theorem:exact_multi_group}]
    Similar to the proof sketch of Theorem~\ref{theorem:exact_one_group_univariate}, we will only sketch the proof for the univariate setting for ease of presentation. 

We proceed to prove the local bound in seven steps. Before that, we unconditionally partition the $k_*$ atoms of $G$ into three disjoint sets based on exact spatial thresholds 
\begin{itemize}
    \item The micro near set $\mathcal{M}_{\mathrm{mic}} = \{ i : |\theta_i - \theta_{c(i)}^*| \le \frac{\Delta_{\mathrm{sep}}}{4} \}$. We define the explicit Voronoi cells strictly inside this core as $\bar{\mathcal{V}}_j = \{i \in \mathcal{M}_{\mathrm{mic}} : c(i) = j\}$. 
    \item The macro near set $\mathcal{M}_{\mathrm{mac}} = \{ i : \frac{\Delta_{\mathrm{sep}}}{4} < |\theta_i - \theta_{c(i)}^*| \le \frac{D_0}{4} \}$, which includes atoms that have escaped the micro near set but still associate with the cluster $\mathcal{C}_{m(i)}$. We denote the total macro near mass as $\pi_{\mathrm{mac}} = \sum_{i \in \mathcal{M}_{\mathrm{mac}}} \pi_i$.
    \item The far void set $\mathcal{M}_{\mathrm{far}} = \{ i : |\theta_i - \theta_{c(i)}^*| > \frac{D_0}{4} \}$, which consists of atoms in the void between distinct clusters. We denote the total far mass as $\pi_{\mathrm{far}} = \sum_{i \in \mathcal{M}_{\mathrm{far}}} \pi_i$.
\end{itemize}
\emph{Step 1 - Wasserstein decomposition.} Let $\Delta \pi_j : = \left( \sum_{i \in \bar{\mathcal{V}}_j} \pi_i \right) - \pi_j^*$ and $\Delta\Pi_m = \sum_{j \in \mathcal{C}_m} \tilde{\Delta}\pi_j$, then by using intermediate measure $\tilde{G} = \sum_{j=1}^{k_*}\pi_j\delta_{\theta^*_{c(j)}}$ and suitable transportation plan, we have
\begin{align*}
\nonumber
    W_1(G, G_*) &\le \sum_{i \in \mathcal{M}_{\mathrm{mic}}} \pi_i|\theta_{i}-\theta^*_{c(i)}| \ + \sum_{i \in \mathcal{M}_{\mathrm{mac}} \cup \mathcal{M}_{\mathrm{far}}} \pi_i|\theta_{i}-\theta^*_{c(i)}|+C_0 \Delta_{\mathrm{sep}} \sum_{j=1}^{k_*} |\Delta \pi_j|\\
    &\hspace{4cm} + C_0 \Delta_{\mathrm{sep}} \big(\pi_{\mathcal{M}_{\mathrm{mac}}} + \pi_{\mathcal{M}_{\mathrm{far}}}\big) +2R \sum_{m=1}^{k_0} |\Delta \Pi_m|.
\end{align*}

\emph{Step 2 - Bounding variance.} For this purpose, let us revisit the polynomial $ P_{\mathrm{var}}(\theta) = \prod_{l=1}^{k_*} (\theta - \theta_l^*)^2$. Integrating $P_{\mathrm{var}}$ with respect to signed measure $\nu$ and using Lemma \ref{lemma:Hellinger_to_Polynomial} and Lemma \ref{lemma:variance_test_function}, we achieve 
\begin{align*}
\left|\int P_{\mathrm{var}}(\theta) dG(\theta)\right| = \left|\int P_{\mathrm{var}}(\theta) d\nu(\theta)\right| \le C_{\mathrm{poly}}C_{\mathrm{var}} d_H(p_{G},p_{G_*}).
\end{align*}

Using the relative position of $\theta_i$ with respect to its nearest true center, we obtain the following bound on $P_{\mathrm{var}}(\theta_i)$:
\begin{align*}
    P_{\mathrm{var}}(\theta_i) \geq C_{\mathrm{mic}} \cdot \Delta_{\mathrm{sep}}^{2s_{\max}-2} |\theta_i-\theta^*_{c(i)}|_2^2 \quad \text{for all } i \in \mathcal{M}_{\mathrm{mic}},
\end{align*}
\begin{align*}
P_{\mathrm{var}}(\theta_i)\geq C_{\mathrm{mac}}\cdot\Delta_{\mathrm{sep}}^{2s_{\max}-2}|\theta_i - \theta_{c(i)}^*|_2^2 \quad \text{for all } i \in \mathcal{M}_{\mathrm{mac}},
\end{align*}
\begin{align*}  
P_{\mathrm{var}}(\theta_i) \geq C_{\mathrm{far}} \cdot \Delta^{2s_{\max}-2}_{\mathrm{sep}}|\theta_i-\theta^*_{c(i)}|_2^2 \quad \text{for all } i \in \mathcal{M}_{\mathrm{far}}.
\end{align*}
Combining these estimation together, we have 
\begin{equation}
\label{eqn:dung_sketch_thm_3_2_variance_bound}
    \sum_{i=1}^{k_*} \pi_i|\theta_i - \theta^*_{c(i)}|^2\leq C_{\mathrm{var},2} \Delta^{2s_{\max}-2}_{\mathrm{sep}}d_H(p_G,p_{G_*}). 
\end{equation}

\emph{Step 3 - Bounding macro- and far-related quantities.}
For macro-related quantities, we first bound the high moment $\sum_{i\in \mathcal{M}_{\mathrm{mac}}}\pi_i|\theta_i - \theta^*_{c(i)}|^{2s_{\max}}$. For $i \in \mathcal{M}_{\mathrm{mac}}$, we first achieve a lower bound of $P_{\mathrm{var}}(\theta_i)$ based on  $|\theta_i - \theta^*_{c(i)}|^{2s_{\max}}$, before achieving 
\begin{equation*}
    \sum_{i \in \mathcal{M}_{\mathrm{mac}}} \pi_i |\theta_i - \theta^*_{c(i)}|^{2s_{\max}} \leq C_{\mathrm{mac,moment,1}}\cdot d_{H}(p_G,p_{G_*}). 
\end{equation*}
From this, as $|\theta_i - \theta^*_{c(i)}| > \Delta_{\mathrm{sep}}/4$, we have 
$$\pi_{\mathrm{mac}} \leq C_{\mathrm{mac,mass,1}}\Delta^{-2s_{\max}}_{\mathrm{sep}}d_H(p_G,p_{G_*}).$$

For $i\in\mathcal{M}_{\mathrm{far}}$, we have $|\theta_i - \theta_l^*|\geq |\theta_i-\theta^*_{c(i)}| > \frac{D_0}{4}$, for all $1\leq l\leq k_*$, which leads to $P_{\mathrm{var}}(\theta_i) \geq \left(\frac{D_0}{4}\right)^{2k_*}$. As a result, we obtain 
\begin{align}
\nonumber
    \pi_{\mathrm{far}} = \sum_{i \in \mathcal{M}_{\mathrm{far}}} \pi_i &\leq \sum_{i \in \mathcal{M}_{\mathrm{far}}}\left(\frac{D_0}{4}\right)^{-2k_*} \pi_iP_{\mathrm{var}}(\theta_i) \leq \left(\frac{D_0}{4}\right)^{-2k_*}\int P_{\mathrm{var}}(\theta)d\nu(\theta) \\
    &\leq  C_{\mathrm{far,mass,2}} \cdot d_H(p_{G},p_{G_*}).
\label{eq:exact_multi_sketch_16}
\end{align}

For any $i \in \mathcal{M}_{\mathrm{mac}}$, since $|\theta_i-\theta^*_{c(i)}| > \Delta_{\mathrm{sep}}/4$, we have
\begin{align*}
    \sum_{i\in\mathcal{M}_{\mathrm{mac}}} \pi_i \le \left( \frac{4}{\Delta_{\mathrm{sep}}} \right)^{2s_{\max}} \sum_{i\in\mathcal{M}_{\mathrm{mac}}} \pi_i |\theta_i-\theta^*_{c(i)}|^{2s_{\max}}. 
\end{align*}
Recall that $|\theta_i - \theta^*_{c(i)}|\leq 2R$, we can show that get $P_{\mathrm{var}}(\theta_i) \geq C_{\min}|\theta_i - \theta^*_{c(i)}|^{2s_{\max}}$, where $C_{\min}>0$ is some universal constant.
Thus, we have 
\begin{equation}
\label{eq:exact_multi_sketch_14}
    \sum_{i\in \mathcal{M}_{\mathrm{mac}}} \pi_i |\theta_i-\theta^*_{c(i)}|^{2s_{\max}} \leq C_{\min}^{-1}\int P_{\mathrm{var}}(\theta)d\nu(\theta)\leq C_{\min}^{-1}C_{\mathrm{poly}}C_{\mathrm{var}}d_H(p_{G},p_{G_*}).
\end{equation}
As a result, we obtain the following 
\begin{align*}
    \pi_{\mathrm{mac}} = \sum_{i\in\mathcal{M}_{\mathrm{mac}}} \pi_i \le C_{\mathrm{mac,mass,1}} \Delta_{\mathrm{sep}}^{-2s_{\max}} d_H(p_{G},p_{G_*}).
\end{align*}

\emph{Step 4 - Bounding cluster mass discrepancy.} For each cluster $1 \leq m \leq k_{0}$, we aim to build a polynomial $P_m(\theta)$ satisfying exactly $2k_*$ Hermite interpolation conditions at the true centers $\theta_j^*$, that is,
\begin{align}
    \label{eq:polynomial_conditions}
    P_m(\theta_j^*) = 1_{\left\{j \in \mathcal{C}_m\right\}}, \qquad P_m'(\theta_j^*) = 0, \qquad \forall j\in[k_*].
\end{align}

Given these conditions, for any $j\in[k_{*}]$, a second-order Taylor expansion to $P_m(\theta_j)$ around $\theta^*_j$, that is,
$P_m(\theta_j) = 1_{\left\{j \in \mathcal{C}_m\right\}} + \frac{1}{2} P_m''(\xi_j) (\theta_j - \theta_j^*)^2$, helps extract the term $\Delta\Pi_m$ as
\begin{align*}
\Delta\Pi_m =  \int P_m(\theta)d\nu(\theta) + \sum_{c(i) \in\mathcal{C}_m}\pi_{i}\mathbf{1}_{\{i \in\mathcal{M}_{\mathrm{far}}\}} &-  \frac{1}{2} \sum_{i \in \mathcal{M}_{\mathrm{mic}} \cup \mathcal{M}_{\mathrm{mac}}} \pi_i P_m''(\xi_i) |\theta_i-\theta^*_{c(i)}|^2\\
    &-  \sum_{i \in \mathcal{M}_{\mathrm{far}}} \pi_i P_m(\theta_i).
\end{align*} 

Thus, it boils down to bound four terms in the above RHS. Before that, we need to find such polynomial functions $P_m(\theta)$, for $m\in[k_0]$, which is the primary challenge in this proof. The main idea is to employ the Newton interpolation technique based on divided differences in \eqref{eqn:dung_divided_difference}. Although the complicated cluster structure of the interpolation points may initially raise concerns about the complexity of the interpolation scheme and the potential growth of the polynomial coefficients, this issue can be resolved by exploiting the connection between the interpolation polynomial and a smooth function satisfying the conditions in \eqref{eq:polynomial_conditions}.

More precisely, we construct a bump function $\Phi$ that separates the points within a cluster from those outside the cluster. The corresponding divided differences can then be controlled through the derivatives of $\Phi$, which in turn provides the desired bounds on the interpolation coefficients. The detailed construction is presented in Section~\ref{sec:cluster_wise_test_function}. 

Upon constructing desirable polynomial $P_m$, by means of Lemmas~\ref{lemma:Hellinger_to_Polynomial} and \ref{lemma:dung_bound_for_cluster_mass_extracting}, we have 
\begin{align}
\nonumber
    |\Delta \Pi_m| &\le C_{\mathrm{poly}}C_{\mathrm{norm},P}\cdot d_H(p_{G},p_{G_*})\\
    &+\frac{1}{2} C_{P,2} \left( \sum_{\mathcal{M}_{\mathrm{mic}} \cup \mathcal{M}_{\mathrm{mac}}} \pi_i |\theta_i-\theta^*_{c(i)}|^2 \right) \ + \ (C_{\mathrm{norm},P} + 1) \pi_{\mathrm{far}}.
    \label{eq:exact_multi_sketch_8}
\end{align}

Here, we can employ the variance bound in \eqref{eqn:dung_sketch_thm_3_2_variance_bound} and far mass bound in \eqref{eq:exact_multi_sketch_16} to achieve the following bound

\begin{align*}
    |\Delta \Pi_m| &\le C_{\Delta\Pi,1} \Delta_{\mathrm{sep}}^{-(2s_{\max}-1)}d_H(p_{G},p_{G_*}).
\end{align*}

\emph{Step 5 - Bounding mass discrepancy.} Let us consider the polynomial $$\bar{E}_{j}(\theta) : =  \ell_{j,\text{micro}}^2(\theta) [\bar{A}_{j} + \bar{B}_{j}(\theta - \theta_{j}^{*})] P_{\text{macro}}(\theta),$$ where $\ell_{j,\text{micro}}(\theta) = \prod_{q \in \mathcal{C}_{m}\setminus\{j\}} \frac{\theta - \theta_q^*}{\theta_j^* - \theta_q^*}$, $P_{\text{macro}}(\theta) = \prod_{p \neq m} \prod_{q \in \mathcal{C}_p} \left( \frac{\theta - \theta_q^*}{2R} \right)^{2s_{\max}}$ and
\begin{align*}
    \bar{A}_j  = 1/ P_{\text{macro}}(\theta_{j}^{*}), \quad 
    \bar{B}_j = -2\bar{A}_j \left(\sum_{q \in \mathcal{C}_m \setminus \{j\}} \frac{1}{\theta_j^* - \theta_q^*} + 1\sum_{p \neq m} \sum_{q \in \mathcal{C}_p} \frac{s_{\max}}{\theta_j^* - \theta_q^*} \right).
\end{align*}
It is obvious to check that $\bar{E}_j(\theta^*_l) = \delta_{jl}$ and $\bar{E}'_{j}(\theta_l^*) = 0$. Assume that $j\in \mathcal{C}_m$. Then, we have $\int\bar{E}_j(\theta)d\nu(\theta) = \sum_{l=1}^{k_*}\pi_l\bar{E}_{j}(\theta_l) - \pi^*_j = \left(\sum_{i\in \bar{\mathcal{V}}_j}\pi_i -\pi^*_j\right) + \sum_{l=1}^{k_*}\pi_l\bar{E}_{j}(\theta_l) - \sum_{i\in \bar{\mathcal{V}}_j}\pi_i,$
which implies that
\begin{align}
    \label{eq:exact_multi_sketch_10}
    \Delta\pi_j &= \int\bar{E}_j(\theta)d\nu(\theta)  - \sum_{l \in \mathcal{C}_m}\sum_{i\in \bar{\mathcal{V}}_l}\pi_i(\bar{E}_j(\theta_i) -\bar{E}_j(\theta^*_l)) - \sum_{\substack{i \in \mathcal{M}_{\mathrm{mic}}\\ c(i) \notin \mathcal{C}_m }}\pi_i\bar{E}_j(\theta_i) \nonumber\\
    &-\sum_{\substack{i\in \mathcal{M}_{\mathrm{mac}}\\ c(i)\in\mathcal{C}_m}}\pi_i\bar{E}_j(\theta_i)-\sum_{\substack{i\in \mathcal{M}_{\mathrm{mac}}\\ c(i)\notin\mathcal{C}_m}}\pi_i\bar{E}_j(\theta_i)- \sum_{{i\in\mathcal{M}_{\mathrm{far}}}}\pi_i\bar{E}_j(\theta_i).
\end{align}
Using Lemma \ref{lemma:Hellinger_to_Polynomial} and Lemma \ref{lemma:dung_multicluster_mass_test_function}, we have $$\left|\int \bar{E}_{j}(\theta) d\nu(\theta)\right|\leq C_{\mathrm{poly}}C_{\mathrm{norm},\bar{E}} \Delta_{\mathrm{sep}}^{-(2s_{m}-1)}d_H(p_{G},p_{G_*}).$$ Thus, it suffices to bound the last five terms in the RHS of equation~\eqref{eq:exact_multi_sketch_10}.

Regarding the first term, for each $i \in \bar{\mathcal{V}}_l$ and $l \in \mathcal{C}_m$, a Taylor-expansion of $\bar{E}_j(\theta_i)$ around $\theta^*_l$ gives 
$\bar{E}_j(\theta_i) -\bar{E}_j(\theta^*_l)  = \frac{1}{2}(\theta_i-\theta_l^*)^2\bar{E}''_{j}(\xi_i)$. This result together with Lemma~\ref{lemma:dung_multicluster_mass_test_function} yields that
\begin{align}
    \left|\sum_{l \in \mathcal{C}_m}\sum_{i\in \bar{\mathcal{V}}_l}\pi_i(\bar{E}_j(\theta_i) -\bar{E}_j(\theta^*_l))\right| &\leq C_{2,E}\Delta^{-2}_{\mathrm{sep}}\sum_{i=1}^{k_*}\pi_i|\theta_i-\theta_{c(i)}|^2\nonumber\\
    &\leq C_{2,E}C_{\mathrm{var},2}\Delta^{-2s_{\max}}_{\mathrm{sep}}d_H(p_{G},p_{G_*}), 
    \label{eq:exact_multi_sketch_11}
\end{align}
where the last inequality follows from equation~\eqref{eqn:dung_sketch_thm_3_2_variance_bound}. Regarding the rest terms, a key step is to evaluate the each term $P_{\text{macro}}(\theta_i)$, $\ell^2_{j,\text{micro}}(\theta_i)$ separately, and then obtain the bound for $|\bar{E}_{j}(\theta_i)|$ for $i$ in corresponding sets. Putting the results, we obtain 
\begin{align*}
       |\Delta\pi_j| 
    &\leq C_{\mathrm{mass,2}} \Delta^{-2s_{\max}}_{\mathrm{sep}}d_H(p_{G},p_{G_*}).
\end{align*}

\emph{Step 6 - Bounding $\sum_{j=1}^{k_*} \pi_j|\theta_j - \theta^*_{c(j)}|$.}  For this term, we consider two separate cases when $i$ belongs to $\mathcal{M}_{\mathrm{mac}}\cup \mathcal{M}_{\mathrm{far}}$ and when $i$ belongs to $\mathcal{M}_{\mathrm{near}}$. 

For $i\in\mathcal{M}_{\mathrm{mac}}\cup \mathcal{M}_{\mathrm{far}}$, note that in this case $|\theta_i-\theta^*_{c(i)}|>\Delta_{\mathrm{sep}}/4$, thus we have 
\begin{align*}
\nonumber
   \sum_{\mathcal{M}_{\mathrm{mac}}\cup\mathcal{M}_{\mathrm{far}}} \pi_i |\theta_i -\theta^*_{c(i)}| &= \sum_{\mathcal{M}_{\mathrm{mac}}\cup\mathcal{M}_{\mathrm{far}}} \pi_i |\theta_i -\theta^*_{c(i)}|^2 |\theta_i -\theta^*_{c(i)}|^{-1}\\
   &\leq \dfrac{4}{\Delta_{\mathrm{sep}}}\sum_{\mathcal{M}_{\mathrm{mac}}\cup\mathcal{M}_{\mathrm{far}}} \pi_i |\theta_i -\theta^*_{c(i)}|^2   \leq 4C_{\mathrm{var},2}\Delta_{\mathrm{sep}}^{-(2s_{\max}-1)}d_H(p_{G},p_{G_*}).
\end{align*}

For $i\in\mathcal{M}_{\mathrm{mic}}$, by streamlining similar arguments to the proof for the global bound in Theorem \ref{theorem:exact_one_group_univariate}, we only need to proof for the case that each sub-Voronoi cell contains exactly one element. Now, let us consider a polynomial of the form
$$\bar{H}_{j}(\theta) = \ell_{j,\mathrm{micro}}^2(\theta) [\bar{v}_{j}(\theta - \theta_{j}^{*})] P_{\mathrm{macro}}(\theta),$$ 
where we define $\ell_{j,\mathrm{micro}}(\theta) = \prod_{q \in \mathcal{C}_{m} \neq j} \frac{\theta - \theta_q^*}{\theta_j^* - \theta_q^*}$ and $P_{\mathrm{macro}}(\theta) = \prod_{p \neq m} \prod_{q \in \mathcal{C}_p} \left( \frac{\theta - \theta_q^*}{2R} \right)^{2s_{\max}}$ and $\bar{v}_j = 1/ P_{\mathrm{macro}}(\theta_{j}^{*}) $. We can verify that $\bar{H}_j(\theta_l^*) = 0$ and $\bar{H}_j'(\theta_l^*) = \delta_{jl}$, for all $1\leq l\leq k_*$. Assume that $j \in \mathcal{C}_{m}$, a second-order Taylor expansion of $\bar{H}_j(\theta_l)$ around $\theta^*_l$ gives 
\begin{align*}
    \int\bar{H}_{j}(\theta) d\nu(\theta) & = \sum_{l \in \mathcal{C}_m} \pi_l \bar{H}_{j}(\theta_l)+\sum_{l \notin \mathcal{C}_m} \pi_l \bar{H}_{j}(\theta_l) \\
    & = \pi_j(\theta_j - \theta_{c(j)}^*) + \frac{1}{2} \sum_{l \in \mathcal{C}_m} \pi_l \bar{H}_{j}''(\xi_l) (
    \theta_{l} - \theta^*_{c(l)})^2 + \sum_{l \notin \mathcal{C}_m} \pi_l \bar{H}_{j}(\theta_l).
\end{align*} 
Therefore, by means of the triangle inequality, we have
\begin{align}
\pi_j|\theta_j - \theta_{c(j)}^*| \leq \left|\int \bar{H}_{j}(\theta) d\nu(\theta)\right| + \frac{1}{2} \sum_{l \in \mathcal{C}_m} \pi_l |\bar{H}_{j}''(\xi_l)| (
    \theta_{l} - \theta_{l})^2 + \sum_{l \notin \mathcal{C}_m} \pi_l |\bar{H}_{j}(\theta_l)|. 
\end{align}
By means of Lemma~\ref{lemma:Hellinger_to_Polynomial} and Lemma~\ref{lemma:dung_multicluster_mean_test_function}, we have
\begin{equation*}
    \left| \int \bar{H}_{j}(\theta) d\nu(\theta) \right| \le C_{\mathrm{poly}}C_{\mathrm{norm},\bar{H}} \Delta_{\mathrm{sep}}^{-(2s_{\max}-1)} d_H(p_{G},p_{G_*}).
\end{equation*}
Furthermore, it also follows from Lemma~\ref{lemma:dung_multicluster_mean_test_function} that
\begin{align*}
        \dfrac{1}{2}\sum_{l \in \mathcal{C}_m} \pi_l |\bar{H}''_{j}(\xi_l)| (\theta_l - \theta^*_{c(l)})^2 &\leq C_{\bar{H}}\Delta^{-1}_{\mathrm{sep}} \sum_{l \in \mathcal{C}_m}\pi_l(\theta_l -\theta_{c(l)}^*)^2,\\
         \sum_{l \notin \mathcal{C}_m} \pi_l |\bar{H}_{j}(\theta_l)|&\leq C_{\mathrm{cross},\bar{H}}\sum_{l \notin \mathcal{C}_m}\pi_l(\theta_{l} - \theta_{c(l)}^{*})^{2}. 
\end{align*}
As a result, using the variance bound in \eqref{eqn:dung_sketch_thm_3_2_variance_bound}, we obtain
\begin{align*}
    \pi_j|\theta_j - \theta_{c(j)}^*| \leq C_{\mathrm{mean},1}\Delta_{\mathrm{sep}}^{-(2s_{\max}-1)}d_H(p_{G},p_{G_*}), \quad \text{.}
\end{align*}  
where $C_{\mathrm{mean},1}>0$ is some universal constant. Thus, we eventually achieve
\begin{align*}
    \sum_{j\in \mathcal{M}_{\mathrm{near}}}\pi_j|\theta_j-\theta_{c(j)}^*| \leq C_{\mathrm{mean,near}} (\pi^*_{\text{min}})^{-1}\Delta^{-(2s_{\max}-1)}_{\mathrm{sep}}d_H(p_{G},p_{G_*}).
\end{align*}
Combine the results for $i \in \mathcal{M}_{\mathrm{mac}} \cup \mathcal{M}_{\mathrm{far}}$ and $i \in  \mathcal{M}_{\mathrm{mic}}$, we obtain
\begin{align*}
    \sum_{j=1}^{k_*} \pi_j|\theta_j - \theta^*_{c(j)}|\leq C_{\mathrm{mean},2}(\pi^*_{\text{min}})^{-1}\Delta^{-(2s_{\max}-1)}_{\mathrm{sep}}d_H(p_{G},p_{G_*}).
\end{align*}
\emph{Step 7 - Conclusion.} Putting the results of the above six steps together, we reach the desired global bound.
\end{proof}

\subsection{Proof Sketch of Theorem~\ref{theorem:exact_no_group}}
\begin{proof}[Proof Sketch of Theorem~\ref{theorem:exact_no_group}]
We present the proof for the global bound in five main steps.

\emph{Step 1 - Wasserstein decomposition.} By transporting the mass through the intermediate measure $G' = \sum_{j=1}^{k_*}\pi_j \delta_{\theta^*_{c(j)}}$, we obtain a decomposition for $W_1(G, G_*)$ as 
\begin{align*}
W_1(G, G_*)
&= \sum_{j = 1}^{k_*} \sum_{i \in \mathcal{V}_j} \pi_i \, |\theta_i - \theta_j^*|
+ 2R \sum_{j = 1}^{k_*} \left| \sum_{i \in \mathcal{V}_j} \pi_i - \pi_j^* \right|.
\end{align*}
Thus, it reduces to bound two terms in the above RHS.

\emph{Step 2 - Bounding variance.} Consider the witness function 
\begin{equation*}
    P_{\mathrm{var}}(\theta) = \prod_{l=1}^{k_*}(\theta-\theta^*_{l})^2.
\end{equation*}
Since $\int P_{\mathrm{var}}(\theta) d\nu(\theta)  \ge    \left(\frac{1}{2}\right)^{2k_*-2} \Delta_{\mathrm{sep}}^{2k_*-2} \sum_{j = 1}^{k_{*}} \sum_{i \in \mathcal{V}_{j}} \pi_{i} |\theta_{i} - \theta_{j}^{*}|^2$, Lemma~\ref{lemma:Hellinger_to_Polynomial} and Lemma~\ref{lemma:variance_test_function} indicate that
\begin{align}
    \label{eq:exact_no_sketch_2}
    \sum_{j = 1}^{k_{*}} \sum_{i \in \mathcal{V}_{j}} \pi_{i} |\theta_{i} - \theta_{j}^{*}|^2 \le C_{\mathrm{var}} \Delta_{\mathrm{sep}}^{-(2k_* - 2)} d_H(p_{G},p_{G_*}).
\end{align}

\emph{Step 3 - Bounding mass discrepancy.} Let us revisit the following polynomial function:  
\begin{align*}
    E_{j}(\theta) : = \ell_j^2(\theta) \left[ \bar{A}_j + \bar{B}_j(\theta - \theta_j^*) \right], \quad \text{where} \quad \ell_j(\theta) = \prod_{q \neq j} \frac{\theta - \theta_q^*}{\theta_j^* - \theta_q^*},
\end{align*}
and $\bar{A}_j = 1$, $\bar{B}_j = - 2\ell_j'(\theta_j^*)$. By means of Taylor expansion for $E_j(\theta_l)$ exactly around its true center $\theta_l^*$, we have $E_j(\theta_l) = \delta_{jl} + \frac{1}{2} E_j''(\xi_l) (\theta_{l} - \theta_{c(l)}^{*})^2$, which implies that
$$  \left(\sum_{i \in \mathcal{V}_{j}} \pi_{i} - \pi_{j}^{*}\right)= \int E_j(\theta) d\nu(\theta)- \frac{1}{2}\sum_{l=1}^{k_*} \sum_{i \in \mathcal{V}_l} \pi_{i} E_j''(\xi_i) (\theta_{i} - \theta_{l}^{*})^2.$$ 
By means of Lemma~\ref{lemma:Hellinger_to_Polynomial} and Lemma~\ref{lemma:mass_test_function}, we have
\begin{align*}
    \left|\sum_{i \in \mathcal{V}_{j}} \pi_{i} - \pi_{j}^{*}\right|\leq C_{\mathrm{poly}}{C}_{\mathrm{norm},E} \Delta_{\mathrm{sep}}^{-(2k_*-1)}d_H(p_{G},p_{G_*})+\frac{1}{2}C_{E,2} \Delta_{\mathrm{sep}}^{-(2k_*-1)}\sum_{l=1}^{k_*} \sum_{i \in \mathcal{V}_l} \pi_{i} (\theta_{i} - \theta_{l}^{*})^2.
\end{align*}

From the variance estimation in \eqref{eq:exact_no_sketch_2}, we have 
\begin{align}
\label{eq:exact_no_sketch_1}
    \left|\sum_{i \in \mathcal{V}_{j}} \pi_{i} - \pi_{j}^{*}\right|  
& :=  C_{\mathrm{mass},2} \Delta_{\mathrm{sep}}^{-(4k_* - 3)} d_H(p_{G},p_{G_*}). 
\end{align}

\emph{Step 4 - Bounding mean discrepancy.}  We consider the Hermite polynomial given by:
\begin{align*}
    H_j(\theta) : = \ell_j^2(\theta) (\theta - \theta_j^*),  \quad \text{where} \quad \ell_j(\theta) : = \prod_{q \neq j} \frac{\theta - \theta_q^*}{\theta_j^* - \theta_q^*}.
\end{align*}
Since $H_j(\theta_l^*) = 0$ and $H_j'(\theta_l^*) = \delta_{jl}$, for all $1 \leq l \leq k_{*}$, a Taylor expansion for $H_j(\theta_i)$ around its true center $\theta_j^*$ yields
$$ \sum_{i \in \mathcal{V}_{j}} \pi_{i}(\theta_{i} - \theta_{j}^{*})=\int H_j(\theta) d\nu(\theta)  - \frac{1}{2} \sum_{l=1}^{k_*} \sum_{i \in \mathcal{V}_{l}} \pi_{l} H_j''(\xi_l) (\theta_{i} - \theta_{l}^{*})^2.$$
 
We now divide our argument into two cases:

\emph{Case 4.1.} Suppose that there exists a Voronoi cell $\mathcal{V}_j$ containing more than one element. Thus, at least one Voronoi cell is empty, and we may assume it is $\mathcal{V}_1$. In this case, it follows directly that $| \sum_{i \in \mathcal{V}_1} \pi_i - \pi_1^* | \;=\; \pi_1^*$. 
Consequently, the bound in equation~\eqref{eq:exact_no_sketch_1} implies that
\begin{align*}
    \pi_{\text{min}}^{*}\leq \pi_{1}^{*} \leq C_{\mathrm{mass},2} \Delta_{\mathrm{sep}}^{-(4k_*-3)} d_H(p_{G},p_{G_*}). 
\end{align*}
Now, since $|\theta_{i} - \theta_{c(i)}^{*}| \leq 2R$ for all $1\leq i\leq k_*$, we have
\begin{align*}
    \sum_{j = 1}^{k_{*}} \sum_{i \in \mathcal{V}_{j}} \pi_{i} |\theta_{i} - \theta_{j}^{*}| \leq 2R \sum_{i = 1}^{k_{*}} \pi_{i} = 2R & \leq 2R \cdot \dfrac{C_{\mathrm{mass},2} \Delta_{\mathrm{sep}}^{-(4k_* - 3)} d_H(p_{G},p_{G_*})}{\pi_{\text{min}}^{*}} \nonumber \\
    & := C_{\mathrm{mass,near,1}} (\pi_{\text{min}}^{*})^{-1}\Delta_{\mathrm{sep}}^{-(4k_*- 3)}d_H(p_{G},p_{G_*}).  
\end{align*}
\emph{Case 4.2.} Assume that every Voronoi cell $\mathcal{V}_j$ contains at most one element. 
When there exists an empty sub-Voronoi cell $\bar{\mathcal{V}}_j
:= \left\{
\theta:|\theta-\theta_j^*|\leq \Delta_{\mathrm{sep}}/4
\right\}$,
we can proceed using the same argument as in Case~4.1. 
It remains to consider the case where each sub-Voronoi cell contains exactly one element, which also means that each $\mathcal{V}_j$ similarly contains exactly one element. In this case, using Lemma \ref{lemma:Hellinger_to_Polynomial} for $\int H_j(\theta)d\nu(\theta)$, Lemma \ref{lemma:mean_test_function} for $H_j''$, and variance estimation in \eqref{eq:exact_no_sketch_2}, we achieve
\begin{align}
\sum_{j = 1}^{k_{*}} \sum_{i \in \mathcal{V}_{j}} \pi_{i} |\theta_{i} - \theta_{j}^{*}| \leq  C_{\mathrm{mean,near,2}} \Delta_{\mathrm{sep}}^{-(4k_* - 4)}d_H(p_{G},p_{G_*}).
\label{eq:exact_global_key_equation_thirteen} 
\end{align}
Combining the above two cases and let $C_{\mathrm{mean,2}} := \max\{C_{\mathrm{mean,near,1}}, 2RC_{\mathrm{mean,near,2}}\}$, we have 
\begin{align}
\label{eqn:dung_thm3_global_final_upper_bound_for_first_moment}
    \sum_{j = 1}^{k_{*}} \sum_{i \in \mathcal{V}_{j}} \pi_{i} |\theta_{i} - \theta_{j}^{*}| &\leq C_{\mathrm{mean,2}}(\pi_{\text{min}}^{*})^{-1}\Delta_{\mathrm{sep}}^{-(4k_* - 3)}d_H(p_{G},p_{G_*}).
\end{align}
\emph{Step 5 - Conclusion.} Put the results of Steps 1-4 together, we reach the desired global bound.
\end{proof}

\section{Discussion}
\label{sec:discussion}
In this article, we develop a geometric framework for understanding how the minimum separation and the minimum weight influence the convergence behavior of parameter estimation in finite Gaussian mixtures. By establishing lower bounds for the Hellinger distance between mixture densities in terms of Wasserstein distances between mixing measures, our analysis reveals that the impact of component separation is fundamentally determined by the spatial organization of the true components, ranging from a single compact cluster, several well-separated clusters, to an entirely unconstrained geometric configuration. We further investigate the over-specified setting, where the fitted model contains more components than the ground-truth. In this regime, a transition from first-order to second-order Wasserstein geometry alters the dependence of the convergence rates on both component separation and mixture weights: the separation complexity becomes less severe, while the minimum mixture weight no longer appears in the resulting bounds. As a consequence, our theories yield separation-dependent convergence rates that smoothly connect point-wise and uniform estimation regimes, providing a unified understanding of parameter recovery in finite Gaussian mixtures.

\subsection{Extension to other strongly identifiable mixtures}
Although our analysis is developed for finite location Gaussian mixtures, the underlying methodology is considerably more general. The key ingredient throughout the paper is Lemma~\ref{lemma:Hellinger_to_Polynomial}, which establishes a bridge between moments of signed mixing measures and the Hellinger distance between mixture densities. Once such a moment-control inequality is available, the interpolation-polynomial constructions developed in this work can be applied almost verbatim to extract geometric discrepancies and derive separation-dependent Hellinger lower bounds.

This observation suggests that the analysis of separation complexity is not unique to Gaussian mixtures. Indeed, it is plausible that analogues of Lemma 3.2 can be established by exploiting the strongly identifiable structures of several classical finite mixture models, including location mixtures of Laplace, Student-t, or mixtures of discrete distributions, such as Poisson distributions. 

Indeed, the key identity \eqref{eqn:dung_lemma_hellinger_heat_identity}, 
which relates the Gaussian convolution to the heat semigroup,
\begin{equation*}
    \mathbb{E}_{Z \sim N(0,I_d)}[g(\theta+Z)] 
    = \left(e^{\frac{1}{2}\Delta}g\right)(\theta),
\end{equation*}
can be naturally extended to a general noise distribution. More precisely, 
for a random variable $Z$ with characteristic function $\varphi_Z$, we have
\begin{equation*}
    \mathbb{E}[g(\theta+Z)] 
    = \left(\varphi_Z(-i\nabla)g\right)(\theta).
\end{equation*}
Therefore, for any test polynomial $U$, we can construct
\begin{equation*}
    g = \left(\varphi_Z(-i\nabla)\right)^{-1}U,
\end{equation*}
such that $U(\theta)=\mathbb{E}[g(\theta+Z)]$. Using this representation together with H\"{o}lder's inequality, we obtain 
an upper bound for the integral of the test function $U$ with respect to 
the signed measure $\nu$ in terms of the Hellinger distance, as in 
\eqref{eq:Hellinger_to_polynomial_1}. It remains to control the quantity
\begin{equation*}
\begin{aligned}
    &\sqrt{\int_{\mathcal{S}} g^2(x_{\mathcal{S}})
    \left(p_{G,\mathcal{S}}(x_{\mathcal{S}})
    + p_{G_{*},\mathcal{S}}(x_{\mathcal{S}})\right)
    dx_{\mathcal{S}}}   \leq
    2\sup_{\theta\in \mathcal{S}\cap B(0,R)}
    \sqrt{
    \mathbb{E}_{X\sim \mathcal{N}(\theta,I_{d_{\mathcal{S}}})}
    \left[g^2(X)\right]} .
\end{aligned}
\end{equation*}
Consequently, the extension is completed once we establish a suitable 
bound on $g=(\varphi_Z(-i\nabla))^{-1}U$ in terms of the original test 
polynomial $U$ over $B(0,R)$, which can be achieved, for instance, through 
polynomial inequalities such as Markov-type bounds. 


\subsection{Separation complexity in location-scale Gaussian mixtures}
A substantially more challenging extension concerns finite Gaussian mixtures with both unknown locations and unknown covariance matrices. Unlike the location model considered in this paper, the key moment-control inequality in Lemma 3.2 no longer holds directly in the location-scale setting. The reason is that perturbations of covariance parameters introduce higher-order interactions that cannot be captured solely through polynomial moment functionals of the mixing measure. As a result, the interpolation-polynomial arguments developed in this paper are insufficient for controlling the discrepancy between latent parameters and observed densities.

This difficulty is closely related to recent developments in the literature on location-scale mixtures, where identifiability and convergence analyses require substantially more delicate arguments than those for location mixtures. In particular, covariance perturbations generate additional cancellation mechanisms and anisotropic geometric structures that are absent in purely location-based models. Understanding how these effects interact with component separation remains largely unexplored.

We therefore conjecture that a fundamentally new family of test functions will be required to extend the notion of separation complexity to location-scale Gaussian mixtures. Developing such techniques would not only generalize the present theory but could also lead to a deeper understanding of weak identifiability phenomena in more general mixture models. 
Hence, we leave an important problem of establishing separation-dependent convergence rates in this setting for future work.

\newpage
\appendix
\addtocontents{toc}{\protect\setcounter{tocdepth}{10}}
\begin{center}
{}\textbf{\Large{Appendices for\\ \vspace{0.5em}
``On the Geometry of Separation in Finite Gaussian Mixtures''}}
\end{center}
\tableofcontents

\section{Proofs}

\subsection{Preliminary Results}
\label{sec:proof_of_preliminary_results}
\begin{proof}[Proof of Lemma~\ref{lemma:general_to_identity}]
By letting $y = \Sigma^{-1/2}x$, we have for any $i \in [1,k]$
\begin{align*}  f(x|\Sigma^{-1/2}\theta_i,I_d) &=(2\pi)^{-d/2}\exp\left(-\frac{1}{2}\|x-\Sigma^{-1/2}\theta_i\|^2\right) = (2\pi)^{-d/2}\exp\left(-\frac{1}{2}\|\Sigma^{-1/2}y-\Sigma^{-1/2}\theta_i\|^2\right)\\
&= (2\pi)^{-d/2}\exp\left(-\frac{1}{2}(y-\theta_i)^\top \Sigma^{-1}(y-\theta_i)\right) = \sqrt{\det(\Sigma)}\cdot f(y|\theta_i,\Sigma).
\end{align*}
Thus, we have $p_{\tilde{G}}(x) = \sqrt{\det\Sigma}\cdot p_{G}(y)$, thus 
\begin{align*}
    d^2_H(p_{\tilde{G}},p_{\tilde{G_*}}) &= \int_{\mathbb{R}^d}(\sqrt{p_{\tilde{G}}(y)}-\sqrt{p_{\tilde{G}_*}(y)})^{2}dy = \int_{\mathbb{R}^d}\sqrt{\det\Sigma}\cdot(\sqrt{p_{G}(x)}-\sqrt{p_{G_*}(x)})^{2}\sqrt{\det\Sigma^{-1}} dx\\
    &=d^2_H(p_{G},p_{G_*}). 
\end{align*}
As a result, $d_H(p_{\tilde{G}},p_{\tilde{G_*}}) = d_H(p_{G},p_{G_*})$. For the order preservation of Wasserstein distance, to prove the second inequality, let $\pi$ be the optimal coupling measure of $G$ and $G'$, i.e. 
\begin{equation*}
    W_p^p(G,G') = \iint_{\mathbb{R}^d\times \mathbb{R}^d}\|x-y\|^{p}d\pi(x,y). 
\end{equation*}
Then, by abusing of notation, consider the measure $\Sigma^{\#}\pi$ defined in $\mathbb{R}^d\times \mathbb{R}^d$ such that $\Sigma^{\#}\pi(A) = \pi(\Sigma^{-1}A)$, where $\Sigma^{-1}A = \{(x,y)\in \mathbb{R}^d\times\mathbb{R}^d:(\Sigma x,\Sigma y) \in A\}$. It is obvious that $\Sigma^{\#}\pi$ is a coupling measure of $\Sigma^{\#}G$ and $\Sigma^{\#}G'$. Then, we have 
\begin{align*}
W_p^p(\Sigma^{\#}G,\Sigma^{\#}G') &\leq \iint_{\mathbb{R}^d \times \mathbb{R}^d} \|x-y\|^pd\Sigma^{\#}\pi(x,y) = \iint_{\mathbb{R}^d \times \mathbb{R}^d}\|\Sigma x -\Sigma y\|^pd\pi(x,y)\\
&\leq \lambda_{\max}^p\iint_{\mathbb{R}^d \times \mathbb{R}^d}\|x-y\|^pd\pi(x,y) = W_p^p(G,G'). 
\end{align*}
Thus, $W_p^p(\Sigma^{\#}G,\Sigma^{\#}G') \leq \lambda_{\max}W_p(G,G')$. For the first inequality, we apply the result of the first quality for two measures $\Sigma^{\#}G$ and $\Sigma^{\#}G'$ and matrix $\Sigma^{-1}$. 
\end{proof}

\begin{proof}[Proof of Lemma~\ref{lemma:Hellinger_to_Polynomial}]
    Since $\mathcal{S}$ is a finite-dimensional closed linear subspace of the space $\mathbb{R}^d$, we have the decomposition $\mathbb{R}^d = \mathcal{S} \oplus \mathcal{S}^\perp$, where $\mathcal{S}^\perp$ is the orthogonal complement of $\mathcal{S}$. We denote $d_{\mathcal{S}}$ as the dimension of $\mathcal{S}$, by performing a rotation, without lost of generality, we can suppose that $\mathcal{S} = \mathbb{R}^{d_{\mathcal{S}}}$ be subspace of $\mathbb{R}^d$ with zero values for last $d-d_{\mathcal{S}}$ coordinates, while $\mathcal{S}^{\perp}$ with zero values for first $d_{\mathcal{S}}$ coordinates. Then, any vector $x \in \mathbb{R}^d$ admits a unique orthogonal projection decomposition:
$$ x = x_{\mathcal{S}} + x_{\mathcal{S}^\perp}, \quad \text{where } x_{\mathcal{S}} \in \mathcal{S} \text{ and } x_{\mathcal{S}^\perp} \in \mathcal{S}^\perp.$$

Furthermore, the Lebesgue measure  factorizes as $dx = dx_{\mathcal{S}} \otimes dx_{\mathcal{S}^{\perp}}$. For any parameter $\theta \in \mathcal{S}$, we have
$$\|x - \theta\|_2^2 = \|(x_{\mathcal{S}} - \theta) + x_{\mathcal{S}^\perp}\|_2^2 = \|x_{\mathcal{S}} - \theta\|_2^2 + \|x_{\mathcal{S}^\perp}\|_2^2.$$
Then, it is clear that $d_{\mathcal{S}} \leq \min\{k + k_{*},d\}$. Direct calculation yields that
\begin{align*}
\frac{1}{(2\pi)^{d/2}} \exp\left( -\frac{\|x - \theta\|_{2}^2}{2} \right) \
& = \left[ \frac{1}{(2\pi)^{d_{\mathcal{S}}/2}} \exp\left( -\frac{\|x_{\mathcal{S}} - \theta\|_{2}^2}{2} \right) \right] \left[ \frac{1}{(2\pi)^{(d - d_{\mathcal{S}})/2}} \exp \left( - \frac{\|x_{\mathcal{S}^{\perp}}\|_{2}^2}{2} \right) \right].
\end{align*}
Define $\phi_\perp(x_{\mathcal{S}^{\perp}}) = \frac{1}{(2\pi)^{(d - d_{\mathcal{S}})/2}} \exp \left( - \frac{\|x_{\mathcal{S}^{\perp}}\|_{2}^2}{2} \right)$. Furthermore, we define $p_{G,\mathcal{S}}(x_{\mathcal{S}}) = \sum_{i = 1}^{k} \pi_{i} f(x_{\mathcal{S}}|\theta_{i},I_{d_{\mathcal{S}}})$, $p_{G_{*},\mathcal{S}}(x_{\mathcal{S}}) = \sum_{i = 1}^{k_{*}} \pi_{i}^{*} f(x_{\mathcal{S}}|\theta_{i}^{*},I_{d_{\mathcal{S}}})$ and $$ d_H^2(p_{G,\mathcal{S}}, p_{G_*,\mathcal{S}}) = \left[ \frac{1}{2} \int_{\mathcal{S}} \left( \sqrt{p_{G,\mathcal{S}}(x_{\mathcal{S}})} - \sqrt{p_{G_*,\mathcal{S}}(x_{\mathcal{S}})} \right)^2 dx_{\mathcal{S}} \right].$$
Therefore, we obtain that
\begin{align*}
d_H^2(p_G, p_{G_*}) & = \frac{1}{2} \int_{\mathcal{S} \times \mathcal{S}^\perp} \phi_\perp(x_{\mathcal{S}^{\perp}}) \left( \sqrt{p_{G,\mathcal{S}}(x_{\mathcal{S}})} - \sqrt{p_{G_*, \mathcal{S}}(x_{\mathcal{S}})} \right)^2 dx_{\mathcal{S}} dx_{\mathcal{S}^{\perp}} \\
& = \frac{1}{2} \int_{\mathcal{S}} \left( \sqrt{p_{G,\mathcal{S}}(x_{\mathcal{S}})} - \sqrt{p_{G_*, \mathcal{S}}(x_{\mathcal{S}})} \right)^2 dx_{\mathcal{S}} = d_H^2(p_{G,\mathcal{S}}, p_{G_*,\mathcal{S}}),
\end{align*}
where the first equation is due to the fact that the term $\phi_\perp(x_{\mathcal{S}^{\perp}})$ is a Gaussian probability density function on $\mathcal{S}^\perp$, which means that $\int_{\mathcal{S}^\perp} \phi_\perp(x_{\mathcal{S}^{\perp}}) dx_{\mathcal{S}^{\perp}} = 1$.

For any polynomial function $U: \mathcal{S}  \to \mathbb{R}$, we choose the function $g:\mathcal{S} \to \mathbb{R}$ such that for any $\theta \in \mathcal{S}$
\begin{equation}
\label{eqn:dung_lemma_hellinger_heat_identity}
U(\theta) = \mathbb{E}_{X \sim N(\theta, I_{d_{\mathcal{S}}})}[g(X)] = \mathbb{E}_{Z \sim N(0,I_{d_{\mathcal{S}}})}[g(Z + \theta)].    
\end{equation}
Indeed, since $\mathcal{S} \cong \mathbb{R}^{d_{\mathcal{S}}}$ we have 
$$\mathbb{E}_{Z \sim N(0, I_{d_{\mathcal{S}}})}[g(Z + \theta)] = e^{\frac{\Delta_{L}}{2}}g(\theta) = \sum_{m = 0}^{\infty} \frac{1}{2^{m} m!} \Delta_{L}^{m} g(\theta)$$ 
where $\Delta_{L}$ denotes the Laplacian operator defined in subspace $\mathcal{V}$, namely, $\Delta_{L} = \sum_{i=1}^{d_{\mathcal{S}}}\frac{\partial^2}{\partial x_i^2}$, and $\Delta_{L}^{m} g = \underbrace{\Delta_{L} (\Delta_{L} (\dots (\Delta_{L}}_{m \text{ times}} g) \dots ))$ for any $m \geq 0$. 

It indicates that $g = e^{-\frac{\Delta_{L}}{2}} U$. Assume that $U$ has the degree of $D$. Then, this formulation means that
$$ g(x_{\mathcal{S}}) = \sum_{m=0}^{\lfloor D/2 \rfloor} \frac{(-1)^m}{2^m m!} \Delta_L^m U(x_{\mathcal{S}}).$$
Now, given the formulation of the function $U$, we have
$$\sum_{i=1}^k \pi_i U(\theta_i) - \sum_{j=1}^{k_*} \pi_j^* U(\theta_j^*) = \int_{\mathcal{S}} U(\theta) d(G-G_{*})(\theta) = \int_{\mathcal{S}} \int_{\mathcal{S}} g(x_{\mathcal{S}}) f(x_{\mathcal{S}}|\theta,I_{d_{\mathcal{S}}}) dx_{\mathcal{S}} d(G-G_{*})(\theta).$$
An application of the Fubini's theorem leads to
\begin{align*}
\int_{\mathcal{S}} \int_{\mathcal{S}} g(x_{\mathcal{S}}) f(x_{\mathcal{S}}|\theta,I_{d_{\mathcal{S}}}) dx_{\mathcal{S}} d(G-G_{*})(\theta) & = \int_{\mathcal{S}} g(x_{\mathcal{S}}) \left(\int_{\mathcal{S}} f(x_{\mathcal{S}}|\theta,I_{d_{\mathcal{s}}}) d(G-G_{*})(\theta)\right) dx_{\mathcal{S}} \\ 
& = \int_{\mathcal{S}} g(x_{\mathcal{S}}) (p_{G, \mathcal{S}}(x_{\mathcal{S}}) - p_{G_{*}, \mathcal{S}}(x_{\mathcal{S}})) dx_{\mathcal{S}}.
\end{align*}
An application of the Holder's inequality leads to
\begin{align*}
& \left(\int_{\mathcal{S}} g(x_{\mathcal{S}}) (p_{G,\mathcal{S}}(x_{\mathcal{S}}) - p_{G_{*},\mathcal{S}}(x_{\mathcal{S}})) dx_{\mathcal{S}} \right)^2 \\
& \leq \left(\int_{\mathcal{S}} g^2(x_{\mathcal{S}})(\sqrt{p_{G,\mathcal{S}}(x_{\mathcal{S}})} + \sqrt{p_{G_{*},\mathcal{S}}(x_{\mathcal{S}})})^2dx_{\mathcal{S}}\right)\left(\int_{\mathcal{S}} \frac{(p_{G, \mathcal{S}}(x_{\mathcal{S}}) - p_{G_{*}, \mathcal{S}}(x_{\mathcal{S}}))^2}{(\sqrt{p_{G, \mathcal{S}}(x_{\mathcal{S}})} + \sqrt{p_{G_{*}, \mathcal{S}}(x_{\mathcal{S}})})^2} dx_{\mathcal{S}}\right) \\
& = 2 d_H^2(p_{G,\mathcal{S}},p_{G_{*},\mathcal{S}}) \int_{\mathcal{S}} g^2(x_{\mathcal{S}})(\sqrt{p_{G,\mathcal{S}}(x_{\mathcal{S}})} + \sqrt{p_{G_{*},\mathcal{S}}(x_{\mathcal{V}})})^2dx_{\mathcal{V}} \\ 
& \leq 4 d_H^2(p_{G},p_{G_{*}}) \int_{\mathcal{S}} g^2(x_{\mathcal{S}})(p_{G,\mathcal{S}}(x_{\mathcal{S}}) + p_{G_{*},\mathcal{S}}(x_{\mathcal{S}}))dx_{\mathcal{S}}, 
\end{align*}
where the final inequality is due to Cauchy-Schwarz's inequality. 

Putting these results together leads to
\begin{align}
\left|\sum_{i=1}^k \pi_i U(\theta_i) - \sum_{j=1}^{k_*} \pi_j^* U(\theta_j^*)\right| \leq 2 d_H(p_{G}, p_{G_{*}}) \sqrt{\int_{\mathcal{S}} g^2(x_{\mathcal{S}})(p_{G,\mathcal{S}}(x_{\mathcal{S}}) + p_{G_{*},\mathcal{S}}(x_{\mathcal{S}}))dx_{\mathcal{S}}}. \label{eq:Hellinger_to_polynomial_1}
\end{align}

Now, we proceed to upper bound $\int_{\mathcal{S}} g^2(x_{\mathcal{S}})(p_{G,\mathcal{S}}(x_{\mathcal{S}}) + p_{G_{*},\mathcal{S}}(x_{\mathcal{S}}))dx_{\mathcal{S}}$. It is clear that
\begin{align}
\int_{\mathcal{S}} g^2(x_{\mathcal{S}})(p_{G,\mathcal{S}}(x_{\mathcal{S}}) + p_{G_{*},\mathcal{S}}(x_{\mathcal{S}}))dx_{\mathcal{S}} \leq 2 \sup_{\theta \in \mathcal{S} \cap B(0,R)} \mathbb{E}_{X \sim \mathcal{N}(\theta, I_{d_{\mathcal{S}}})} [g^2(X)].\label{eq:Hellinger_to_polynomial_2}
\end{align}
Recall that, $g(x_{\mathcal{S}}) = \sum_{m=0}^{\lfloor D/2 \rfloor} \frac{(-1)^m}{2^m m!} \Delta_L^m U(x_{\mathcal{S}})$. We first proceed to bound $\max_{x \in \mathcal{S} \cap B(0,R)} |g(x)|$. Since $U$ is a polynomial of degree $D$, using multivariate Markov polynomial inequalities (see \cite{duffin1941refinement}, or Theorem 1, \cite{HARRIS2008350}), we obtain that
$$\|\partial_{x_i}^2 U\|_\infty  \le \frac{T^{(2)}_{D}(1)}{R^2} \|U\|_\infty = \frac{D^4-D^2}{3R^2} \|U\|_\infty \leq \frac{D^4}{R^2}\|U\|_\infty,$$
where $T_{D}$ is the degree $D$-Chebyshev polynomial. Therefore, we have
$$\|\Delta_{L} U\|_\infty \le d_{\mathcal{V}} \frac{D^4}{R^2} \|U\|_\infty.$$
By repeating this argument $m$ times, we arrive at
$$\|\Delta_{L}^m U\|_\infty \le \left( \frac{d_{\mathcal{V}} D^4}{R^2} \right)^m \|U\|_\infty.$$
As a consequence, 
$$ \max_{x \in \mathcal{S} \cap \mathcal{B}(0, R)} |g(x)| \le \sum_{m=0}^{\lfloor D/2 \rfloor} \frac{1}{2^m m!} \left( \frac{d_{\mathcal{V}} D^4}{R^2} \right)^m \|U\|_\infty \le \exp\left( \frac{d_{\mathcal{V}} D^4}{2 R^2} \right) \|U\|_\infty := M_{f}.$$
From the extremal growth properties of Chebyshev polynomials, any polynomial bounded by $M_f$ on $[-R, R]$ cannot grow faster than $M_f |T_D(\|x\|_2/R)|$ outside the boundary (see Theorem 1, \cite{HARRIS2008350}), namely, 
$$|g(x)| \leq M_{f} \max\left\{1, |T_D(\|x\|_2/R)|\right\}.$$
Since $|T_D(y)| \le (2|y|)^D$ for all $|y| \ge 1$, we obtain further that 
$$ |g(x)| \le M_f \max\left(1, \left( \frac{2\|x\|_2}{R} \right)^D\right) \le M_f \left( 1 + \frac{2\|x\|_2}{R} \right)^D.$$
Now, for any $X = \theta + Z$ where $Z \sim \mathcal{N}(0, I_{d_{\mathcal{S}}})$, substituting the global Chebyshev bound leads to $$ |g(\theta + Z)| \le M_f \left( 1 + \frac{2(R + \|Z\|_2)}{R} \right)^D = M_f \left( 3 + \frac{2\|Z\|_2}{R} \right)^D.$$
It indicates that
$$\mathbb{E}_{X \sim \mathcal{N}(\theta, I_{d_{\mathcal{S}}})} [g^2(X)] = \mathbb{E}_{Z \sim \mathcal{N}(0, I_{d_{\mathcal{S}}})} [g^2(\theta + Z)] \le M_f . \mathbb{E}_{Z} \left[ \left( 3 + \frac{2\|Z\|_2}{R} \right)^{2D} \right].$$
Via Minskowski's inequality, we have
$$\mathbb{E} \left[ \left( 3 + \frac{2\|Z\|_2}{R} \right)^{2D} \right]  \le \left(3 + \frac{2}{R} \Big( \mathbb{E} \big[ \|Z\|_2^{2D} \big] \Big)^{\frac{1}{2D}}\right)^{2D}.$$
Direct calculation yields that
$$\mathbb{E}\big[ (\|Z\|_2^2)^D \big] = d_{\mathcal{S}}(d_{\mathcal{S}}+2)\dots(d_{\mathcal{S}}+2D-2) \le (d_{\mathcal{S}} + 2D)^D.$$
Putting all the bounds together, we arrive at
\begin{align}
\sup_{\theta \in \mathcal{S} \cap B(0,R)} \mathbb{E}_{X \sim \mathcal{N}(\theta, I_{d_{\mathcal{S}}})} [g^2(X)] \leq M_{f} \left( 3 + \frac{2 \sqrt{d_{\mathcal{S}} + 2D}}{R} \right)^D. \label{eq:Hellinger_to_polynomial_3}
\end{align}
Since $d_{\mathcal{S}} \leq \min\{k + k_{*},d\}:=\bar{d}$, by combining the results from equations~\eqref{eq:Hellinger_to_polynomial_1}-\eqref{eq:Hellinger_to_polynomial_3} we obtain
$$\left|\sum_{i=1}^k \pi_i U(\theta_i) - \sum_{j=1}^{k_*} \pi_j^* U(\theta_j^*)\right| \leq C(R,\bar{d},D) ||U||_{\infty} d_H(p_{G}, p_{G_{*}}),$$
where $C(R,\bar{d},D) = 2 \exp\left( \frac{\bar{d} D^4}{2 R^2} \right) \left( 3 + \frac{2 \sqrt{\bar{d} + 2D}}{R} \right)^D$. As a consequence, we reach the conclusion of the lemma.
\end{proof}

\subsection{Proof of Theorem~\ref{theorem:exact_one_group_univariate}}
\label{sec:proof_theorem:exact_one_group_univariate}

\subsubsection{Univariate setting - Local bound}
\label{sec:local_bound_univariate}
To shed light into the proof idea, we first consider the univariate setting, namely, $d = 1$.
To facilitate the ensuing presentation, we denote $G = \sum_{i=1}^{k_{*}} \pi_i \delta_{\theta_i}$ be a strictly positive measure and $\nu = G - G_*$ be the corresponding finite signed measure.

We first consider the local bound when $W_1(G, G_*)$ is sufficiently small such that every fitted atom $\theta_i$ belongs strictly to a Voronoi cell $\mathcal{V}_j$ anchored to its closest true center $\theta_j^*$, with local distance $|\theta_i - \theta_j^*| \le \frac{\Delta_{\mathrm{sep}}}{4}$. Since $G$ only has $k_{*}$ components, it indicates that each Voronoi cell $\mathcal{V}_{j}$ has exactly one element as pointed out in Lemma \ref{lemma:small_wasserstein_implies_center_in_voronoi_cell}. Without loss of generality, we assume that $j \in \mathcal{V}_{j}$, for any $1 \leq j \leq k_{*}$.

\emph{Step 1 - Wasserstein decomposition.}
The first step is to decompose $W_1(G,G_*)$ into moment mismatch and mass mismatch components. We consider the measure $\tilde{G} = \sum_{i=1}^{k_*} \pi_i \delta_{\theta^*_i}$. Using the triangle inequality for Wasserstein distance, we have 
\begin{align}
\label{eqn:dung_thm3_1_first_inequality_W_1}
    W_1(G,G_*) \leq W_1(G,\tilde{G}) + W_1(\tilde{G},G_*). 
\end{align}
For $W_1(G,\tilde{G})$, consider the transportation map totally transferring each $\theta_i$ into its closet center $\theta^*_{i}$. This map gives us an upper bound for $W_1(G,\tilde{G})$
\begin{equation}
\label{eqn:dung_thm3_1_inequality_W_1_G_and_intermediate}
    W_1(G,\tilde{G}) \leq \sum_{j=1}^{k_*}\pi_j|\theta_j-\theta_j^*|. 
\end{equation}
For $W_1(\tilde{G},G_*)$, we keep a mass of $\min\{\pi_i,\pi^*_i\}$ for each $\theta^*_i$, while transferring $\max\{\pi_i-\pi^*_i,0\}$ to the other center. Then, the total mass moved is exactly $\frac{1}{2}\sum_{j = 1}^{k_{*}} |\pi_j - \pi_{j}^{*}|$, and the distance between any true centers is less than $C_0\Delta_{\mathrm{sep}}$, thus 
\begin{equation}
\label{eqn:dung_thm3_1_univariate_local_inequality_W_1_true_measure_and_intermediate}
    W_1(\tilde{G},G_*) \leq C_0 \Delta_{\mathrm{sep}} \sum_{j=1}^{k_*} |\pi_j - \pi_{j}^{*}|. 
\end{equation}

Combining the result from \eqref{eqn:dung_thm3_1_first_inequality_W_1}, \eqref{eqn:dung_thm3_1_inequality_W_1_G_and_intermediate}, and \eqref{eqn:dung_thm3_1_univariate_local_inequality_W_1_true_measure_and_intermediate}, we have 
\begin{align}
W_1(G, G_*) \le \sum_{j=1}^{k_*} \pi_{j}|\theta_{j} - \theta_{j}^{*}| + C_0 \Delta_{\mathrm{sep}} \sum_{j=1}^{k_*} |\pi_j - \pi_{j}^{*}|. \label{eq:exact_key_equation_second}
\end{align}
Before bounding two terms in the above right hand side, it is necessary to bound the term $\sum_{j=1}^{k_*} \pi_j |\theta_{j} - \theta_{j}^{*}|^2$ first.
\newline 

\emph{Step 2 - Bounding variance $\sum_{j=1}^{k_*} \pi_j |\theta_{j} - \theta_{j}^{*}|^2$.} We now consider the following polynomial function, which is similarly defined in Section \ref{sec:variance_test_function}: 
\begin{align*}
    P_{\mathrm{var}}(\theta) : = \prod_{l=1}^{k_*} (\theta - \theta_l^*)^2.
\end{align*}
Since $P_{\mathrm{var}}(\theta_l^*) = 0$, it follows that $\int P_{\mathrm{var}}(\theta) dG_*(\theta) = 0$. For any probability measure $G$, we obtain that
\begin{align*} 
\int P_{\mathrm{var}}(\theta) d\nu(\theta) = \sum_{j=1}^{k_*} \pi_j (\theta_{j} - \theta_{j}^{*})^2 \prod_{l \neq j} (\theta_j - \theta_l^*)^2.
\end{align*}
For any $l \neq j$, we have
\begin{align}
    \label{eq:argument_34}
    |\theta_{j} - \theta_l^*| \ge |\theta_j^* - \theta_l^*| - |\theta_{j} - \theta_{j}^{*}| \ge \Delta_{\mathrm{sep}} - \frac{\Delta_{\mathrm{sep}}}{4} = \frac{3 \Delta_{\mathrm{sep}}}{4}.
\end{align}
Thus, we find that
\begin{align*} 
\int P_{\mathrm{var}}(\theta) d\nu(\theta) & \ge    \left(\frac{3}{4}\right)^{2k_*-2} \Delta_{\mathrm{sep}}^{2k_*-2} \sum_{j=1}^{k_*} \pi_j |\theta_{j} - \theta_{j}^{*}|^2.
\end{align*}
By the results of Lemma~\ref{lemma:Hellinger_to_Polynomial} and Lemma~\ref{lemma:variance_test_function}, we have $\int P_{\mathrm{var}}(\theta) d\nu(\theta) \le C_{\mathrm{poly}}C_{\mathrm{var}} d_H(p_{G},p_{G_*})$. These bounds then lead to
\begin{align}
    \sum_{j=1}^{k_*} \pi_j |\theta_{j} - \theta_{j}^{*}|^2 &\le \left[ \frac{C_{\mathrm{var}} C_{\mathrm{poly}}}{(3/4)^{2k_*-2}} \right]  \Delta_{\mathrm{sep}}^{-(2k_* - 2)} d_H(p_{G},p_{G_*})\nonumber\\
    &: = C_{\mathrm{var,1}} \Delta_{\mathrm{sep}}^{-(2k_* - 2)} d_H(p_{G},p_{G_*}). \label{eq:exact_key_equation_third} 
\end{align}
\newline 

\emph{Step 3 - Bounding the mean discrepancy $\pi_i|\theta_j - \theta_j^*|$.} To obtain an upper bound for $\sum_{j=1}^{k_*} \pi_j |\theta_{j} - \theta_{j}^{*}|$, we construct a  Hermite polynomial $H_j(\theta)$ such that $H_j(\theta_l^*) = 0$ and $H'_j(\theta^*_{l}) =\delta_{jl}$. In particular, we can choose the Hermite interpolation polynomial $H_j(\theta)$ of degree $2k_*-1$ given by
\begin{align*}
    H_j(\theta) : = \ell_j^2(\theta) (\theta - \theta_j^*),  \quad \text{where} \quad \ell_j(\theta) : = \prod_{q \neq j} \frac{\theta - \theta_q^*}{\theta_j^* - \theta_q^*},
\end{align*}
which is also defined in Section \ref{sec:mean_test_function}. It can be checked that $H_j(\theta_l^*) = 0$ and $H_j'(\theta_l^*) = \delta_{jl}$ being a Kronecker delta, for all $1 \leq l \leq k_{*}$.
By applying Taylor expansion for $H_j(\theta_l)$ exactly around its true center $\theta_l^*$, we have $H_j(\theta_l) = \delta_{jl}(\theta_{l} - \theta_{l}^{*}) + \frac{1}{2} H_j''(\xi_l) (\theta_{l} - \theta_{l}^{*})^2$, where $\xi_l$ is some point between $\theta_l$ and $\theta_l^*$. Thus, we arrive at $$ \int H_j(\theta) d\nu(\theta) = \pi_{j}(\theta_{j} - \theta_{j}^{*}) + \frac{1}{2} \sum_{l=1}^{k_*} \pi_{l} H_{j}''(\xi_l) (\theta_{l} - \theta_{l}^{*})^2,$$
which indicates that
\begin{align}
    \pi_{j}|\theta_{j} - \theta_{j}^{*}| &\leq \left|\int H_j(\theta) d\nu(\theta)\right| + \frac{1}{2} \sum_{l=1}^{k_*} \pi_{l} |H_{j}''(\xi_l)| (\theta_{l} - \theta_{l}^{*})^2\nonumber\\
    \label{eq:bound_2}
    &\leq C_{\mathrm{poly}}\|H_j\|_{\infty}d_H(p_{G},p_{G_*})+\frac{1}{2}\max_{\theta \in \mathcal{V}_{l}: |\theta - \theta_{l}^{*}| \leq \Delta_{\mathrm{sep}}/4} |H_j''(\theta)|\sum_{l=1}^{k_*} \pi_{l}  (\theta_{l} - \theta_{l}^{*})^2,
\end{align}
where the second inequality follows from Lemma~\ref{lemma:Hellinger_to_Polynomial}. By the results of Lemma~\ref{lemma:mean_test_function}, we have
\begin{align}
    \label{eq:exact_key_equation_third_2}
    \|H_j\|_{\infty}\leq C_{\mathrm{norm},H}\Delta_{\mathrm{sep}}^{-(2k_*-2)},
\end{align}
and
\begin{align}
    \label{eq:exact_key_equation_third_1}
    \max_{\theta \in \mathcal{V}_{l}: |\theta - \theta_{l}^{*}| \leq \Delta_{\mathrm{sep}}/4} |H_j''(\theta)| \le \Delta_{\mathrm{sep}}^{-1} C_{H,1}.
\end{align}
By plugging the bounds~\eqref{eq:exact_key_equation_third}, \eqref{eq:exact_key_equation_third_2}, and \eqref{eq:exact_key_equation_third_1} to the bound in equation~\eqref{eq:bound_2}, we obtain
\begin{align}
    \pi_{j}|\theta_{j} - \theta_{j}^{*}| & \leq C_{\mathrm{poly}}C_{\mathrm{norm},H}\Delta_{\mathrm{sep}}^{-(2k_*-2)} d_H(p_{G},p_{G_*}) + \frac{1}{2} C_{\mathrm{var},1} C_{H,1}  \Delta_{\mathrm{sep}}^{-(2k_* - 1)} d_H(p_{G},p_{G_*}) \nonumber \\
    & \leq \left(2R\cdot C_{\mathrm{poly}}C_{\mathrm{norm},H} + \frac{1}{2} C_{\mathrm{var},1} C_{H,1}\right) \Delta_{\mathrm{sep}}^{-(2k_*-1)} d_H(p_{G},p_{G_*})\nonumber\\
    &:=C_{\text{mean},1}\Delta_{\mathrm{sep}}^{-(2k_*-1)} d_H(p_{G},p_{G_*}), \label{eq:exact_key_equation_fourth_1}
\end{align}
where the second inequality occurs due to $\Delta_{\mathrm{sep}}\leq 2R$.
\newline 

\emph{Step 4 - Bounding mass discrepancy $|\pi_j - \pi_j^*|$.} We now turn to upper bound $|\pi_{j} - \pi_{j}^{*}|$. For that purpose, we build a polynomial $E_j$ such that $E_j(\theta_l^*) = \delta_{jl}$ and $E'_j(\theta^*_l) = 0$. We take into account another Hermite polynomial $E_j(\theta)$ of degree $2k_*-1$ defined as
\begin{align*}
    E_j(\theta) : = \ell_j^2(\theta) \left[ \bar{A}_j + \bar{B}_j(\theta - \theta_j^*) \right], \quad \text{where} \quad \ell_j(\theta) = \prod_{q \neq j} \frac{\theta - \theta_q^*}{\theta_j^* - \theta_q^*},
\end{align*}
and $\bar{A}_j = 1$, $\bar{B}_j = - 2\ell_j'(\theta_j^*)$. Direct calculation yields that $E_j(\theta_l^*) = \delta_{jl}$ and $E_j'(\theta_l^*) = 0$ for all $l$.
By utilizing Taylor expansion for $E_j(\theta_l)$ exactly around its true center $\theta_l^*$, we have $E_j(\theta_l) = \delta_{jl} + \frac{1}{2} \bar{H}_j''(\xi_l) (\theta_{l} - \theta_{l}^{*})^2$, where $\xi_l$ is some point between $\theta_l$ and $\theta_l^*$. Thus, we arrive at $$ \int E_j(\theta) d\nu(\theta) = \sum_{l=1}^{k_*} \pi_{l} E_j(\theta_{l}) - \pi_j^* = (\pi_{j} - \pi_{j}^{*}) + \frac{1}{2} \sum_{l=1}^{k_*} \pi_{l} E_j''(\xi_{l}) (\theta_{l} - \theta_{l}^{*})^2,$$
implying that
\begin{align} 
|\pi_{j} - \pi_{j}^{*}| & \le \left| \int E_j(\theta) d\nu(\theta) \right| +\frac{1}{2} \sum_{l=1}^{k_*} \pi_{l} |E_j''(\xi_{l})| (\theta_{l} - \theta_{l}^{*})^2 \nonumber\\
\label{eq:bound_3}
& \le C_{\mathrm{poly}} \|E_j\|_{\infty} d_H(p_{G},p_{G_*}) + \frac{1}{2}\max_{\substack{\theta \in \mathcal{V}_l: |\theta - \theta_{l}^{*}| \leq \Delta_{\mathrm{sep}}/4}} |E''_j(\theta)|\sum_{l=1}^{k_*} \pi_{l} (\theta_{l} - \theta_{l}^{*})^2.
\end{align}
By the results of Lemma~\ref{lemma:mass_test_function}, we have
\begin{align}
    \label{eq:H_bar_supnorm}
    \|E_{j}\|_{\infty}\leq C_{\mathrm{norm},E} \Delta_{\mathrm{sep}}^{-(2k_*-1)},
\end{align}
and
\begin{align}
\label{eq:H_bar_second_order}
\max_{\theta \in \mathcal{V}_l: |\theta - \theta_{l}^{*}| \leq \Delta_{\mathrm{sep}}/4} |E_j''(\theta)| &\le \Delta_{\mathrm{sep}}^{-2} C_{E,1}. 
\end{align}

By applying the results in equations~\eqref{eq:exact_key_equation_third}, \eqref{eq:H_bar_supnorm} and \eqref{eq:H_bar_second_order} to the bound in equation~\eqref{eq:bound_3}, we obtain
\begin{align*} 
|\pi_{j} - \pi_{j}^{*}| 
& \le C_{\mathrm{poly}}C_{\mathrm{norm},E} \Delta_{\mathrm{sep}}^{-(2k_*-1)} d_H(p_{G},p_{G_*}) + \frac{C_{E,1}}{2 \Delta_{\mathrm{sep}}^2} \left[ C_{\mathrm{var},1} \Delta_{\mathrm{sep}}^{-(2k_*-2)} d_H(p_{G},p_{G_*}) \right].
\end{align*}
Since $\Delta_{\mathrm{sep}}^{-(2k_*-1)} = \Delta_{\mathrm{sep}} \cdot \Delta_{\mathrm{sep}}^{-2k_*} \le 2R \cdot \Delta_{\mathrm{sep}}^{-2k_*}$, we rigorously factor out the dominant penalty:
\begin{align}
    |\pi_{j} - \pi_{j}^{*}| \le \left( 2R C_{\mathrm{poly}}C_{\text{norm}} + \frac{1}{2} C_{E,1} C_{\mathrm{var}} \right) \Delta_{\mathrm{sep}}^{-2k_*} d_H(p_{G},p_{G_*}) := C_{\mathrm{mass},1}\Delta_{\mathrm{sep}}^{-2k_*} d_H(p_{G},p_{G_*}). \label{eq:exact_key_equation_sixth}
\end{align}
\newline 

\emph{Step 5 - Bounding $W_1(G,G_*)$ and conclusion.} 
We substitute the bounds in equations~\eqref{eq:exact_key_equation_sixth} and ~\eqref{eq:exact_key_equation_fourth_1} back into the inequality~\eqref{eq:exact_key_equation_second} and obtain that 
\begin{align*}
    W_1(G, G_*) &\le k_{*} C_{\mathrm{mean}} \Delta_{\mathrm{sep}}^{-(2k_*-1)} d_H(p_{G},p_{G_*}) + k_{*} C_0 \Delta_{\mathrm{sep}} C_{\mathrm{mass}} \Delta_{\mathrm{sep}}^{-2k_*} d_H(p_{G},p_{G_*}), 
\end{align*}
or equivalently,
$$ W_1(G, G_*) \le k_{*} \left( C_{\mathrm{mean}} + C_0 C_{\mathrm{mass}} \right) \Delta_{\mathrm{sep}}^{-(2k_*-1)} d_H(p_{G},p_{G_*}). $$
Define the constant $C_{\mathrm{local},1}^{-1} = k_{*} \left(C_{\mathrm{mean}} + C_0 C_{\text{mass}}\right)$, 
then we achieve the local geometric bound:
\begin{align}
    d_H(p_{G},p_{G_*}) \ge C_{\mathrm{local},1} \cdot \Delta_{\mathrm{sep}}^{2k_* - 1} \cdot W_1(G, G_*) \label{eq:exact_local_bound}
\end{align}
as long as $W_1(G, G_*)$ is sufficiently small such that every fitted atom $\theta_i$ belongs strictly to a Voronoi cell $\mathcal{V}_j$ with local distance $|\theta_j - \theta_j^*| \le \frac{\Delta_{\mathrm{sep}}}{4}$.


\subsubsection{Univariate setting - Global bound}
\label{sec:global_bound_exact_univariate_one_cluster}
We now establish the global bound without any assumptions on the supports of $G$. For every fitted atom $\theta_i$, let $c(i) = \text{argmin}_j |\theta_i - \theta_j^*|$ be the index of true center $\theta_{j}^{*}$ that is closest to that fitted atom. While there are multiple true centers sharing the same minimum distance to $\theta_{i}$, we just pick randomly one of them. We unconditionally partition the components of $G$ into two disjoint sets based on a strictly enforced boundary of $\Delta_{\mathrm{sep}}/4$:
\begin{itemize}
    \item The near set $\mathcal{M}_{\mathrm{near}} = \{ i \in \{1, \dots, k_{*}\} : |\theta_{i} - \theta_{c(i)}^{*}| \le \Delta_{\mathrm{sep}} / 4 \}$. We denote $\bar{\mathcal{V}}_j = \{i \in \mathcal{M}_{\mathrm{near}} : c(i) = j\}$ as a subset of the Voronoi cells $\mathcal{V}_{j}$.
    \item The far set $\mathcal{M}_{\mathrm{far}} = \{ i \in \{1, \dots, k_{*}\} : |\theta_{i} - \theta_{c(i)}^{*}| > \Delta_{\mathrm{sep}} / 4 \}$. We denote the total far mass as $\pi_{\mathcal{F}} = \sum_{i \in \mathcal{M}_{\mathrm{far}}} \pi_i$.
\end{itemize}
\emph{Step 1 - Wasserstein decomposition.} We define a deterministic spatial mapping function $T: \mathbb{R} \to \Theta^* = \left\{\theta_{1}^{*}, \ldots, \theta_{k_{*}}^{*}\right\}$ such that $T(\theta_i) = \theta_{c(i)}^*$. We define $\widetilde{G}$ as the exact pushforward measure of $G$ under $T$:
$$ \widetilde{G} = T_{\#} G = \sum_{i=1}^{k_*} \pi_i \delta_{T(\theta_i)} = \sum_{i=1}^{k_*} \pi_i \delta_{\theta_{c(i)}^*}.$$
From the triangle inequality with the $W_{1}$ metric, we have
\begin{align}
    W_1(G, G_*) \leq W_1(G, \widetilde{G}) + W_{1}(\widetilde{G},G_*). \label{eq:exact_Wasserstein_bound_0}
\end{align}
We first upper bound $W_1(G, \widetilde{G})$. By specifically considering the transportation plan $\hat{\gamma} = (\text{Id} \times T)_{\#} G$, we obtain that:
\begin{align} 
W_1(G, \widetilde{G}) \le \int_{\mathbb{R} \times \mathbb{R}} |x - y| d\hat{\gamma}(x,y) & = \int_{\mathbb{R}} |\theta - T(\theta)| dG(\theta) = \sum_{i=1}^{k_*} \pi_i |\theta_i - \theta_{c(i)}^*| \nonumber \\
& = \sum_{i \in \mathcal{M}_{\mathrm{near}}} \pi_i |\theta_{i} - \theta_{c(i)}^{*}| + \sum_{i \in \mathcal{M}_{\mathrm{far}}}  \pi_i |\theta_{i} - \theta_{c(i)}^{*}|. \label{eq:exact_Wasserstein_bound_1}
\end{align}

We now move to upper bound $W_1(\widetilde{G}, G_*)$. Let us rewrite $\widetilde{G}$ on the true discrete support basis by grouping all mass mapped to the identical center $j$ as follows: $$\widetilde{G} = \sum_{j=1}^{k_*} \tilde{\pi}_j \delta_{\theta_j^*} \quad \text{where} \quad \tilde{\pi}_j = \sum_{i: c(i) = j} \pi_i.$$
Let the net mass discrepancy at center $j$ be exactly defined as $\tilde{\Delta} \pi_j = \tilde{\pi}_j - \pi_j^*$. We consider the transportation plan from $\tilde{G}$ to $G_*$ such that for each $j$, we only keep $\min\{\sum_{i \in \mathcal{V}_j}\pi_i,\pi_j^*\}$ for center $\theta_j^*$ while moving $\max\{\tilde{\Delta}\pi_j,0\}$ to other center, where $\tilde{\Delta}\pi_i:= \tilde{\Delta}\pi_i-\pi_i^*$. Then, it is obvious that the total transported mass is exactly $\frac{1}{2}\sum_{j=1}^{k_*}|\tilde{\Delta}\pi_j|$. As a result, we achieve the following bound for Wasserstein distance between $\tilde{G}$ and $G_*$ 
\begin{align}
W_1(\widetilde{G}, G_*) \leq \frac{C_{0}\Delta_{\mathrm{sep}}}{2} \sum_{j = 1}^{k_{*}} |\tilde{\Delta} \pi_j|. \label{eq:exact_Wasserstein_bound_3}
\end{align}
Putting the results from bounds~\eqref{eq:exact_Wasserstein_bound_0},~\eqref{eq:exact_Wasserstein_bound_1}, and~\eqref{eq:exact_Wasserstein_bound_3} together, we attain the following unconditional global transport bound
\begin{align}
W_1(G, G_*) \le \sum_{i \in \mathcal{M}_{\mathrm{near}}} \pi_i |\theta_{i} - \theta_{c(i)}^{*}| + \sum_{i \in \mathcal{M}_{\mathrm{far}}}  \pi_i |\theta_{i} - \theta_{c(i)}^{*}| + \frac{1}{2} C_0 \Delta_{\mathrm{sep}} \sum_{j=1}^{k_*} |\tilde{\Delta} \pi_j|. \label{eq:exact_Wasserstein_bound_4}
\end{align}
From the definition of $\tilde{\pi}_j$, we decompose the subset of atoms mapping to $j$ into near and far subsets as follows:$$\tilde{\pi}_j = \sum_{i \in \bar{\mathcal{V}}_j} \pi_i  + \sum_{i \in \mathcal{M}_{\mathrm{far}}, c(i)=j} \pi_i.$$
We define $\Delta \pi_j : = \left( \sum_{i \in \bar{\mathcal{V}}_j} \pi_i \right) - \pi_j^*$. Then, simple algebra shows that $\tilde{\Delta} \pi_j = \tilde{\pi}_j - \pi_j^* = \Delta \pi_j \ + \sum_{i \in \mathcal{M}_{\mathrm{far}}, c(i)=j} \pi_i$. An application of triangle inequality leads to $|\tilde{\Delta} \pi_j| \leq |\Delta \pi_j|  + \sum_{i \in \mathcal{M}_{\mathrm{far}}, c(i)=j} \pi_i$. Therefore, we have
\begin{align}
\sum_{j=1}^{k_*} |\tilde{\Delta} \pi_j| \le \sum_{j=1}^{k_*} |\Delta \pi_j| \ + \ \sum_{j=1}^{k_*} \sum_{i \in \mathcal{M}_{\mathrm{far}}, c(i)=j} \pi_i = \sum_{j=1}^{k_*} |\Delta \pi_j| + \sum_{i \in \mathcal{M}_{\mathrm{far}}} \pi_{i}. \label{eq:exact_Wasserstein_bound_5}
\end{align}
Combining the results from equation~\eqref{eq:exact_Wasserstein_bound_4} and~\eqref{eq:exact_Wasserstein_bound_5}, we obtain the following bound on the $W_{1}$ metric:
\begin{align}
W_1(G, G_*) \le \sum_{i \in \mathcal{M}_{\mathrm{near}}} \pi_i |\theta_{i} - \theta_{c(i)}^{*}| + \sum_{i \in \mathcal{M}_{\mathrm{far}}}  \pi_i |\theta_{i} - \theta_{c(i)}^{*}| + \frac{1}{2} C_0 \Delta_{\mathrm{sep}} \left( \sum_{j=1}^{k_*} |\Delta \pi_j| + \pi_{\mathcal{F}} \right), \label{eq:exact_global_key_equation_first}
\end{align}
where $\pi_{\mathcal{F}} : = \sum_{i \in \mathcal{M}_{\mathrm{far}}} \pi_{i}$. In order to obtain upper bounds for terms in the above right hand side, we first need to bound two terms $\sum_{i \in \mathcal{M}_{\mathrm{near}}} \pi_i (\theta_{i} - \theta_{c(i)}^{*})^2$ and $\sum_{i \in \mathcal{M}_{\mathrm{far}}} \pi_i (\theta_{i} - \theta_{c(i)}^{*})^{2k_*}$.
\newline

\emph{Step 2 - Bounding near variance and far high moment for variance.} We now evaluate the polynomial $P_{\mathrm{var}}(\theta)=\prod_{l=1}^{k_*} (\theta - \theta_l^*)^2$ unconditionally over all atoms of $G$. 
For any index $i \in \mathcal{M}_{\mathrm{near}}$, the argument with the local bound in equation~\eqref{eq:argument_34} indicates 
\begin{align}
\sum_{i \in \mathcal{M}_{\mathrm{near}}} \pi_i (\theta_{i} - \theta_{c(i)}^{*})^2 \le C_{\mathrm{var},2} \Delta_{\mathrm{sep}}^{-(2k_*-2)} d_H(p_{G},p_{G_*}). \label{eq:exact_global_key_equation_second}
\end{align}
For any index $i \in \mathcal{M}_{\mathrm{far}}$, its distance to any true center $\theta_{l}^{*}$ is strictly bounded by $|\theta_i - \theta_l^*| \ge |\theta_{i} - \theta_{c(i)}^{*}|$, implying that $P_{\mathrm{var}}(\theta_i)=\prod_{l=1}^{k_*} (\theta_i - \theta_l^*)^2 \ge (\theta_{i} - \theta_{c(i)}^{*})^{2k_*}$.

From the results of Lemma~\ref{lemma:Hellinger_to_Polynomial} and Lemma~\ref{lemma:variance_test_function} and the fact that $\int P_{\mathrm{var}}(\theta) dG_*(\theta) = 0$, we have 
\begin{align*}
\int P_{\mathrm{var}}(\theta) dG(\theta) = \int P_{\mathrm{var}}(\theta) d\nu(\theta) \le C_{\mathrm{var}}C_{\mathrm{poly}} d_H(p_{G},p_{G_*}).
\end{align*}
We partition the above integral as 
\begin{align}
    \int P_{\mathrm{var}}(\theta) dG(\theta) = \sum_{i \in \mathcal{M}_{\mathrm{near}}} \pi_i P_{\mathrm{var}}(\theta_i) + \sum_{i \in \mathcal{M}_{\mathrm{far}}} \pi_i P_{\mathrm{var}}(\theta_i). \label{eq:exact_global_key_equation_first_1}
\end{align}
Putting these results together, we have
\begin{align*}
    \sum_{i \in \mathcal{M}_{\mathrm{far}}} \pi_i (\theta_{i} - \theta_{c(i)}^{*})^{2k_*}&\leq\sum_{i \in \mathcal{M}_{\mathrm{far}}} \pi_iP_{\mathrm{var}}(\theta_i)\leq  \int P_{\mathrm{var}}(\theta) dG(\theta)\\
    &\le  C_{\mathrm{var}}C_{\mathrm{poly}}  d_H(p_{G},p_{G_*}) := C_{\mathrm{far,moment,1}} d_H(p_{G},p_{G_*}).
\end{align*}
We now systematically convert this rigid far $2k_*$-moment bound into bounds for the far transport cost $\sum_{i \in \mathcal{M}_{\mathrm{far}}} \pi_i |\theta_{i} - \theta_{c(i)}^{*}|$ and the far mass $\pi_{\mathcal{F}}$ required for the bound of $W_{1}(G,G_{*})$ in equation~\eqref{eq:exact_global_key_equation_first}. 
\newline

\emph{Step 3: Bounding far parameters.} In this step, we bound the related far parameters. 

\emph{Step 3.1: Bounding far mean discrepancy.} For any index $i \in \mathcal{M}_{\mathrm{far}}$, we explicitly have the strict geometric condition $|\theta_{i} - \theta_{c(i)}^{*}| > \Delta_{\mathrm{sep}}/4$. Direct algebra shows that $|\theta_{i} - \theta_{c(i)}^{*}| = |\theta_{i} - \theta_{c(i)}^{*}|^{2k_*} |\theta_{i} - \theta_{c(i)}^{*}|^{-(2k_*-1)} < (\theta_{i} - \theta_{c(i)}^{*})^{2k_*} \left(\frac{4}{\Delta_{\mathrm{sep}}}\right)^{2k_*-1}$. Therefore, we find that
\begin{align}
    \sum_{i \in \mathcal{M}_{\mathrm{far}}} \pi_i |\theta_{i} - \theta_{c(i)}^{*}| &\le \left(\frac{4}{\Delta_{\mathrm{sep}}}\right)^{2k_*-1} \sum_{i \in \mathcal{M}_{\mathrm{far}}} \pi_i (\theta_{i} - \theta_{c(i)}^{*})^{2k_*}\nonumber\\
    &\le  4^{2k_*-1} C_{\mathrm{far,moment,1}} \Delta_{\mathrm{sep}}^{-(2k_*-1)} d_H(p_{G},p_{G_*}) := C_{\mathrm{far,mean,1}}\Delta_{\mathrm{sep}}^{-(2k_*-1)} d_H(p_{G},p_{G_*})\label{eq:exact_global_key_equation_third}
\end{align} 

\emph{Step 3.2: Bounding far mass.} For any index $i \in \mathcal{M}_{\mathrm{far}}$, we have $1 = (\theta_{i} - \theta_{c(i)}^{*})^{2k_*} (\theta_{i} - \theta_{c(i)}^{*})^{-2k_*} < (\theta_{i} - \theta_{c(i)}^{*})^{2k_*} \left(\frac{4}{\Delta_{\mathrm{sep}}}\right)^{2k_*}$, which follows that
\begin{align}
    \pi_{\mathcal{F}} &= \sum_{i \in \mathcal{M}_{\mathrm{far}}} \pi_i \le \left(\frac{4}{\Delta_{\mathrm{sep}}}\right)^{2k_*} \sum_{i \in \mathcal{M}_{\mathrm{far}}} \pi_i (\theta_{i} - \theta_{c(i)}^{*})^{2k_*} \le 4^{2k_*} C_{\mathrm{far,moment,1}} \Delta_{\mathrm{sep}}^{-2k_*} d_H(p_{G},p_{G_*}) \nonumber\\
    &:= C_{\mathrm{far,mass,1}} \Delta_{\mathrm{sep}}^{-2k_*} d_H(p_{G},p_{G_*}). \label{eq:exact_global_key_equation_fourth}
\end{align} 
\newline

\emph{Step 4: Bounding mass discrepancy. } 
We now evaluate the mass extractor test function $E_j(x)$ such that $E_j(\theta_l^*) = \delta_{jl}$ and $E_j'(\theta_l^*) = 0$ for all $l$. A plausible choice is a  Hermite polynomial $E_j(\theta)$ of degree $2k_*-1$ defined in Section \ref{sec:point_wise_mass_extractor_non_multicluster}
\begin{align*}
    E_j(\theta) : = \ell_j^2(\theta) \left[ \bar{A}_j + \bar{B}_j(\theta - \theta_j^*) \right], \quad \text{where} \quad \ell_j(\theta) = \prod_{q \neq j} \frac{\theta - \theta_q^*}{\theta_j^* - \theta_q^*},
\end{align*}
and $\bar{A}_j = 1$, $\bar{B}_j = - 2\ell_j'(\theta_j^*)$. Direct calculation yields that $E_j(\theta_l^*) = \delta_{jl}$ and $E_j'(\theta_l^*) = 0$ for all $l$.  
Integrating $E_j$ unconditionally yields:
\begin{align*}
    \int E_j(\theta) d\nu(\theta) &=\sum_{i=1}^{k_*} \pi_{i} E_j(\theta_{i}) - \pi_j^*=\sum_{i \in \mathcal{M}_{\mathrm{near}}} \pi_i E_j(\theta_i)- \pi_j^*+\sum_{i \in \mathcal{M}_{\mathrm{far}}} \pi_i E_j(\theta_i).
\end{align*}
For $i\in\mathcal{M}_{\mathrm{near}}$, by performing the Taylor expansion on $E_j(\theta_i)$ exactly around its true center $\theta_{c(i)}^*$, we have $E_j(\theta_i) = \delta_{jc(i)} + \frac{1}{2} E_j''(\xi_i) (\theta_{i} - \theta_{c(i)}^{*})^2$. 
Thus, we arrive at 
\begin{align*}
    \int E_j(\theta) d\nu(\theta) &=\Delta \pi_j + \frac{1}{2}\sum_{i \in \mathcal{M}_{\mathrm{near}}} \pi_{i} E_j''(\xi_i) (\theta_{i} - \theta_{c(i)}^{*})^2 + \sum_{i \in \mathcal{M}_{\mathrm{far}}} \pi_i E_j(\theta_i).
\end{align*}
Applying the triangle inequality leads to
\begin{align}
    |\Delta \pi_j| \le \left| \int E_j(\theta) d\nu(\theta) \right| + \underbrace{ \frac{1}{2}\sum_{i \in \mathcal{M}_{\mathrm{near}}} \pi_{i} |E_j''(\xi_i)| (\theta_{i} - \theta_{c(i)}^{*})^2}_{\text{Near Error}} + \underbrace{ \sum_{i \in \mathcal{M}_{\mathrm{far}}} \pi_i |E_j(\theta_i)| }_{\text{Far Error}}. \label{eq:exact_global_key_equation_fifth}
\end{align}
From the results of Lemma~\ref{lemma:Hellinger_to_Polynomial} and Lemma~\ref{lemma:mass_test_function}, we have
\begin{align*}
    \left| \int E_j(\theta) d\nu(\theta) \right| \le C_{\mathrm{poly}}C_{\mathrm{norm},E} \Delta_{\mathrm{sep}}^{-(2k_*-1)} d_H(p_{G},p_{G_*}).
\end{align*}
Since $\Delta_{\mathrm{sep}} \le 2R$, we factor $\Delta_{\mathrm{sep}}^{-(2k_*-1)} = \Delta_{\mathrm{sep}} \cdot \Delta_{\mathrm{sep}}^{-2k_*} \le 2R \Delta_{\mathrm{sep}}^{-2k_*}$, leading to
\begin{align}
\left| \int E_j(\theta) d\nu(\theta) \right| \le 2R C_{\mathrm{poly}}C_{\mathrm{norm},E} \Delta_{\mathrm{sep}}^{-2k_*} d_H(p_{G},p_{G_*}). \label{eq:exact_global_key_equation_sixth}
\end{align}
Regarding the near error, Lemma~\ref{lemma:mass_test_function} indicates that it is bounded explicitly by $\frac{C_{\bar{H}}}{2 \Delta_{\mathrm{sep}}^2} \sum_{i \in \mathcal{M}_{\mathrm{near}}} \pi_{i} (\theta_{i} - \theta_{c(i)}^{*})^2$. Substituting the bound in equation~\eqref{eq:exact_global_key_equation_second}, we obtain that
\begin{align}
\frac{1}{2}\sum_{i \in \mathcal{M}_{\mathrm{near}}}  \pi_i |E_j''(\xi_i)| (\theta_{i} - \theta_{c(i)}^{*})^2 \leq \frac{1}{2} C_{E,1} C_{\mathrm{var},2} \Delta_{\mathrm{sep}}^{-2k_*} d_H(p_{G},p_{G_*}).\label{eq:exact_global_key_equation_seventh}
\end{align}
Regarding the far error, for any index $i \in \mathcal{M}_{\mathrm{far}}$, the distance from $\theta_i$ to any arbitrary true center $\theta^{*}_{q}$ is bounded by $|\theta_i - \theta_q^*| \le |\theta_i - \theta_{c(i)}^*| + |\theta_{c(i)}^* - \theta_q^*| \le |\theta_i - \theta_{c(i)}^*| + C_0 \Delta_{\mathrm{sep}}$. Because $\Delta_{\mathrm{sep}} < 4|\theta_i - \theta_{c(i)}^*|$, we bound the distance as
\begin{align*}
    |\theta_i - \theta_q^*| < |\theta_i - \theta_{c(i)}^*| + 4C_0 |\theta_i - \theta_{c(i)}^*| = (1+4C_0)|\theta_i - \theta_{c(i)}^*|.
\end{align*}
The denominator of the polynomial function $\ell_j$ is precisely bounded below by $\Delta_{\mathrm{sep}}^{k_*-1}$. Substituting the root bound yields $$\ell_j^2(\theta_i) \le (1+4C_0)^{2k_*-2} \Delta_{\mathrm{sep}}^{-(2k_*-2)} |\theta_i - \theta_{c(i)}^*|^{2k_*-2}.$$ 
From the argument in equation~\eqref{eq:bound_Bj}, we have $|\bar{B}_j| \le \Delta_{\mathrm{sep}}^{-1} C_B$. Because $|\theta_i - \theta_{c(i)}^*| > \Delta_{\mathrm{sep}}/4$, it follows that
\begin{align*}
|\bar{A}_j + \bar{B}_j(\theta_i - \theta_j^*)| & \le 1 + \Delta^{-1}_{\mathrm{sep}} C_B(1+4C_0)|\theta_i - \theta_{c(i)}^*| \\
& < \Delta_{\mathrm{sep}}^{-1} \Big[ 4  + C_B(1+4C_0) \Big] |\theta_i - \theta_{c(i)}^*| := \Delta_{\mathrm{sep}}^{-1} C_{\mathrm{lin},1} |\theta_i - \theta_{c(i)}^*|.
\end{align*}
Multiplying the bounded components, we have
\begin{align*}
    |E_j(\theta_i)| & \le \Big[ (1+4C_0)^{2k_*-2} \Delta_{\mathrm{sep}}^{-(2k_*-2)} |\theta_i - \theta_{c(i)}^*|^{2k_*-2} \Big] \Big[ C_{\mathrm{lin},1} \Delta_{\mathrm{sep}}^{-1} |\theta_i - \theta_{c(i)}^*| \Big] \\
    & := C_{\mathrm{far},E,1} \Delta_{\mathrm{sep}}^{-(2k_*-1)} |\theta_i - \theta_{c(i)}^*|^{2k_*-1}.
\end{align*}
Therefore, we obtain that
\begin{align}
\sum_{i \in \mathcal{M}_{\mathrm{far}}} \pi_i |E_j(\theta_i)| &\le C_{\mathrm{far},E,1} \Delta_{\mathrm{sep}}^{-(2k_*-1)} \sum_{i \in \mathcal{M}_{\mathrm{far}}} \pi_i |\theta_i - \theta_{c(i)}^*|^{2k_* - 1} \nonumber\\
& \leq 4 C_{\mathrm{far},E,1} \Delta_{\mathrm{sep}}^{-2k_*} \sum_{i \in \mathcal{M}_{\mathrm{far}}} \pi_i |\theta_i - \theta_{c(i)}^*|^{2k_*} \nonumber \\
& \leq 4 C_{\mathrm{far,E},1} C_{\mathrm{far,moment,1}} \Delta_{\mathrm{sep}}^{-2k_*} d_H(p_{G},p_{G_*}). \label{eq:exact_global_key_equation_eighth}
\end{align}
Plugging the upper bounds in equations~\eqref{eq:exact_global_key_equation_sixth}-\eqref{eq:exact_global_key_equation_eighth} to the inequality in equation~\eqref{eq:exact_global_key_equation_fifth} leads to
\begin{align}
|\Delta \pi_j| & \le \big[ 2R C_{\mathrm{poly}}C_{\mathrm{norm},E} + \frac{1}{2} C_{\mathrm{E},1} C_{\mathrm{var},2} + 4 C_{\mathrm{far,E}} C_{\mathrm{far,moment,1}} \big] \Delta_{\mathrm{sep}}^{-2k_*} d_H(p_{G},p_{G_*}) \nonumber \\
& :=  C_{\mathrm{mass},2}\Delta_{\mathrm{sep}}^{-2k_*} d_H(p_{G},p_{G_*}). \label{eq:exact_global_key_equation_ninth}
\end{align}
\newline

\emph{Step 5: Bounding mass discrepancy.} In the local bound in Section~\ref{sec:proof_theorem:exact_one_group_univariate}, we utilize the function $H_{j}$ to bound that term. However, the key distinction between the global setting and the local setting is that for the local setting the sub-Voronoi cell $\bar{\mathcal{V}}_{j}$ only has one element for each $1 \leq j \leq k_{*}$. In the global setting, that cell can contain more than one element. In particular, with the similar argument as that in the local bound setting, we arrive at
$$ \int H_j(\theta) d\nu(\theta) = \sum_{i \in \bar{\mathcal{V}}_{j}} \pi_{i}(\theta_{i} - \theta_{j}^{*}) + \frac{1}{2} \sum_{l=1}^{k_*} \sum_{i \in \bar{\mathcal{V}}_{l}} \pi_{l} H_{j}''(\xi_l) (\theta_{i} - \theta_{l}^{*})^2.$$
If $\bar{\mathcal{V}}_{j}$ has more than one element, the term $\sum_{i \in \bar{\mathcal{V}}_{j}} \pi_{i}(\theta_{i} - \theta_{j}^{*})$ can be zero, which means that it is not possible to bound $\sum_{i \in \bar{\mathcal{V}}_{j}} \pi_{i}|\theta_{i} - \theta_{j}^{*}|$ based on the previous argument that we use in the local bound when $\bar{\mathcal{V}}_{j}$ has only one element. However, an important insight is that since $G$ only has $k_{*}$ elements, if some $\bar{\mathcal{V}}_{j}$ has more than one element, it means that there is a Voronoi cell that has no element from $G$. That empty Voronoi cell plays an important role in overcoming the issue of the impossibility of bounding $\sum_{i \in \bar{\mathcal{V}}_{j}} \pi_{i}|\theta_{i} - \theta_{j}^{*}|$ via the function $H_{j}$ when $\bar{\mathcal{V}}_{j}$ has more than one element. 

We now divide our argument into two cases:

\emph{Case 5.1.} There exists a sub-Voronoi cell $\bar{\mathcal{V}}_{j}$ that has more than one element. Then, without loss of generality, we assume that the Voronoi cell $\mathcal{V}_{1}$ has no element. Under this case, it is clear that $|\Delta \pi_{1}| = \pi_{1}^{*}$. The bound in equation~\eqref{eq:exact_global_key_equation_ninth} indicates that
\begin{align}
    \pi_{1}^{*} \leq C_{\mathrm{mass,2}} \Delta_{\mathrm{sep}}^{-2k_*} d_H(p_{G},p_{G_*}). \label{eq:exact_global_key_equation_tenth}
\end{align}
As $\pi_{1}^{*} \geq \pi_{\text{min}}^{*}$, this bound becomes
\begin{align*}
    \pi_{\text{min}}^{*} \leq C_{\mathrm{mass,2}} \Delta_{\mathrm{sep}}^{-2k_*} d_H(p_{G},p_{G_*}).
\end{align*}
Now, since $|\theta_{i} - \theta_{c(i)}^{*}| \leq \Delta_{\mathrm{sep}}/4$ for all $i \in \mathcal{M}_{\mathrm{near}}$, we have
\begin{align}
    \sum_{i \in \mathcal{M}_{\mathrm{near}}} \pi_i |\theta_{i} - \theta_{c(i)}^{*}| \leq \dfrac{\Delta_{\mathrm{sep}}}{4} \sum_{i \in \mathcal{M}_{\mathrm{near}}} \pi_{i} \leq \dfrac{\Delta_{\mathrm{sep}}}{4}. \label{eq:exact_global_key_equation_eleventh}
\end{align}
Combining the bounds~\eqref{eq:exact_global_key_equation_tenth} and~\eqref{eq:exact_global_key_equation_eleventh} leads to
\begin{align}
    \sum_{i \in \mathcal{M}_{\mathrm{near}}} \pi_i |\theta_{i} - \theta_{c(i)}^{*}| & \leq \dfrac{\Delta_{\mathrm{sep}}}{4} \cdot \dfrac{C_{\mathrm{mass,2}}  \Delta_{\mathrm{sep}}^{-2k_*} d_H(p_{G},p_{G_*})}{\pi_{\text{min}}^{*}} \nonumber \\
    & = \dfrac{C_{\mathrm{mass,2}}  \Delta_{\mathrm{sep}}^{-(2k_* - 1)}d_H(p_{G},p_{G_*})}{4 \pi_{\text{min}}^{*}} \nonumber \\
    & = C_{\mathrm{near,mean,1}} (\pi_{\text{min}}^{*})^{-1}\Delta_{\mathrm{sep}}^{-(2k_* - 1)}d_H(p_{G},p_{G_*}). \label{eq:exact_global_key_equation_twelth}
\end{align}

\emph{Case 5.2.} There is no sub-Voronoi cell $\bar{\mathcal{V}}_{j}$ that has more than one element. It means that if $\bar{\mathcal{V}}_{j}$ is non-empty, it has only one element. If there exists some empty sub-Voronoi cell $\bar{\mathcal{V}}_{j}$, we can argue in a similar fashion to Case 1. Therefore, it is sufficient to assume that all the sub-Voronoi cells $\bar{\mathcal{V}}_{j}$ have exactly one element. Then, by using the similar arguments to the local bound along with the bound in equation~\eqref{eq:exact_key_equation_fourth_1}, we obtain
\begin{align*}
\sum_{i \in \bar{\mathcal{V}}_{j}} \pi_{i}|\theta_{i} - \theta_{j}^{*}| \leq C_{\mathrm{mean,1}}\Delta_{\mathrm{sep}}^{-(2k_*-1)} d_H(p_{G},p_{G_*}).
\end{align*}
As a consequence, 
\begin{align}
\sum_{i \in \mathcal{M}_{\mathrm{near}}} \pi_i |\theta_{i} - \theta_{c(i)}^{*}| &= \sum_{j = 1}^{k_{*}} \sum_{i \in \bar{\mathcal{V}}_{j}} \pi_{i}|\theta_{i} - \theta_{j}^{*}| \nonumber\\
& \leq k_{*}C_{\mathrm{mean,1}} \Delta_{\mathrm{sep}}^{-(2k_*-1)} d_H(p_{G},p_{G_*}) \nonumber \\
& = C_{\mathrm{near,mean,2}} \Delta_{\mathrm{sep}}^{-(2k_* - 1)}d_H(p_{G},p_{G_*})
\label{eq:dung_exact_global_key_equation_thirteen} 
\end{align}

\emph{Step 6 - Bounding $W_1(G,G_*)$ and conclusion.} 
Plugging all the bounds~\eqref{eq:exact_global_key_equation_third},~\eqref{eq:exact_global_key_equation_fourth},~\eqref{eq:exact_global_key_equation_ninth},~\eqref{eq:exact_global_key_equation_twelth}, and~\eqref{eq:dung_exact_global_key_equation_thirteen} into the upper bound of $W_{1}(G,G_{*})$ in equation~\eqref{eq:exact_global_key_equation_first}, we arrive at
\begin{align*}
W_1(G, G_*) & \le \left[C_{\mathrm{far,mean,1}}+ \max\left\{C_{\mathrm{mean,near,1}}, C_{\mathrm{mean,near,2}}\right\} + \frac{1}{2}k_*C_0C_{\mathrm{mass,2}} + \frac{1}{2}C_0C_{\mathrm{far,mass,1}} \right]\\
&\hspace{6cm}\times(\pi_{\text{min}}^{*})^{-1} \Delta_{\mathrm{sep}}^{-(2k_* - 1)} d_H(p_{G},p_{G_*}).
\end{align*}
Denote $$C_{\mathrm{global,1}}^{-1} = C_{\mathrm{far,mean,1}}+ \max\left\{C_{\mathrm{near,mean,1}}, C_{\mathrm{near,mean,2}},\right\} + \frac{1}{2}k_*C_0C_{\mathrm{mass,2}} + \frac{1}{2}C_0C_{\mathrm{far,mass,1}},$$
which is independent of $\Delta_{\mathrm{sep}}$ and $\pi_{\text{min}}^{*}$, revealing the final global bound:
$$ d_H(p_{G},p_{G_*}) \ge C_{\mathrm{global,1}} \cdot \pi_{\text{min}}^{*} \cdot \Delta_{\mathrm{sep}}^{2k_* - 1} \cdot W_{1}(G, G_*)$$
uniformly for all $G$ that has $k_{*}$ components. We achieve the conclusion of the theorem.
\subsubsection{Multivariate setting - Global bound}
\label{sec:global_bound_exact_multivariate_one_cluster}

As we consider test functions in the space $\mathcal{S}= \text{span} \{\theta_1, \dots, \theta_k\} \cup \{\theta_1^*, \dots, \theta_{k_*}^*\}$, by some rotation, we can suppose that $\mathcal{S} = \mathbb{R}^{d'}$ to be the subspace of $\mathbb{R}^d$ with zero value for lass $d-d'$ coordinate, and consider the testing function of $d'$ variables. Here, we have $d' \leq \min\{2k_*,d\}$.

For global bound, the first step, as in the univariate case, is to partition centers into scale set. For each fitted atom $\theta_i$, let $c(i) = \arg\min_j\|\theta_i-\theta^*_j\|$ be the index of true center $\theta_j^*$ which is closest to $\theta_i$ (when several points share the same minimum distance to $\theta_i$, we just randomly pick up one of them). 
\begin{itemize}
    \item The near set $\mathcal{M}_{\mathrm{near}} = \{i \in \{1,\ldots,k_*\}:\|\theta_i-\theta^*_{c(i)}\|\leq \Delta_{\mathrm{sep}}/4\}$, and we denote $\bar{\mathcal{V}}_j = \{i\in \mathcal{M}_{\mathrm{near}}:c(i) = j\}$ as a subset of the Voronoi cells $\mathcal{V}_j$. 
    \item The far set $\mathcal{M}_{\mathrm{far}} = \{i \in \{1,\ldots,k_*\}:\|\theta_i-\theta^*_{c(i)}\|>\Delta_{\mathrm{sep}}/4\}$, and we denote the total far mass as $\pi_{\mathcal{F}} = \sum_{i \in \mathcal{M}_{\mathrm{far}}}\pi_i$. 
\end{itemize}

\vspace{0.5 em}
\noindent
\emph{Step 1 - Wasserstein decomposition.}  We consider a deterministic spatial mapping function $T: \mathbb{R}^{d'} \to \Theta^* = \left\{\theta_{1}^{*}, \ldots, \theta_{k_{*}}^{*}\right\}$ such that $T(\theta_i) = \theta_{c(i)}^*$. We define $\widetilde{G}$ as the exact pushforward measure of $G$ under $T$:
$$ \widetilde{G} = T_{\#} G = \sum_{i=1}^{k_*} \pi_i \delta_{T(\theta_i)} = \sum_{i=1}^{k_*} \pi_i \delta_{\theta_{c(i)}^*}.$$
From the triangle inequality with the $W_{1}$ metric, we have
\begin{align}
    W_1(G, G_*) \leq W_1(G, \widetilde{G}) + W_{1}(\widetilde{G},G_*). \label{eq:dung_thm_3.1_multivariate_case_preliminary_estimation_of_W1}
\end{align}

We first upper bound $W_1(G, \widetilde{G})$. By specifically considering the transportation plan $\hat{\gamma} = (\text{Id} \times T)_{\#} G$, we obtain that:
\begin{align} 
W_1(G, \widetilde{G}) \le \int_{\mathbb{R}^{d'} \times \mathbb{R}^{d'}} \|x-y\| d\hat{\gamma}(x,y) & = \int_{\mathbb{R}^{d'}} \|\theta - T(\theta)\| dG(\theta) = \sum_{i=1}^k \pi_i \|\theta_i - \theta_{c(i)}^*\| \nonumber \\
& = \sum_{i \in \mathcal{M}_{\mathrm{near}}} \pi_i \|\theta_{i} - \theta_{c(i)}^{*}
\| + \sum_{i \in \mathcal{M}_{\mathrm{far}}}  \pi_i \|\theta_{i} - \theta_{c(i)}^{*}\|. \label{eq:dung_thm_3.1_multivariate_case_estimation_for_G_and_intermediate}
\end{align}

We now move to upper bound $W_1(\widetilde{G}, G_*)$. Let us rewrite $\widetilde{G}$ on the true discrete support basis by grouping all mass mapped to the identical center $j$ as follows: $$\widetilde{G} = \sum_{j=1}^{k_*} \tilde{\pi}_j \delta_{\theta_j^*} \quad \text{where} \quad \tilde{\pi}_j = \sum_{i: c(i) = j} \pi_i.$$
We consider the transporation plan from $\tilde{G}$ to $G_*$ such that for each $j$, we keep a mass of $\min\{\sum_{i\in \mathcal{V}_j}\pi_i,\pi^*_j\}$ for each $\theta^*_i$, while transferring $\max\{0,\tilde{\Delta}\pi_i\}$ to the other centers, where $\tilde{\Delta}\pi_i:= \tilde{\Delta}\pi_i-\pi_i^*$ . Also noting that 
\begin{align*}
    \tilde{\pi}_j = \sum_{i\in \bar{\mathcal{V}}_j} \pi_i + \sum_{i\in \mathcal{M}_{\mathrm{far}}, c(i)=j} \pi_i
\end{align*}
Define $\Delta\pi_j:= \left(\sum_{i\in \bar{\mathcal{V}}_j}\pi_i\right)-\pi_j^*$. Then, the total mass moved is exactly $\frac{1}{2}\sum_{j = 1}^{k_{*}} |\tilde{\Delta}\pi_j|$, and the distance between any true centers is $C_0\Delta_{\mathrm{sep}}$, thus 
\begin{align}
\nonumber
    W_1(\tilde{G},G_*) &\leq C_0 \Delta_{\mathrm{sep}} \sum_{j=1}^{k_*} |\tilde{\Delta}\pi_j| \leq C_0\Delta_{\mathrm{sep}}\left(\sum_{j=1}^{k_*}|\Delta \pi_j| + \sum_{j=1}^{k_*}\sum_{i\in \mathcal{M}_{\mathrm{far}}, c(i)=j} \pi_i\right)\\
    &\leq C_0\Delta_{\mathrm{sep}}\left(\sum_{j=1}^{k_*}|\Delta \pi_j| +\pi_{\mathcal{F}}\right)
\label{eqn:dung_thm3_1_inequality_W_1_true_measure_and_intermediate}
\end{align}

Plugging in the result from \eqref{eq:dung_thm_3.1_multivariate_case_estimation_for_G_and_intermediate} and \eqref{eqn:dung_thm3_1_inequality_W_1_true_measure_and_intermediate} into \eqref{eq:dung_thm_3.1_multivariate_case_preliminary_estimation_of_W1}, we have 
\begin{align}
    W_1(G,G_*) \leq \sum_{i \in \mathcal{M}_{\mathrm{near}}} \pi_i \|\theta_{i} - \theta_{c(i)}^{*}
\| + \sum_{i \in \mathcal{M}_{\mathrm{far}}}  \pi_i \|\theta_{i} - \theta_{c(i)}^{*}\| + C_0\Delta_{\mathrm{sep}}\left(\sum_{j=1}^{k_*}|\Delta \pi_j| +\pi_{\mathcal{F}}\right).
\label{eq:dung_thm_3.1_multivariate_case_estimation_for_W1_based_on_moment}
\end{align}

\vspace{0.5 em}
\noindent
\emph{Step 2 - Bounding near variance and far high moment for variance.} Now we bound the variance in near set and high moment for variance in far set. We use the test function in Section \ref{sec:variance_test_function}
\begin{equation*}
    P_{\mathrm{var}}(\theta) = \prod_{l=1}^{k_*} \|\theta - \theta^*_l\|_2^2. 
\end{equation*}
Using Lemma \ref{lemma:variance_test_function}, we have $\|P_{\mathrm{var}}\|_{\infty} \leq C_{\mathrm{var}}$. From Lemma \ref{lemma:Hellinger_to_Polynomial}, we achieve 
\begin{align}
\label{eqn:dung_thm_3_multivariate_variance_test_function}
\int P_{\mathrm{var}}(\theta)d\nu(\theta) \leq C_{\mathrm{poly}}C_{\mathrm{var}}d_H(p_G,p_{G_*}).
\end{align}
In addition, the variance test  polynomial satisfies $P_{\mathrm{var}}(\theta_l^*) = 0$, thus $\int P_{\mathrm{var}}(\theta)dG_*(\theta) = 0$. As a result, we have 
$$
\int P_{\mathrm{var}}(\theta)d\nu(\theta) = \underbrace{\sum_{i \in \mathcal{M}_{\mathrm{near}}}\pi_iP_{\mathrm{var}}(\theta_i)}_{\geq 0} + \underbrace{\sum_{i \in \mathcal{M}_{\mathrm{far}}} \pi_iP_{\mathrm{var}}(\theta_i)}_{\geq 0}.  
$$
Consider the case $i \in \mathcal{M}_{\mathrm{near}}$, in which 
$$P_{\mathrm{var}}(\theta_i) = \|\theta_i-\theta^*_{c(i)}\|^2\prod_{l\neq c(i)}\|\theta_i - \theta^*_{l}\|_2^2.$$
For any $j \neq c(i)$, we have 
\begin{equation*}
    \|\theta_i - \theta_j^*\|_2\geq  \|\theta^*_j - \theta_{c(i)}^*\|_2 - \|\theta_i - \theta_{c(i)}^*\|_2  \geq\Delta_{\mathrm{sep}} - \frac{\Delta_{\mathrm{sep}}}{4} = \frac{3\Delta_{\mathrm{sep}}}{4}. 
\end{equation*}
As a result, we have 
\begin{equation*}
    \int P_{\mathrm{var}}(\theta)d\nu(\theta)\geq \sum_{i \in \mathcal{M}_{\mathrm{near}}}\pi_iP_{\mathrm{var}}(\theta_i)   \geq  \left(\frac{3}{4}\right)^{2k_*-2} \Delta^{2k_*-2}_{\mathrm{sep}}\sum_{i \in \mathcal{M}_{\mathrm{near}}} \pi_i\|\theta_i - \theta^*_{c(i)}\|^2. 
\end{equation*}
Combining this result with \eqref{eqn:dung_thm_3_multivariate_variance_test_function}, we have 
\begin{align}
\sum_{i \in \mathcal{M}_{\mathrm{near}}} \pi_i\|\theta_i - \theta^*_{c(i)}\|^2&\leq \left[\frac{C_{\mathrm{var}}C_{\mathrm{poly}}}{(3/4)^{2k_*-2}}\right]\Delta_{\mathrm{sep}}^{-(2k_*-2)}d_H(p_G,p_{G_*})\nonumber\\
&:= C_{\mathrm{var},3} \Delta_{\mathrm{sep}}^{-(2k_*-2)}d_H(p_G,p_{G_*}).
\label{eqn:dung_thm_3.1_multivariate_near_variance_estimation}
\end{align}
For $i \in \mathcal{M}_{\mathrm{far}}$, we have $\|\theta_i - \theta^*_{c(i)}\|_2\leq \|\theta_i - \theta_j^*\|_2$ for any $j$, thus $P_{\mathrm{var}}(\theta_i) \geq \|\theta_i-\theta^*_{c(i)}\|_2^{2k_*}$. Thus, from estimation \eqref{eqn:dung_thm_3_multivariate_variance_test_function}, we have 
\begin{align}
\nonumber
    \sum_{i \in \mathcal{M}_{\mathrm{far}}}\pi_i\|\theta_i-\theta^*_{c(i)}\|^{2k_*} &\leq \sum_{i \in \mathcal{M}_{\mathrm{far}}}\pi_iP_{\mathrm{var}}(\theta_i)\\
    &\leq C_{\mathrm{var}}C_{\mathrm{poly}}d_H(p_G,p_{G_*}) := C_{\mathrm{far,moment,2}}d_H(p_G,p_{G_*}). 
\label{eqn:dung_thm3.1_multivariate_far_high_moment_estimation} 
\end{align}

\vspace{0.5 em}
\noindent
\emph{Step 3 - Bounding far parameter.} In this step, we bound the related far parameters.

\emph{Step 3.1 - Bounding far mean discrepancy.} For any index $i \in \mathcal{M}_{\mathrm{far}}$, we explicitly have the strict geometric condition $\|\theta_{i} - \theta_{c(i)}^{*}\| > \Delta_{\mathrm{sep}}/4$. In addition, we have $\|\theta_{i} - \theta_{c(i)}^{*}\| = \|\theta_{i} - \theta_{c(i)}^{*}\|^{2k_*} \|\theta_{i} - \theta_{c(i)}^{*}\|^{-(2k_*-1)} < \|\theta_{i} - \theta_{c(i)}^{*}\|^{2k_*} \left(\frac{4}{\Delta_{\mathrm{sep}}}\right)^{2k_*-1}$. Therefore, we find that
\begin{align}
    \sum_{i \in \mathcal{M}_{\mathrm{far}}} \pi_i \|\theta_{i} - \theta_{c(i)}^{*}\| &\le \left(\frac{4}{\Delta_{\mathrm{sep}}}\right)^{2k_*-1} \sum_{i \in \mathcal{M}_{\mathrm{far}}} \pi_i \|\theta_{i} - \theta_{c(i)}^{*}\|^{2k_*}\nonumber\\
    &\le  4^{2k_*-1} C_{\mathrm{far,moment,2}}  \Delta_{\mathrm{sep}}^{-(2k_*-1)} d_H(p_{G},p_{G_*})\nonumber\\
    &:= C_{\mathrm{far,mean,2}} \Delta_{\mathrm{sep}}^{-(2k_*-1)} d_H(p_{G},p_{G_*}).\label{eq:dung_thm3.1_multivariate_far_first_moment}
\end{align} 
\emph{Step 3.2 - Bounding far mass.} For any index $i \in \mathcal{M}_{\mathrm{far}}$, we have $$1 = \|\theta_{i} - \theta_{c(i)}^{*}\|^{2k_*} \|\theta_{i} - \theta_{c(i)}^{*}\|^{-2k_*} < \|\theta_{i} - \theta_{c(i)}^{*}\|^{2k_*} \left(\frac{4}{\Delta_{\mathrm{sep}}}\right)^{2k_*},$$ which follows that
\begin{align}
\nonumber
    \pi_{\mathcal{F}} &= \sum_{i \in \mathcal{M}_{\mathrm{far}}} \pi_i \le \left(\frac{4}{\Delta_{\mathrm{sep}}}\right)^{2k_*} \sum_{i \in \mathcal{M}_{\mathrm{far}}} \pi_i \|\theta_{i} - \theta_{c(i)}^{*}\|^{2k_*} \\
    &\le 4^{2k_*} C_{\mathrm{far,moment,2}} \Delta_{\mathrm{sep}}^{-2k_*} d_H(p_{G},p_{G_*}):= C_{\mathrm{far,mass,2}}\Delta_{\mathrm{sep}}^{-2k_*} d_H(p_{G},p_{G_*}). \label{eqn:dung_thm3_1_multivariate_far_zero_moment_bound}
\end{align} 
\emph{Step 4 - Bounding near mass. } To evaluate the total mass mismatch, we consider the test function $H_{j,j}$ such that $E_j(\theta_l^*) = \delta_{jl}$ and $\nabla E_j(\theta_l^*) = 0$. This polynomial can be chosen the one constructed in Section \ref{sec:point_wise_mass_extractor_non_multicluster}: 
\begin{equation*}
    E_j(\theta) : = \ell_j^2(\theta) \left[ \bar{A}_j + \langle\bar{B}_j,\theta - \theta_j^*\rangle\right], \quad \text{where} \quad \ell_j(\theta) = \prod_{q \neq j} \frac{\|\theta - \theta_q^*\|_2}{\|\theta_j^* - \theta_q^*\|_2},
\end{equation*}
and $\bar{A}_j = 1$, $\bar{B}_j = - 2\nabla\ell_j(\theta_j^*)$. Recall that this test function satisfies $E_j(\theta_l^*) = \delta_{jl}$ and $\nabla E_j(\theta_j^*) = 0$. Integrating $E_j$ yields:
\begin{align*}
    \int E_j(\theta)d\nu(\theta) = \sum_{i=1}^{k_*}\pi_i E_j(\theta_i) - \pi_j^* = \sum_{i \in \mathcal{M}_{\mathrm{near}}}\pi_i E_j(\theta_i) - \pi_j^*+ \sum_{i \in \mathcal{M}_{\mathrm{far}}}\pi_i E_j(\theta_i). 
\end{align*}
For $i\in\mathcal{M}_{\mathrm{near}}$, by performing the Taylor expansion with Lagrange remainder on $E_j(\theta_i)$ exactly around its true center $\theta_{c(i)}^*$, we have $$E_j(\theta_i) = \delta_{jc(i)} + \frac{1}{2} (\theta_{i} - \theta_{c(i)}^{*})^\top D^2E_j(\xi_i) (\theta_{i} - \theta_{c(i)}^{*}),$$ where $\xi_i$ is some point lying in the segment between $\theta_i$ and $\theta^*_{c(i)}$. 
Thus, we arrive at
\begin{align*}
    \int E_j(\theta) d\nu(\theta) &=\Delta \pi_j + \frac{1}{2}\sum_{i \in \mathcal{M}_{\mathrm{near}}} \pi_{i} (\theta_{i} - \theta_{c(i)}^{*})^{\top}D^2E_j(\xi_i) (\theta_{i} - \theta_{c(i)}^{*}) + \sum_{i \in \mathcal{M}_{\mathrm{far}}} \pi_i E_j(\theta_i).
\end{align*}
Applying the triangle inequality leads to
\begin{align}
    |\Delta \pi_j| \le \left| \int E_j(\theta) d\nu(\theta) \right| + \underbrace{ \frac{1}{2}\sum_{i \in \mathcal{M}_{\mathrm{near}}} \pi_{i} \|D^2E_j(\xi_i)\|_{\mathrm{op}} \|\theta_{i} - \theta_{c(i)}^{*}\|^2}_{\text{Near Error}} + \underbrace{ \sum_{i \in \mathcal{M}_{\mathrm{far}}} \pi_i |E_j(\theta_i)| }_{\text{Far Error}}. \label{eqn:dung_thm3.1_multivariate_mass_mismatch_prelim_estimation}
\end{align}
From the results of Lemma~\ref{lemma:Hellinger_to_Polynomial} and Lemma~\ref{lemma:mass_test_function}, we have
\begin{align*}
    \left| \int E_j(\theta) d\nu(\theta) \right| \le C_{\mathrm{norm,E}} \Delta_{\mathrm{sep}}^{-(2k_*-1)} d_H(p_{G},p_{G_*}).
\end{align*}
Since $\Delta_{\mathrm{sep}} \le 2R$, we factor $\Delta_{\mathrm{sep}}^{-(2k_*-1)} = \Delta_{\mathrm{sep}} \cdot \Delta_{\mathrm{sep}}^{-2k_*} \le 2R \Delta_{\mathrm{sep}}^{-2k_*}$, leading to
\begin{align}
\left| \int E_j(\theta) d\nu(\theta) \right| \le 2R C_{\mathrm{norm},E} \Delta_{\mathrm{sep}}^{-2k_*} d_H(p_{G},p_{G_*}). \label{eqn:dung_thm3_multivariate_integral_of_H_estimation}
\end{align}
Regarding the near error, Lemma~\ref{lemma:mass_test_function} indicates that it is bounded explicitly by $\frac{C_{E,1}}{2 \Delta_{\mathrm{sep}}^2} \sum_{i \in \mathcal{M}_{\mathrm{near}}} \pi_{i} \|\theta_{i} - \theta_{c(i)}^{*}\|^2$. Substituting the bound in equation~\eqref{eqn:dung_thm_3.1_multivariate_near_variance_estimation}, we obtain that
\begin{align}
\frac{1}{2}\sum_{i \in \mathcal{M}_{\mathrm{near}}}  \pi_i \|D^2E_j(\xi_i)\| \|\theta_{i} - \theta_{c(i)}^{*}\|^2 \leq \frac{1}{2} C_{E,1} C_{\mathrm{var},3} \Delta_{\mathrm{sep}}^{-2k_*} d_H(p_{G},p_{G_*}).\label{eqn:dung_thm3.1_near_variance_with_derivative_estimation}
\end{align}
Regarding the far error, for any index $i \in \mathcal{M}_{\mathrm{far}}$, the distance from $\theta_i$ to any arbitrary true center $\theta^{*}_{q}$ is bounded by $\|\theta_i - \theta_q^*\| \le \|\theta_i - \theta_{c(i)}^*\| + \|\theta_{c(i)}^* - \theta_q^*\| \le \|\theta_i - \theta_{c(i)}^*\| + C_0 \Delta_{\mathrm{sep}}$. Because $\Delta_{\mathrm{sep}} < 4\|\theta_i - \theta_{c(i)}^*\|$, we bound the distance as
\begin{align}
\label{eqn:dung_thm_3_1_multivariate_global_theta_distance_to_r_distance}
    \|\theta_i - \theta_q^*\| < \|\theta_i - \theta_{c(i)}^*\| + 4C_0 \|\theta_i - \theta_{c(i)}^*\| = (1+4C_0)\|\theta_i - \theta_{c(i)}^*\|.
\end{align}
The denominator of the polynomial function $\ell_j$ is precisely bounded below by $\Delta_{\mathrm{sep}}^{k_*-1}$. Substituting the root bound yields 
\begin{equation}
\label{eqn:dung_thm_3_1_multivariate_global_estimation_for_ell_square_for_far_case}
\ell_j^2(\theta_i) \le (1+4C_0)^{2k_*-2} \Delta_{\mathrm{sep}}^{-(2k_*-2)} \|\theta_i - \theta_{c(i)}^*\|^{2k_*-2}.
\end{equation}
From the argument in equation~\eqref{eq:bound_Bj}, $\|\bar{B}_j\| \le \Delta_{\mathrm{sep}}^{-1} C_B$. Because $\|\theta_i - \theta_{c(i)}^*\| > \Delta_{\mathrm{sep}}/4$, we have
\begin{align}
\nonumber
|\bar{A}_j + \langle \bar{B}_j, \theta_i - \theta_j^*\rangle| & \le 1 + \Delta^{-1}_{\mathrm{sep}} C_B(1+4C_0)\|\theta_i - \theta_{c(i)}^*\| \\
& < \Delta_{\mathrm{sep}}^{-1} \Big[ 4  + C_B(1+4C_0) \Big] \|\theta_i - \theta_{c(i)}^*\| = \Delta_{\mathrm{sep}}^{-1} C_{\mathrm{lin},2} \|\theta_i - \theta_{c(i)}^*\|.
\label{eqn:dung_thm3_1_multivariate_global_part_mononomial_factor_estimation}
\end{align}
Multiplying the bounded components, we have 
\begin{align}
\nonumber
    |E_j(\theta_i)| & \le \Big[ (1+4C_0)^{2k_*-2} \Delta_{\mathrm{sep}}^{-(2k_*-2)} \|\theta_i - \theta_{c(i)}^*\|^{2k_*-2} \Big] \Big[ C_{\mathrm{lin},2} \Delta_{\mathrm{sep}}^{-1} \|\theta_i - \theta_{c(i)}^*\| \Big] \\
    & := C_{\mathrm{far},E,2} \Delta_{\mathrm{sep}}^{-(2k_*-1)} \|\theta_i - \theta_{c(i)}^*\|^{2k_*-1}.
\label{eqn:dung_thm3_1_multivariate_global_estimation_of_function_far_case}
\end{align}
Therefore, we obtain from \eqref{eqn:dung_thm3.1_multivariate_far_high_moment_estimation} that
\begin{align}
\sum_{i \in \mathcal{M}_{\mathrm{far}}} \pi_i |E_j(\theta_i)| &\le C_{\mathrm{far},E,2} \Delta_{\mathrm{sep}}^{-(2k_*-1)} \sum_{i \in \mathcal{M}_{\mathrm{far}}} \pi_i \|\theta_i - \theta_{c(i)}^*\|^{2k_* - 1} \nonumber\\
& \leq 4 C_{\mathrm{far},E,2} \Delta_{\mathrm{sep}}^{-2k_*} \sum_{i \in \mathcal{M}_{\mathrm{far}}} \pi_i \|\theta_i - \theta_{c(i)}^*\|^{2k_*} \nonumber \\
& \leq 4 C_{\mathrm{far},E,2} C_{\mathrm{far,moment,2}} \Delta_{\mathrm{sep}}^{-2k_*} d_H(p_{G},p_{G_*}). \label{eqn:dung_thm3_1_multivariate_far_set_test_function_sum}
\end{align}
Plugging the upper bounds in equations~\eqref{eqn:dung_thm3_multivariate_integral_of_H_estimation}, \eqref{eqn:dung_thm3.1_near_variance_with_derivative_estimation}, \eqref{eqn:dung_thm3_1_multivariate_far_set_test_function_sum} to the inequality in equation~\eqref{eqn:dung_thm3.1_multivariate_mass_mismatch_prelim_estimation} leads to
\begin{align}
|\Delta \pi_j| & \le \big[ 2R C_{\mathrm{norm},E} + \frac{1}{2} C_{E,1} C_{\mathrm{var},3} + 4 C_{\mathrm{far},H,2} C_{\mathrm{far,moment,2}} \big] \Delta_{\mathrm{sep}}^{-2k_*} d_H(p_{G},p_{G_*}) \nonumber \\
& := C_{\mathrm{mass},3}\Delta_{\mathrm{sep}}^{-2k_*}d_H(p_{G},p_{G_*}).\label{eqn:dung_thm3_1_final_bound_for_mismatch_mass}
\end{align}

\vspace{0.5 em}
\noindent
\emph{Step 5 - Bounding near mass discrepancy. }
For this quantity, we employ similar argument as in the proof of global case for univariate setting. 
We divide our argument into two cases:

\emph{Case 5.1: } There exists a sub-Voronoi cell $\bar{\mathcal{V}}_{j}$ that has more than one element. Then, without loss of generality, we assume that the Voronoi cell $\mathcal{V}_{1}$ has no element. Under this case, it is clear that $|\Delta \pi_{1}| = \pi_{1}^{*}$. The bound in equation~\eqref{eqn:dung_thm3_1_final_bound_for_mismatch_mass} indicates that
\begin{align}
    \pi^*_{\min}\le \pi_{1}^{*} \leq C_{\mathrm{mass},3}\Delta_{\mathrm{sep}}^{-2k_*} d_H(p_{G},p_{G_*}). \label{eqn:dung_thm3_1_multivariate_exact_global_upper bound_for_pi_min_star}
\end{align}
As a result, given that $\|\theta_{i} - \theta_{c(i)}^{*}\| \leq \Delta_{\mathrm{sep}}/4$ for all $i \in \mathcal{M}_{\mathrm{near}}$, we have
\begin{align}
\nonumber
    \sum_{i \in \mathcal{M}_{\mathrm{near}}} \pi_i \|\theta_{i} - \theta_{c(i)}^{*}\| &\leq \dfrac{\Delta_{\mathrm{sep}}}{4} \sum_{i \in \mathcal{M}_{\mathrm{near}}} \pi_{i} \leq \dfrac{\Delta_{\mathrm{sep}}}{4}\leq  \dfrac{\Delta_{\mathrm{sep}}}{4}\frac{C_{\mathrm{mass},3}\Delta_{\mathrm{sep}}^{-2k_*} d_H(p_{G},p_{G_*})}{\pi^*_{\min}}\\
    &\leq \bar{C}_{\mathrm{mean,near,1}}(\pi^*_{\min})^{-1}\Delta_{\mathrm{sep}}^{-2k_*} d_H(p_{G},p_{G_*}). 
\label{eqn:dung_thm3.1_multivariate_first_case_for_near_first_moment_bound}
\end{align}

\emph{Case 5.2:} There is no sub-Voronoi cell $\bar{\mathcal{V}}_{j}$ that has more than one element. It means that if $\bar{\mathcal{V}}_{j}$ is non-empty, it has only one element. If there exists some empty sub-Voronoi cell $\bar{\mathcal{V}}_{j}$, we can argue in a similar fashion to Case 1. Therefore, it is sufficient to assume that all the sub-Voronoi cells $\bar{\mathcal{V}}_{j}$ have exactly one element. Without loss of generality, we suppose that each point $\theta_i \in \bar{\mathcal{V}}_{i}$. 

We use the mean extractor test function $H_{i,i}$ such that $H_{i,i}(\theta_l^*) = 0$ for all $l$, $\nabla H_{i,i}(\theta_l^*) = 0$ for all $l\neq i$, and $\langle \nabla H_{i,i}(\theta_i^*), \theta_i - \theta_i^*\rangle = \|\theta_i - \theta_i^*\|_2$. An eligible choice is the polynomial $H_{i,i}$ defined in Section \ref{sec:mean_test_function}, whose formula is 
\begin{align*}
    H_{j,i}(\theta) : = \ell_j^2(\theta) \left[\langle v_{j,i}, \theta - \theta_j^*\rangle\right],  \quad \text{where} \quad \ell_j(\theta) : = \prod_{q \neq j} \frac{\|\theta - \theta_q^*\|_2}{\|\theta_j^* - \theta_q^*\|_2}, 
\end{align*}
and $v_{j,i} = {(\theta_i-\theta_j^*)}/{\|\theta_i-\theta_j^*\|}$ if $\theta_i \neq \theta_j^*$ and $v_{j,i}=(1, 0,\ldots ,0)^\top$ otherwise. By applying Taylor expansion for 
$H_{i,i}(\theta_l)$ exactly around its true center $\theta_l^*$, we have 
\begin{align*}
H_{i,i}(\theta_l) &= \delta_{il}v_{i,i}^{\top}(\theta_{l} - \theta_{l}^{*}) + \frac{1}{2} (\theta_{l} - \theta_{l}^{*})^{\top}D^2H_{i,i}(\xi_l) (\theta_{l} - \theta_{l}^{*}) \\
&= \delta_{il}\|\theta_i-\theta^*_i\| + \frac{1}{2} (\theta_{l} - \theta_{l}^{*})^{\top}D^2H_{i,i}(\xi_l) (\theta_{l} - \theta_{l}^{*}).
\end{align*}
Thus, we arrive at $$ \int H_{i,i}(\theta) d\nu(\theta) = \pi_{i}\|\theta_{i} - \theta_{i}^{*}\| + \frac{1}{2} \sum_{l=1}^{k_*} \pi_{l} (\theta_{l} - \theta_{l}^{*})^{\top} D^2H_{i,i}(\xi_l)(\theta_{l} - \theta_{l}^{*}),$$
which indicates that
\begin{align*}
    \pi_{i}\|\theta_{i} - \theta_{i}^{*}\| &\leq \left|\int H_{i,i}(\theta) d\nu(\theta)\right| + \frac{1}{2} \sum_{l=1}^{k_*} \pi_{l} \|D^2H_{i,i}(\xi_l)\|_{\mathrm{op}} \|\theta_{l} - \theta_{l}^{*}\|_2^2\\
    &\leq C_{\mathrm{poly}}\|H_{i,i}\|_{\infty}d_H(p_{G},p_{G_*})+\frac{1}{2}\max_{\theta \in \mathcal{V}_{l}: \|\theta - \theta_{l}^{*}\|\leq \Delta_{\mathrm{sep}}/4} \|D^2H_{i,i}(\theta)\|_{\mathrm{op}}\sum_{l=1}^{k_*} \pi_{l}  \|\theta_{l} - \theta_{l}^{*}\|^2,
\end{align*}
where the second inequality follows from Lemma~\ref{lemma:Hellinger_to_Polynomial}. Using estimations in Lemma \ref{lemma:mean_test_function} and equation~\eqref{eqn:dung_thm_3.1_multivariate_near_variance_estimation}, we have 
\begin{align*}
\sum_{i \in \bar{\mathcal{V}}_{j}} \pi_{i}\|\theta_{i} - \theta_{i}^{*}\| \leq \left(2RC_{\mathrm{poly}}C_{\mathrm{norm},H} + \frac{1}{2} C_{\mathrm{var},3} C_{H,1}\right) \Delta_{\mathrm{sep}}^{-(2k_*-1)} d_H(p_{G},p_{G_*}).
\end{align*}
As a consequence, 
\begin{align}
\sum_{i \in \mathcal{M}_{\mathrm{near}}} \pi_i \|\theta_{i} - \theta_{c(i)}^{*}\| &= \sum_{j = 1}^{k_{*}} \sum_{i \in \bar{\mathcal{V}}_{j}} \pi_{i}\|\theta_{i} - \theta_{j}^{*}\| \nonumber\\
& \leq k_{*}\left(2C_{\mathrm{poly}}C_{\text{norm}}R + \frac{1}{2} C_{\mathrm{var}} C_{H,1}\right) \Delta_{\mathrm{sep}}^{-(2k_*-1)} d_H(p_{G},p_{G_*}) \nonumber \\
& = \bar{C}_{\mathrm{mean,near},2} \Delta_{\mathrm{sep}}^{-(2k_* - 1)}d_H(p_{G},p_{G_*})
\label{eqn:dung_thm3.1_multivariate_second_case_for_near_first_moment_bound} 
\end{align}
Combining the results from equations~\eqref{eqn:dung_thm3.1_multivariate_first_case_for_near_first_moment_bound} and \eqref{eqn:dung_thm3.1_multivariate_second_case_for_near_first_moment_bound}, we have 
\begin{align}
\label{eqn:dung_thm3.1_multivariate_final_bound_for_near_first_moment_bound} 
    \sum_{i \in \mathcal{M}_{\mathrm{near}}} \pi_i \|\theta_{i} - \theta_{c(i)}^{*}\| \leq \bar{C}_{\mathrm{mean,near}}(\pi^*_{\min})^{-1}\Delta_{\mathrm{sep}}^{-(2k_* - 1)}d_H(p_{G},p_{G_*}),
\end{align}
where $\bar{C}_{\mathrm{mean,near}} = \min\{\bar{C}_{\mathrm{mean,near},1},\bar{C}_{\mathrm{mean,near},2}\}$. 

\vspace{0.5 em}
\noindent
\emph{Step 6 - Bounding $W_1(G,G_*)$ and conclusion. } Now we put everything together. We substitute the results in equations~\eqref{eq:dung_thm3.1_multivariate_far_first_moment}, \eqref{eqn:dung_thm3_1_multivariate_far_zero_moment_bound}, \eqref{eqn:dung_thm3_1_final_bound_for_mismatch_mass}, and  \eqref{eqn:dung_thm3.1_multivariate_final_bound_for_near_first_moment_bound} back to equation~\eqref{eq:dung_thm_3.1_multivariate_case_estimation_for_W1_based_on_moment}, by defining 
\begin{equation*}
    \bar{C}_{\text{global,1}} = \left[\bar{C}_{\mathrm{mean,near}}+C_{\mathrm{far,mean,2}} + C_0(k_*\cdot C_{\mathrm{mass},3}+C_{\mathrm{far,mass,2}})\right]^{-1},
\end{equation*}
we have 
\begin{align*}
     d_{H}(p_G,p_{G_*})\geq \bar{C}_{\text{global,1}}\pi^*_{\min}\Delta_{\mathrm{sep}}^{2k_* - 1}W_1(G,G_*).
\end{align*}
This completes our proof. 

\subsection{Proof of Theorem~\ref{theorem:exact_multi_group}}
\label{sec:proof_theorem:exact_multi_group_univariate}

\subsubsection{Univariate Setting - Local Bound}
We begin by analyzing the local regime where the Wasserstein distance $W_1(G, G_*)$ is sufficiently small. According to Lemma \ref{lemma:small_wasserstein_implies_center_in_voronoi_cell}, each fitted atom $\theta_i$ lies strictly within a Voronoi cell $\mathcal{V}_j$ associated with its closest true center $\theta_j^*$ such that $|\theta_i - \theta_j^*| \le \frac{\Delta_{\mathrm{sep}}}{4}$. Because $G$ only contains $k_{*}$ atoms, each Voronoi cell $\mathcal{V}_{j}$ has exactly one element. Thus, WLOG, we may assume that $j \in \mathcal{V}_{j}$ for any $1 \leq j \leq k_{*}$. In addition, let $m(j)$ denote the index of the cluster containing the center $\theta^{*}_{j}$. Finally, we define $\Delta\Pi_m = \sum_{j \in \mathcal{C}_m} (\pi_{j} - \pi_{j}^{*})$, for any $1 \leq m \leq k_{0}$.  

\vspace{0.5 em}
\noindent
\emph{Step 1 - Wasserstein decomposition.} We define $G' = \sum_{j=1}^{k_*} \pi_j \delta_{\theta_j^*}$. By the triangle inequality with the $W_{1}$ metric, we have
$$ W_1(G, G_*) \le W_1(G, G') + W_1(G', G_*).$$

For $W_1(G, G')$, by choosing the transportation plan $\rho_{jj} = \pi_{j}$ for all $1 \leq j \leq k_{*}$ and $\rho_{ij} = 0$ as $1 \leq i \neq j \leq k_{*}$ we obtain that
\begin{align}
    W_1(G, G') \leq \sum_{j=1}^{k_*} \pi_{j}|\theta_{j} - \theta_{j}^{*}|. \label{eq:local_bound_Wasserstein_multi_group_1} 
\end{align}

To bound $W_1(G', G_*)$, we consider the following transportation plan from $G'$ to $G_*$
\begin{enumerate}
    \item For each center $\theta_j^*$, we keep $\min\{\pi_j,\pi^*_j\}$. Then, the remaining mass in each center for $G'$ is exactly $\max\{0, \Delta \pi_j\}$. 
    \item For each cluster $\mathcal{C}_m$, we move the remaining mass in each center to other center \textit{within} this cluster. Then, as the transportation distance is less than or equal to $C_0\Delta_{\mathrm{sep}}$, the total transportation cost is not exceed $\sum_{j=1}^k C_0\Delta_{\mathrm{sep}}|\Delta\pi_j|$, while the total remaining mass for $G'$ in each cluster $\mathcal{C}_m$ is exactly $\max\{0, \Delta \Pi_m\}$.
    \item Finally, we transport each remaining mass in each cluster $\max\{0, \Delta \Pi_m\}$ to other center in other cluster. Then, as the transportation distance is less than or equal to $2R$, the total transportation cost is not exceed $\sum_{m=1}^{k_0}R|\Delta\Pi_m|$. 
\end{enumerate}

Combing these estimations, we arrive at
\begin{align}
    W_1(G', G_*) \leq C_{0} \Delta_{\mathrm{sep}}\sum_{m=1}^{k_0} \sum_{j \in \mathcal{C}_m} |\pi_j - \pi_{j}^{*}| + 2R \sum_{m = 1}^{k_{0}} |\Delta \Pi_{m}|. \label{eq:local_bound_Wasserstein_multi_group_5}  
\end{align}

In light of the bounds on $W_{1}(G,G')$ in equation~\eqref{eq:local_bound_Wasserstein_multi_group_1} and on $W_{1}(G',G_{*})$ in equation~\eqref{eq:local_bound_Wasserstein_multi_group_5}, we eventually obtain that
\begin{align}
W_1(G, G_*) \le \sum_{j=1}^{k_*} \pi_{j}|\theta_{j} - \theta_{j}^{*}| + C_{0} \Delta_{\mathrm{sep}}\sum_{m=1}^{k_0} \sum_{j \in \mathcal{C}_m} |\pi_j - \pi_{j}^{*}| + 2R \sum_{m = 1}^{k_{0}} |\Delta \Pi_{m}|. \label{eq:exact_multi_group_key_equation_second}
\end{align}
\emph{Step 2 - Bounding variance.} In this step, we consider the following witness function defined in Section \ref{sec:variance_test_function}
\begin{align*}
    P_{\mathrm{var}}(\theta) = \prod_{l=1}^{k_*} (\theta - \theta_l^*)^2.
\end{align*} 
Using Lemma~\ref{lemma:Hellinger_to_Polynomial} and Lemma~\ref{lemma:variance_test_function} about the supremum norm of $P_{\mathrm{var}}$, we can bound the absolute value of integral of $P_{\mathrm{var}}$ as
\begin{align}
    \label{eqn:dung_thm_3_2_univariate_local_integral_bound_for_variance_test_function}
    \left|\int P_{\mathrm{var}}d\nu\right|  \leq  C_{\mathrm{poly}}\|P_{\mathrm{var}}\|_{\infty}d_{H}(p_G,p_{G_*})\leq C_{\mathrm{poly}}C_{\mathrm{var}}d_{H}(p_G,p_{G_*}). 
\end{align}

From the hypothesis, we have $|\theta_{j} - \theta_{j}^{*}| \leq \frac{\Delta_{\mathrm{sep}}}{4}$, for any $1 \leq j \leq k_{*}$. For other centers $\theta_{q}^{*}$ lying in the same cluster $\mathcal{C}_{m(j)}$ of $\theta_{j}^{*}$, by the reverse triangle inequality, $$|\theta_j - \theta_q^*| \ge |\theta_j^* - \theta_q^*| - |\theta_{j} - \theta_{j^{*}}| \ge \Delta_{\mathrm{sep}} - \frac{\Delta_{\mathrm{sep}}}{4} = \frac{3}{4}\Delta_{\mathrm{sep}}.$$
Additionally, the distance of $\theta_{j}$ to centers $\theta_{q}^{*}$ in external clusters, i.e., $\theta_{q}^{*} \notin \mathcal{C}_m(j)$, is lowered bounded as follows:
 $$|\theta_j - \theta_q^*| \ge |\theta_j^* - \theta_q^*| -|\theta_{j} - \theta_{j}^{*}| \ge D_0 - \frac{\Delta_{\mathrm{sep}}}{4}.$$
 As $\Delta_{\mathrm{sep}} \le \frac{D_0}{4C_0} \le \frac{D_0}{4}$, this bound becomes $|\theta_i - \theta_q^*| \geq \frac{15}{16} D_{0}$. Putting all these bounds together, we obtain that 
 \begin{align*}
     P_{\mathrm{var}}(\theta_j) & \ge  (\theta_{j} - \theta_{j}^{*})^2 \left( \frac{3}{4}\Delta_{\mathrm{sep}}\right)^{2s_{m(j)} - 2} \left( \frac{15}{16}D_0 \right)^{2k_* - 2s_{m(j)}}, 
 \end{align*} 
Therefore, we arrive at
\begin{align} 
\int P_{\mathrm{var}}(\theta) d\nu(\theta) &= \sum_{j=1}^{k_*} \pi_j P_{\mathrm{var}}(\theta_{j}) \geq \sum_{j = 1}^{k_{*}} \pi_{j} (\theta_{j} - \theta_{j}^{*})^2 \left( \frac{3}{4}\Delta_{\mathrm{sep}} \right)^{2s_{m(j)} - 2} \left( \frac{15}{16}D_0 \right)^{2k_* - 2s_{m(j)}}\nonumber\\
&\geq \min\{(2R)^{-2k_{*}},1\} \left(\frac{3}{4}\right)^{2k_*-2}\min\left\{\left( \frac{15}{16}D_0 \right)^{2k_* - 2},1 \right\}\Delta_{\mathrm{sep}}^{2s_{\max} - 2} \sum_{j = 1}^{k_{*}} \pi_{j} (\theta_{j} - \theta_{j}^{*})^2.
\label{eqn:dung_thm_3_2_P_var_greater_than_quadratic_for_near_case}
\end{align}
As a consequence, by letting $$C^{-1}_{\mathrm{var},1} = C^{-1}_{\mathrm{poly}}C^{-1}_{\mathrm{var}}\min\{(2R)^{-2k_{*}},1\} \left(\frac{3}{4}\right)^{2k_*-2}\min\left\{\left( \frac{15}{16}D_0 \right)^{2k_* - 2},1 \right\},$$
we can derive from estimation equation~\eqref{eqn:dung_thm_3_2_univariate_local_integral_bound_for_variance_test_function} that
\begin{align}
\label{eqn:dung_thm_3_2_univariate_local_variance_final_bound}
\sum_{j=1}^{k_*}  \pi_{j} (\theta_{j} - \theta_{j}^{*})^2 \leq  C_{\mathrm{var},1}\Delta_{\mathrm{sep}}^{-(2s_{\max} - 2)}d_{H}(p_G,p_{G_*}). 
\end{align}

\vspace{0.5 em}
\noindent
\emph{Step 3 - Bounding cluster-wise mass discrepancy.} Recall that
$\Delta\Pi_m = \sum_{j \in \mathcal{C}_m} \pi_j - \sum_{j \in \mathcal{C}_m} \pi_j^*$. For each $1 \leq m \leq k_{0}$, we choose the witness function $P_m(\theta)$ satisfying the condition that $ P_m(\theta_j^*) = \mathbf{1}_{\{j\in \mathcal{C}_m\}}$ and $P'_m(\theta_j^*) = 0$ for all $j \in [k_*]$. A possible choice is the polynomial $P_m$ defined in Section \ref{sec:cluster_wise_test_function}.
Now, we move to evaluate $\int_{\mathbb{R}} P_m(\theta) d\nu(\theta)$. Direct computation leads to
$$ \int P_m(\theta) d\nu(\theta) = \int P_m(\theta) dG(\theta) - \int P_m(\theta) dG_*(\theta).$$
From the formulation of the polynomial $P_{m}$, we have $P_m(\theta_l^*) = 1_{\left\{l \in \mathcal{C}_m\right\}}$, which implies that
$$\int P_m(\theta) dG_*(\theta) = \sum_{l=1}^{k_*} \pi_l^* 1_{\{l \in \mathcal{C}_m\}} = \sum_{l \in \mathcal{C}_m} \pi_l^* = \Pi_m^*.$$
For any $1 \leq j \leq k_{*}$, an application of Taylor expansion up to the second order to $P_m(\theta_j)$ around $\theta^*_j$ leads to
$$P_m(\theta_j) = P_m(\theta_j^*) + P_m'(\theta_j^*)(\theta_j - \theta_j^*) + \frac{1}{2} P_m''(\xi_j)(\theta_j - \theta_j^*)^2.$$
Since $P_m(\theta_j^*) = 1_{\left\{j \in \mathcal{C}_m\right\}}$ and $P_m'(\theta_j^*) = 0$, the above equation becomes 
$$ P_m(\theta_j) = 1_{\left\{j \in \mathcal{C}_m\right\}} + \frac{1}{2} P_m''(\xi_j) (\theta_j - \theta_j^*)^2.$$
Therefore, we obtain that
$$\int P_m(\theta) dG(\theta) = \sum_{j=1}^{k_*} \pi_j \left[ 1_{\left\{j \in \mathcal{C}_m\right\}} + \frac{1}{2} P_m''(\xi_j) (\theta_j-\theta_j^*)^2 \right] = \sum_{j \in \mathcal{C}_m} \pi_j + \frac{1}{2} \sum_{j=1}^{k_*} \pi_j P_m''(\xi_j) (\theta_j - \theta_j^*)^2.$$
Putting the above results together, we have
$$\int P_m(\theta) d\nu(\theta) = \Delta\Pi_m + \frac{1}{2} \sum_{j=1}^{k_*} \pi_j P_m''(\xi_j)(\theta_j - \theta_j^*)^2.$$
Hence, an application of triangle inequality leads to
\begin{align*}
    |\Delta\Pi_m| & \le \left| \int_{\mathbb{R}} P_m d\nu \right| + \frac{1}{2} \sum_{j=1}^{k_*} \pi_j |P_m''(\xi_j)| (\theta_j - \theta_j^*)^2  \\
    & \leq  C_{\mathrm{poly}}\|P_m\|_{\infty} d_H(p_{G},p_{G_*}) + \frac{1}{2} \sum_{j=1}^{k_*} \pi_j |P_m''(\xi_j)| (\theta_j - \theta_j^*)^2.
\end{align*}
An application of Lemma \ref{lemma:dung_bound_for_cluster_mass_extracting} leads to 
\begin{align*}
|\Delta\Pi_m|\leq  C_{\mathrm{poly}}C_{\mathrm{norm},P} d_H(p_{G},p_{G_*}) + \frac{1}{2}C_{P,2} \sum_{j=1}^{k_*} \pi_j (\theta_j - \theta_j^*)^2. 
\end{align*}

Since $1 \le (2R)^{2s_{\max}-2} \Delta_{\mathrm{sep}}^{-(2s_{\max}-2)} \leq \max\{1,(2R)^{2k_*-2}\}\Delta_{\mathrm{sep}}^{-(2s_{\max}-2)}$, using the variance estimation in equation~\eqref{eqn:dung_thm_3_2_univariate_local_variance_final_bound}, by setting 
$$C^{-1}_{\mathrm{cluster},1} = \max\{1,(2R)^{2k_*-2}\}C_{\mathrm{poly}}C_{\mathrm{norm},P}+ \frac{1}{2} C_{P,2}  C_{\mathrm{var},1},$$
we finally establish
\begin{align}
    |\Delta\Pi_m| \leq C_{\mathrm{cluster},1}\Delta_{\mathrm{sep}}^{-(2s_{\max}-2)} d_H(p_{G},p_{G_*}). \label{eq:exact_multi_group_key_equation_seventh}
\end{align}

\vspace{0.5 em}
\noindent
\emph{Step 4 - Bounding mass discrepancy.} We now move to upper bound $\sum_{j = 1}^{k_{*}} |\pi_{j} - \pi_{j}^{*}|$. We aim to construct $\bar{E}_j(\theta)$ such that $\bar{E}_j(\theta_l^*) = \delta_{jl}$ and $\bar{E}_j'(\theta_l^*) = 0$ for all $1\leq l\leq k_*$. An eligible choice is the polynomial $\bar{E}_j$ defined in Section \ref{sec:dung_mass_extractor_multicluster}
$$\bar{E}_{j}(\theta) = \ell_{j,\text{micro}}^2(\theta) [\bar{A}_j +  \bar{B}_j(\theta - \theta_{j}^{*})]P_{\text{macro}}(\theta),$$ 
where we define $\ell_{j,\text{micro}}(\theta) = \prod_{q \in \mathcal{C}_{m} \neq j} \frac{\theta - \theta_q^*}{\theta_j^* - \theta_q^*}$, $P_{\text{macro}}(\theta) = \prod_{p \neq m} \prod_{q \in \mathcal{C}_p} \left( \frac{\theta - \theta_q^*}{2R} \right)^{2s_{\max}}$, and $\bar{A}_j = 1/ P_{\text{macro}}(\theta_{j}^{*})$, and
\begin{align*}
    \bar{B}_j & = -\bar{A}_j[2\ell'_{j,\text{micro}}(\theta_j^*) + (P_{\text{macro}}(\theta^*_j))^{-1} P'_{\text{macro}}(\theta^*_j)]\\
    &= -\bar{A}_j \left(2\sum_{q \in \mathcal{C}_m \setminus \{j\}} \frac{1}{\theta_j^* - \theta_q^*}+ 2s_{\max}\sum_{p \neq m} \sum_{q \in \mathcal{C}_p} \frac{1}{\theta_j^* - \theta_q^*} \right).
\end{align*}

Assume without loss of generality that $j \in \mathcal{C}_{m}$, for some $1\leq m\leq k_0$. Then, direct calculation yields that $$\int \bar{E}_{j}(\theta) d\nu(\theta) = \int \bar{E}_{j}(\theta) dG(\theta) - \int \bar{E}_{j}(\theta) dG_*(\theta).$$
From the properties of the function $\bar{E}_{j}$, we have
$$\int \bar{E}_{j}(\theta) dG_*(\theta) = \sum_{l=1}^{k_*} \pi_l^* \bar{E}_{j}(\theta_l^*) = \sum_{l=1}^{k_*} \pi_l^* \delta_{jl} = \pi_j^*.$$
It leads to $\int \bar{E}_{j}(\theta) d\nu(\theta) = \sum_{l=1}^{k_*} \pi_l \bar{E}_{j}(\theta_l) - \pi_j^* = (\pi_j - \pi_j^*) + \sum_{l=1}^{k_*} \pi_l \bar{E}_{j}(\theta_l) - \pi_j$. It is equivalent to
\begin{align*}
(\pi_j - \pi_j^*)& = \int \bar{E}_{j}(\theta) d\nu(\theta) -  \left(\sum_{l=1}^{k_*} \pi_l \bar{E}_{j}(\theta_l) - \pi_j\right) = \int \bar{E}_{j}(\theta) d\nu(\theta) -  \sum_{l=1}^{k_*} \pi_l (\bar{E}_{j}(\theta_l) - \delta_{jl}) \\
& = \int \bar{E}_{j}(\theta) d\nu(\theta) -  \sum_{l=1}^{k_*} \pi_l (\bar{E}_{j}(\theta_l) - \bar{E}_{j}(\theta_l^{*})).
\end{align*}
The above equation can be rewritten as
\begin{align}
(\pi_j - \pi_j^*) = \int_{\mathbb{R}} \bar{E}_{j}(\theta) d\nu(\theta) -  \sum_{l \in \mathcal{C}_{m}} \pi_l (\bar{E}_{j}(\theta_l) - \bar{E}_{j}(\theta_l^{*})) - \sum_{l \notin \mathcal{C}_{m}} \pi_l (\bar{E}_{j}(\theta_l) - \bar{E}_{j}(\theta_l^{*})). \label{eq:exact_multi_group_key_equation_sixth_1}
\end{align}

For the integral term, by invoking Lemma \ref{lemma:Hellinger_to_Polynomial} about its connection with Hellinger distance and Lemma \ref{lemma:dung_multicluster_mass_test_function} about the supremum norm of $\bar{E}_j$, also noting that $\Delta^{-(2s_m-1)}_{\mathrm{sep}} \leq \Delta^{-(2s_{\max}-1)}_{\mathrm{sep}}(2R)^{2s_{\max}-2s_m} \leq \Delta^{-(2s_{\max}-1)}_{\mathrm{sep}}\max\{1,(2R)^{2k_*-2}\}$, we have 
\begin{align}
    \left|\int_{\mathbb{R}} \bar{E}_{j}(\theta) d\nu(\theta)\right|&\leq C_{\mathrm{poly}}\|\bar{E}_j\|_{\infty}d_H(p_{G},p_{G_*}) \leq C_{\mathrm{poly}}C_{\mathrm{norm},\bar{E}}\Delta^{-(2s_m-1)}_{\mathrm{sep}}d_H(p_{G},p_{G_*})\nonumber\\
    &\leq \max\{1,(2R)^{2k_*-2}\}C_{\mathrm{poly}}C_{\mathrm{norm},\bar{E}}\Delta^{-(2s_{\max}-1)}_{\mathrm{sep}}d_H(p_{G},p_{G_*}). 
    \label{eq:f_bar_moment}
\end{align}
We now proceed to bound the term $ \sum_{l \in \mathcal{C}_{m}} \pi_l (\bar{E}_{j}(\theta_l) - \bar{E}_{j}(\theta_l^{*}))$. By utilizing Taylor expansion for $\bar{E}_j(\theta_l)$ exactly around its true center $\theta_l^*$, we have $\bar{E}_j(\theta_l) = \bar{E}_j(\theta_l^{*}) + \frac{1}{2} \bar{E}_j''(\xi_l) (\theta_{l} - \theta_{l}^{*})^2$, for any $l \in \mathcal{C}_{m}$, implying that
$$\sum_{l \in \mathcal{C}_{m}} \pi_l (\bar{E}_{j}(\theta_l) - \bar{E}_{j}(\theta_l^{*})) = \sum_{l \in \mathcal{C}_{m}} \frac{\pi_l}{2} \bar{E}_j''(\xi_l) (\theta_{l} - \theta_{l}^{*})^2.$$
An application of the triangle inequality leads to
\begin{align*}
\left|\sum_{l \in \mathcal{C}_{m}} \pi_l (\bar{E}_{j}(\theta_l) - \bar{E}_{j}(\theta_l^{*}))\right| \leq \sum_{l \in \mathcal{C}_{m}} \pi_l |\bar{E}_{j}(\theta_l) - \bar{E}_{j}(\theta_l^{*})| \leq \sum_{l \in \mathcal{C}_{m}} \frac{\pi_l}{2} |\bar{E}_j''(\xi_l)| (\theta_{l} - \theta_{l}^{*})^2. 
\end{align*}

Using Lemma \ref{lemma:dung_multicluster_mass_test_function} for the bound of second derivative of $\bar{E}_j$ and the estimation in equation~\eqref{eqn:dung_thm_3_2_univariate_local_variance_final_bound} about the variance, we have 
\begin{align}
    \left|\sum_{l \in \mathcal{C}_{m}} \pi_l (\bar{E}_{j}(\theta_l) - \bar{E}_{j}(\theta_l^{*}))\right| &\leq \frac{1}{2}C_{\bar{E},2}\Delta^{-2}_{\mathrm{sep}}\sum_{l \in \mathcal{C}_m}\pi_l(\theta_l-\theta_l^*)^2 \nonumber\\
    &\leq \frac{1}{2}C_{\bar{E},2}C_{\mathrm{var},1}\Delta^{-2s_{\max}}_{\mathrm{sep}}d_H(p_G,p_{G_*}). 
    \label{eqn:dung_thm_3_2_univariate_local_sum_of_difference_E_inside_cluster}
\end{align}

We now move on deriving an upper bound for the term $|\sum_{l \notin \mathcal{C}_{m}} \pi_l (\bar{E}_{j}(\theta_l) - \bar{E}_{j}(\theta_l^{*}))|$. Recall that $\bar{E}_j(\theta_l^{*}) = 0$ for $l \notin \mathcal{C}_m$, using Lemma \ref{lemma:dung_multicluster_mass_test_function}, we have 
\begin{align*}
\left|\sum_{l \notin \mathcal{C}_{m}} \pi_l (\bar{E}_{j}(\theta_l) - \bar{E}_{j}(\theta_l^{*}))\right|  & \leq \sum_{l \notin \mathcal{C}_{m}} \pi_l |\bar{E}_{j}(\theta_l)|  \leq C_{\mathrm{cross},\bar{E}} \Delta_{\mathrm{sep}}^{-1} \sum_{l \notin \mathcal{C}_{m}} \pi_l (\theta_{l} - \theta_{l}^{*})^{2}. 
\end{align*}

Thus, it follows from estimation in equation~\eqref{eqn:dung_thm_3_2_univariate_local_variance_final_bound} that 
\begin{align}
    \left|\sum_{l \notin \mathcal{C}_{m}} \pi_l (\bar{E}_{j}(\theta_l) - \bar{E}_{j}(\theta_l^{*}))\right|\leq C_{\mathrm{cross},\bar{E}}C_{\mathrm{var},1}\Delta^{-(2s_{\max}-1)}_{\mathrm{sep}}d_H(p_G,p_{G_*}). 
\label{eqn:dung_thm_3_2_univariate_local_sum_of_difference_E_outside_cluster}
\end{align}
Combining the estimations in equations~\eqref{eq:f_bar_moment}, \eqref{eqn:dung_thm_3_2_univariate_local_sum_of_difference_E_inside_cluster}, and \eqref{eqn:dung_thm_3_2_univariate_local_sum_of_difference_E_outside_cluster} with equation~\eqref{eq:exact_multi_group_key_equation_sixth_1} yields
\begin{align}
\label{eqn:dung_thm_3_2_univariate_local_final_bound_for_mass_discrepancy}
    |\pi_j - \pi_j^*| \leq C_{\mathrm{mass},1}\Delta^{-2s_{\max}}_{\mathrm{sep}}d_H(p_G,p_{G_*}),
\end{align}
where 
\begin{equation*}
    C^{-1}_{\mathrm{mass},1} = \max\{2R,(2R)^{2k_*-1}\}C_{\mathrm{poly}}C_{\mathrm{norm},\bar{E}} +\frac{1}{2}C_{\bar{E},2}C_{\mathrm{var},1} + 2R C_{\mathrm{cross},\bar{E}}C_{\mathrm{var},1}. 
\end{equation*}
{
}

\vspace{0.5 em}
\noindent
\emph{Step 5: Bounding mean discrepancy.} To obtain an upper bound on $\sum_{j=1}^{k_*} \pi_j |\theta_{j} - \theta_{j}^{*}|$, we aim to construct $\bar{H}_j(\theta)$ such that $\bar{H}_j(\theta_l^*) = 0$ and $\bar{H}_j'(\theta_l^*) = \delta_{jl}$, for all $1\leq l\leq k_*$. A suitable choice for this is the polynomial $\bar{H}_j$ defined in Section \ref{sec:dung_mean_extractor_multicluster} as 
$$\bar{H}_{j}(\theta) = \ell_{j,\text{micro}}^2(\theta) [ \bar{v}_{j}(\theta - \theta_{j}^{*})]P_{\text{macro}}(\theta),$$ 
where we define $\ell_{j,\text{micro}}(\theta) = \prod_{q \in \mathcal{C}_{m} \neq j} \frac{\theta - \theta_q^*}{\theta_j^* - \theta_q^*}$, $P_{\text{macro}}(\theta) = \prod_{p \neq m} \prod_{q \in \mathcal{C}_p} \left( \frac{\theta - \theta_q^*}{2R} \right)^{2s_{\max}}$, and $$\bar
v_{j,i} = P^{-1}_{\text{macro}}(\theta_j^*),$$
when $\theta_i\neq \theta_j^*$ and $\bar{v}_{j} = P^{-1}_{\text{macro}}(\theta_j^*)$ otherwise.

Assume that $j \in \mathcal{C}_{m}$, then we have
$$\int \bar{H}_{j}(\theta) d\nu(\theta) = \int \bar{H}_{j}(\theta) dG(\theta) - \int \bar{H}_{j}(\theta) dG_*(\theta).$$
From the properties of the function $\bar{H}_{j}$, we have
$$\int_{\mathbb{R}} \bar{H}_{j}(\theta) dG_*(\theta) = \sum_{l=1}^{k_*} \pi_l^* \bar{H}_{j}(\theta_l^*) = 0.$$
It leads to
\begin{align*}
    \int \bar{H}_{j}(\theta) d\nu(\theta) & = \sum_{l=1}^{k_*} \pi_l \bar{H}_{j}(\theta_l) \\
    &= \sum_{l \in \mathcal{C}_{m}} \pi_{l}(\underbrace{\bar{H}_{j}(\theta_l^*)}_{=0} + \underbrace{\bar{H}_{j}'(\theta_l^*)}_{=\delta_{jl}}(\theta_l - \theta_l^*) + \frac{1}{2} \bar{H}_{j}''(\xi_l)(\theta_l - \theta_l^*)^2) + \sum_{l \notin \mathcal{C}_m} \pi_l \bar{H}_{j}(\theta_l) \\
    & = \pi_j(\theta_j - \theta_j^*) + \frac{1}{2} \sum_{l \in \mathcal{C}_m} \pi_l \bar{H}_{j}''(\xi_l) (
    \theta_{l} - \theta^*_{l})^2 + \sum_{l \notin \mathcal{C}_m} \pi_l \bar{H}_{j}(\theta_l).
\end{align*} 
Therefore, by means of the triangle inequality, we have
\begin{align}
\pi_j|\theta_j - \theta_j^*| \leq \left|\int_{\mathbb{R}} \bar{H}_{j}(\theta) d\nu(\theta)\right| + \frac{1}{2} \sum_{l \in \mathcal{C}_m} \pi_l |\bar{H}_{j}''(\xi_l)| (
    \theta_{l} - \theta_{l})^2 + \sum_{l \notin \mathcal{C}_m} \pi_l |\bar{H}_{j}(\theta_l)|,  \label{eq:exact_multi_group_key_equation_twevth}
\end{align}
For the integral term, an application of Lemma \ref{lemma:Hellinger_to_Polynomial} and Lemma \ref{lemma:dung_multicluster_mass_test_function} for the supremum norm of $\bar{H}_j$ gives us 
\begin{align}
    \left|\int \bar{H}_{j}(\theta) d\nu(\theta)\right|&\leq C_{\mathrm{poly}}\|\bar{H}_j\|_{\infty}d_H(p_{G},p_{G_*}) \leq C_{\mathrm{poly}}C_{\mathrm{norm},\bar{H}}\Delta^{-(2s_m-2)}_{\mathrm{sep}}d_H(p_{G},p_{G_*})\nonumber\\
    &\leq \max\{1,(2R)^{2k_*-2}\}C_{\mathrm{poly}}C_{\mathrm{norm},\bar{H}}\Delta^{-(2s_{\max}-2)}_{\mathrm{sep}}d_H(p_{G},p_{G_*}). 
    \label{eqn:dung_thm_3_2_univariate_local_mean_integral_bound}
\end{align}
where the last inequality holds thanks to $$1 \le (2R)^{2s_{\max}-2} \Delta_{\mathrm{sep}}^{-(2s_{\max}-2)} \leq \max\{1,(2R)^{2k_*-2}\}\Delta_{\mathrm{sep}}^{-(2s_{\max}-2)}.$$

We now consider the term $\frac{1}{2}\sum_{l \in \mathcal{C}_m}|\bar{H}''(\xi_l)|(\theta_l-\theta_l^*)^2$. As a consequence of Lemma \ref{lemma:dung_multicluster_mean_test_function} and the estimation for variance in equation~\eqref{eqn:dung_thm_3_2_univariate_local_variance_final_bound}, the following estimation can be established 
\begin{align}
\nonumber
\frac{1}{2}\sum_{l \in \mathcal{C}_m}\pi_l|\bar{H}''(\xi_l)|(\theta_l-\theta_l^*)^2 &\leq C_{\bar{H}}\Delta^{-1}_{\mathrm{sep}}\sum_{l \in \mathcal{C}_m}\pi_l(\theta_l-\theta_l^*)^2 \\
&\leq C_{\bar{H}}C_{\mathrm{var},1}\Delta^{-(2s_{\max}-1)}_{\mathrm{sep}}d_H(p_G,p_{G_*}).
\label{eqn:dung_thm_3_2_univariate_local_bound_for_sum_second_derivative_and_variance}
\end{align}
Let move to $\sum_{l \notin \mathcal{C}_m} \pi_l |\bar{H}_{j}(\theta_l)|$, from Lemma \ref{lemma:dung_multicluster_mass_test_function} and variance estimation in equation~\eqref{eqn:dung_thm_3_2_univariate_local_variance_final_bound}, we have 
\begin{align}
    \sum_{l \in \mathcal{C}_m} \pi_l|\bar{H}_j(\theta_l)| &\leq C_{\mathrm{cross},\bar{H}}\sum_{l \in \mathcal{C}_m}\pi_l(\theta_l - \theta^*_{l})^2\nonumber\\
    &\leq C_{\mathrm{cross},\bar{H}}C_{\mathrm{var},1}\Delta^{-(2s_{\max}-2)}_{\mathrm{sep}}d_H(p_{G},p_{G_*}). 
\label{eqn:dung_thm_3_2_univariate_local_bound_for_sum_of_H_for_j_notin_cluster}
\end{align}

Putting together the result from equations~\eqref{eqn:dung_thm_3_2_univariate_local_mean_integral_bound}, \eqref{eqn:dung_thm_3_2_univariate_local_bound_for_sum_second_derivative_and_variance}, \eqref{eqn:dung_thm_3_2_univariate_local_bound_for_sum_of_H_for_j_notin_cluster} and from the estimation in equation~\eqref{eq:exact_multi_group_key_equation_twevth}, we derive the bound 
\begin{align}
    \pi_j|\theta_j - \theta_j^*| \leq C_{\mathrm{mean},1}\Delta^{-(2s_{\max}-2)}_{\mathrm{sep}}d_H(p_{G},p_{G_*}),
\label{eqn:dung_thm_3_2_univariate_local_final_bound_for_mean_discrepancy} 
\end{align}
where 
\begin{equation*}
    C_{\mathrm{mean},1} = \max\{1,(2R)^{2k_*-2}\}C_{\mathrm{poly}}C_{\mathrm{norm},\bar{H}} + 2RC_{\bar{H}}C_{\mathrm{var},1} + C_{\mathrm{cross},\bar{H}}C_{\mathrm{var},1}. 
\end{equation*}

\vspace{0.5 em}
\noindent
\emph{Step 6 - Bounding $W_1(G,G_*)$ and conclusion.} Upon substituting  equations~\eqref{eq:exact_multi_group_key_equation_seventh}, \eqref{eqn:dung_thm_3_2_univariate_local_final_bound_for_mass_discrepancy}, and \eqref{eqn:dung_thm_3_2_univariate_local_final_bound_for_mean_discrepancy} into equation~\eqref{eq:exact_multi_group_key_equation_second}, we achieve 
\begin{equation*}
        d_H(p_{G},p_{G_*}) \geq C_{\mathrm{local},2}\cdot \Delta_{\mathrm{sep}}^{2(s_{\max}-1)}\cdot W_1(G,G_*),
\end{equation*}
where 
\begin{equation*}
    C^{-1}_{\mathrm{local},2} = k_02RC_{\mathrm{cluster},1} + k_*(C_0C_{\mathrm{mass},1} + 2RC_{\mathrm{mean},1}). 
\end{equation*}
    
This completes our proof for the local part. 
\subsubsection{Univariate Setting - Global Bound}
Throughout the proof, we denote $G = \sum_{i=1}^{k_{*}} \pi_i \delta_{\theta_i}$ to be a probability measure with exactly $k_{*}$ components. For every fitted atom $\theta_i$, let $c(i) = \text{argmin}_{1 \le j \le k_*} |\theta_i - \theta_j^*|$ be its absolute closest true center (while there exists several centers sharing the minimal distance, we randomly choose one of them). Let $m(i)$ denote the cluster containing the center $\theta^{*}_{c(i)}$. We unconditionally partition the $k_*$ atoms of $G$ into three disjoint sets based on exact spatial thresholds:
\begin{itemize}
    \item The micro near set $\mathcal{M}_{\mathrm{mic}} = \{ i : |\theta_i - \theta_{c(i)}^*| \le \frac{\Delta_{\mathrm{sep}}}{4} \}$. We define the explicit Voronoi cells strictly inside this core as $\bar{\mathcal{V}}_j = \{i \in \mathcal{M}_{\mathrm{mic}} : c(i) = j\}$. 
    \item The macro near set $\mathcal{M}_{\mathrm{mac}} = \{ i : \frac{\Delta_{\mathrm{sep}}}{4} < |\theta_i - \theta_{c(i)}^*| \le \frac{D_0}{4} \}$, which includes atoms that have escaped the micro near set but still associate with the cluster $\mathcal{C}_{m(i)}$. We denote the total macro near mass as $\pi_{{\mathrm{mac}}} = \sum_{i \in \mathcal{M}_{\mathrm{mac}}} \pi_i$.
    \item The far void set $\mathcal{M}_{\mathrm{far}} = \{ i : |\theta_i - \theta_{c(i)}^*| > \frac{D_0}{4} \}$, which consists of atoms in the void between distinct clusters. We denote the total far mass as $\pi_{\mathrm{far}} = \sum_{i \in \mathcal{M}_{\mathrm{far}}} \pi_i$.
\end{itemize}

\vspace{0.5 em}
\noindent
\emph{Step 1 - Wasserstein decomposition. } We first provide an upper bound on $W_{1}(G,G_{*})$. We define a deterministic spatial mapping function $T: \mathbb{R} \to \Theta^* = \left\{\theta_{1}^{*}, \ldots, \theta_{k_{*}}^{*}\right\}$ such that $T(\theta_i) = \theta_{c(i)}^*$. We define $\widetilde{G}$ as the exact pushforward measure of $G$ under $T$:
$$ \widetilde{G} = T_{\#} G = \sum_{i=1}^k \pi_i \delta_{T(\theta_i)} = \sum_{i=1}^k \pi_i \delta_{\theta_{c(i)}^*}.$$
From the triangle inequality with the $W_{1}$ metric, we have
\begin{align}
    W_1(G, G_*) \leq W_1(G, \widetilde{G}) + W_{1}(\widetilde{G},G_*). \label{eq:exact_Wasserstein_global_bound_multi_0}
\end{align}
We first upper bound $W_1(G, \widetilde{G})$. By specifically considering the transportation plan $\hat{\gamma} = (\text{Id} \times T)_{\#} G$, we obtain that:
\begin{align} 
W_1(G, \widetilde{G}) \le \int_{\mathbb{R} \times \mathbb{R}} |x - y| d\hat{\gamma}(x,y) & = \int_{\mathbb{R}} |\theta - T(\theta)| dG(\theta) = \sum_{i=1}^k \pi_i |\theta_i - \theta_{c(i)}^*| \nonumber \\
& = \sum_{i \in \mathcal{M}_{\mathrm{mic}}} \pi_i |\theta_{i} - \theta_{c(i)}^{*}| + \sum_{i \in \mathcal{M}_{\mathrm{mac}} \cup \mathcal{M}_{\mathrm{far}}}  \pi_i |\theta_{i} - \theta_{c(i)}^{*}|. \label{eq:exact_Wasserstein_global_bound_multi_1}
\end{align}
We now move to upper bound $W_1(\widetilde{G}, G_*)$. Firstly, we define some notation to be utilized later in the proof. Let $\tilde{\pi}_j = \sum_{i:c(i) = j}\pi_i$ to be the total mass of $\tilde{G}$ at $\theta_j^*$, then we can write 
$\tilde{G} = \sum_{j=1}^{k_*} \tilde{\pi}_j\delta_{\theta^*_{j}}$. Let the net mass discrepancy at center $j$ be exactly defined as $\tilde{\Delta} \pi_j = \tilde{\pi}_j - \pi_j^*$, while 
$\Delta\pi_j:= \left(\sum_{i\in \bar{\mathcal{V}}_j}\pi_i\right)-\pi_j^*$. While the cluster discrepancy $\Delta\Pi_m$ follows the analogous intuition as in univariate local situation, we need to redefine it as $\Delta\Pi_m = \sum_{j \in \mathcal{C}_{m}}\tilde{\Delta}\pi_{j}$. We also consider the transportation plan from $\widetilde{G}$ to $G_*$ as in univariate local part, but with a more careful handle with the transportation cost.
\begin{enumerate}
    \item For each center $\theta_j^*$, we keep $\min\{\tilde{\pi}_j,\pi^*_j\}$. Then, the remaining mass in each center for $G'$ is exactly $\max\{0, \tilde{\Delta} \pi_j\}$. 
    \item For each cluster $\mathcal{C}_m$, we move the remaining mass in each center to other center \textit{within} this cluster. Then, as the transportation distance is less than or equal to $C_0\Delta_{\mathrm{sep}}$, the total transportation cost is not exceed $\sum_{j=1}^k C_0\Delta_{\mathrm{sep}}|\Delta\pi_j|$, while the total remaining mass for $G'$ in each cluster $\mathcal{C}_m$ is exactly $\max\{0, \Delta \Pi_m\}$.
    \item Finally, we transport each remaining mass in each cluster $\max\{0, \Delta \Pi_m\}$ to other center in other cluster. Then, as the transportation distance is less than or equal to $2R$, the total transportation cost is not exceed $\sum_{m=1}^{k_0}R|\Delta\Pi_m|$. 
\end{enumerate}

Let us rewrite $\widetilde{G}$ on the true discrete support basis by grouping all mass mapped to the identical center $j$ as follows: $$\widetilde{G} = \sum_{j=1}^{k_*} \tilde{\pi}_j \delta_{\theta_j^*} \quad \text{where} \quad \tilde{\pi}_j = \sum_{i: c(i) = j} \pi_i.$$

Combing these estimations, we arrive at
\begin{align}
    W_1(G', G_*) \leq C_{0} \Delta_{\mathrm{sep}}\sum_{m=1}^{k_0} \sum_{j \in \mathcal{C}_m} |\tilde{\Delta}\pi_j| + 2R \sum_{m = 1}^{k_{0}} |\Delta \Pi_{m}|. 
\label{eqn:dung_thm_3_2_univariate_global_W_1_G'_G_star}
\end{align}
In addition, we also note that $$\tilde{\Delta}\pi_j = \Delta\pi_j + \sum_{i \notin \mathcal{M}_{\mathrm{mic}}, \, c(i)=j} \pi_i.$$
Therefore, we have $|\tilde{\Delta}\pi_j| \leq |\Delta\pi_j| + \sum_{i \notin \mathcal{M}_{\mathrm{mic}}, \, c(i)=j} \pi_i$. Plugging this inequality into the bound \eqref{eqn:dung_thm_3_2_univariate_global_W_1_G'_G_star} and using the bound \eqref{eq:exact_Wasserstein_global_bound_multi_1}, from the estimation in equation~\eqref{eq:exact_Wasserstein_global_bound_multi_0}, we have 
\begin{align}
\nonumber
    W_1(G, G_*) &\le \sum_{i \in \mathcal{M}_{\mathrm{mic}}} \pi_i|\theta_{i}-\theta_{c(i)}| \ + \sum_{i \in \mathcal{M}_{\mathrm{mac}} \cup \mathcal{M}_{\mathrm{far}}} \pi_i|\theta_{i}-\theta_{c(i)}|+C_0 \Delta_{\mathrm{sep}} \sum_{j=1}^{k_*} |\Delta \pi_j|\\
    &\hspace{4cm} + C_0 \Delta_{\mathrm{sep}} \big(\pi_{\mathcal{M}_{\mathrm{mac}}} + \pi_{\mathcal{M}_{\mathrm{far}}}\big) +2R \sum_{m=1}^{k_0} |\Delta \Pi_m|.
    \label{eqn:dung_thm2_global_W_1_expansion}
\end{align}

\vspace{0.5 em}
\noindent
\emph{Step 2 - Bounding variance. } Similar to the proof of the local bound, we consider the witness function $$P_{\mathrm{var}}(\theta) = \prod_{l=1}^{k_*} (\theta - \theta_l^*)^2.$$ According to Lemma~\ref{lemma:variance_test_function}, the supremum norm of $P_{\mathrm{var}}$ can be upper bounded as follows:
$$ \|P_{\mathrm{var}}\|_{\infty} \le C_{\mathrm{var}}:=(2R)^{2k_*}.$$
Since $\int P_{\mathrm{var}}(\theta) dG_*(\theta) = 0$, Lemma~\ref{lemma:Hellinger_to_Polynomial} indicates that 
\begin{align}
\label{eqn:dung_first_estimation_about_f_var_multicluster}
\left|\int P_{\mathrm{var}}(\theta) dG(\theta)\right| = \left|\int P_{\mathrm{var}}(\theta) d\nu(\theta)\right| \le C_{\mathrm{poly}}C_{\mathrm{var}} d_H(p_{G},p_{G_*}).
\end{align}
We divide the integral into three components
\begin{align}
    \int P_{\mathrm{var}}(\theta) dG(\theta) = \sum_{i \in \mathcal{M}_{\mathrm{mic}}} \pi_i P_{\mathrm{var}}(\theta_i) + \sum_{i \in \mathcal{M}_{\mathrm{mac}}} \pi_i P_{\mathrm{var}}(\theta_i) + \sum_{i \in \mathcal{M}_{\mathrm{far}}} \pi_i P_{\mathrm{var}}(\theta_i). \label{eq:exact_global_multi_key_equation_first_1}
\end{align}
Now we estimate the value of $P_{\mathrm{var}}(\theta_i)$, for $i$ in each set $\mathcal{M}_{\mathrm{mic}}$, $\mathcal{M}_{\mathrm{mac}}$, and $\mathcal{M}_{\mathrm{far}}$. 

\emph{Case 2.1 - In micro-near set ($\mathcal{M}_{\mathrm{mic}}$). } For $i \in \mathcal{M}_{\mathrm{mic}}$,
we define $C_{\mathrm{mic},1} := \left(\frac{3}{4}\right)^{2k_*-2}\cdot \min\left\{\left(\frac{15}{16}D_0\right)^{2k_*-2}, 1\right\}\cdot \min\{(2R)^{-2k_*},1\}$, from equation \eqref{eqn:dung_thm_3_2_P_var_greater_than_quadratic_for_near_case}, we secure
\begin{align}
    P_{\mathrm{var}}(\theta_i) \ge C_{\mathrm{mic,1}} \cdot \Delta_{\mathrm{sep}}^{2s_{\max}-2} |\theta_i-\theta^*_{c(i)}|^2, \quad \text{for all } i \in \mathcal{M}_{\mathrm{mic}}. \label{eqn:dung_thm_3_2_univariate_global_P_var_greater_than_second_moment_micro_case}
\end{align}

\emph{Case 2.2 - In macro-near set ($\mathcal{M}_{\mathrm{mac}}$).} For $i \in \mathcal{M}_{\mathrm{mac}}$, from the definition of $\mathcal{M}_{\mathrm{mac}}$, we have $\frac{\Delta_{\mathrm{sep}}}{4} < |\theta_i - \theta^*_{j}| \le \frac{D_0}{4}$. For any center indexed by $q \in \mathcal{C}_m$ within $\theta_i$'s cluster, because $j$ is the absolute closest center globally, the Voronoi property guarantees that $|\theta_i - \theta_q^*| \ge |\theta_i - \theta_j^*|\geq\frac{\Delta_{\mathrm{sep}}}{4}$. For any center indexed by $p \notin \mathcal{C}_m$ outside $\theta_i$'s cluster, we have $|\theta_i - \theta_p^*| \ge |\theta_j^* - \theta_p^*| - |\theta_i-\theta_j^*| \ge D_0 - \frac{D_0}{4} = \frac{3}{4}D_0$. Multiplying these bounds with a note that $|\theta_i-\theta_j^*| \leq 2R$, we have
\begin{align*}
    P_{\mathrm{var}}(\theta_i) &\ge  |\theta_i - \theta_{c(i)}^*|^{2}\left(\dfrac{\Delta_{\mathrm{sep}}}{4}\right)^{2s_{m}-2} \left(\frac{3}{4}D_0\right)^{2k_* - 2s_m}\\
    &\geq  |\theta_i -\theta_{c(i)}^*|^{2}\left(\dfrac{1}{4}\right)^{2s_m-2}\Delta_{\mathrm{sep}}^{2s_{\max}-2} \left(\frac{3}{4}D_0\right)^{2k_* - 2s_m}(2R)^{-2(s_{\max}-s_m)}.
\end{align*}
In addition, we also have 
\begin{align*}
\left(\frac{1}{4}\right)^{2s_m-2}\left(\frac{3}{4}D_0\right)^{2k_* - 2s_m}(2R)^{-2(s_{\max}-s_m)} &\geq 4^{-2(k_*-1)}\min\left\{\left(\frac{3}{4}D_0\right)^{2k_*-2},1\right\} \cdot\min\{(2R)^{-2k_*},1\}\\
&:=C_{\mathrm{mac,1}}. 
\end{align*}
As a consequence, we achieve the following estimation for $P_{\mathrm{var}}(\theta_i)$
\begin{align}
    P_{\mathrm{var}}(\theta_i) \geq C_{\mathrm{mac},1}\cdot\Delta_{\mathrm{sep}}^{2s_{\max}-2}|\theta_i - \theta_{c(i)}^*|^{2}\quad \text{for all } i \in \mathcal{M}_{\mathrm{mac}}.
\label{eqn:dung_thm_3_2_univariate_global_P_var_greater_than_second_moment_macro_case}
\end{align}

\emph{Case 2.3 - In far void set ($\mathcal{M}_{\mathrm{far}}$)}. For $i \in \mathcal{M}_{\mathrm{far}}$, we have $|\theta_i-\theta^*_{c(i)}| > \frac{D_0}{4}$. Because $c(i)$ is the absolute closest center of $i$, it follows that $|\theta_i - \theta_l^*| \ge |\theta_i-\theta^*_{c(i)}| > \frac{D_0}{4}$ for all $k_*$ centers $\theta^*_l$. Thus, as $\Delta_{\mathrm{sep}} \leq 2R$, we have 
\begin{align}    
P_{\mathrm{var}}(\theta_i) &\ge  |\theta_i-\theta^*_{c(i)}|^{2k_*} \geq  |\theta_i-\theta^*_{c(i)}|^2\left(\frac{D_0}{4}\right)^{2k_*-2} \geq \left(\frac{D_0}{4}\right)^{2k_*-2s_{\max}}\Delta^{2s_{\max}-2}_{\mathrm{sep}}|\theta_i-\theta^*_{c(i)}|^2\nonumber\\
&\geq C_{\mathrm{far},1} \Delta^{2s_{\max}-2}_{\mathrm{sep}}|\theta_i-\theta_{c(i)}|^2,
\label{eqn:dung_thm_3_2_univariate_global_P_var_greater_than_second_moment_far_case}
\end{align}
where $C_{\mathrm{far},1}:= \min\left\{\left(\frac{D_0}{4}\right)^{2k_*-2},1\right\}$.

Plugging these bounds of $P_{\mathrm{var}}(\theta_i)$ from equations~\eqref{eqn:dung_thm_3_2_univariate_global_P_var_greater_than_second_moment_micro_case}, \eqref{eqn:dung_thm_3_2_univariate_global_P_var_greater_than_second_moment_macro_case}, and \eqref{eqn:dung_thm_3_2_univariate_global_P_var_greater_than_second_moment_far_case} into equation \eqref{eq:exact_global_multi_key_equation_first_1}, we have
\begin{equation*}
    \int P_{\mathrm{var}}(\theta)dG(\theta) \geq \min\{C_{\mathrm{mic},1},C_{\mathrm{mac},1},C_{\mathrm{far},1}\}\cdot \Delta_{\mathrm{sep}}^{2s_{\max}-2}\sum_{i\in\mathcal{M}_{\mathrm{mic}} \cup \mathcal{M}_{\mathrm{mac}}\cup \mathcal{M}_{\mathrm{far}}} \pi_i |\theta_i-\theta_{c(i)}|^2.
\end{equation*}
Combining this result with equation \eqref{eqn:dung_first_estimation_about_f_var_multicluster}, we have 
\begin{equation}
\label{eqn:dung_thm2_global_variation_bound}
    \sum_{i=1}^{k_*} \pi_i |\theta_i-\theta^*_{c(i)}|^2 \leq C_{\mathrm{var},2}\Delta_{\mathrm{sep}}^{-(2s_{\max}-2)}d_H(p_{G},p_{G_*}), 
\end{equation}
where $C_{\mathrm{var},2} = C_{\mathrm{poly}}C_{\mathrm{var}}\cdot \max\{C^{-1}_{\mathrm{mic},1},C^{-1}_{\mathrm{mac},1},C^{-1}_{\mathrm{far},1}\}$.

\emph{Step 3 - Bounding macro- and far-related quantities. } We now bound the quantities related to $\mathcal{M}_{\mathrm{mac}}$ and $\mathcal{M}_{\mathrm{far}}$ set. 

\emph{Step 3.1 - Bounding macro high-moment $\sum_{i\in\mathcal{M}_{\mathrm{mac}}} \pi_i |\theta_i-\theta^*_{c(i)}|^{2s_{\max}}$: } Recall that from the estimation of $P_{\mathrm{var}}(\theta_i)$ ($i \in \mathcal{M}_{\mathrm{mac}}$), we have 
\begin{align*}
    P_{\mathrm{var}}(\theta_i) &\geq |\theta_i - \theta^*_{c(i)}|^{2s_m}\left(\frac{3}{4}D_0\right)^{2k_*-2s_m}\geq \min\left\{1,\left(\frac{3}{4}D_0\right)^{2k_*}\right\}|\theta_i - \theta^*_{c(i)}|^{2s_m}
\end{align*}
Moreover, since $|\theta_i - \theta^*_{c(i)}|\leq 2R$, we can lower bound $|\theta_i - \theta^*_{c(i)}|^{2s_m}$ as 
\begin{align*}
    |\theta_i - \theta^*_{c(i)}|^{2s_m} &= |\theta_i - \theta^*_{c(i)}|^{2s_{\max}}|\theta_i - \theta^*_{c(i)}|^{2(s_m-s_{\max})}\\
    &\geq \min\{(2R)^{-2(k_*-1)},1\}|\theta_i - \theta^*_{c(i)}|^{2s_{\max}}.
\end{align*}
Consequently, by defining 
$$C_{\mathrm{mac,moment,1}} = \max\{(2R)^{2(k_*-1)},1\}\cdot\max\left\{1,\left(\frac{3}{4}D_0\right)^{-2k_*}\right\}C_{\mathrm{poly}}C_{\mathrm{var}}$$
we achieve the following bound 
\begin{equation}
\label{eqn:macro_high_moment_estimate}
    \sum_{i\in \mathcal{M}_{\mathrm{mac}}} \pi_i |\theta_i-\theta^*_{c(i)}|^{2s_{\max}} \leq C_{\mathrm{mac,moment,1}} \cdot d_H(p_{G},p_{G_*}).
\end{equation}

\emph{Step 3.2 - Bounding far mass $\pi_{\mathrm{far}}$. } Recall that for $i\in\mathcal{M}_{\mathrm{far}}$, we have $|\theta_i - \theta_l^*|\geq |\theta_i-\theta^*_{c(i)}| > \frac{D_0}{4}$ for all $1\leq l\leq k_*$, which leads to $P_{\mathrm{var}}(\theta_i) \geq \left(\frac{D_0}{4}\right)^{2k_*}$. As a consequence of estimation in equation~\eqref{eqn:dung_first_estimation_about_f_var_multicluster}, we obtain 
\begin{align}
\nonumber
    \pi_{\mathrm{far}} = \sum_{i \in \mathcal{M}_{\mathrm{far}}} \pi_i &\leq \sum_{i \in \mathcal{M}_{\mathrm{far}}}\left(\frac{D_0}{4}\right)^{-2k_*} \pi_iP_{\mathrm{var}}(\theta_i) \leq  \left(\frac{D_0}{4}\right)^{-2k_*} \cdot C_{\mathrm{poly}}C_{\mathrm{var}} d_H(p_{G},p_{G_*}) \\
    &\leq C_{\mathrm{far,mass,2}} \cdot d_H(p_{G},p_{G_*}),
    \label{eqn:dung_thm2_global_far_mass}
\end{align}
where $C_{\mathrm{far,mass,2}} = \left(\frac{D_0}{4}\right)^{-2k_*} \cdot C_{\mathrm{poly}}C_{\mathrm{var}}$.

\emph{Step 3.3 - Bounding macro mass $\pi_{\mathrm{macro}}$.} For any $i \in \mathcal{M}_{\mathrm{mac}}$, since $|\theta_i-\theta^*_{c(i)}| > \Delta_{\mathrm{sep}}/4$, equation \eqref{eqn:macro_high_moment_estimate} implies
\begin{align}
\pi_{\mathrm{mac}} = \sum_{\mathcal{M}_{\mathrm{mac}}} \pi_i &\le \left( \frac{4}{\Delta_{\mathrm{sep}}} \right)^{2s_{\max}} \sum_{\mathcal{M}_{\mathrm{mac}}} \pi_i |\theta_i-\theta^*_{c(i)}|^{2s_{\max}}\nonumber \\
    &\le C_{\mathrm{mac,mass,1}} \Delta_{\mathrm{sep}}^{-2s_{\max}} d_H(p_{G},p_{G_*}),
    \label{eqn:dung_sum_pi_M_mac}
\end{align}
where $C_{\mathrm{mac,mass,1}} := 4^{2k_*}C_{\mathrm{mac,moment,1}}$. 

\emph{Step 4 - Bounding cluster mass discrepancy.} Recall that in univariate global case, 
$\Delta\Pi_m = \sum_{j \in \mathcal{C}_m} \tilde{\Delta}\pi_j$. As in the proof of univariate local case, for each $1 \leq m \leq k_{0}$, we choose the witness function $P_m(\theta)$ satisfying the condition that $ P_m(\theta_j^*) = \mathbf{1}_{\{j\in \mathcal{C}_m\}}$ and $P'_m(\theta_j^*) = 0$ for all $j \in [k_*]$. A possible choice guaranteeing these conditions is the polynomial $P_m$ defined in Section \ref{sec:cluster_wise_test_function}.
By the definition of $P_{m}$, the integration over $\nu$ gives us 
\begin{align*}
    \int P_m(\theta)d\nu(\theta) = {\int P_m(\theta)dG(\theta)} - \int P_m(\theta)dG_*(\theta).
\end{align*}
From the formulation of $P_m$, we have $P_m(\theta_l^*) = \mathbf{1}_{\{l \in \mathcal{C}_m\}}$, which implies 
\begin{equation*}
    \int P_mdG_* = \sum_{l=1}^{k_*}\pi^*_{l}\mathbf{1}_{\{l\in\mathcal{C}_m\}} = \sum_{l\in\mathcal{C}_m} \pi^*_l. 
\end{equation*}
For every $i \in \mathcal{M}_{\mathrm{mic}} \cup \mathcal{M}_{\mathrm{mac}}$, we consider the Taylor expansion around its absolute closest true center $\theta_{c(i)}^*$, which implies that there exists $\xi_i$ between $\theta_i$ and  $\theta^*_{c(i)}$ 
\begin{align*}
P_m(\theta_i) &= P_m(\theta^*_{c(i)}) + \underbrace{P'_m(\theta^*_{c(i)})}_{=0}(\theta_{i}-\theta^*_{c(i)}) + \frac{1}{2} P_m''(\xi_i) (\theta_{i}-\theta^*_{c(i)})^2\\
&= \boldsymbol{1}_{\{c(i) \in \mathcal{C}_m\}}+ \frac{1}{2} P_m''(\xi_i) (\theta_{i}-\theta^*_{c(i)})^2. 
\end{align*}
Therefore,
\begin{align*} \int P_m dG &= \sum_{i \in \mathcal{M}_{\mathrm{mic}} \cup \mathcal{M}_{\mathrm{mac}}} \pi_i \mathbf{1}_{\{c(i) \in \mathcal{C}_m\}} + \frac{1}{2} \sum_{i \in \mathcal{M}_{\mathrm{mic}} \cup \mathcal{M}_{\mathrm{mac}}} \pi_i P_m''(\xi_i)|\theta_i-\theta^*_{c(i)}|^2 \ + \ \sum_{i \in \mathcal{M}_{\mathrm{far}}} \pi_i P_m(\theta_i)
\end{align*}
Putting the above results together, we have 
\begin{equation*}
    \int P_m(\theta)d\nu(\theta) = \Delta\Pi_m - \sum_{c(i) \in\mathcal{C}_m}\pi_{i}\mathbf{1}_{\{i \in\mathcal{M}_{\mathrm{far}}\}} +  \frac{1}{2} \sum_{i \in \mathcal{M}_{\mathrm{mic}} \cup \mathcal{M}_{\mathrm{mac}}} \pi_i P_m''(\xi_i) |\theta_i-\theta^*_{c(i)}|^2 \ + \ \sum_{i \in \mathcal{M}_{\mathrm{far}}} \pi_i P_m(\theta_i),
\end{equation*} 
which implies 
\begin{align*}
    |\Delta\Pi_m|&\leq \left|\int P_md\nu\right|  + \frac{1}{2} \sum_{i \in \mathcal{M}_{\mathrm{mic}} \cup \mathcal{M}_{\mathrm{mac}}} \pi_i |P_m''(\xi_i)| |\theta_i-\theta^*_{c(i)}|^2 \\
    &\hspace{1cm}+ \left|\sum_{c(i) \in\mathcal{C}_m}\pi_{i}\mathbf{1}_{\{i \in\mathcal{M}_{\mathrm{far}}\}}\right| +\left|\sum_{i \in \mathcal{M}_{\mathrm{far}}} \pi_i P_m(\theta_i)\right|,
\end{align*} 

Utilizing the fact that $\sum_{\mathcal{M}_{\mathrm{far}}, c(i) \in \mathcal{C}_m} \pi_i \le \pi_{\mathrm{far}}$, Lemma \ref{lemma:Hellinger_to_Polynomial}, and Lemma \ref{lemma:dung_bound_for_cluster_mass_extracting}, we have 
\begin{align*} |\Delta \Pi_m| &\le C_{\mathrm{poly}}C_{\mathrm{norm},P} \cdot d_H(p_{G},p_{G_*})\\
&\hspace{1cm}+ \frac{1}{2} C_{P,2} \left( \sum_{\mathcal{M}_{\mathrm{mic}} \cup \mathcal{M}_{\mathrm{mac}}} \pi_i |\theta_i-\theta^*_{c(i)}|^2 \right) \ + \ (C_{\mathrm{norm},P} + 1) \pi_{\mathrm{far}}.
\end{align*}

We utilize the estimation for global variance in equation~\eqref{eqn:dung_thm2_global_variation_bound} and far mass in equation~\eqref{eqn:dung_thm2_global_far_mass}, noting that  $1 \le (2R)^{2s_{\max}-2} \Delta^{-(2s_{\max}-2)}$ with $\Delta_{\mathrm{sep}} \leq 2R$, we obtain  
\begin{align}
    |\Delta \Pi_m| \le C_{\Delta\Pi,1} \Delta_{\max}^{-(2s_{\max}-2)}d_H(p_{G},p_{G_*}),\label{eqn:dung_thm2_sum_delta_pi_estimation}
\end{align}  
where $C_{\Delta\Pi,1}$ is defined as 
\begin{align*}
    C_{\Delta\Pi,1} &= C_{\mathrm{poly}}C_{\mathrm{norm},P}\max\{(2R)^{2k_*-2},1\} + \frac{1}{2} C_{P,2} C_{\mathrm{var},2}\\
    &\hspace{1cm} + (C_{\mathrm{norm},P} + 1) C_{\mathrm{far,mass,2}} \max\{(2R)^{2k_*-2},1\}. 
\end{align*}

\emph{Step 5 - Bounding mass discrepancy. } We now evaluate mass discrepancy based on mass extractor function $\bar{E}_j$ such that $\bar{E}_j(\theta^*_l) = \delta_{jl}$ and $\bar{E}'_{j}(\theta_l^*) = 0$. We can choose the Hermite interpolated polynomial $\bar{E}_j$ defined in Section \ref{sec:dung_mass_extractor_multicluster} as 
$$\bar{E}_{j}(\theta) = \ell_{j,\text{micro}}^2(\theta) [\bar{A}_j +  \bar{B}_j(\theta - \theta_{j}^{*})]P_{\text{macro}}(\theta),$$ 
where we define $\ell_{j,\text{micro}}(\theta) = \prod_{q \in \mathcal{C}_{m} \neq j} \frac{\theta - \theta_q^*}{\theta_j^* - \theta_q^*}$, $P_{\text{macro}}(\theta) = \prod_{p \neq m} \prod_{q \in \mathcal{C}_p} \left( \frac{\theta - \theta_q^*}{2R} \right)^{2s_{\max}}$, and the coefficients $\bar{A}_j = 1/ P_{\text{macro}}(\theta_{j}^{*})$,
\begin{align*}
    \bar{B}_j & = -\bar{A}_j[2\ell'_{j,\text{micro}}(\theta_j^*) + (P_{\text{macro}}(\theta^*_j))^{-1} P'_{\text{macro}}(\theta^*_j)].
\end{align*}
Without loss of generality, we assume that $j\in \mathcal{C}_m$. Then, we have 
\begin{align}
\label{eqn:dung_delta_pi_as_sum_of_test_function}
     \int \bar{E}_j(\theta)d\nu(\theta) = \sum_{l=1}^{k_*}\pi_l\bar{E}_{j}(\theta_l) - \pi^*_j = \left(\sum_{i\in \bar{\mathcal{V}}_j}\pi_i -\pi^*_j\right) + \sum_{l=1}^{k_*}\pi_l\bar{E}_{j}(\theta_l) - \sum_{i\in \bar{\mathcal{V}}_j}\pi_i,
\end{align}
which implies 
\begin{align*}
    \Delta\pi_j &= \int \bar{E}_j(\theta)d\nu(\theta)  - \underbrace{\left(\sum_{l \in \mathcal{C}_m}\sum_{i\in \bar{\mathcal{V}}_l}\pi_i(\bar{E}_j(\theta_i) -\bar{E}_j(\theta^*_l))\right)}_{\text{Case 5.1}} - \underbrace{\sum_{\substack{i \in \mathcal{M}_{\mathrm{mic}}\\ c(i) \notin \mathcal{C}_m }}\pi_i\bar{E}_j(\theta_i)}_{\text{Case 5.2}}\\
    &\hspace{1cm } -\underbrace{\sum_{\substack{i \in \mathcal{M}_{\mathrm{mac}}\\ c(i) \in \mathcal{C}_m }}\pi_i\bar{E}_j(\theta_i)}_{\text{Case 5.3}} - \underbrace{\sum_{\substack{i \in \mathcal{M}_{\mathrm{mac}}\\ c(i) \notin \mathcal{C}_m }}\pi_i\bar{E}_j(\theta_i)}_{\text{Case 5.4}} -  \underbrace{\sum_{{i\in \mathcal{M}_{\mathrm{far}}}}\pi_i\bar{E}_j(\theta_i)}_{\text{Case 5.5}}. 
\end{align*}
For integral term, an application of Lemma \ref{lemma:Hellinger_to_Polynomial} and \ref{lemma:dung_multicluster_mass_test_function} gives us 
\begin{align*}
    \left|\int \bar{E}_jd\nu\right| \leq C_{\mathrm{poly}}\|\bar{E}_j\|_{\infty}d_H(p_G,p_{G_*}) \leq C_{\mathrm{poly}} C_{\mathrm{norm},\bar{E}}\Delta^{-(2s_m-1)}_{\mathrm{sep}}d_H(p_G,p_{G_*}).
\end{align*}
In addition, since $\Delta_{\mathrm{sep}}\leq 2R$, we have 
\begin{equation*}
    \Delta^{-(2s_m-1)}_{\mathrm{sep}} \leq  \Delta^{-(2s_{\max}-1)}_{\mathrm{sep}}(2R)^{2s_{\max}-2s_m} \leq \max\{1,(2R)^{2k_*-2}\}\Delta^{-(2s_{\max}-1)}_{\mathrm{sep}}. 
\end{equation*}
As a result, we achieve 
\begin{align}
    \left|\int \bar{E}_j(\theta)d\nu(\theta)\right| \leq C_{\bar{E},\mathrm{int},1}\Delta^{-(2s_{\max}-1)}_{\mathrm{sep}}d_H(p_G,p_{G_*}).  
    \label{eqn:dung_thm_3_2_univariate_global_integral_of_E_bound}
\end{align}
Now we move to estimate the sum of weighted $\bar{E}_j(\theta_i)$ based on its geometrical location with respect to its nearest point and related cluster.

\emph{Case 5.1 - Near set inside $\mathcal{C}_m$.} For each $i \in\mathcal{V}_l$ such that $l\in\mathcal{C}_m$, using Taylor expansion, for each $i \in \bar{\mathcal{V}}_l$ and $l \in \mathcal{C}_m$, there exists $\xi_i$ between $\theta_i$ and $\theta^*_l$ such that 
$$\bar{E}_j(\theta_i) -\bar{E}_j(\theta^*_l)  =  (\theta_i-\theta_l^*)\bar{E}'_j(\theta^*_l) + \frac{1}{2}(\theta_i-\theta_l^*)^2\bar{E}''_{j}(\xi_i) = \frac{1}{2}(\theta_i-\theta_l^*)^2\bar{E}''_{j}(\xi_i).$$
Using estimation in Lemma \ref{lemma:dung_multicluster_mass_test_function} for $\bar{E}''$ and equation \eqref{eqn:dung_thm2_global_variation_bound} for global variance, noting that we have 
\begin{align}
\nonumber
    \left|\sum_{l \in \mathcal{C}_m}\sum_{i\in \bar{\mathcal{V}}_l}\pi_i(\bar{E}_j(\theta_i) -\bar{E}_j(\theta^*_l))\right| &\leq C_{\bar{E},2}\Delta^{-2}_{\mathrm{sep}}\sum_{i\in\bar{\mathcal{V}}_l, l\in\mathcal{C}_m}\pi_{i}|\theta_i-\theta^*_l|^2\\
    &\leq C_{\mathrm{near,in,mass,1}}\Delta^{-2s_{\max}}_{\mathrm{sep}}d_H(p_{G},p_{G_*}),
    \label{eqn:thm_2_sum_near_f-f}
\end{align}
where $C_{\mathrm{near,in,mass,1}} =C_{\bar{E},2}C_{\mathrm{var},2}$. 

\emph{Case 5.2 - Near set outside $\mathcal{C}_m$.} For centers indexed by  $ i \in \mathcal{M}_{\mathrm{near}}$ but $c(i) \notin \mathcal{C}_m$, we need to estimate the corresponding value of $\bar{E}_j(\theta_i)$. Indeed, using Lemma \ref{lemma:dung_multicluster_mass_test_function}, noting that $|\theta_i - \theta^*_{c(i)}|\leq \Delta_{\mathrm{sep}}/4$ we have 
\begin{equation*}
|\bar{E}_j(\theta_i)|
    \leq C_{\text{cross},\bar{E}} \Delta_{\mathrm{sep}}^{-1}|\theta_i-\theta^*_{c(i)}|^2. 
\end{equation*}
By combining with equation~\eqref{eqn:dung_thm2_global_variation_bound}, this bound yields 
\begin{align}
\nonumber
|\sum_{\substack{i \in \mathcal{M}_{\mathrm{mic}}\\ c(i) \notin \mathcal{C}_m }}\pi_i\bar{E}_j(\theta_i)|& \leq \Delta_{\mathrm{sep}}^{- 1}C_{\text{cross},\bar{E}}\sum_{i=1}^{k_*} \pi_i  |\theta_l-\theta^*_{c(l)}|^2 \\
&\leq C_{\mathrm{near,out,mass,1}}\Delta_{\mathrm{sep}}^{-( 2s_{\max} - 1)}d_H(p_{G},p_{G_*}),
\label{eqn:dung_near_not_m_value_estimation}
\end{align}
where $C_{\mathrm{near,out,mass,1}} = C_{\text{cross},\bar{E}}C_{\mathrm{var,2}}$.

\emph{Case 5.3 - Macro set inside $\mathcal{C}_m$.} For $i \in \mathcal{M}_{\text{macro}}$ such that $c(i) \in \mathcal{C}_m$, we evaluate each term $P_{\text{macro}}(\theta_i)$, $\ell^2_{j,\text{micro}}(\theta_i)$, and monomial component separately. Indeed, we have for $|P_{\text{macro}}(\theta_i)| \leq 1$ due to the fact that $|\theta_i - \theta^*_{q}| \leq 2R$ for any true center $\theta^*_{q}$. For $\ell_{j,\text{micro}}$, noting that for any $q \in \mathcal{C}_m$, 
\begin{align}
\label{eqn:dung_thm_3_2_univariate_global_bound_for_distance_inside_cluster}
    |\theta_i-\theta^*_q| \leq |\theta_i-\theta^*_{c(i)}| + |\theta^*_{c(i)} - \theta^*_q| \leq  |\theta_i-\theta^*_{c(i)}| + C_0\Delta_{\mathrm{sep}} \leq (1+4C_0) |\theta_i-\theta^*_{c(i)}|. 
\end{align}
As a result, we have 
\begin{equation}
\label{eqn:ell_j_micro_macro_in_cluster}
    |\ell_{j,\text{micro}}(\theta_i)|^2 \leq \Delta^{-2(s_m-1)}_{\mathrm{sep}}(1+4C_0)^{2(s_m-1)}|\theta_i - \theta^*_{c(i)}|^{2(s_m-1)}. 
\end{equation}

For the monomial component, using the estimation of $\bar{A}_j$ and $\bar{B}_j$ at the beginning of the proof of Lemma \ref{lemma:dung_multicluster_mass_test_function}, we have 
\begin{equation*}
    |\bar{A}_j + \bar{B}_j(\theta_i -\theta_j^*)|\leq \bar{C}_A +\bar{C}_B\Delta^{-1}_{\mathrm{sep}}(1+4C_0)|\theta_i-\theta^*_{c(i)}| \leq (4\bar{C}_A +\bar{C}_B(1+4C_0)) \Delta^{-1}_{\mathrm{sep}}|\theta_i-\theta^*_{c(i)}|. 
\end{equation*}

Combining these estimations for all components of $\bar{E}_j$, we achieve 
\begin{align*}
|\bar{E}_j(\theta_i)| \leq   (4\bar{C}_A +\bar{C}_B(1+4C_0)) (1+4C_0)^{2(s_m-1)}\Delta^{-2s_m-1}_{\mathrm{sep}}|\theta_i - \theta^*_{c(i)}|^{2s_m-1}.
\end{align*}
Moreover, since $|\theta_i - \theta^*_{c(i)}|\geq \Delta_{\mathrm{sep}}/4$, we achieve 
\begin{align*}
1&\leq 4^{2s_{\max}-2s_m+1}\Delta^{-2s_{\max}+2s_m-1}_{\mathrm{sep}}|\theta_i - \theta^*_{c(i)}|^{2s_{\max}-2s_m+1} \\
&\leq 4^{2k_*+1}\Delta^{-2s_{\max}+2s_m-1}_{\mathrm{sep}}|\theta_i - \theta^*_{c(i)}|^{2s_{\max}-2s_m+1}. 
\end{align*}

Thus, the following estimation of $\bar{E}_j(\theta_i)$ is established
\begin{align}
\label{eqn:dung_thm_3_2_univariate_local_estimation_for_E_in_macro_mass_case}
    |\bar{E}_j(\theta_i)| \leq   C_{\bar{E},\mathrm{mac,in,mass,1}}\Delta^{-2s_{\max}}_{\mathrm{sep}}|\theta_i - \theta^*_{c(i)}|^{2s_{\max}},
\end{align}
where 
\begin{equation*}
C_{\bar{E},\mathrm{mac,in,mass,1}} = 4^{2k_*+1} \cdot (4\bar{C}_A +\bar{C}_B(1+4C_0)) (1+4C_0)^{2(k_*-1)}.
\end{equation*}
Therefore, thanks to the high moment estimation in equation~\eqref{eqn:macro_high_moment_estimate}, we have 
\begin{align}
\nonumber
\left|\sum_{i \in \mathcal{M}_{\mathrm{mac}},c(i)\in\mathcal{C}_m} \pi_i \bar{E}_j(\theta_i)\right| &\leq C_{\bar{E},\mathrm{mac,in,mass,1}}\Delta^{-2s_{\max}}_{\mathrm{sep}}\sum_{i \in \mathcal{M}_{\mathrm{mac}},c(i)\in\mathcal{C}_m}\pi_{i}|\theta_i - \theta_{c(i)}|^{2s_{\max}}\\
&\leq C_{\mathrm{mac,in,mass,1}} \Delta^{-2s_{\max}}_{\mathrm{sep}}d_H(p_{G},p_{G_*}),
\label{eqn:macro_m_set_sum_estimation}
\end{align}
where $C_{\mathrm{mac,in,mass,1}}  = C_{\bar{E},\mathrm{mac,in,mass,1}}\cdot C_{\mathrm{var},2}$. 

\emph{Case 5.4 - Macro set outside $\mathcal{C}_m$.} For $i\in\mathcal{M}_{\text{macro}}$ such that $c(i)\notin\mathcal{C}_m$, we similarly evaluate each term $P_{\text{macro}}(\theta_i)$, $l^2_{j,\text{micro}}(\theta_i)$, and monomial component separately. Indeed, we have for $|\ell^2_{j,\text{micro}}(\theta_i)| \leq (2R)^{2(s_m-1)}\Delta^{-2(s_m-1)}_{\mathrm{sep}}$ due to the fact that $|\theta_i - \theta^*_{q}| \leq 2R$. For $P_{\text{macro}}$, noting that $|\theta_i - \theta^*_{q}| \leq 2R$, we have 
\begin{equation*}
    |P_{\text{macro}}(\theta_i)|\leq (2R)^{-2s_{\max}}|\theta_i-\theta^*_{c(i)}|^{2s_{\max}}.
\end{equation*}
For monomial component, using the estimation of $\bar{A}_j$ and $\bar{B}_j$ at the beginning of the proof of Lemma \ref{lemma:dung_multicluster_mass_test_function}, we have 
\begin{equation*}
    |\bar{A}_j + \bar{B}_j(\theta_i -\theta_j^*)| \leq \bar{C}_A + \bar{C}_{B}\Delta^{-1}_{\mathrm{sep}}\cdot 2R \leq 2R(\bar{C}_A + \bar{C}_{B})\Delta^{-1}_{\mathrm{sep}}. 
\end{equation*}
Combining all these estimations above, we have 
\begin{align*}
    |\bar{E}_j(\theta_i)| \leq (\bar{C}_{A} + \bar{C}_{B}) (2R)^{2s_{m} - 2s_{\max} - 1} \Delta_{\mathrm{sep}}^{-(2s_{m} - 1)} (\theta_{l} - \theta_{c(l)}^{*})^{2s_{\max}}.
\end{align*}
Moreover, since $\Delta_{\mathrm{sep}}\leq 2R$, we achieve 
\begin{align*}
1&\leq \Delta^{2(s_m - s_{\max})}_{\mathrm{sep}}(2R)^{2(s_{\max}-s_m)},
\end{align*}
thus, we obtain 
\begin{align*}
    |\bar{E}_j(\theta_i)| \leq  C_{\bar{E},\mathrm{mac,out,mass,1}}\Delta_{\mathrm{sep}}^{-(2s_{\max} - 1)} (\theta_{l} - \theta_{c(l)}^{*})^{2s_{\max}}. 
\end{align*}
where $C_{\bar{E},\mathrm{mac,out,mass,1}} = (\bar{C}_{A} + \bar{C}_{B}) (2R)^{-1}$. Therefore, thanks to the high moment estimation in equation~\eqref{eqn:macro_high_moment_estimate}, we have 
\begin{align}
\nonumber
\left|\sum_{i \in \mathcal{M}_{\mathrm{mac}},c(i)\notin\mathcal{C}_m} \pi_i \bar{E}_j(\theta_i)\right| &\leq C_{\bar{E},\mathrm{mac,out,mass,1}}\Delta^{-(2s_{\max}-1)}_{\mathrm{sep}}\sum_{i \in \mathcal{M}_{\mathrm{mac}},c(i)\notin\mathcal{C}_m}\pi_{i}|\theta_i - \theta^*_{c(i)}|^{2s_{\max}}\\
&\leq C_{\mathrm{mac,out,mass,1}}\Delta^{-(2s_{\max}-1)}_{\mathrm{sep}}d_H(p_{G},p_{G_*}),
\label{eqn:macro_not_m_set_sum_estimation}
\end{align}
where $ C_{\mathrm{mac,out,mass,1}}= C_{\bar{E},\mathrm{mac,out,mass,1}}\cdot C_{\mathrm{mac,moment,1}}$. 

\emph{Case 5.5 - Far set. } 
For the centers $i \in \mathcal{M}_{\mathrm{far}}$, as  $|\theta_i -\theta^*_{c(i)}|\leq 2R$, it is straightforward to verify that $|\ell^2_{j,\text{macro}}(\theta_i)| \leq (2R)^{2(s_m-1)}\Delta^{-(2s_m-2)}_{\mathrm{sep}}$, $|P_{\text{macro}}(\theta_i)| \leq 1$. In addition, for the monomial component, as in Case 5.4, we have 
\begin{equation*}
    |\bar{A}_j + \bar{B}_j(\theta_i -\theta_j^*)| \leq 2R(\bar{C}_A + \bar{C}_{B})\Delta^{-1}_{\mathrm{sep}}. 
\end{equation*}
Using all the above results together, we have 
\begin{align*}
    |\bar{E}_j(\theta_i)| \leq (2R)^{2s_m-1}(\bar{C}_A + \bar{C}_{B})\Delta^{-(2s_m-1)}_{\mathrm{sep}}. 
\end{align*}
Moreover, since $\Delta_{\mathrm{sep}}\leq 2R$, we achieve 
\begin{align*}
1&\leq \Delta^{2(s_m - s_{\max})}_{\mathrm{sep}}(2R)^{2(s_{\max}-s_m)},
\end{align*}
thus, we obtain 
\begin{align*}
    |\bar{E}_j(\theta_i)| \leq  C_{\bar{E},\mathrm{far,mass,1}}\Delta_{\mathrm{sep}}^{-(2s_{\max} - 1)} (\theta_{l} - \theta_{c(l)}^{*})^{2s_{\max}}. 
\end{align*}
where $C_{\bar{E},\mathrm{far,mass,1}} = (\bar{C}_{A} + \bar{C}_{B})\max\{1,(2R)^{2k_*-1}\}$. 
Using estimation for global far mass in equation~\eqref{eqn:dung_thm2_global_far_mass}, we have 
\begin{align}
\nonumber
\left|\sum_{i \in \mathcal{M}_{\mathrm{far}}} \pi_i \bar{E}_j(\theta_i)\right| &\leq C_{\bar{E},\mathrm{far,mass,1}}\Delta^{-(2s_{\max}-1)}_{\mathrm{sep}}\pi_{\mathrm{far}}\\
&\leq C_{\mathrm{far,mass,test}}\Delta^{-(2s_{\max}-1)}_{\mathrm{sep}}d_H(p_{G},p_{G_*})
\label{eqn:dung_far_set_sum_estimation}
\end{align}
where $C_{\mathrm{far,mass,test,1}} = C_{\bar{E},\mathrm{far,mass,1}} \cdot C_{\mathrm{far,mass},2}$.

Now we can put all the estimations in separated cases in equations~\eqref{eqn:dung_thm_3_2_univariate_global_integral_of_E_bound}, \eqref{eqn:thm_2_sum_near_f-f}, \eqref{eqn:dung_near_not_m_value_estimation}, \eqref{eqn:macro_m_set_sum_estimation}, \eqref{eqn:macro_not_m_set_sum_estimation}, and \eqref{eqn:dung_far_set_sum_estimation} together to achieve the bound 
\begin{align}
 |\Delta\pi_j| \leq C_{\mathrm{mass},2} \Delta^{-2s_{\max}}_{\mathrm{sep}}d_H(p_{G},p_{G_*})
\label{eqn:dung_delta_pi_j_estimation}
\end{align}
where 
\begin{align*}
    C_{\mathrm{mass},2} &= 2R\left[C_{\bar{E},\mathrm{int,1}} +  C_{\mathrm{near,out,mass,1}} + C_{\mathrm{mac,in,mass,1}} + C_{\mathrm{mac,out,mass,1}} + C_{\mathrm{far,mass,test,1}}\right] \\
    &\hspace{1cm}  + C_{\mathrm{near,in,mass,1}}. 
\end{align*}

\emph{Step 6 - Bounding mean.} For this term, we consider two separate cases for the sum of weighted mean discrepancy for  $i \in \mathcal{M}_{\mathrm{mac}}\cup \mathcal{M}_{\mathrm{far}}$ and when $i \in \mathcal{M}_{\mathrm{near}}$. 

\emph{Step 6.1 - Bounding macro and far mean discrepancy.} Note that in the case when $i\in \mathcal{M}_{\mathrm{mac}}\cup \mathcal{M}_{\mathrm{far}}$,  $|\theta_i-\theta^*_{c(i)}|>\Delta_{\mathrm{sep}}/4$, thus we have 
\begin{align*}
\sum_{\mathcal{M}_{\mathrm{mac}}\cup\mathcal{M}_{\mathrm{far}}} \pi_i |\theta_i -\theta^*_{c(i)}| &= \sum_{\mathcal{M}_{\mathrm{mac}}\cup\mathcal{M}_{\mathrm{far}}} \pi_i |\theta_i -\theta^*_{c(i)}|^2 |\theta_i -\theta^*_{c(i)}|^{-1}\leq \dfrac{4}{\Delta_{\mathrm{sep}}}\sum_{\mathcal{M}_{\mathrm{mac}}\cup\mathcal{M}_{\mathrm{far}}} \pi_i |\theta_i -\theta^*_{c(i)}|^2. 
\end{align*}
The estimation in equation~\eqref{eqn:dung_thm2_global_variation_bound} implies that 
\begin{align}
\sum_{\mathcal{M}_{\mathrm{mac}}\cup\mathcal{M}_{\mathrm{far}}} \pi_i |\theta_i -\theta^*_{c(i)}| \leq C_{\mathrm{far,mac,mean},1}\Delta_{\mathrm{sep}}^{-(2s_{\max}-1)}d_H(p_{G},p_{G_*}),
   \label{eqn:dung_thm_2_first_moment_far_and_mac} 
\end{align}
where $C_{\mathrm{far,mac,mean},1} = 4C_{\mathrm{var},2}$. 

\emph{Step 6.2 - Bounding near mean discrepancy.} For this case, we utilize the similar argument  as in the proof of global case in Theorem \ref{theorem:exact_one_group_univariate}. Indeed, we consider two specific cases:

\emph{Case 6.2.1.} There exists a sub-Voronoi cell $\bar{\mathcal{V}}_j$ containing more than one element. Then, it means that there exists one Voronoi cell, without loss of generality, $\mathcal{V}_1$ which contains no element. In this case, $|\Delta\pi_1| = \pi_1^*$. The bound in equation~\eqref{eqn:dung_delta_pi_j_estimation} gives us 
\begin{align*}
    \pi^*_{\text{min}} \leq \pi_1^* \leq C_{\mathrm{mass,2}} \Delta^{-2s_{\max}}_{\mathrm{sep}}d_H(p_{G},p_{G_*}). 
\end{align*}
As a result, noting that $|\theta_i -\theta^*_{c(i)}|\leq \frac{\Delta_{\mathrm{sep}}}{4}$, we have 
\begin{align}
\nonumber
    \sum_{i\in \mathcal{M}_{\mathrm{near}}} \pi_i|\theta_i-\theta^*_{c(i)}| &\leq \frac{\Delta_{\mathrm{sep}}}{4}\sum_{i \in \mathcal{M}_{\mathrm{near}}} \pi_i \leq \frac{\Delta_{\mathrm{sep}}}{4} \leq \frac{\Delta_{\mathrm{sep}}}{4} \cdot \frac{C_{\mathrm{mass,2}} \Delta^{-2s_{\max}}_{\mathrm{sep}}d_H(p_{G},p_{G_*})}{\pi^*_{\text{min}}}\\
    &\leq C_{\mathrm{near,mean},1}(\pi^*_{\text{min}})^{-1}\Delta^{-(2s_{\max}-1)}_{\mathrm{sep}}d_H(p_{G},p_{G_*}).
\label{eqn:dung_thm2_global_near_first_moment_first_case}
\end{align}
where $C_{\mathrm{mean,near},1} = C_{\mathrm{mass,2}}/4$.  

\emph{Case 6.2.2.} There is no sub-Voronoi cell $\bar{\mathcal{V}}_j$ that has more than one element, which means that an non-empty $\bar{\mathcal{V}}_j$ contains exactly one element. If there exists some empty sub-Voronoi cell $\bar{\mathcal{V}}_j$, then we can use the similar argument as in Case 1. Therefore, it is sufficient to consider the situation that each sub-Voronoi cell $\bar{\mathcal{V}}_j$ contains exactly one element. We use similar argument in Step 5 in univariate local situation, bearing in mind that $(\pi^*_{\text{min}})^{-1} \geq 1$, we achieve 
\begin{align}
          \sum_{j\in \mathcal{M}_{\mathrm{near}}}\pi_j|\theta_j-\theta_{c(j)}^*| &= \sum_{j=1}^{k_*}\pi_j|\theta_j-\theta_{c(j)}^*| \leq C_{\mathrm{near,mean,2}}(\pi^*_{\text{min}})^{-1}\Delta^{-(2s_{\max}-1)}_{\mathrm{sep}}d_H(p_{G},p_{G_*}). \label{eqn:dung_thm_3_2_global_near_case_2_sum_of_first_discrepancy}
\end{align}
Combining the result in equations \eqref{eqn:dung_thm2_global_near_first_moment_first_case} and \eqref{eqn:dung_thm_3_2_global_near_case_2_sum_of_first_discrepancy}, we achieve 
\begin{align}
\label{eqn:dung_thm2_global_near_first_moment_overall}
    \sum_{j\in \mathcal{M}_{\mathrm{near}}}\pi_j|\theta_j-\theta_{c(j)}^*| \leq C_{\mathrm{near,mean}} (\pi^*_{\text{min}})^{-1}\Delta^{-(2s_{\max}-1)}_{\mathrm{sep}}d_H(p_{G},p_{G_*}),
\end{align}
where $C_{\mathrm{near,mean}}=\max\{C_{\mathrm{near,mean,1}},C_{\mathrm{near,mean,2}}\}$. Then, combining the result from equations \eqref{eqn:dung_thm_2_first_moment_far_and_mac} and \eqref{eqn:dung_thm2_global_near_first_moment_overall}, we have for $C_{\mathrm{mean},2} = C_{\mathrm{far,mac,mean,1}} + C_{\mathrm{near,mean}}$
\begin{align}
\label{eqn:dung_thm2_global_first_moment_overall}
    \sum_{j=1}^{k_*} \pi_j|\theta_j - \theta^*_{c(j)}|\leq C_{\mathrm{mean},2} (\pi^*_{\text{min}})^{-1}\Delta^{-(2s_{\max}-1)}_{\mathrm{sep}}d_H(p_{G},p_{G_*})
\end{align}

\emph{Step 7 - Bounding $W_1(G,G_*)$ and conclusion.} Now we put everything together. Plugging in the estimations from equations \eqref{eqn:dung_thm2_global_far_mass}, \eqref{eqn:dung_sum_pi_M_mac}, \eqref{eqn:dung_thm2_sum_delta_pi_estimation}, \eqref{eqn:dung_delta_pi_j_estimation}, \eqref{eqn:dung_thm2_global_first_moment_overall} into \eqref{eqn:dung_thm2_global_W_1_expansion}, by defining $$C^{-1}_{\mathrm{global},2}:= \max\{1,(2R)^{2k_*}\}C_{\mathrm{far,mass},2} + C_{\mathrm{mac,mass,1}} + 4R^2k_0\cdot C_{\Delta\Pi,1} + C_0C_{\mathrm{mass,2}} + C_{\mathrm{mean},2},$$ we have 
\begin{align*}
    W_1(G,G_*) \leq  C^{-1}_{\mathrm{global},2}\cdot (\pi^*_{\text{min}})^{-1}\Delta^{-(2s_{\max}-1)}_{\mathrm{sep}}d_H(p_{G},p_{G_*}),
\end{align*}
which implies 
\begin{align*}
   d_H(p_{G},p_{G_*}) \geq C_{\mathrm{global},2}\cdot \pi^*_{\text{min}}\cdot \Delta^{2s_{\max}-1}_{\mathrm{sep}}W_1(G,G_*). 
\end{align*}
This completes our proof for the global part. 

\subsubsection{Multivariate setting - Global bound}
\label{sec:global_bound_exact_multivariate_multi_cluster}

In this part, we follow the same strategy as in univariate global part. For every fitted atom $\theta_i$, let $c(i) = \text{argmin}_{1 \le j \le k_*} |\theta_i - \theta_j^*|$ be its absolute closest true center (while there exists several centers sharing the minimal distance, we randomly choose one of them). Let $m(i)$ denote the cluster containing the center $\theta^{*}_{c(i)}$. We partition the $k$ atoms of $G$ into three mutually disjoint regions according to their exact spatial proximity to the reference atoms:
\begin{itemize}
    \item The \emph{micro-near set} $\mathcal{M}_{\mathrm{mic}}=\left\{i:\|\theta_i - \theta_{c(i)}^*\|_2\le\frac{\Delta_{\mathrm{sep}}}{4}\right\},$
    consisting of atoms lying in the immediate neighborhood of their associated reference atoms. For each $j$, we further define the corresponding Voronoi-type cell inside this core region by $\bar{\mathcal{V}}_j=\{ i \in\mathcal{M}_{\mathrm{mic}} : c(i)=j\}.$

    \item The \emph{macro-near set} $\mathcal{M}_{\mathrm{mac}}=\left\{i :\frac{\Delta_{\mathrm{sep}}}{4}<\|\theta_i-\theta_{c(i)}^*\|_2\le\frac{D_0}{4}\right\},$
    which contains atoms that are no longer in the micro-near regime but still remain associated with the cluster $\mathcal{C}_{m(i)}$. We denote the total macro near mass as $\pi_{{\mathrm{mac}}} = \sum_{i \in \mathcal{M}_{\mathrm{mac}}} \pi_i$.

    \item The \emph{far set} $\mathcal{M}_{\mathrm{far}}=\left\{i :\|\theta_i-\theta_{c(i)}^*\|_2>\frac{D_0}{4}\right\},$
    consisting of atoms located in the inter-cluster void region separating distinct clusters. We denote the total mass contained in this region by $\pi_{\mathrm{far}}=
        \sum_{i \in\mathcal{M}_{\mathrm{far}}} \pi_i.$
\end{itemize}
As the proof of this part follows arguments nearly identical to those used in the univariate global case, we only present the main estimation steps and omit the repetitive technical details.

\vspace{0.5 em}
\noindent
\emph{Step 1 - Wasserstein decomposition.} Using exactly the same argument as in univariate global case, we achieve a preliminary decomposition for $W_1(G,G_*)$ to the first moment variance and mass mismatch per center and per cluster 
\begin{align}
\nonumber
    W_1(G, G_*) &\le \sum_{i \in \mathcal{M}_{\mathrm{mic}}} \pi_i\|\theta_{i}-\theta_{c(i)}\|_2 \ + \sum_{i \in \mathcal{M}_{\mathrm{mac}} \cup \mathcal{M}_{\mathrm{far}}} \pi_i\|\theta_{i}-\theta_{c(i)}\|_2+C_0 \Delta_{\mathrm{sep}} \sum_{j=1}^{k_*} |\Delta \pi_j|\\
    &\hspace{2cm} + C_0 \Delta_{\mathrm{sep}} \big(\pi_{\mathrm{mac}} + \pi_{\mathrm{far}}\big) +2R \sum_{m=1}^{k_0} |\Delta \Pi_m|,
\label{eqn:dung_thm2_global_multivariate_W_1_expansion}
\end{align}
where $\Delta \pi_j : = \left( \sum_{i \in \bar{\mathcal{V}}_j} \pi_i \right) - \pi_j^*$, $\tilde{\pi}_j = \sum_{i:c(i) = j}\pi_i$, and $\Delta\Pi_m = \sum_{j \in \mathcal{C}_m} \tilde{\Delta}\pi_j$.

\vspace{0.5 em}
\noindent
\emph{Step 2 -Bounding variance.} In this part, we utilize the variance extractor test function define in Section \ref{sec:variance_test_function}
\begin{equation*}
    P_{\mathrm{var}}(\theta) =  \prod_{l=1}^{k_*}\|\theta - \theta^*_{l}\|^2.
\end{equation*}

Since $\int P_{\mathrm{var}}(\theta) dG_*(\theta) = 0$, Lemma~\ref{lemma:Hellinger_to_Polynomial} and Lemma \ref{lemma:variance_test_function} indicates that 
\begin{align}
\label{eqn:dung_thm_3_2_mulivariate_global_first_estimation_about_P_var_multicluster}
\left|\int P_{\mathrm{var}}(\theta) dG(\theta)\right| = \left|\int P_{\mathrm{var}}(\theta) d\nu(\theta)\right| \le C_{\mathrm{poly}}C_{\mathrm{var}} d_H(p_{G},p_{G_*}).
\end{align}

We divide the integral into three components
\begin{align*}
    \sum_{i \in \mathcal{M}_{\mathrm{mic}}} \pi_i P_{\mathrm{var}}(\theta_i) + \sum_{i \in \mathcal{M}_{\mathrm{mac}}} \pi_i P_{\mathrm{var}}(\theta_i) + \sum_{i \in \mathcal{M}_{\mathrm{far}}} \pi_i P_{\mathrm{var}}(\theta_i)
\end{align*}
and 
estimate the value of $P_{\mathrm{var}}$ in each set $\mathcal{M}_{\mathrm{mic}}$, $\mathcal{M}_{\mathrm{mac}}$, and $\mathcal{M}_{\mathrm{far}}$. 

\emph{Case 2.1 - In micro-near set ($\mathcal{M}_{\mathrm{mic}}$)}: Using the same argument as in univariate global setting in equation~\eqref{eqn:dung_thm_3_2_univariate_global_P_var_greater_than_second_moment_micro_case}, we have 
\begin{align}
\label{eqn:dung_thm3.2_multivariate_global_f_var_at_micro_estimation}
    P_{\mathrm{var}}(\theta_i) \geq C_{\mathrm{mic},2} \cdot \Delta_{\mathrm{sep}}^{2s_{\max}-2} \|\theta_i-\theta^*_{c(i)}\|_2^2 \quad \text{for all } i \in \mathcal{M}_{\mathrm{mic}},
\end{align}
where $C_{\mathrm{mic},2} := \left(\frac{3}{4}\right)^{2k_*-2}\cdot \min\left\{\left(\frac{15}{16}D_0\right)^{2k_*-2}, 1\right\}\cdot \min\{(2R)^{-2k_*},1\}$. 

\emph{Case 2.2 - In macro-near set ($\mathcal{M}_{\mathrm{mac}}$). }  For $i \in \mathcal{M}_{\mathrm{mac}}$, using the same argument as in the univariate global setting, we have $\|\theta_i-\theta_q^*\|_2\geq \frac{\Delta_{\mathrm{sep}}}{4}$ for other $s_m-1$ centers in $\mathcal{C}_m$ and $\|\theta_i - \theta_p^*\|_2\geq \frac{3}{4}D_0$, thus 
\begin{align}
\label{eqn:dung_thm3.2_multivariate_global_f_var_at_macro_estimation}
 P_{\mathrm{var}}(\theta_i)\geq C_{\mathrm{mac},2}\cdot\Delta_{\mathrm{sep}}^{2s_{\max}-2}\|\theta_i - \theta_{c(i)}^*\|_2^2,
\end{align}
where $C_{\mathrm{mac},2} =4^{2-2k_*}\cdot \min\{(3D_0/4)^{2k_*-2},1\}\cdot\min\{(2R)^{-2k_*},1\}$. 

\emph{Case 2.3 - In far near set ($\mathcal{M}_{\mathrm{far}}$).} Using the same idea as in univariate global setting, for $i \in \mathcal{M}_{\mathrm{far}}$, we have $\|\theta_i - \theta_l^*\|_2 \ge \|\theta_i-\theta_{c(i)}\|_2 > \frac{D_0}{4}$ for all $k_*$ centers. Thus, as $\Delta_{\mathrm{sep}} \leq 2R$, we have
\begin{align}  
\label{eqn:dung_thm3.2_multivariate_global_f_var_at_far_estimation}
P_{\mathrm{var}}(\theta_i) \geq C_{\mathrm{far},2} \Delta^{-2s_{\max}}_{\mathrm{sep}}\|\theta_i-\theta^*_{c(i)}\|_2^2,
\end{align}
where $C_{\mathrm{far},2}=\min\{1,(2R)^{2k_*-2}\}\left(\frac{D_0}{4}\right)^{2k_*-2}$. 

Putting together these estimations of $P_{\mathrm{var}}$ in equations~\eqref{eqn:dung_thm3.2_multivariate_global_f_var_at_micro_estimation}, \eqref{eqn:dung_thm3.2_multivariate_global_f_var_at_macro_estimation}, and \eqref{eqn:dung_thm3.2_multivariate_global_f_var_at_far_estimation}, we have
\begin{equation*}
    \int P_{\mathrm{var}}(\theta)dG(\theta) \geq \min\{C_{\mathrm{mic},2},C_{\mathrm{mac},2},C_{\mathrm{far},2}\}\cdot \Delta_{\mathrm{sep}}^{-(2s_{\max}-2)}\sum_{\mathcal{M}_{\mathrm{mic}} \cup \mathcal{M}_{\mathrm{mac}}\cup \mathcal{M}_{\mathrm{far}}} \pi_i \|\theta_i-\theta_{c(i)}\|^2.
\end{equation*}
Combining this result with equation~\eqref{eqn:dung_thm_3_2_mulivariate_global_first_estimation_about_P_var_multicluster}, we have 
\begin{equation}
\label{eqn:dung_thm3.2_multivarite_global_variation_bound}
\sum_{i=1}^{k_*} \pi_i \|\theta_i-\theta_{c(i)}\|_2^2 \leq C_{\mathrm{var},3}\Delta_{\mathrm{sep}}^{-(2s_{\max}-2)}d_H(p_{G},p_{G_*}), 
\end{equation}
where $C_{\mathrm{var},3} = C_{\mathrm{poly}}C_{\mathrm{var}}\cdot \max\{C^{-1}_{\mathrm{mic},2},C^{-1}_{\mathrm{mac},2},C^{-1}_{\mathrm{far},2}\}$.
\newline 

\vspace{0.5 em}
\noindent
\emph{Step 3 - Bounding macro- and far-related quantities.} Now we bound the quantities related to $\mathcal{M}_{\mathrm{mac}}$ and $\mathcal{M}_{\mathrm{far}}$ set. 

\emph{Step 3.1 - Macro High-Moment $\sum_{\mathcal{M}_{\mathrm{mac}}} \pi_i \|\theta_i-\theta^*_{c(i)}\|_2^{2s_{\max}}$.} For each $i \in \mathcal{M}_{\mathrm{mac}}$, using the same argument as in the univariate global setting, we have $P_{\mathrm{var}}(\theta_i) \geq C^{-1}_{\mathrm{mac,moment,2}}\|\theta_i -\theta^*_{c(i)}\|^{2s_{\max}}_2$. Furthermore, by arguing in similar fashion to equation \eqref{eqn:macro_high_moment_estimate}, it follows that 
\begin{equation}
\label{eqn:dung_thm3.2_multivariate_global_macro_high_moment_estimate_1}
    \sum_{i\in \mathcal{M}_{\mathrm{mac}}} \pi_i \|\theta_i-\theta^*_{c(i)}\|_2^{2s_{\max}} \leq C_{\mathrm{mac,moment,2}} \cdot d_H(p_{G},p_{G_*}),
\end{equation}
where $$C_{\mathrm{mac,moment,2}} = \max\{(2R)^{2(k_*-1)},1\}\cdot\max\left\{1,\left(\frac{3}{4}D_0\right)^{-2k_*}\right\}C_{\mathrm{poly}}C_{\mathrm{var}}.$$ 

\emph{Step 3.2 - Global Far Mass $\pi_{\mathrm{far}}$. } We have for each $i \in \mathcal{M}_{\mathrm{far}}$, $\|\theta_i - \theta_l^*\|_2 \geq \|\theta_i-\theta^*_{c(i)}\|_2 > \frac{D_0}{4}$, thus we have $P_{\mathrm{var}}(\theta_i)\geq \left(\frac{D_0}{4}\right)^{2k_*}$. Using the same argument as in univariate global setting to establish equation~\eqref{eqn:dung_thm2_global_far_mass}, we have 
\begin{align}
    \pi_{\mathrm{far}} = \sum_{i \in \mathcal{M}_{\mathrm{far}}} \pi_i \leq C_{\mathrm{far,mass},3} d_H(p_{G},p_{G_*}),
\label{eqn:dung_thm3.2_multivariate_global_far_mass}
\end{align}
where $C_{\mathrm{far,mass},3} = \left(\frac{D_0}{4}\right)^{-2k_*} \cdot C_{\mathrm{poly}}C_{\mathrm{var}}$.   

\emph{Step 3.3 - Bounding macro mass $\pi_{\mathrm{mac}}$.} For any $i \in \mathcal{M}_{\mathrm{mac}}$, since $\|\theta_i-\theta^*_{c(i)}\|_2 > \Delta_{\mathrm{sep}}/4$, as in univariate global part, we have
\begin{align}
\label{eqn:dung_thm3.2_multivariate_global_macro_mass}
    \pi_{\mathrm{mac}} = \sum_{\mathcal{M}_{\mathrm{mac}}} \pi_i \le C_{\mathrm{mac,mass,2}}\Delta_{\mathrm{sep}}^{-(2s_{\max})} d_H(p_{G},p_{G_*}),
\end{align}
where $C_{\mathrm{mac,mass,2}}:=4^{2k_*}C_{\mathrm{mac,moment,1}}$. 

\vspace{0.5 em}
\noindent
\emph{Step 4 - Bounding cluster mass discrepancy $\sum_{m = 1}^{k_{0}} |\Delta \Pi_{m}|$. } As in the proof of univariate global case, we utilize the witness function $P_{m}$ defined in Section \ref{sec:cluster_wise_test_function}. Using the same argument with Taylor expansion as in univariate global part, there exists $\xi_i \in \bar{B}(0,R)$ such that 
\begin{align*}
    \int P_m(\theta)d\nu(\theta) &= \Delta\Pi_m - \sum_{c(i) \in\mathcal{C}_m}\pi_{i}\mathbf{1}_{\{i \in\mathcal{M}_{\mathrm{far}}\}} \\
    &\hspace{1cm}+  \frac{1}{2} \sum_{i \in \mathcal{M}_{\mathrm{mic}} \cup \mathcal{M}_{\mathrm{mac}}} \pi_i (\theta_i-\theta^*_{c(i)})^{\top}D^2P_m(\xi_i)(\theta_i-\theta^*_{c(i)})  + \sum_{i \in \mathcal{M}_{\mathrm{far}}} \pi_i P_m(\theta_i),
\end{align*} 
which implies 
\begin{align*}
    |\Delta\Pi_m|&\leq \left|\int P_m(\theta)d\nu(\theta)\right|  + \frac{1}{2} \sum_{i \in \mathcal{M}_{\mathrm{mic}} \cup \mathcal{M}_{\mathrm{mac}}} \pi_i \|D^2P_m(\xi_i)\|_{\mathrm{op}} \|\theta_i-\theta^*_{c(i)}\|^2 \\
    &\hspace{1cm}+ \left|\sum_{c(i) \in\mathcal{C}_m}\pi_{i}\mathbf{1}_{\{i \in\mathcal{M}_{\mathrm{far}}\}}\right| +\left|\sum_{i \in \mathcal{M}_{\mathrm{far}}} \pi_i P_m(\theta_i)\right|. 
\end{align*} 
Using the bound from Lemma \ref{lemma:dung_bound_for_cluster_mass_extracting}, we have 
\begin{align*}
    |\Delta \Pi_m| &\le C_{\mathrm{poly}}C_{\mathrm{norm},P}\cdot d_H(p_{G},p_{G_*})\\
    &\hspace{1cm}+ \frac{1}{2} C_{P,2} \left( \sum_{\mathcal{M}_{\mathrm{mic}} \cup \mathcal{M}_{\mathrm{mac}}} \pi_i \|\theta_i-\theta^*_{c(i)}\|_2^2 \right)+(C_{\mathrm{norm},P} + 1) \pi_{\mathrm{far}}.
\end{align*}
As in the proof of univariate global case, by invoking the estimation  in equations~\eqref{eqn:dung_thm3.2_multivariate_global_far_mass} and \eqref{eqn:dung_thm3.2_multivarite_global_variation_bound}, and using the same argument as in multivariate global part, we have 
\begin{align}
|\Delta \Pi_m| &\leq C_{\Delta\Pi,2} \Delta_{\max}^{-(2s_{\max}-1)}d_H(p_{G},p_{G_*}),
\label{eqn:dung_thm2_multivariate_global_sum_delta_pi_estimation}
\end{align}
where $C_{\Delta\Pi,2}$ is defined as 
\begin{align*}
    C_{\Delta\Pi,2} &= C_{\mathrm{poly}}C_{\mathrm{norm},P}\max\{(2R)^{2k_*-2},1\} + \frac{1}{2} C_{P,2} C_{\mathrm{var},2}\\
    &\hspace{1cm} + (C_{\mathrm{norm},P} + 1) C_{\mathrm{far,mass,2}} \max\{(2R)^{2k_*-2},1\}. 
\end{align*}

\vspace{0.5 em}
\noindent
\emph{Step 5 - Bounding mass discrepancy $\sum_{j=1}^{k_*} |\Delta \pi_j|$.} As in the univariate global part, in this section, we use the function $\bar{E}_j(\theta)$ constructed in Section \ref{sec:dung_mass_extractor_multicluster} as 
$$\bar{E}_{j}(\theta) = \ell_{j,\text{micro}}^2(\theta) [\bar{A}_j + \langle \bar{B}_j,\theta - \theta_{j}^{*}\rangle]P_{\text{macro}}(\theta),$$ 
where we define $\ell_{j,\text{micro}}(\theta) = \prod_{q \in \mathcal{C}_{m} \neq j} \frac{\|\theta - \theta_q^*\|_2}{\|\theta_j^* - \theta_q^*\|_2}$, $P_{\text{macro}}(\theta) = \prod_{p \neq m} \prod_{q \in \mathcal{C}_p} \left( \frac{\|\theta - \theta_q^*\|_2}{2R} \right)^{2s_{\max}}$ as in Section \ref{sec:dung_mean_extractor_multicluster}, and $\bar{A}_j = 1/ P_{\text{macro}}(\theta_{j}^{*})$,
\begin{align*}
    \bar{B}_j & = -\bar{A}_j[2\nabla\ell_{j,\text{micro}}(\theta_j^*) + (P_{\text{macro}}(\theta^*_j))^{-1}\nabla P_{\text{macro}}(\theta^*_j)].
\end{align*}
It is straightforward to verify that $\bar{E}_j(\theta^*_l) = \delta_{jl}$ and $\nabla \bar{E}'_{j}(\theta_l^*) = 0$. We assume that $j\in \mathcal{C}_m$. Then, using the same argument as in univariate global part, we have 
\begin{align*}
    \Delta\pi_j &= \int \bar{E}_j(\theta)d\nu(\theta)  - \underbrace{\left(\sum_{l \in \mathcal{C}_m}\sum_{i\in \bar{\mathcal{V}}_l}\pi_i(\bar{E}_j(\theta_i) -\bar{E}_j(\theta^*_l))\right)}_{\text{Case 5.1}} - \underbrace{\sum_{\substack{i \in \mathcal{M}_{\mathrm{mic}}\\ c(i) \notin \mathcal{C}_m }}\pi_i\bar{E}_j(\theta_i)}_{\text{Case 5.2}}\\
    &\hspace{1cm } -\underbrace{\sum_{\substack{i \in \mathcal{M}_{\mathrm{mac}}\\ c(i) \in \mathcal{C}_m }}\pi_i\bar{E}_j(\theta_i)}_{\text{Case 5.3}} - \underbrace{\sum_{\substack{i \in \mathcal{M}_{\mathrm{mac}}\\ c(i) \notin \mathcal{C}_m }}\pi_i\bar{E}_j(\theta_i)}_{\text{Case 5.4}} -  \underbrace{\sum_{{i\in \mathcal{M}_{\mathrm{far}}}}\pi_i\bar{E}_j(\theta_i)}_{\text{Case 5.5}}. 
\end{align*}
For the integral term, by the same argument employing Lemma \ref{lemma:Hellinger_to_Polynomial} and Lemma \ref{lemma:dung_multicluster_mass_test_function}, we achieve the bound 
\begin{align}
    \left|\int \bar{E}_j(\theta)d\nu(\theta)\right| \leq C_{\bar{E},\mathrm{int},2}\Delta^{-(2s_{\max}-1)}_{\mathrm{sep}}d_H(p_G,p_{G_*}).  
\label{eqn:dung_thm_3_2_multivariate_global_integral_of_E_bound}
\end{align}
Now we move to estimate the sum of weighted $\bar{E}_j(\theta_i)$ based on its geometrical location with respect to its nearest point and related cluster. 

\emph{Case 5.1 - Near set inside $\mathcal{C}_m$.} For each $i \in \bar{\mathcal{V}}_l$ and $l \in \mathcal{C}_m$, using Taylor expansion with Lagrange remainder, Lemma \ref{lemma:dung_multicluster_mass_test_function}, and estimation in equation~\eqref{eqn:dung_thm3.2_multivarite_global_variation_bound}, with the same argument as in the proof of univariate global part, we have 
\begin{align}
\nonumber
    \left|\sum_{l \in \mathcal{C}_m}\sum_{i\in \bar{\mathcal{V}}_l}\pi_i(\bar{E}_j(\theta_i) -\bar{E}_j(\theta^*_l))\right| &\leq C_{\bar{E},2}\Delta^{-2}_{\mathrm{sep}}\sum_{i\in\bar{\mathcal{V}}_l, l\in\mathcal{C}_m}\pi_{i}\|\theta_i-\theta^*_l\|_2^2\\
    &\leq C_{\mathrm{near,in,mass},2}\Delta^{-2s_{\max}}_{\mathrm{sep}}d_H(p_{G},p_{G_*}). 
    \label{eqn:dung_thm3.2_multivariate_global_sum_near_f-f}
\end{align}
where $C_{\mathrm{near,in,mass},2} = C_{\bar{E},2}C_{\mathrm{var},3}$.

\emph{Case 5.2 - Near set outside $\mathcal{C}_m$.} 
For index $i \in \mathcal{M}_{\mathrm{mic}}$ that does not in $\mathcal{C}_m$, using Lemma \ref{lemma:dung_bound_for_cluster_mass_extracting}, we have
\begin{equation*}
    |\bar{E}_j(\theta_l)|
    \leq C_{\text{cross},\bar{E}}\Delta_{\mathrm{sep}}^{- 1}\|\theta_l-\theta^*_{c(l)}\|_2^2. 
\end{equation*}
Using the same argument as in univariate global part, the above bound yields
\begin{align}
\nonumber
|\sum_{\substack{i \in \mathcal{M}_{\mathrm{mic}}\\ c(i) \notin \mathcal{C}_m }}\pi_i\bar{E}_j(\theta_i)|& \leq \Delta_{\mathrm{sep}}^{-1}C_{\text{cross},\bar{E}}\sum_{i=1}^{k_*} \pi_i  \|\theta_l-\theta_{c(l)}\|^2 \\
&\leq C_{\mathrm{near,out,mass},2}\Delta_{\mathrm{sep}}^{-( 2s_{\max} - 1)}d_H(p_{G},p_{G_*}),
\label{eqn:dung_thm3_2_multivariate_global_near_not_m_value_estimation}
\end{align}
where $C_{\mathrm{near,out,mass},2} = C_{\text{cross}}C_{\mathrm{var},3}$. 

\emph{Case 5.3 - Macro set inside $\mathcal{C}_m$.} For index $i \in \mathcal{M}_{\text{macro}}$ such that $c(i) \in \mathcal{C}_m$, using the same argument as in Case 5.3 of univariate global part, we have 
\begin{equation*}
    |\bar{E}_j(\theta_l)| \leq C_{\bar{E},\mathrm{mac,in,mass},2}\Delta^{-2s_{\max}}_{\mathrm{sep}}\|\theta_l - \theta^*_{c(l)}\|_2^{2s_{\max}}.
\end{equation*}
Using the high moment estimation in equation~\eqref{eqn:dung_thm3.2_multivariate_global_macro_high_moment_estimate_1}, we have
\begin{align}
\nonumber
\left|\sum_{i \in \mathcal{M}_{\mathrm{mac}},c(i)\in\mathcal{C}_m} \pi_i \bar{E}_j(\theta_i)\right| &\leq C_{\bar{E},\mathrm{mac,in,mass},2}\Delta^{-2s_{\max}}_{\mathrm{sep}}\sum_{i \in \mathcal{M}_{\mathrm{mac}},c(i)\in\mathcal{C}_m}\pi_{i}|\theta_i - \theta_{c(i)}|^{2s_{\max}}\\
&\leq C_{\mathrm{mac,in,mass},2} \Delta^{-2s_{\max}}_{\mathrm{sep}}d_H(p_{G},p_{G_*}),
\label{eqn:dung_thm3_2_multivariate_global_macro_m_set_sum_estimation}
\end{align}

\emph{Case 5.4 - Macro set outside $\mathcal{C}_m$.} For centers $\theta_i$ in $\mathcal{C}_m \cap \mathcal{M}_{\text{macro}}$, using the same argument as in univariate global part, we have $\ell^2_{j,\text{micro}}(\theta_i) \leq (2R)^{2(s_m-1)}\Delta^{-2(s_m-1)}_{\mathrm{sep}}$, $|P_{\text{macro}}(\theta_i)|\leq (2R)^{-2s_{\max}}|\theta_i-\theta^*_{c(i)}|^{2s_{\max}}$, and for degree-one component, $|\bar{A}_j + \langle \bar{B}_j,\theta_i - \theta_j^*\rangle| \leq 2R(\bar{C}_A +\bar{C}_B)\Delta^{-1}_{\mathrm{sep}}$. Consequently, we have
\begin{equation*}
    |\bar{E}_j(\theta_l)| \leq C_{\bar{E},\mathrm{mac,out,mass},2}\Delta_{\mathrm{sep}}^{-(2s_{m} - 1)} \|\theta_{i} - \theta_{c(i)}^{*}\|_2^{2s_{\max}}.
\end{equation*}
Therefore, thanks to the high moment estimation in equation~\eqref{eqn:dung_thm3.2_multivariate_global_macro_high_moment_estimate_1}, we have 
\begin{align}
\nonumber
\left|\sum_{i \in \mathcal{M}_{\mathrm{mac}},c(i)\notin\mathcal{C}_m} \pi_i \bar{E}_j(\theta_i)\right| &\leq C_{\bar{E},\mathrm{mac,out,mass},2}\Delta^{-(2s_{\max}-1)}_{\mathrm{sep}}\sum_{i \in \mathcal{M}_{\mathrm{mac}},c(i)\notin\mathcal{C}_m}\pi_{i}\|\theta_i - \theta_{c(i)}\|^{2s_{\max}}\\
&\leq C_{\mathrm{mac,out,mass},2}\Delta^{-(2s_{\max}-1)}_{\mathrm{sep}}d_H(p_{G},p_{G_*}),
\label{eqn:dung_thm3_2_multivariate_global_macro_not_m_set_sum_estimation}
\end{align}
where $C_{\mathrm{mac,out,mass},2}  = C_{\bar{E},\mathrm{mac,out,mass},2}\cdot C_{\mathrm{mac,moment},2}$. 

\emph{Case 5.5 - Far set.}
For the centers $i$ in $\mathcal{M}_{\mathrm{far}}$, as $\|\theta_i -\theta^*_{c(i)}\|_2\leq 2R$, using the same argument as in univariate global part, we have 
\begin{align*}
|\bar{E}_j(\theta_i)| \leq (2R)^{2s_m-1}(\bar{C}_A + \bar{C}_B)\Delta^{-(2s_m-1)}_{\mathrm{sep}}.
\end{align*}
Using the estimation for global far mass in equation~\eqref{eqn:dung_thm2_global_far_mass} and the same argument as in univariate global part, we have 
\begin{align}
\nonumber
\left|\sum_{i \in \mathcal{M}_{\mathrm{far}}} \pi_i \bar{E}_j(\theta_i)\right| &\leq (2R)^{2s_m-1}(\bar{C}_A + \bar{C}_B)\Delta^{-(2s_{m}-1)}_{\mathrm{sep}}\sum_{i \in \mathcal{M}_{\mathrm{far}}}\pi_{i}\\
&\leq C_{\mathrm{far,mass,test,2}} \Delta^{-(2s_{\max}-1)}_{\mathrm{sep}}d_H(p_{G},p_{G_*}).
\label{eqn:dung_thm3_2_multivariate_global_far_set_sum_estimation}
\end{align}
Combining the results from the integral estimation in equation~\eqref{eqn:dung_thm_3_2_multivariate_global_integral_of_E_bound}, and the sum $\bar{E}_j$ based on geometric location in equations~\eqref{eqn:dung_thm3.2_multivariate_global_sum_near_f-f}, \eqref{eqn:dung_thm3_2_multivariate_global_near_not_m_value_estimation}, \eqref{eqn:dung_thm3_2_multivariate_global_macro_m_set_sum_estimation}, \eqref{eqn:dung_thm3_2_multivariate_global_macro_not_m_set_sum_estimation}, and \eqref{eqn:dung_thm3_2_multivariate_global_far_set_sum_estimation}, as in the proof of the univariate global part, we have 
\begin{align}
 |\Delta\pi_j| \leq C_{\mathrm{mass},3} \Delta^{-2s_{\max}}_{\mathrm{sep}}d_H(p_{G},p_{G_*})
\label{eqn:dung_thm3_2_multivariate_global_delta_pi_j_estimation}
\end{align}
where 
\begin{align*}
    C_{\mathrm{mass},3} &= 2R\left[C_{\bar{E},\mathrm{int,2}} +  C_{\mathrm{near,out,mass,2}} + C_{\mathrm{mac,in,mass,2}} + C_{\mathrm{mac,out,mass,2}} + C_{\mathrm{far,mass,2}}\right] \\
    &\hspace{1cm}  + C_{\mathrm{near,in,mass,2}}. 
\end{align*}

\vspace{0.5 em}
\noindent
\emph{Step 6 - Bounding mean $\sum_{j=1}^{k_*} \pi_j\|\theta_j - \theta^*_{c(j)}\|$.} For this term, we consider two separate cases when $i$ belongs to $\mathcal{M}_{\mathrm{mac}}\cup \mathcal{M}_{\mathrm{far}}$ and when $i$ belongs to $\mathcal{M}_{\mathrm{near}}$. 

\emph{Step 6.1 - Bounding macro and far mean discrepancy $\mathcal{M}_{\mathrm{mac}}\cup \mathcal{M}_{\mathrm{far}}$:} Note that in this case $\|\theta_i-\theta^*_{c(i)}\|_2>\Delta_{\mathrm{sep}}/4$, thus we have 
\begin{align}
\nonumber
\sum_{\mathcal{M}_{\mathrm{mac}}\cup\mathcal{M}_{\mathrm{far}}} \pi_i \|\theta_i -\theta^*_{c(i)}\|_2 
   &\leq \dfrac{4}{\Delta_{\mathrm{sep}}}\sum_{\mathcal{M}_{\mathrm{mac}}\cup\mathcal{M}_{\mathrm{far}}} \pi_i \|\theta_i -\theta^*_{c(i)}\|^2 \\
   &\leq C_{\mathrm{far,mac,mean,2}}\Delta_{\mathrm{sep}}^{-(2s_{\max}-1)}d_H(p_{G},p_{G_*}).
\label{eqn:dung_thm_2_multivariate_global_first_moment_far_and_mac} 
\end{align}

\emph{Step 6.2 - Bounding near set mean discrepancy $\mathcal{M}_{\mathrm{mic}}$:} For this case, we utilize the similar argument  as in the proof of univariate global case. Indeed, we consider two specific cases. 

\emph{Case 6.2.1.} There exists a sub-Voronoi cell $\bar{\mathcal{V}}_j$ containing more than one element. Then, it means that there exists one Voronoi cell, without loss of generality, $\mathcal{V}_1$ which contains no element. In this case, $|\Delta\pi_1| = \pi_1^*$. The bound in equation~\eqref{eqn:dung_thm3_2_multivariate_global_delta_pi_j_estimation} gives us 
\begin{align}
\pi^*_{\text{min}} \leq \pi_1^* \leq C_{\mathrm{mass,3}} \Delta^{-2s_{\max}}_{\mathrm{sep}}d_H(p_{G},p_{G_*}). 
\end{align}
As a result, noting that $\|\theta_i -\theta^*_{c(i)}\|\leq \frac{\Delta_{\mathrm{sep}}}{4}$, we have 
\begin{align}
    \sum_{i\in \mathcal{M}_{\mathrm{near}}} \pi_i\|\theta_i-\theta^*_{c(i)}\| 
    \leq \bar{C}_{\mathrm{near,mean},1}(\pi^*_{\text{min}})^{-1}\Delta^{-(2s_{\max}-1)}_{\mathrm{sep}}d_H(p_{G},p_{G_*}).
\label{eqn:dung_thm2_multivariate_global_near_first_moment_first_case}
\end{align}

\emph{Case 6.2.2.} There is no sub-Voronoi cell $\bar{\mathcal{V}}_j$ that has more than one element, which means that an non-empty $\bar{\mathcal{V}}_j$ contains exactly one element. If there exists some empty sub-Voronoi cell $\bar{\mathcal{V}}_j$, then we can use the similar argument as in Case 6.2.1. Therefore, it is sufficient to consider the situation that each sub-Voronoi cell $\bar{\mathcal{V}}_j$ contains exactly one element. We use similar argument in local situation in equation \eqref{eqn:dung_thm_3_2_univariate_local_final_bound_for_mean_discrepancy}, bearing in mind that $(\pi^*_{\text{min}})^{-1} \geq 1$, we achieve 
\begin{align}
\sum_{j\in \mathcal{M}_{\mathrm{near}}}\pi_j\|\theta_j-\theta_{c(j)}^*\| &= \sum_{j=1}^{k_*}\pi_j\|\theta_j-\theta_{c(j)}^*\| \leq \bar{C}_{\mathrm{near,mean},2} \Delta_{\mathrm{sep}}^{-(2s_{\max}-1)}d_H(p_{G},p_{G_*}).\label{eqn:dung_thm_3_2_multivariate_global_near_case_2_sum_of_first_discrepancy}
\end{align}
Combining the result in equations \eqref{eqn:dung_thm2_multivariate_global_near_first_moment_first_case} and \eqref{eqn:dung_thm_3_2_multivariate_global_near_case_2_sum_of_first_discrepancy}, we achieve 
\begin{align}
    \sum_{j\in \mathcal{M}_{\mathrm{near}}}\pi_j\|\theta_j-\theta_{c(j)}^*\| \leq \bar{C}_{\mathrm{near,mean}} (\pi^*_{\text{min}})^{-1}\Delta^{-(2s_{\max}-1)}_{\mathrm{sep}}d_H(p_{G},p_{G_*}),
\label{eqn:dung_thm_3_2_multivariate_global_near_first_moment_overall}
\end{align}
where $\bar{C}_{\mathrm{near,mean}}=\max\{\bar{C}_{\mathrm{near,mean},1} ,\bar{C}_{\mathrm{near,mean},2} \}$. Then, combining the result from equations \eqref{eqn:dung_thm_2_multivariate_global_first_moment_far_and_mac} and \eqref{eqn:dung_thm_3_2_multivariate_global_near_first_moment_overall}, we have for $C_{\mathrm{mean},3} = C_{\mathrm{far,mac,mean,2}} + \bar{C}_{\mathrm{near,mean}}$
\begin{align}
\label{eqn:dung_thm_3_2_multivariate_global_first_moment_overall}
    \sum_{j=1}^{k_*} \pi_j\|\theta_j - \theta^*_{c(j)}\|\leq C_{\mathrm{mean},3} (\pi^*_{\text{min}})^{-1}\Delta^{-(2s_{\max}-1)}_{\mathrm{sep}}d_H(p_{G},p_{G_*})
\end{align}

\vspace{0.5 em}
\noindent
\emph{Step 7 - Bounding $W_1(G,G_*)$ and conclusion.} Now we put everything together. Plugging the estimations from equations \eqref{eqn:dung_thm3.2_multivariate_global_far_mass}, \eqref{eqn:dung_thm3.2_multivariate_global_macro_mass}, \eqref{eqn:dung_thm2_multivariate_global_sum_delta_pi_estimation}, \eqref{eqn:dung_thm3_2_multivariate_global_delta_pi_j_estimation}, \eqref{eqn:dung_thm_3_2_multivariate_global_first_moment_overall} into equation~\eqref{eqn:dung_thm2_global_multivariate_W_1_expansion}, by defining $$\bar{C}^{-1}_{\text{global},2}:= \max\{1,(2R)^{2k_*}\}C_{\mathrm{far,mass},3} + C_{\mathrm{mac,mass,2}} + 4R^2k_0\cdot C_{\Delta\Pi,2} + C_0C_{\mathrm{mass,3}} + C_{\mathrm{mean},3},$$ we have 
\begin{align*}
    W_1(G,G_*) \leq  \bar{C}^{-1}_{\mathrm{global},2}\cdot (\pi^*_{\text{min}})^{-1}\Delta^{-(2s_{\max}-1)}_{\mathrm{sep}}d_H(p_{G},p_{G_*}),
\end{align*}
which implies 
\begin{align*}
   d_H(p_{G},p_{G_*}) \geq \bar{C}_{\mathrm{global},2}\cdot \pi^*_{\text{min}}\cdot \Delta^{2s_{\max}-1}_{\mathrm{sep}}W_1(G,G_*). 
\end{align*}
This completes our proof for the global part.

\subsection{Proof of Theorem \ref{theorem:exact_no_group}}
\label{sec:proof_of_exact_no_group}
\subsubsection{Univariate Setting - Local Bound}
\label{sec:local_bound_exact_univariate_no_group}
We now focus on the local regime where the Wasserstein distance $W_1(G, G_*)$ is sufficiently small. Lemma \ref{lemma:small_wasserstein_implies_center_in_voronoi_cell} implies that each estimated atom $\theta_i$ lies strictly within a Voronoi cell $\mathcal{V}_j$ associated with its nearest true parameter $\theta_j^*$, and satisfies
$\lvert \theta_i - \theta_j^* \rvert \leq \frac{\Delta_{\mathrm{sep}}}{4}.$
Since the fitted measure \( G \) has exactly \( k_* \) components, it follows that each Voronoi cell \( \mathcal{V}_j \) contains exactly one atom. Without loss of generality, we may relabel the indices so that \( \theta_j \in \mathcal{V}_j \) for all \( j = 1, \dots, k_* \).

\vspace{0.5 em}
\noindent
\emph{Step 1 - Wasserstein decomposition.} Let $G' = \sum_{j=1}^{k_*} \pi_j \delta_{\theta_j^*}$, we would like to decompose $W_1(G, G_*)$ through the intermediate discrete measure $G'$. By the triangle inequality with the $W_{1}$ metric, we have
\begin{equation}
\label{eqn:dung:thm_3_3_univariate_local_first_decomposition_of_W1}
    W_1(G, G_*) \le W_1(G, G') + W_1(G', G_*).
\end{equation}

For $W_1(G, G')$, by choosing the transportation plan $\rho_{jj} = \pi_{j}$ for all $1 \leq j \leq k_{*}$ and $\rho_{ij} = 0$ as $1 \leq i \neq j \leq k_{*}$ we obtain that
\begin{align}
    W_1(G, G') \leq \sum_{j=1}^{k_*} \pi_{j}|\theta_{j} - \theta_{j}^{*}|. \label{eq:local_bound_Wasserstein_no_group_1} 
\end{align}

To bound $W_1(G', G_*)$,  we consider the following transportation plan from $G'$ to $G_*$. We keep a mass of $\min\{\pi_i,\pi^*_i\}$ for each $\theta^*_i$, while transferring $\max\{\pi_i-\pi^*_i,0\}$ to the other center. Then, the total mass moved is exactly $\frac{1}{2}\sum_{j = 1}^{k_{*}} |\pi_j - \pi_{j}^{*}|$, and the distance between any true centers is less than $2R$, thus 
\begin{equation}
\label{eqn:dung_thm_3_3_univariate_local_inequality_W_1_true_measure_and_intermediate}
    W_1(\tilde{G},G_*) \leq 2R \sum_{j=1}^{k_*} |\pi_j - \pi_{j}^{*}|. 
\end{equation}

By plugging the results from equations \eqref{eq:local_bound_Wasserstein_no_group_1} and \eqref{eqn:dung_thm_3_3_univariate_local_inequality_W_1_true_measure_and_intermediate} into equation~\eqref{eqn:dung:thm_3_3_univariate_local_first_decomposition_of_W1}, we achieve a decomposition of $W_1(G,G')$ into mass and mean discrepancies 
\begin{align}
W_1(G, G_*) \le \sum_{j=1}^{k_*} \pi_{j}|\theta_{j} - \theta_{j}^{*}| + 2R\sum_{j = 1}^{k_{*}} |\pi_j - \pi_{j}^{*}|. \label{eq:no_group_Wasserstein_local_bound}
\end{align}
\emph{Step 2 - Bounding variance $\sum_{j=1}^{k_*} \pi_j |\theta_{j} - \theta_{j}^{*}|^2$.} We now consider the globally positive witness function defined in Section \ref{sec:variance_test_function}: 
\begin{align*}
    P_{\mathrm{var}}(\theta) = \prod_{l=1}^{k_*} (\theta - \theta_l^*)^2.
\end{align*}
Since $P_{\mathrm{var}}(\theta_l^*) = 0$, we have $\int P_{\mathrm{var}}(\theta) dG_*(\theta) = 0$. As a result, we have
\begin{align*} 
\int P_{\mathrm{var}}(\theta) d\nu(\theta)=\int P_{\mathrm{var}}(\theta) d G(\theta) = \sum_{j=1}^{k_*} \pi_j (\theta_{j} - \theta_{j}^{*})^2 \prod_{l \neq j} (\theta_j - \theta_l^*)^2.
\end{align*}
For any $l \neq j$, we have
\begin{align*}
    |\theta_{j} - \theta_l^*| \ge |\theta_j^* - \theta_l^*| - |\theta_{j} - \theta_{j}^{*}| \ge \Delta_{\mathrm{sep}} - \frac{\Delta_{\mathrm{sep}}}{4} = \frac{3 \Delta_{\mathrm{sep}}}{4}.
\end{align*}
Thus, we find that
\begin{align*} 
\int P_{\mathrm{var}}(\theta) d\nu(\theta) & \ge    \left(\frac{3}{4}\right)^{2k_*-2} \Delta_{\mathrm{sep}}^{2k_*-2} \sum_{j=1}^{k_*} \pi_j |\theta_{j} - \theta_{j}^{*}|^2.
\end{align*}
Recall from Lemma~\ref{lemma:Hellinger_to_Polynomial} and Lemma~\ref{lemma:variance_test_function} that $$|\int P_{\mathrm{var}}(\theta) d\nu(\theta)| \le C_{\mathrm{poly}}\|P_{\mathrm{var}}\|_{\infty}d_H(p_{G},p_{G_*}) \leq C_{\mathrm{poly}}C_{\mathrm{var}} d_H(p_{G},p_{G_*}).$$ Combining these bounds leads to
\begin{align}
\nonumber
    \sum_{j=1}^{k_*} \pi_j |\theta_{j} - \theta_{j}^{*}|^2 &\le \left[ \frac{C_{\mathrm{poly}}C_{\mathrm{var}}}{(3/4)^{2k_*-2}} \right]  \Delta_{\mathrm{sep}}^{-(2k_* - 2)} d_H(p_{G},p_{G_*}) \\
    &: = C_{\mathrm{var},1} \Delta_{\mathrm{sep}}^{-(2k_* - 2)} d_H(p_{G},p_{G_*}). \label{eq:exact_no_group_third} 
\end{align}
\emph{Step 3 - Bounding mean discrepancy $\pi_j |\theta_{j} - \theta_{j}^{*}|$.} A necessary ingredient to achieve a bound for the mean discrepancy is a polynomial $H_{j}$ such that $H_{j,i}(\theta_l^*) = 0$ and $H_j'(\theta_l^*) = \delta_{jl}$, for all $1 \leq l \leq k_{*}$. We can choose the Hermite interpolation polynomial $H_j$ of degree $2k_*-1$ defined in Section \ref{sec:mean_test_function} given by 
\begin{align*}
    H_{j}(\theta) : = \ell_j^2(\theta) (\theta - \theta_j^*)  \quad \text{where} \quad \ell_j(\theta) : = \prod_{q \neq j} \frac{\theta - \theta_q^*}{\theta_j^* - \theta_q^*},
\end{align*}
By utilizing Taylor expansion $H_j(\theta_l)$ exactly around its true center $\theta_l^*$, we have $H_j(\theta_l) = \delta_{jl}(\theta_{l} - \theta_{l}^{*}) + \frac{1}{2} H_j''(\xi_l) (\theta_{l} - \theta_{l}^{*})^2$. Thus, we arrive at $$ \int H_j(\theta) d\nu(\theta) = \pi_{j}(\theta_{j} - \theta_{j}^{*}) + \frac{1}{2} \sum_{l=1}^{k_*} \pi_{l} H_j''(\xi_l) (\theta_{l} - \theta_{l}^{*})^2.$$
This equation leads to
\begin{align*}
    \pi_{j}|\theta_{j} - \theta_{j}^{*}| &\leq \left|\int H_j(\theta) d\nu(\theta)\right| + \frac{1}{2} \sum_{l=1}^{k_*} \pi_{l} |H_j''(\xi_l)| (\theta_{l} - \theta_{l}^{*})^2\nonumber\\
    &\leq C_{\mathrm{poly}}\|H_j\|_{\infty}d_H(p_{G},p_{G_*})+ \frac{1}{2}\max_{\theta \in [-R,R]} |H_{j}''(\theta)| \sum_{l=1}^{k_*} \pi_{l} (\theta_{l} - \theta_{l}^{*})^2.
\end{align*}
where the second inequality follows from Lemma~\ref{lemma:Hellinger_to_Polynomial}. Using Lemma \ref{lemma:mass_test_function} and the estimation in equation~\eqref{eq:exact_no_group_third}, we have $\|H_j\|_{\infty}\leq C_{\mathrm{norm},H}\Delta^{-(2k_*-2)}_{\mathrm{sep}}$ and $\max_{\theta \in [-R,R]}|H_j''(\theta)| \leq C_{H,2}\Delta^{-(2k_*-2)}_{\mathrm{sep}}$. Consequently, thanks to variance bound \eqref{eq:exact_no_group_third} and the fact that $\Delta_{\mathrm{sep}}\leq 2R$, we achieve a bound 
\begin{align}
    \pi_{j}|\theta_{j} - \theta_{j}^{*}|\leq C_{\mathrm{mean},1} \Delta_{\mathrm{sep}}^{-(4k_*-4)} d_H(p_{G},p_{G_*}), \label{eqn:dung_thm_3.3_univariate_local_final_estimation_for_mean_discrepancy_ohlala}
\end{align}
where 
\begin{equation*}
    C_{\mathrm{mean},1} = (2R)^{2k_*-2}C_{\mathrm{poly}}C_{\mathrm{norm},H} + \frac{1}{2}C_{H,2}C_{\mathrm{var},2}. 
\end{equation*}

\vspace{0.5 em}
\noindent
\emph{Step 4 - Bounding mass discrepancy $|\pi_{j} - \pi_{j}^{*}|$.} To derive an upper bound for $|\pi_{j} - \pi_{j}^{*}|$, we consider a polynomial $E_j$ such that $E_j(\theta_l^*) = \delta_{jl}$ and $E_j'(\theta_l^*) = 0$ for all $1\leq l\leq k_*$. We can choose the Hermite interpolation polynomial of degree $2k_* - 1$ defined in Section \ref{sec:prior_transition_test_function} given by
\begin{align*}
    E_j(\theta) : = \ell_j^2(\theta) \left[ \bar{A}_j + \bar{B}_j(\theta - \theta_j^*) \right]:= \ell_j^2(\theta)[u_0 + u_1\theta] \quad \text{where} \quad \ell_j(\theta) = \prod_{q \neq j} \frac{\theta - \theta_q^*}{\theta_j^* - \theta_q^*},
\end{align*}
and $\bar{A}_j = 1$, $\bar{B}_j =  - 2\ell_j'(\theta_j^*)$, $u_0 = \bar{A}_j - \bar{B}_j \theta_j^*$ and $u_1 = \bar{B}_j$.

Now we employ Taylor expansion for $E_j(\theta_l)$ exactly around its true center $\theta_l^*$ to achieve $E_j(\theta_l) = \delta_{jl} + \frac{1}{2} E_j''(\xi_l) (\theta_{l} - \theta_{l}^{*})^2$. Thus, we arrive at $$ \int E_j(\theta) d\nu(\theta) = \sum_{l=1}^{k_*} \pi_{l} E_j(\theta_{l}) - \pi_j^* = (\pi_{j} - \pi_{j}^{*}) + \frac{1}{2} \sum_{l=1}^{k_*} \pi_{l} E_j''(\xi_{l}) (\theta_{l} - \theta_{l}^{*})^2.$$
Isolating $|\pi_{j} - \pi_{j}^{*}|$ and applying the results of Lemma~\ref{lemma:Hellinger_to_Polynomial}, we obtain
\begin{align*} 
|\pi_{j} - \pi_{j}^{*}| & \le \left| \int E_j(\theta) d\nu(\theta) \right| + \frac{1}{2} \sum_{l=1}^{k_*} \pi_{l} |E_j''(\xi_{l})| (\theta_{l} - \theta_{l}^{*})^2\\
&\leq C_{\mathrm{poly}}\|E_j\|_{\infty}d_H(p_{G},p_{G_*}) + \frac{1}{2}\max_{\theta \in [-R,R]} |E_{j}''(\theta)|  \sum_{l=1}^{k_*} \pi_{l}  (\theta_{l} - \theta_{l}^{*})^2.
\end{align*}
Lemma ~\ref{lemma:mass_test_function} implies that $\|E_j\|_{\infty} \leq C_{\mathrm{norm},E}\Delta^{-(2k_*-1)}_{\mathrm{sep}}$ and $\max_{\theta \in [-R,R]}|E''_j(\theta)| \leq C_{E,2}\Delta^{-(2k_*-1)}_{\mathrm{sep}}$. Thus, by substituting these estimations along with variance bound in equation~\eqref{eq:exact_no_group_third}, we achieve  
\begin{align}
    |\pi_{j} - \pi_{j}^{*}|\leq  C_{\mathrm{mass},1}\Delta_{\mathrm{sep}}^{-(4k_* - 3)} d_H(p_{G},p_{G_*}),
    \label{eqn:dung_thm3_3_univariate_local_mass_discrepancy_bound}
\end{align}
where 
\begin{equation*}
    C_{\mathrm{mass},1} = (2R)^{2k_*-2}C_{\mathrm{poly}}C_{\mathrm{norm},E} + \frac{1}{2}C_{E,2}C_{\mathrm{var},2}.
\end{equation*}
\emph{Step 5 - Bounding for $W_1(G,G_*)$ and conclusion.} We substitute the bounds in equations~\eqref{eqn:dung_thm_3.3_univariate_local_final_estimation_for_mean_discrepancy_ohlala} and ~\eqref{eqn:dung_thm3_3_univariate_local_mass_discrepancy_bound} back into the inequality~\eqref{eq:no_group_Wasserstein_local_bound}, and by defining 
\begin{equation*}
    C^{-1}_{\mathrm{local},3} = k_*\left[2RC_{\mathrm{mean},1} + C_{\mathrm{mass},1}\right]
\end{equation*}
and obtain that 
\begin{align*} W_1(G, G_*) \leq  C^{-1}_{\mathrm{local},3}\Delta_{\mathrm{sep}}^{-(4k_* - 3)} d_H(p_{G},p_{G_*})
\end{align*}
which implies
\begin{align*}
    d_H(p_{G},p_{G_*}) \ge C_{\mathrm{local},3} \cdot \Delta_{\mathrm{sep}}^{4k_* - 3} \cdot W_1(G, G_*).
\end{align*}

\subsubsection{Univariate Setting - Global Bound}
For the global bound, its Voronoi cell can contain more than one elements. Therefore, we start with the following upper bound on Wasserstein distance to take into account that setting. Throughout this proof, for each atom $\theta_i$, let $c(i) = \arg\min_{1\leq j \leq k_*}|\theta_i - \theta_j^*|$ be it absolute closest true center (while there exists several centers sharing minimal distance, we randomly choose one of them). For each $j \in [k_*]$, let $\tilde{\pi}_j = \sum_{i: c(i) = j} \pi_i$ and $\Delta\tilde{\pi}_j = \tilde{\pi}_j - \pi_j$. 

\vspace{0.5 em}
\noindent
\emph{Step 1 - Wasserstein decomposition.} Let $T: \mathbb{R} \to \Theta^* = \left\{\theta_{1}^{*}, \ldots, \theta_{k_{*}}^{*}\right\}$ be a mapping function defined as $T(\theta_i) = \theta_{c(i)}^*$. We define $\widetilde{G}$ as the exact push-forward measure of $G$ under $T$:
$$ \widetilde{G} = T_{\#} G = \sum_{i=1}^k \pi_i \delta_{T(\theta_i)} = \sum_{i=1}^k \pi_i \delta_{\theta_{c(i)}^*}.$$
We rewrite $\widetilde{G}$ on the true discrete support basis by grouping all mass mapped to the identical center $j$ as follows: $$\widetilde{G} = \sum_{j=1}^{k_*} \tilde{\pi}_j \delta_{\theta_j^*} \quad \text{where} \quad \tilde{\pi}_j = \sum_{i: c(i) = j} \pi_i.$$
The triangle inequality $W_{1}$ metric implies 
\begin{align}
    W_1(G, G_*) \leq W_1(G, \widetilde{G}) + W_{1}(\widetilde{G},G_*). \label{eq:exact_no_group_global_Wasserstein_bound_0}
\end{align} 
Firstly, we establish an upper bound $W_1(G, \widetilde{G})$. By considering the transportation plan $\hat{\gamma} = (\text{Id} \times T)_{\#} G$, we obtain that:
\begin{align} 
W_1(G, \widetilde{G}) \le \int_{\mathbb{R} \times \mathbb{R}} |x - y| d\hat{\gamma}(x,y) & = \int_{\mathbb{R}} |\theta - T(\theta)| dG(\theta) = \sum_{i=1}^{k_*} \pi_i |\theta_i - \theta_{c(i)}^*|. \label{eq:exact_no_group_global_Wasserstein_bound_1}
\end{align}
We now move to upper bound $W_1(\widetilde{G}, G_*)$. As in univariate local case, we consider the following transportation plan from $G'$ to $G_*$. While keeping a mass of $\min\{\tilde{\pi}_i,\pi^*_i\}$ for each $\theta^*_i$, we transfer a mass of quantity $\max\{\tilde{\pi}_i-\pi^*_i,0\}$ to the other center. Consequently, the total mass moved is exactly $\frac{1}{2}\sum_{j = 1}^{k_{*}} | \Delta\tilde{\pi}_{j}|$. As the distance between any true centers is less than $2R$, we achieve the following bound
\begin{equation}
\label{eqn:dung_thm_3_3_univariate_global_inequality_W_1_true_measure_and_intermediate}
    W_1(\tilde{G},G_*) \leq 2R \sum_{j=1}^{k_*} |\Delta\tilde{\pi}_{j}|. 
\end{equation}
Combining this result with equations~\eqref{eq:exact_no_group_global_Wasserstein_bound_0} and~\eqref{eq:exact_no_group_global_Wasserstein_bound_1}, we obtain the following unconditional global transportation bound:
\begin{align}
W_1(G, G_*)
\le \sum_{i=1}^{k_*} \pi_i \, |\theta_i - \theta_{c(i)}^*|
+ 2R\cdot \sum_{j=1}^{k_*} |\tilde{\Delta} \pi_j| 
\label{eq:exact_no_group_global_key_equation_first}
\end{align}

\vspace{0.5 em}
\noindent
\emph{Step 2 - Bound variance $\sum_{i=1}^{k_*}\pi_i|\theta_i-\theta^*_{c(i)}|^2$.}
We next consider the function $$P_{\mathrm{var}}(\theta) = \prod_{l=1}^{k_*} (\theta - \theta_l^*)^2$$ which is defined in Section \ref{sec:variance_test_function} and evaluate it globally over all atoms of $G$.
Moreover, Lemma~\ref{lemma:Hellinger_to_Polynomial} and Lemma~\ref{lemma:variance_test_function} indicate that
\begin{align*}
 \left|\int P_{\mathrm{var}}(\theta) d\nu(\theta)\right|\le C_{\mathrm{poly}}\|P_{\mathrm{var}}\|_{\infty}d_H(p_G,p_{G_*}) \leq C_{\mathrm{poly}}C_{\mathrm{var}}d_H(p_G,p_{G_*}).
\end{align*}
For any index $1 \le i \le k_*$ and $j \ne c(i)$, we have
\begin{align*}
    |\theta_{c(i)}^* - \theta_j^*|\leq |\theta_i - \theta_{c(i)}^*|+|\theta_i - \theta_j^*|\leq 2 |\theta_i - \theta_j^*|,
\end{align*}
which further implies that
$$
|\theta_i - \theta_j^*|
\;\ge\;
\frac{1}{2} |\theta_{c(i)}^* - \theta_j^*|
\;\ge\;
\frac{\Delta_{\mathrm{sep}}}{2}.
$$
Therefore, thanks to $\int P_{\mathrm{var}}(\theta)dG_*(\theta) = 0$, we obtain
\begin{align*} 
\int P_{\mathrm{var}}(\theta) d\nu(\theta) &= \sum_{i=1}^{k_*}\pi_iP_{\mathrm{var}}(\theta_i)  \ge  \left(\frac{1}{2}\Delta_{\mathrm{sep}}\right)^{2k_*-2} \sum_{i=1}^{k_*} \pi_i |\theta_{i} - \theta_{c(i)}^{*}|^2 \\
& \geq  \left(\frac{1}{2}\right)^{2k_*-2} \Delta_{\mathrm{sep}}^{2k_*-2} \sum_{j = 1}^{k_{*}} \sum_{i \in \mathcal{V}_{j}} \pi_{i} |\theta_{i} - \theta_{j}^{*}|^2.
\end{align*}
As a result, we arrive at
\begin{align}
    \sum_{j = 1}^{k_{*}} \sum_{i \in \mathcal{V}_{j}} \pi_{i} |\theta_{i} - \theta_{j}^{*}|^2 &\le \left[ \frac{C_{\mathrm{var}}C_{\mathrm{poly}}}{(1/2)^{2k_*-2}} \right]  \Delta_{\mathrm{sep}}^{-(2k_* - 2)} d_H(p_{G},p_{G_*}) \nonumber\\
    &: = C_{\mathrm{var},2} \Delta_{\mathrm{sep}}^{-(2k_* - 2)} d_H(p_{G},p_{G_*}). \label{eq:exact_no_group_global_third} 
\end{align}

\vspace{0.5 em}
\noindent
\emph{Step 3 - Bounding mass discrepancy $|\tilde{\Delta} \pi_j|$.} To obtain an upper bound for $|\tilde{\Delta} \pi_j|$ with $j \in [k_*]$, we consider a polynomial $E_j$ such that $E_j(\theta_l^*) = \delta_{jl}$ and $E'_j(\theta_l^*) = 0$. We can choose the Hermite interpolation polynomial $E_j$ of degree $2k_*-1$ in Section  \ref{sec:point_wise_mass_extractor_non_multicluster} given by 
\begin{align*}
    E_{j}(\theta) : = \ell_j^2(\theta) \left[ \bar{A}_j + \bar{B}_j(\theta - \theta_j^*) \right], \quad \text{where} \quad \ell_j(\theta) = \prod_{q \neq j} \frac{\theta - \theta_q^*}{\theta_j^* - \theta_q^*},
\end{align*}
and $\bar{A}_j = 1$, $\bar{B}_j = - 2\ell_j'(\theta_j^*)$. By means of Taylor expansion for $E_j(\theta_l)$ exactly around its true center $\theta_l^*$, we have $E_j(\theta_l) = \delta_{jl} + \frac{1}{2} E_j''(\xi_l) (\theta_{l} - \theta_{c(l)}^{*})^2$, where $\xi_l$ is between $\theta_l$ and $\theta_{c(l)}^*$ which implies that
$$ \int E_j(\theta) d\nu(\theta) = \left(\sum_{i \in \mathcal{V}_{j}} \pi_{i} - \pi_{j}^{*}\right) + \frac{1}{2}\sum_{l=1}^{k_*} \sum_{i \in \mathcal{V}_l} \pi_{i} E_j''(\xi_i) (\theta_{i} - \theta_{l}^{*})^2.$$ 
Applying the triangle inequality and Lemma \ref{lemma:Hellinger_to_Polynomial} leads to
\begin{align}
    |\tilde{\Delta} \pi_j| &\le \left| \int E_j(\theta) d\nu(\theta) \right| + \frac{1}{2}\sum_{l=1}^{k_*} \sum_{i \in \mathcal{V}_l} \pi_{i} |E_j''(\xi_i)| (\theta_{i} - \theta_{l}^{*})^2\nonumber\\
    &\leq C_{\mathrm{poly}}\|E_j\|_{\infty}d_H(p_{G},p_{G_*})+ \frac{1}{2}\max_{\theta \in [-R,R]} |E_{j}''(\theta)| \sum_{l=1}^{k_*} \sum_{i \in \mathcal{V}_l} \pi_{i}  (\theta_{i} - \theta_{l}^{*})^2, \label{eq:exact_no_group_global_key_equation_fifth}
\end{align}
Using Lemma \ref{lemma:mass_test_function}, we have $\|E_j\|_{\infty} \leq C_{\mathrm{norm},E}$, and $\max_{\theta \in [-R,R]}|E''_j(\theta)| \leq C_{E,2}\Delta^{-(2k_*-1)}_{\mathrm{sep}}$. Combining these estimations with the bound for variance in equation~\eqref{eq:exact_no_group_global_third}, we have
\begin{align}
 |\tilde{\Delta} \pi_j|\leq  C_{\mathrm{mass},2} \Delta_{\mathrm{sep}}^{-(4k_* - 3)} d_H(p_{G},p_{G_*}), \label{eq:exact_no_group_global_key_equation_ninth}
\end{align}
where 
\begin{equation*}
    C_{\mathrm{mass},2} = (2R)^{2k_*-2}C_{\mathrm{poly}}C_{\mathrm{norm},E} + \frac{1}{2}C_{E,2}C_{\mathrm{var},2}.
\end{equation*}

\vspace{0.5 em}
\noindent
\emph{Step 4 - Upper bound on $\sum_{j = 1}^{k_{*}} \sum_{i \in \mathcal{V}_{j}} \pi_{i} |\theta_{i} - \theta_{j}^{*}|$.} We utilize a test polynomial $H_j$ satisfying $H_j(\theta_l^*) = 0$ and $H_j'(\theta_l^*) = \delta_{jl}$, for all $1 \leq l \leq k_{*}$. We can choose the polynomial $H_j$ of degree $2k_*-1$ which is defined in Section \ref{sec:non_multi_cluster_mean_test_function} given by:
\begin{align*}
    H_j(\theta) : = \ell_j^2(\theta) (\theta - \theta_j^*)  \quad \text{where} \quad \ell_j(\theta) : = \prod_{q \neq j} \frac{\theta - \theta_q^*}{\theta_j^* - \theta_q^*},
\end{align*}

By utilizing Taylor expansion $H_j(\theta_l)$ exactly around its true center $\theta_l^*$, we arrive at
$$ \int H_j(\theta) d\nu(\theta) = \sum_{i \in \mathcal{V}_{j}} \pi_{i}(\theta_{i} - \theta_{j}^{*}) + \frac{1}{2} \sum_{l=1}^{k_*} \sum_{i \in \mathcal{V}_{l}} \pi_{l} H_j''(\xi_l) (\theta_{i} - \theta_{l}^{*})^2.$$
 
We now divide our argument into two cases:

\emph{Case 4.1.} Suppose that there exists a Voronoi cell $\mathcal{V}_j$ containing more than one element. Without loss of generality, we may assume that the cell $\mathcal{V}_1$ is empty. In this case, it follows directly that $| \sum_{i \in \mathcal{V}_1} \pi_i - \pi_1^* | \;=\; \pi_1^*$. 
Consequently, the bound in equation~\eqref{eq:exact_no_group_global_key_equation_ninth} implies that
\begin{align}
    \pi_{\text{min}}^{*}\leq \pi_{1}^{*} \leq C_{\mathrm{mass},2} \Delta_{\mathrm{sep}}^{-(4k_*-3)} d_H(p_{G},p_{G_*}). \label{eq:exact_no_group_global_key_equation_tenth}
\end{align}
Now, since $|\theta_{i} - \theta_{c(i)}^{*}| \leq 2R$ for all $1\leq i\leq k_*$, we have
\begin{align}
    \sum_{j = 1}^{k_{*}} \sum_{i \in \mathcal{V}_{j}} \pi_{i} |\theta_{i} - \theta_{j}^{*}| \leq 2R \sum_{i = 1}^{k_{*}} \pi_{i} = 2R. \label{eq:exact_no_group_global_key_equation_eleventh}
\end{align}
Combining the bounds in equations~\eqref{eq:exact_no_group_global_key_equation_tenth} and~\eqref{eq:exact_no_group_global_key_equation_eleventh} leads to
\begin{align}
    \sum_{j = 1}^{k_{*}} \sum_{i \in \mathcal{V}_{j}} \pi_{i} |\theta_{i} - \theta_{j}^{*}| & \leq 2R \cdot \dfrac{C'_{\text{mass}} \Delta_{\mathrm{sep}}^{-(4k_* - 3)} d_H(p_{G},p_{G_*})}{\pi_{\text{min}}^{*}} \nonumber \\
    & := C_{\mathrm{mean,near},1} (\pi_{\text{min}}^{*})^{-1}\Delta_{\mathrm{sep}}^{-(4k_*- 3)}d_H(p_{G},p_{G_*}). \label{eq:exact_no_group_global_key_equation_twelth}
\end{align}
\emph{Case 4.2.} Assume that no Voronoi cell $\mathcal{V}_j$ contains more than one element, so that each cell $\mathcal{V}_j$ contains exactly one element. In this case, by applying an argument analogous to that used in the local bound together with the estimate in equation~\eqref{eq:exact_no_group_global_third}, we obtain
\begin{align*}
\sum_{i \in \mathcal{V}_{j}} \pi_{i}|\theta_{i} - \theta_{j}^{*}| \leq \left( (2R)^{2k_{*} - 2}C_{\mathrm{poly}}C_{\mathrm{norm},E} + \frac{1}{2} C_{E,2}C_{\mathrm{var},2} \right)\Delta_{\mathrm{sep}}^{-(4k_*-4)} d_H(p_{G},p_{G_*}).
\end{align*}
which implies
\begin{align}
\sum_{j = 1}^{k_{*}} \sum_{i \in \mathcal{V}_{j}} \pi_{i} |\theta_{i} - \theta_{j}^{*}| & \leq C_{\mathrm{mean,near,2}}  \Delta_{\mathrm{sep}}^{-(4k_* - 4)}d_H(p_{G},p_{G_*})
\label{eqn:dung_thm_3_3_univariate_local_mean_bound_with_one_element_each_cell}
\end{align}
Combining the results in equations \eqref{eq:exact_no_group_global_key_equation_twelth} and \eqref{eqn:dung_thm_3_3_univariate_local_mean_bound_with_one_element_each_cell} and let $C_{\mathrm{mean},2} := \max\{C_{\mathrm{mean,near},1}, 2R\cdot C_{\mathrm{mean,near},2}\}$, we have 
\begin{align}
\label{eqn:dung_thm_3_3_univariate_global_final_upper_bound_for_first_moment}
    \sum_{j = 1}^{k_{*}} \sum_{i \in \mathcal{V}_{j}} \pi_{i} |\theta_{i} - \theta_{j}^{*}| &\leq C_{\mathrm{mean},2}(\pi_{\text{min}}^{*})^{-1}\Delta_{\mathrm{sep}}^{-(4k_* - 3)}d_H(p_{G},p_{G_*}).
\end{align}
\emph{Step 5 - Bounding $W_1(G,G_*)$ and conclusion.} Now we put everything together. Plugging the bounds in equations \eqref{eq:exact_no_group_global_key_equation_ninth} and \eqref{eqn:dung_thm_3_3_univariate_global_final_upper_bound_for_first_moment} into equation~\eqref{eq:exact_no_group_global_key_equation_first}, we achieve
\begin{align}
\label{eqn:dung_thm3_global_final_upper_bound_W1}
d_H(p_{G},p_{G_*}) \geq C_{\mathrm{global},3}\cdot \pi_{\text{min}}^{*}\cdot\Delta_{\mathrm{sep}}^{4k_* - 3}W_1(G,G_*),
\end{align}
where $C_{\mathrm{global},3} := (2R\cdot k_*\cdot C_{\mathrm{mass},2} + C_{\mathrm{mean},2})^{-1}$. 
\subsubsection{Multivariate setting - Global bound}
\label{sec:global_bound_exact_multivariate_no_cluster}
The proof of this part follows identically the arguments used in the univariate global case, except that all test functions are now multivariate polynomials. Thus, for brevity, we present only the core arguments and omit the repetitive technical details. Before proceeding with the proof, for each atom $\theta_i$, we denote $c(i) = \arg\min_{j \in [k_*]}\|\theta_i - \theta_j^*\|$ (in the case where several centers attain the minimal distance, we choose one of them arbitrarily). For each $j \in [k_*]$, let $\tilde{\pi}_j = \sum_{i: c(i) = j} \pi_i$ and $\Delta\tilde{\pi}_j = \tilde{\pi}_j - \pi_j$. 

\vspace{0.5 em}
\noindent
\emph{Step 1 - Wasserstein decomposition.} By splitting the transportation through intermediate measure $\tilde{G} = \sum_{j=1}^{k_*}\tilde{\pi}_j\delta_{\theta_j^*}$, 
we have the following estimation 
\begin{align}
W_1(G, G_*)
&\le \sum_{i=1}^{k_*} \pi_i \, \|\theta_i - \theta_{c(i)}^*\|+2R\cdot  \sum_{j=1}^{k_*} |\tilde{\Delta} \pi_j| \notag \\
&= \sum_{j = 1}^{k_*} \sum_{i \in \mathcal{V}_j} \pi_i \, \|\theta_i - \theta_j^*\|
+2R\cdot\sum_{j = 1}^{k_*} |\tilde{\Delta} \pi_j|.
\label{eq:dung_thm_3_3_multivariate_global_exact_no_group_global_key_equation_first}
\end{align}

\vspace{0.5 em}
\noindent
\emph{Step 2 - Bounding variance $\sum_{j=1}^{k_*}\sum_{i\in \mathcal{V}_j}\pi_i\|\theta_i-\theta_j^*\|^2$: } In this part, we use the test function  $P_{\mathrm{var}}$ defined in Section \ref{sec:variance_test_function} given by 
\begin{equation*}
    P_{\mathrm{var}}(\theta) : = \prod_{l=1}^{k_*} \|\theta - \theta_l^*\|^2.
\end{equation*}
By using the same argument as in univariate global part, we have 
\begin{align}
\label{eqn:dung_thm_3.3_multivariate_global_sum_of_variance}
\sum_{j=1}^{k_*}\sum_{i\in \mathcal{V}_j}\pi_i \|\theta_i-\theta_j^*\|^2\leq C_{\mathrm{var},3}\Delta^{-(2k_*-2)}_{\mathrm{sep}}d_H(p_G,p_{G_*}). 
\end{align}
\emph{Step 3 - Bounding mass discrepancy $|\tilde{\Delta} \pi_j|$.} In this part, we would like to pick a test function $E_j$ such that $E_j(\theta_i) = \delta_{ji}$ and $\nabla E_j(\theta_i) = 0$ for all $i \in [k_*]$. We can choose the mass extracting function $E_j$ defined in Section \ref{sec:point_wise_mass_extractor_non_multicluster} given by
\begin{align*}
    E_j(\theta) : = \ell_j^2(\theta) \left[ \bar{A}_j + \langle\bar{B}_j,\theta - \theta_j^*\rangle\right], \quad \text{where} \quad \ell_j(\theta) = \prod_{q \neq j} \frac{\|\theta - \theta_q^*\|_2}{\|\theta_j^* - \theta_q^*\|_2},
\end{align*}
and $\bar{A}_j = 1$, $\bar{B}_j = - 2\nabla\ell_j(\theta_j^*)$. From the property of $E_{j}$, we can express 
\begin{equation*}
    \tilde{\Delta}\pi_j = \int E_j(\theta)d\nu(\theta) - \frac{1}{2}\sum_{l=1}^{k_*}\sum_{i \in \mathcal{V}_l}\pi_i(\theta_i-\theta^*_{l})^\top D^2E_j(\xi_i)(\theta_i-\theta^*_{l})
\end{equation*}
Using the results from Lemma~\ref{lemma:Hellinger_to_Polynomial}, Lemma~\ref{lemma:mass_test_function} and the estimation in equation~\eqref{eqn:dung_thm_3.3_multivariate_global_sum_of_variance} as in proof of univariate global part, we have 
\begin{equation}
    |\tilde{\Delta}\pi_j| \leq C_{\mathrm{mass},3}\Delta_{\mathrm{sep}}^{-(4k_* - 3)} d_H(p_{G},p_{G_*}). \label{eq:dung_thm3_3_multivariate_global_exact_no_group_global_key_equation_ninth}
\end{equation}

\vspace{0.5 em}
\noindent
\emph{Step 4 - Bounding mass discrepancy $\sum_{j = 1}^{k_{*}} \sum_{i \in \mathcal{V}_{j}} \pi_{i} \|\theta_{i} - \theta_{j}^{*}\|$.} For an index $i \in \mathcal{V}_j$, we choose a Hermite interpolation polynomial $H_{j,i}$ such that $H_{j,i}(\theta_l^*) = 0$ and $\nabla H_{j,i}(\theta_l^*) = v_{j,i}\delta_{lj}$, where $\langle v_{j,i}, \theta_i - \theta_j^*\rangle = \|\theta_i - \theta_j^*\|$. We can choose $H_{j,i}$ defined in Section \ref{sec:non_multi_cluster_mean_test_function} given by 
\begin{align*}
    H_{j,i}(\theta) : = \ell_j^2(\theta) \left[\langle v_{j,i}, \theta - \theta_j^*\rangle\right],  \quad \text{where} \quad \ell_j(\theta) : = \prod_{q \neq j} \frac{\|\theta - \theta_q^*\|_2}{\|\theta_j^* - \theta_q^*\|_2}, 
\end{align*}
and $v_{j,i} = {(\theta_i-\theta_j^*)}/{\|\theta_i-\theta_j^*\|}$ if $\theta_i \neq \theta_j^*$ and $v_{j,i}=(1, 0,\ldots ,0)^\top$ otherwise. By utilizing Taylor expansion $H_{j,i}(\theta_l)$ exactly around its true center $\theta_l^*$, we arrive at
$$ \int H_{j,i}(\theta) d\nu(\theta) = \sum_{i \in \mathcal{V}_{j}} \pi_{i}\langle v_{j,i},\theta_{i} - \theta_{j}^{*}\rangle + \frac{1}{2} \sum_{l=1}^{k_*} \sum_{i \in \mathcal{V}_{l}} \pi_{l} (\theta_{i} - \theta_{l}^{*})^{\top}D^2H_{j,i}(\xi_l) (\theta_{i} - \theta_{l}^{*}).$$

We now divide our argument into two cases:

\emph{Case 4.1.} Suppose that there exists a Voronoi cell $\mathcal{V}_j$ containing more than one element. Without loss of generality, we may assume that the cell $\mathcal{V}_1$ is empty. In this case, it follows directly that $| \sum_{i \in \mathcal{V}_1} \pi_i - \pi_1^* | \;=\; \pi_1^*$. 
Consequently, the bound in equation~\eqref{eq:dung_thm3_3_multivariate_global_exact_no_group_global_key_equation_ninth} implies that
\begin{align}
    \pi_{\text{min}}^{*}\leq \pi_{1}^{*} \leq C_{\mathrm{mass,3}} \Delta_{\mathrm{sep}}^{-(4k_*-3)} d_H(p_{G},p_{G_*}). \label{eq:dung_thm_3_3_multivariate_global_exact_no_group_global_key_equation_tenth}
\end{align}
Using the same argument as in univariate global part, we have 
\begin{align}
    \sum_{j = 1}^{k_{*}} \sum_{i \in \mathcal{V}_{j}} \pi_{i} \|\theta_{i} - \theta_{j}^{*}\| \leq  \bar{C}_{\mathrm{mean,near,1}} (\pi_{\text{min}}^{*})^{-1}\Delta_{\mathrm{sep}}^{-(4k_*- 3)}d_H(p_{G},p_{G_*}).\label{eq:dung_thm3_3_multivariate_global_exact_no_group_global_key_equation_tenth}
\end{align}
\emph{Case 4.2.} Assume that no Voronoi cell $\mathcal{V}_j$ contains more than one element, so that each cell $\mathcal{V}_j$ contains exactly one element. Without loss of generality, we can suppose that $\theta_j \in \mathcal{V}_j$. In this case, by applying an argument analogous to that used in the local bound together with the estimation in equation~\eqref{eqn:dung_thm_3.3_multivariate_global_sum_of_variance}, we obtain
\begin{align*}
\|\theta_{i} - \theta_{i}^{*}\|_2 \leq \left((2R)^{2k_{*} - 2}C_{\mathrm{poly}}C_{\mathrm{norm},E} + \frac{1}{2} C_{\mathrm{var},E} C_{E,2}\right) \Delta_{\mathrm{sep}}^{-(4k_*-4)} d_H(p_{G},p_{G_*}). 
\end{align*}
As a consequence, we get
\begin{align}
\sum_{i = 1}^{k_{*}} \pi_{i} \|\theta_{i} - \theta_{i}^{*}\| \leq  \bar{C}_{\mathrm{mean,near,2}} \Delta_{\mathrm{sep}}^{-(4k_* - 4)}d_H(p_{G},p_{G_*})
\label{eq:exact_global_key_equation_thirteen_2} 
\end{align}
Combining the results in equations \eqref{eq:dung_thm3_3_multivariate_global_exact_no_group_global_key_equation_tenth} and \eqref{eq:exact_global_key_equation_thirteen_2} and let $C_{\mathrm{mean},3} := \max\{\bar{C}_{\mathrm{mean,near,1}}, 2R\bar{C}_{\mathrm{mean,near,2}}\}$, we have 
\begin{align}
\label{eqn:dung_thm3_global_final_upper_bound_for_first_moment_2}
    \sum_{j = 1}^{k_{*}} \sum_{i \in \mathcal{V}_{j}} \pi_{i} \|\theta_{i} - \theta_{j}^{*}\| &\leq C_{\mathrm{mean},3}(\pi_{\text{min}}^{*})^{-1}\Delta_{\mathrm{sep}}^{-(4k_* - 3)}d_H(p_{G},p_{G_*}).
\end{align} 
\emph{Step 5 - Bounding $W_1(G,G_*)$ and conclusion.} Now we put everything together. Plugging in the bounds in equations \eqref{eq:dung_thm3_3_multivariate_global_exact_no_group_global_key_equation_ninth} and \eqref{eqn:dung_thm3_global_final_upper_bound_for_first_moment_2} into equation~\eqref{eq:dung_thm_3_3_multivariate_global_exact_no_group_global_key_equation_first}, we achieve
\begin{align}
\label{eqn:dung_thm3_global_final_upper_bound_W1_2}
d_H(p_{G},p_{G_*}) \geq \bar{C}_{\mathrm{global},3}\cdot \pi_{\text{min}}^{*}\cdot\Delta_{\mathrm{sep}}^{4k_* - 3}W_1(G,G_*),
\end{align}
where $\bar{C}_{\mathrm{global},3} := (2R\cdot k_*\cdot C_{\mathrm{mass},3} + C_{\mathrm{mean},3}')^{-1}$. 

\subsection{Proof of Theorem~\ref{theorem:one_group_univariate}}
\label{sec:proof_theorem:one_group_multivariate_global}
In this proof, we only consider the multivariate global situation. For each fitted atom $\theta_i$, we define $c(i) = \text{argmin}_j \|\theta_i - \theta_j^*\|_2$ as the index of true center $\theta_{j}^{*}$ that is closest to that fitted atom (while there exists several centers sharing minimal distance, we randomly choose one of them). We then partition the atoms of $G$ into two disjoint groups according to a fixed threshold of $\Delta_{\mathrm{sep}}/4$:
\begin{itemize}
    \item The near set $\mathcal{M}_{\mathrm{near}} = \{ i \in \{1, \dots, k\} : \|\theta_i-\theta^*_{c(i)}\|_2 \le \Delta_{\mathrm{sep}} / 4 \}$. Let $\bar{\mathcal{V}}_j = \{i \in \mathcal{M}_{\mathrm{near}} : c(i) = j\}$ be a subset of the Voronoi cells $\mathcal{V}_{j}$.
    \item The far set $\mathcal{M}_{\mathrm{far}} = \{ i \in \{1, \dots, k\} : \|\theta_i-\theta^*_{c(i)}\| > \Delta_{\mathrm{sep}} / 4 \}$. The total far mass is given by $\pi_{\mathrm{far}} = \sum_{i \in \mathcal{M}_{\mathrm{far}}} \pi_i$.
\end{itemize}
In this proof, for each $j \in [k_*]$, we denote $\tilde{\pi}_j = \sum_{i: c(i) = j} \pi_i$, $\Delta\tilde{\pi}_j = \tilde{\pi}_j - \pi_j$, and $\Delta\pi_j = \left(\sum_{i\in \bar{\mathcal{V}}_j}\pi_i\right) - \pi_j$.

\vspace{0.5 em}
\noindent
\emph{Step 1 - Wasserstein decomposition.} To upper bound $W_2^2(G,G_*)$, we consider the intermediate measure $G' = \sum_{j=1}^k\pi_j\delta_{\theta^*_{c(j)}} = \sum_{j=1}^{k_*}\tilde{\pi}_j\delta_{\theta^*_j}$, then triangle inequality for Wasserstein distance gives us 
\begin{align}
\label{eqn:dung_thm4_multivariate_global_W2_prelim_split}
    W_2^2(G, G_*) \leq(W_2(G, G') + W_2(G', G_*))^2 \leq 2(W_2^2(G, G') + W^2_2(G', G_*)).
\end{align}
For $W_2^2(G, G')$, by taking transportation plan $\rho_{jj} = \pi_j$ for all $1\leq j\leq k$, we achieve 
\begin{align}
\label{eqn:dung_thm4_multivariate_global_W2_prelim_first_bound}
    W_2^2(G, G') \leq \sum_{i=1}^{k} \pi_i\|\theta_i-\theta^*_{c(i)}\|^2 = \sum_{j=1}^{k_*}\sum_{i \in \bar{\mathcal{V}}_j}\pi_i\|\theta_i - \theta_j^*\|^2 + \sum_{i \in \mathcal{M}_{\mathrm{far}}} \pi_i\|\theta_i - \theta^*_{c(i)}\|^2
\end{align}
For $W_2^2(G', G_*)$, we consider the transportation plan such that for each $j$, we keep $\min\{\sum_{i\in\mathcal{V}_j}\pi_i,\pi^*_j\}$ for the center $\theta_j^*$ while moving $|\tilde{\Delta}\pi_j|$. Then, the total mass transported is exactly $\frac{1}{2} \sum |\Delta \pi_j|$, and largest squared distance between any true centers is $(C_0 \Delta_{\mathrm{sep}})^2$, we have 
\begin{align}
\label{eqn:dung_thm4_local_W2_prelim_second_bound}
    W_2^2(G', G_*) \leq \frac{1}{2} C_0^2 \Delta_{\mathrm{sep}}^2 \sum_{j=1}^{k_*} |\tilde{\Delta}\pi_j| \leq \frac{1}{2} C_0^2 \Delta_{\mathrm{sep}}^2 \left(\sum_{j=1}^{k_*} |\Delta\pi_j| + \pi_{\mathrm{far}}\right). 
\end{align}
By plugging the results from equations \eqref{eqn:dung_thm4_multivariate_global_W2_prelim_first_bound} and \eqref{eqn:dung_thm4_local_W2_prelim_second_bound} into equation \eqref{eqn:dung_thm4_multivariate_global_W2_prelim_split}, we have 
\begin{align}
\nonumber
W_2^2(G, G_*) &\le 2\sum_{j=1}^{k_*}\sum_{i \in \bar{\mathcal{V}}_j}\pi_i\|\theta_i - \theta_j^*\|^2 + 2\sum_{i \in \mathcal{M}_{\mathrm{far}}} \pi_i\|\theta_i - \theta^*_{c(i)}\|^2 + C_0^2 \Delta_{\mathrm{sep}}^2 \left(\sum_{j=1}^{k_*} |\Delta \pi_j| + \pi_{\mathrm{far}}\right)\\
\nonumber
&\leq \frac{\Delta_{\mathrm{sep}}}{2}\sum_{j:|\bar{\mathcal{V}_j}|=1,\mathcal{V}_j = \{i\}} \pi_i\|\theta_i - \theta^*_{j}\| +  2\sum_{j:|\bar{\mathcal{V}_j}|>1}\sum_{i \in \bar{\mathcal{V}}_j}\pi_i\|\theta_i - \theta_j^*\|^2 + 2\sum_{i \in \mathcal{M}_{\mathrm{far}}} \pi_i\|\theta_i - \theta^*_{c(i)}\|^2 \\
&\hspace{2cm}+ C_0^2 \Delta_{\mathrm{sep}}^2 \left(\sum_{j=1}^{k_*} |\Delta \pi_j| + \pi_{\mathrm{far}}\right)
\label{eqn:dung_thm4_multivariate_global_W2_prelim_bound}
\end{align}
Let 
\begin{align}
\label{eqn:dung_thm_4_1_Voronoi_cell_distance}
    S_1(G,G_*) &= \frac{\Delta_{\mathrm{sep}}}{2}\sum_{j:|\bar{\mathcal{V}_j}|=1,\mathcal{V}_j = \{i\}} \pi_i\|\theta_i - \theta^*_{j}\| +  2\sum_{j:|\bar{\mathcal{V}_j}|>1}\sum_{i \in \bar{\mathcal{V}}_j}\pi_i\|\theta_i - \theta_j^*\|^2 + 2\sum_{i \in \mathcal{M}_{\mathrm{far}}} \pi_i\|\theta_i - \theta^*_{c(i)}\|^2 \\
&\hspace{2cm}+ C_0^2 \Delta_{\mathrm{sep}}^2 \left(\sum_{j=1}^{k_*} |\Delta \pi_j| + \pi_{\mathrm{far}}\right),
\end{align}
we prove a more stronger result
\begin{equation*}
    d_H(p_G,p_{G_*})\geq C_{\mathrm{global},4}\Delta_{\mathrm{sep}}^{2k_*-2}S_1(G,G_*).
\end{equation*}

\vspace{0.5 em}
\noindent
\emph{Step 2 - Bounding near variance and far high moment for variance.} In this part, we utilize the test function $P_{\mathrm{var}}$ define in Section \ref{sec:variance_test_function}: 
\begin{equation*}
    P_{\mathrm{var}}(\theta) = \prod_{l=1}^{k_*}\|\theta-\theta_l^*\|^2_2
\end{equation*}
Using Lemma \ref{lemma:variance_test_function}, we achieve $\|P_{\mathrm{var}}\|_{\infty} \leq C_{\mathrm{var}}$. Using Lemma \ref{lemma:Hellinger_to_Polynomial}, we have 
\begin{equation}
    \label{eqn:dung_thm_4.1_integral_variance_test_function_bound}
    \left|\int P_{\mathrm{var}}(\theta) d\nu(\theta)\right| \leq C_{\mathrm{poly}}C_{\mathrm{var}}d_H(p_G,p_{G_*}). 
\end{equation}

We partition the integral 
\begin{align}
    \int P_{\mathrm{var}}(\theta) dG(\theta) = \underbrace{\sum_{i \in \mathcal{M}_{\mathrm{near}}} \pi_i P_{\mathrm{var}}(\theta_i)}_{\geq 0} + \underbrace{\sum_{i \in \mathcal{M}_{\mathrm{far}}} \pi_i P_{\mathrm{var}}(\theta_i)}_{\geq 0}. \label{eqn:dung_thm_4.1_multivariate_global_integral_of_P_var_split}
\end{align}
For any index $i \in \mathcal{M}_{\mathrm{near}}$, we have 
\begin{align*}
    P_{\mathrm{var}}(\theta_i) =\|\theta_i-\theta^*_{c(i)}\|_2^2\cdot \prod_{l\neq c(i)} \|\theta_i-\theta_l^*\|_2^2. 
\end{align*}
For any $j\neq c(i)$, we have 
\begin{equation*}
    \|\theta_i - \theta_j^*\|_2\geq  \|\theta_j - \theta_{c(i)}^*\|_2 - \|\theta_i - \theta_{c(i)}^*\|_2  \geq\Delta_{\mathrm{sep}} - \frac{\Delta_{\mathrm{sep}}}{4} = \frac{3\Delta_{\mathrm{sep}}}{4}. 
\end{equation*}
As a consequence, the estimation in equation~\eqref{eqn:dung_thm_4.1_multivariate_global_integral_of_P_var_split} implies
\begin{equation*}
    \int P_{\mathrm{var}}(\theta)d\nu(\theta)\geq \sum_{i \in \mathcal{M}_{\mathrm{near}}}\pi_iP_{\mathrm{var}}(\theta_i)   \geq  \left(\frac{3}{4}\right)^{2k_*-2} \Delta^{2k_*-2}_{\mathrm{sep}}\sum_{i \in \mathcal{M}_{\mathrm{near}}} \pi_i\|\theta_i - \theta_{c(i)}\|^2. 
\end{equation*}
Combining this result with equation~\eqref{eqn:dung_thm_4.1_integral_variance_test_function_bound}, we have 
\begin{align}
\nonumber
\sum_{i \in \mathcal{M}_{\mathrm{near}}} \pi_i\|\theta_i - \theta_{c(i)}\|^2 &\leq \left[\frac{C_{\mathrm{var}}C_{\mathrm{poly}}}{(3/4)^{2k_*-2}}\right]\Delta_{\mathrm{sep}}^{-(2k_*-2)}d_H(p_G,p_{G_*})\\
&:=C_{\mathrm{var},1} \Delta_{\mathrm{sep}}^{-(2k_*-2)}d_H(p_G,p_{G_*}). 
\label{eqn:dung_thm_4.1_multivariate_global_near_variance_estimation}
\end{align}

\vspace{0.5 em}
\noindent
\emph{Step 3 - Bounding far mass $\pi_{\mathrm{far}}$.} For any index $i \in \mathcal{M}_{\mathrm{far}}$, we have $\|\theta_i - \theta^*_{c(i)}\|_2\leq \|\theta_i - \theta_j^*\|_2$ for any $j$, thus $P_{\mathrm{var}}(\theta_i) \geq \|\theta_i-\theta^*_{c(i)}\|_2^{2k_*}$. Thus, from the estimations in equations \eqref{eqn:dung_thm_4.1_integral_variance_test_function_bound} and \eqref{eqn:dung_thm_4.1_multivariate_global_integral_of_P_var_split}, we have 
\begin{align}
\nonumber
    \sum_{i \in \mathcal{M}_{\mathrm{far}}}\pi_i\|\theta_i-\theta^*_{c(i)}\|^{2k_*} &\leq \sum_{i \in \mathcal{M}_{\mathrm{far}}}\pi_iP_{\mathrm{var}}(\theta_i)\\
    &\leq C_{\mathrm{var}}C_{\mathrm{poly}}d_H(p_G,p_{G_*}) := C_{\mathrm{far,moment},1}d_H(p_G,p_{G_*}). 
\label{eqn:dung_thm_4.1_multivariate_far_high_moment_estimation} 
\end{align}
As $\|\theta_i - \theta^*_{c(i)}\|\geq \Delta_{\mathrm{sep}}/4$, we have $\|\theta_i - \theta^*_{c(i)}\|^{2k_*} \geq 4^{-(2k_*-2)}\Delta^{2k_*-2}_{\mathrm{sep}}\|\theta_i - \theta^*_{c(i)}\|^2$. A byproduct of this result and equation \eqref{eqn:dung_thm_4.1_multivariate_far_high_moment_estimation} is the following estimation 
\begin{align}
     \sum_{i \in \mathcal{M}_{\mathrm{far}}}\pi_i\|\theta_i-\theta^*_{c(i)}\|^{2} \leq C_{\mathrm{far,var}}\Delta^{-(2k_*-2)}_{\mathrm{sep}}d_H(p_G,p_{G_*}),
\label{eqn:dung_thm_4.1_multivariate_far_second_moment_estimation}
\end{align}
where $C_{\mathrm{far,var}} = 4^{-(2k_*-2)} C_{\mathrm{far,moment},1}$.
In addition, as $i \in \mathcal{M}_{\mathrm{far}}$, we have $\|\theta_i - \theta^*_{c(i)}\| \geq \Delta_{\mathrm{sep}}/4$, thus $\|\theta_i - \theta^*_{c(i)}\|^{2k_*} \leq 4^{2k_*}\Delta^{-2k_*}_{\mathrm{sep}}$. Thus, the high moment estimation for far centers in equation~\eqref{eqn:dung_thm_4.1_multivariate_far_high_moment_estimation} implies 
\begin{align}
    \pi_{\mathrm{far}} = \sum_{i \in \mathcal{M}_{\mathrm{far}}} \pi_i \le \left(\frac{4}{\Delta_{\mathrm{sep}}}\right)^{2k_*} \sum_{i \in \mathcal{M}_{\mathrm{far}}} \pi_i \|\theta_i - \theta^*_{c(i)}\|^{2k_*} \le C_{\mathrm{far,mass}} \Delta_{\mathrm{sep}}^{-2k_*} d_H(p_{G},p_{G_*}), \label{eqn:dung_thm_4_1_multivariate_global_pi_far_estimation}
\end{align} 
where $C_{\mathrm{far,mass}} := 4^{2k_*}C_{\mathrm{far,moment,1}}.$

\vspace{0.5 em}
\noindent

\emph{Step 4 -  Bounding mass discrepancy $|\Delta\pi_j|$.}
We now obtain an upper bound for $|\Delta \pi_j|$. To do that, we consider the mass extracting function $E_j$ such that $E_j(\theta_i) = \delta_{ji}$ and $\nabla E_j(\theta_i) = 0$ for all $i \in [k_*]$. We can choose the mass extracting function $E_j$ defined in Section \ref{sec:point_wise_mass_extractor_non_multicluster} given by
\begin{align*}
    E_j(\theta) : = \ell_j^2(\theta) \left[ \bar{A}_j + \langle\bar{B}_j,\theta - \theta_j^*\rangle\right], \quad \text{where} \quad \ell_j(\theta) = \prod_{q \neq j} \frac{\|\theta - \theta_q^*\|_2}{\|\theta_j^* - \theta_q^*\|_2},
\end{align*}
and $\bar{A}_j = 1$, $\bar{B}_j = - 2\nabla\ell_j(\theta_j^*)$. By integrating this polynomial with respect to the measure $\nu$, applying Taylor expansion with Lagrangian remainder, and using similar arguments as in Step 4 in proof of univariate local part in Theorem \ref{theorem:exact_one_group_univariate}, we have
$$ \int E_j(\theta) d\nu(\theta) = \Delta \pi_j + \frac{1}{2}\sum_{l=1}^{k_*} \sum_{i \in \mathcal{V}_l} \pi_{i} (\theta_i-\theta_l^*)^\top D^2E_j(\xi_i)(\theta_i-\theta_l^*)+ \sum_{i \in \mathcal{M}_{\mathrm{far}}} \pi_i E_j(\theta_i). $$
Isolating the absolute near mass mismatch via the triangle inequality:
\begin{align}
    |\Delta \pi_j| \le \underbrace{ \left| \int E_j(\theta) d\nu(\theta) \right| }_{\text{Test Function Dual Bound}} + \underbrace{ \frac{1}{2}\sum_{l=1}^{k_*} \sum_{i \in \mathcal{V}_l} \pi_i \|D^2E_j(\xi_i)\|_{\mathrm{op}}\|\theta_i-\theta^*_l\|^2_2 }_{\text{Near Error}} + \underbrace{ \sum_{i \in \mathcal{M}_{\mathrm{far}}} \pi_i |E_j(\theta_i)| }_{\text{Far Error}}. \label{eqn:dung_thm4_1_multivariate_global_delta_pi_j_split}
\end{align}

For the test function dual bound,  using Lemma \ref{lemma:mass_test_function} and Lemma \ref{lemma:Hellinger_to_Polynomial}, we have
\begin{align}
\left| \int E_j(\theta) d\nu(\theta) \right| \le C_{\mathrm{poly}}\|E_j\|_{\infty} d_H(p_G,p_{G_*}) \
\leq C_{\text{norm},E}C_{\mathrm{poly}}\Delta_{\mathrm{sep}}^{-(2k_*-1)} d_H(p_{G},p_{G_*}). \label{eqn:dung_thm_4.1_multivariate_global_test_function_dual_bound}
\end{align}

For the near error, using Lemma \ref{lemma:mass_test_function} and the estimationin equation~\eqref{eqn:dung_thm_4.1_multivariate_global_near_variance_estimation}, we obtain that
\begin{align}
\frac{1}{2}\sum_{l=1}^{k_*} \sum_{i \in \bar{\mathcal{V}}_l} \pi_i \|D^2E_j(\xi_i)\|_{\mathrm{op}}\|\theta_i-\theta_l^*\|^2 \leq \frac{1}{2} C_{2,E} C_{\mathrm{var,1}} \Delta_{\mathrm{sep}}^{-2k_*} d_H(p_{G},p_{G_*}).\label{eqn:dung_thm_4_1_global_multivariate_sum_of_second_derivative}
\end{align}

For the far error, by arguing similarly to equation~\eqref{eqn:dung_thm3_1_multivariate_global_estimation_of_function_far_case} in the proof of the multivariate global part of Theorem \ref{theorem:exact_one_group_univariate}, we have 
\begin{align*}
    |E_j(\theta_i)| \leq  C_{\mathrm{far},E} \Delta_{\mathrm{sep}}^{-(2k_*-1)} \|\theta_i - \theta_{c(i)}^*\|^{2k_*-1},
\end{align*}
which leads to, by combining with estimation in equation~\eqref{eqn:dung_thm_4.1_multivariate_far_high_moment_estimation}, and using the same argument as in  equation~\eqref{eqn:dung_thm3_1_multivariate_far_set_test_function_sum}, 
\begin{align}
\label{eqn:dung_thm_4.1_multivariate_sum_of_far_high_moment_estimation}
    \sum_{i \in \mathcal{M}_{\mathrm{far}}} \pi_i |E_j(\theta_i)| \leq 4 C_{\mathrm{far},E} C_{\mathrm{far,moment,1}} \Delta_{\mathrm{sep}}^{-(2k_*-1)} d_H(p_{G},p_{G_*}).
\end{align}
Plugging the upper bounds of the test function dual bound, near error, and far error in equations~\eqref{eqn:dung_thm_4.1_multivariate_global_test_function_dual_bound}, \eqref{eqn:dung_thm_4_1_global_multivariate_sum_of_second_derivative}, \eqref{eqn:dung_thm_4.1_multivariate_sum_of_far_high_moment_estimation} to the inequality in equation~\eqref{eqn:dung_thm4_1_multivariate_global_delta_pi_j_split} leads to 
\begin{align}
|\Delta \pi_j| & \le C_{\mathrm{mass}} \Delta_{\mathrm{sep}}^{-2k_*} d_H(p_{G},p_{G_*}).
\label{eqn:dung_thm4_1_multivariate_global_delta_pi_j_final_bound}
\end{align}
where $C_{\mathrm{mass}} = 2RC_{\text{norm},E}C_{\mathrm{poly}} + \frac{1}{2} C_{E,2} C_{\mathrm{var,1}} + 4 C_{\mathrm{far},E} C_{\mathrm{far,moment,1}}$. 

\vspace{0.5 em}
\noindent
\emph{Step 5 - Bounding sparse mean discrepancy $\sum_{j=1:|\bar{\mathcal{V}}_j| = 1}^{k_*}\sum_{i\in \bar{\mathcal{V}}_j}\pi_i\|\theta_i - \theta^*_{j}\|$.} Consider an index $j \in [k_*]$ such that $\bar{\mathcal{V}}_j$ contains exactly one element $\theta_i$. We find a function $H_{j,i}$ such that $H_{j,i}(\theta_l^*) = 0$ for all $l$, $\nabla H_{j,i}(\theta_l^*) = 0$ for all $l\neq i$, and $\langle \nabla H_{j,i}(\theta_j^*), \theta_i - \theta_j^*\rangle = \|\theta_i - \theta_j^*\|_2$. For this, we consider the mean extractor test function $H_{j,i}$ defined in Section \ref{sec:non_multi_cluster_mean_test_function}
\begin{align*}
    H_{j,i}(\theta) : = \ell_j^2(\theta) \left[\langle v_{j,i}, \theta - \theta_j^*\rangle\right],  \quad \text{where} \quad \ell_j(\theta) : = \prod_{q \neq j} \frac{\|\theta - \theta_q^*\|_2}{\|\theta_j^* - \theta_q^*\|_2}, 
\end{align*}
and $v_{j,i} = {(\theta_i-\theta_j^*)}/{\|\theta_i-\theta_j^*\|}$ if $\theta_i \neq \theta_j^*$ and $v_{j,i}=(1, 0,\ldots ,0)^\top$ otherwise.
For any center $\theta_l \in \mathcal{M}_{\mathrm{near}}$, by applying Taylor expansion for 
$H_{j,i}(\theta_l)$ exactly around its true center $\theta_{c(l)}^*$, we have 
\begin{align*}
H_{j,i}(\theta_l) &= \delta_{jl}v_{j,i}^{\top}(\theta_{l} - \theta_{c(l)}^{*}) + \frac{1}{2} (\theta_{l} - \theta_{c(l)}^{*})^{\top}D^2H_{j,i}(\xi_l) (\theta_{l} - \theta_{c(l)}^{*}) \\
&= \delta_{jl}\|\theta_i-\theta^*_{c(i)}\| + \frac{1}{2} (\theta_{l} - \theta_{c(l)}^{*})^{\top}D^2H_{j,i}(\xi_l) (\theta_{l} - \theta_{c(l)}^{*}).
\end{align*}
Thus, we arrive at $$ \int H_{j,i}(\theta) d\nu(\theta) = \pi_{i}\|\theta_{i} - \theta_{c(i))}^{*}\| + \frac{1}{2} \sum_{l\in \mathcal{M}_{\mathrm{near}}} \pi_{l} (\theta_{l} - \theta_{c(l)}^{*})^{\top} D^2H_{i,i}(\xi_l)(\theta_{l} - \theta_{c(l)}^{*})+ \sum_{l \in \mathcal{M}_{\mathrm{far}}} \pi_l H_{j,i}(\theta_l). ,$$
which indicates that
\begin{align}
\label{eqn:dung_thm_4_1_first_moment_estimation}
    \pi_{i}\|\theta_{i} - \theta_{c(i)}^{*}\| &\leq \underbrace{\left|\int H_{j,i}(\theta) d\nu(\theta)\right|}_{\text{Test Function Dual Bound}} + \frac{1}{2} \underbrace{\sum_{l=1}^{k_*} \pi_{l} \|D^2H_{j,i}(\xi_l)\|_{\mathrm{op}} \|\theta_{l} - \theta_{c(l)}^{*}\|_2^2}_{\text{Near Error}} +\underbrace{\sum_{l \in \mathcal{M}_{\mathrm{far}}} \pi_l |H_{j,i}(\theta_l)|}_{\text{Far Error}}.
\end{align}

For the test function dual bound, using Lemma \ref{lemma:mean_test_function} and Lemma \ref{lemma:Hellinger_to_Polynomial}, we have
\begin{align}
\left| \int H_{j,i}(\theta) d\nu(\theta) \right| \le C_{\mathrm{poly}}\|H_{j,i}\|_{\infty}d_H(p_G,p_{G_*})\leq C_{\mathrm{poly}}C_{\text{norm},H} \Delta_{\mathrm{sep}}^{-(2k_*-1)} d_H(p_{G},p_{G_*}). \label{eqn:dung_thm_4.1_multivariate_global_mean_unique_point_test_function_dual_bound}
\end{align}

For the near error, using Lemma \ref{lemma:mass_test_function} and the estimation in equation~\eqref{eqn:dung_thm_4.1_multivariate_global_near_variance_estimation}, we obtain that
\begin{align}
\frac{1}{2}\sum_{l=1}^{k_*} \sum_{i \in \bar{\mathcal{V}}_l} \pi_l \|D^2H_{j,i}(\xi_i)\|_{\mathrm{op}}\|\theta_i-\theta_l^*\|^2 \leq \frac{1}{2} C_{2,H} C_{\mathrm{var,1}} \Delta_{\mathrm{sep}}^{-(2k_*-1)} d_H(p_{G},p_{G_*}).\label{eqn:dung_thm_4_1_global_multivariate_sum_of_second_derivative_of_H}
\end{align}

For the far error, when $\|\theta_l - \theta_{c(l)}^*\| \geq \Delta_{\mathrm{sep}}/4$, using the estimation in equation \eqref{eqn:dung_thm_3_1_multivariate_global_theta_distance_to_r_distance}, we have 
\begin{equation*}
    |\langle v_{j,i},\theta_l-\theta_j^* \rangle| \leq \|v_{j,i}\|_2\cdot \|\theta_l-\theta_j^*\|_2 \leq (1+4C_0)\|\theta_{l} - \theta^*_{c(l)}\|_2
\end{equation*}
Combining this result with equation~\eqref{eqn:dung_thm_3_1_multivariate_global_estimation_for_ell_square_for_far_case}, and noting that $\|\theta_l -\theta^*_{c(l)}\|\geq \Delta_{\mathrm{sep}}/4$, we have 
\begin{align*}
    |H_{j,i}(\theta_l)| \leq  C_{\mathrm{far},H} \Delta_{\mathrm{sep}}^{-(2k_*-2)} \|\theta_l - \theta_{c(l)}^*\|^{2k_*-1} \leq 4C_{\mathrm{far},H} \Delta_{\mathrm{sep}}^{-(2k_*-1)} \|\theta_l - \theta_{c(l)}^*\|^{2k_*},
\end{align*}
which leads to, by combining with the estimation in equation~\eqref{eqn:dung_thm_4.1_multivariate_far_high_moment_estimation},
\begin{align}
\label{eqn:dung_thm_4.1_multivariate_sum_of_far_high_moment_estimation_of_H}
    \sum_{i \in \mathcal{M}_{\mathrm{far}}} \pi_i |H_{j,i}(\theta_i)| \leq 4 C_{\mathrm{far},H} C_{\mathrm{far,moment},1} \Delta_{\mathrm{sep}}^{-(2k_*-1)} d_H(p_{G},p_{G_*}).
\end{align}
Combining the results from equations \eqref{eqn:dung_thm_4.1_multivariate_global_mean_unique_point_test_function_dual_bound}, \eqref{eqn:dung_thm_4_1_global_multivariate_sum_of_second_derivative_of_H}, \eqref{eqn:dung_thm_4.1_multivariate_sum_of_far_high_moment_estimation_of_H} and plugging them into equation \eqref{eqn:dung_thm_4_1_first_moment_estimation}, we have 
\begin{align}
\label{eqn:dung_thm4_1_multivariate_global_unique_near_first_moment_bound}
    \pi_i\|\theta_i-\theta_{c(i)}^*\|\leq C_{\mathrm{mean}}\Delta_{\mathrm{sep}}^{-(2k_*-1)} d_H(p_{G},p_{G_*})
\end{align}
where 
\begin{equation*}
C_{\mathrm{mean}}= C_{\mathrm{poly}}C_{\text{norm},H} + \frac{1}{2} C_{2,H} C_{\mathrm{var,1}} +  4 C_{\mathrm{far},H} C_{\mathrm{far,moment},1}. 
\end{equation*}
\emph{Step 6 - Bounding $W^2_2(G,G_*)$ and conclusion.} Combining the results from equations~\eqref{eqn:dung_thm_4.1_multivariate_global_near_variance_estimation}, \eqref{eqn:dung_thm_4.1_multivariate_far_second_moment_estimation}, \eqref{eqn:dung_thm_4_1_multivariate_global_pi_far_estimation}, \eqref{eqn:dung_thm4_1_multivariate_global_delta_pi_j_final_bound} and  \eqref{eqn:dung_thm4_1_multivariate_global_unique_near_first_moment_bound}, we have 
\begin{align}
    S_1(G,G_*)\leq C^{-1}_{\mathrm{global},4}\Delta_{\mathrm{sep}}^{-(2k_* - 2)} d_H(p_{G},p_{G_*}), 
\end{align}
where 
\begin{equation*}
C^{-1}_{\mathrm{global},4} = C_{\mathrm{var,1}} +C_{\mathrm{far,var}}  +  C_0^2(C_{\mathrm{far,mass}} + k_*C_{\mathrm{mass}}) +\frac{1}{2}k_* C_{\mathrm{mean}}.
\end{equation*}
As a consequence, we can conclude that 
$$ d_H(p_G,p_{G_*})\geq C_{\mathrm{global},4}\Delta_{\mathrm{sep}}^{2k_*-2}S_1(G,G_*)\geq C_{\mathrm{global},4}\Delta_{\mathrm{sep}}^{2k_*-2}W_2^2(G,G_*).$$

\subsection{Proof of Theorem~\ref{theorem:one_group_univariate_multi_cluster}}
\label{sec:proof:theorem:one_group_univariate_multi_cluster}

Throughout the proof, we denote $G = \sum_{i=1}^{k} \pi_i \delta_{\theta_i}$ to be a probability measure with exactly $k$ components. For every fitted atom $\theta_i$, let $c(i) = \text{argmin}_{1 \le j \le k_*} \|\theta_i - \theta_j^*\|$ be its absolute closest true center. Let $m(i)$ denote the cluster containing the center $\theta^{*}_{c(i)}$.We unconditionally partition the $k_*$ atoms of $G$ into three disjoint sets based on exact spatial thresholds:
\begin{itemize}
    \item The micro near set $\mathcal{M}_{\mathrm{mic}} = \{ i : \|\theta_i - \theta_{c(i)}^*\|_2 \le \frac{\Delta_{\mathrm{sep}}}{4} \}$. We define the explicit Voronoi cells strictly inside this core as $\bar{\mathcal{V}}_j = \{i \in \mathcal{M}_{\mathrm{mic}} : c(i) = j\}$. 
    \item The macro near set $\mathcal{M}_{\mathrm{mac}} = \{ i : \frac{\Delta_{\mathrm{sep}}}{4} < \|\theta_i - \theta_{c(i)}^*\|_2 \le \frac{D_0}{4} \}$, which includes atoms that have escaped the micro near set but still associate with the cluster $\mathcal{C}_{m(i)}$. We denote the total macro near mass as $\pi_{\mathrm{mac}} = \sum_{i \in \mathcal{M}_{\mathrm{mac}}} \pi_i$.
    \item The far void set $\mathcal{M}_{\mathrm{far}} = \{ i : \|\theta_i - \theta_{c(i)}^*\| > \frac{D_0}{4} \}$, which consists of atoms in the void between distinct clusters. We denote the total far mass as $\pi_{\mathrm{far}} = \sum_{i \in \mathcal{M}_{\mathrm{far}}} \pi_i$.
\end{itemize}

Before delving into details of proof, let us define some notation to be utilized later in the proof. Let $\tilde{\pi}_j = \sum_{i:c(i) = j}\pi_i$ to be the total mass of $\tilde{G}$ at $\theta_j^*$. Let the net mass discrepancy at center $j$ be exactly defined as $\tilde{\Delta} \pi_j = \tilde{\pi}_j - \pi_j^*$, while 
$\Delta\pi_j:= \left(\sum_{i\in \bar{\mathcal{V}}_j}\pi_i\right)-\pi_j^*$. We define cluster discrepancy as $\Delta\Pi_m = \sum_{j \in \mathcal{C}_{m}}\tilde{\Delta}\pi_{j}$.

\vspace{0.5 em}
\noindent
\emph{Step 1 - Wasserstein Decomposition.} To upper bound $W_2^2(G,G_*)$, we consider measure $G' = \sum_{j=1}^k\pi_j\delta_{\theta^*_{c(j)}} = \sum_{j=1}^{k_*}(\sum_{i\in \mathcal{V}_j}\pi_{i})\delta_{\theta^*_j} = \sum_{j=1}^{k_*} \tilde{\pi}_j\delta_{\theta^*_{j}}$, then triangle inequality for Wasserstein distance gives us 
\begin{align}
\label{eqn:dung_thm4_2_multivariate_global_W2_prelim_split}
    W_2^2(G, G_*) \leq(W_2(G, G') + W_2(G', G_*))^2 \leq 2(W_2^2(G, G') + W^2_2(G', G_*)).
\end{align}

For $W_2^2(G, G')$, we can consider the transportation plan $\rho_{jj} = \pi_j$ for all $1\leq j\leq k$, which implies
\begin{align}
\nonumber
    W_2^2(G, G')&\leq \sum_{i=1}^k \pi_i \|\theta_i-\theta^*_{c(i)}\|^2\\
    &\leq \sum_{i \in \mathcal{M}_{\mathrm{mic}}} \pi_i\|\theta_i - \theta^*_{c(i)}\|^2 + \sum_{i \in \mathcal{M}_{\mathrm{mac}}} \pi_i\|\theta_i - \theta^*_{c(i)}\|^2 + \sum_{i \in \mathcal{M}_{\mathrm{far}}} \pi_i\|\theta_i - \theta^*_{c(i)}\|^2
\label{eqn:dung_thm_4_2_multivariate_global_W2_prelim_first_bound_source_and_intermediate}
\end{align}

For $W_2^2(G', G_*)$, we consider the following transportation plan from $G'$ to $G_*$
\begin{enumerate}
    \item For each center $\theta_j^*$, we keep $\min\{\sum_{i\in\mathcal{V}_j}\pi_i,\pi^*_j\}$. Then, the remaining mass in each center for $G'$ is exactly $\max\{0, \tilde{\Delta} \pi_j\}$. 
    \item For each cluster $\mathcal{C}_m$, we move the remaining mass in each center to other center \textit{within} this cluster. Then, as the transportation distance is less than or equal to $C_0\Delta_{\mathrm{sep}}$, the total transportation cost is not exceed $\sum_{j=1}^k C_0^2\Delta^2_{\mathrm{sep}}|\tilde{\Delta}\pi_j|$, while the total remaining mass for $G'$ in each cluster $\mathcal{C}_m$ is exactly $\max\{0, \Delta \Pi_m\}$.
    \item Finally, we transport each remaining mass in each cluster $\max\{0, \Delta \Pi_m\}$ to other center in other cluster. Then, as the transportation distance is less than or equal to $2R$, the total transportation cost is not exceed $\sum_{m=1}^{k_0}4R^2|\Delta\Pi_m|$. 
\end{enumerate}
Combining these estimations, noting that $|\tilde{\Delta}\pi_j| \leq |\Delta\pi_j| + \sum_{i \notin \mathcal{M}_{\mathrm{mic}}, \, c(i)=j} \pi_i$, we achieve an upper bound for $W_2^2(G',G_*)$  
\begin{align}
\nonumber
    W_2^2(G', G_*) &\leq \sum_{j=1}^{k_*} C_0^2\Delta^2_{\mathrm{sep}}|\tilde{\Delta}\pi_j| + \sum_{m=1}^{k_0}4R^2|\Delta\Pi_m|\\
    &\leq C_0^2\Delta^2_{\mathrm{sep}}\left( \pi_{\mathrm{far}} +\pi_{\mathrm{mac}}+\sum_{j=1}^{k_*}|\Delta\pi_j| \right) + 4R^2\sum_{m=1}^{k_0}|\Delta\Pi_m|. 
\label{eqn:dung_thm_4_2_local_W2_intermediate_and_target_prelim_second_bound}
\end{align}
By plugging the result from equations~\eqref{eqn:dung_thm_4_2_multivariate_global_W2_prelim_first_bound_source_and_intermediate} and \eqref{eqn:dung_thm_4_2_local_W2_intermediate_and_target_prelim_second_bound} into \eqref{eqn:dung_thm4_2_multivariate_global_W2_prelim_split}, we have 
\begin{align}
\nonumber
    W_2^2(G,G_*) &\leq 2\left(\sum_{i \in \mathcal{M}_{\mathrm{mic}}} \pi_i\|\theta_i - \theta^*_{c(i)}\|^2 + \sum_{i \in \mathcal{M}_{\mathrm{mac}}} \pi_i\|\theta_i - \theta^*_{c(i)}\|^2 + \sum_{i \in \mathcal{M}_{\mathrm{far}}} \pi_i\|\theta_i - \theta^*_{c(i)}\|^2\right)\\
    &\nonumber\hspace{1cm} +2\left(C_0^2\Delta^2_{\mathrm{sep}}\left( \pi_{\mathcal{M}_{\mathrm{far}}} +\pi_{\mathcal{M}_{\mathrm{mac}}}+\sum_{j=1}^k|\Delta\pi_j| \right) + 4R^2\sum_{m=1}^{k_0}|\Delta\Pi_m|\right)\\
    \nonumber
    &\leq \frac{\Delta_{\mathrm{sep}}}{2}\sum_{j:|\bar{\mathcal{V}_j}|=1,\mathcal{V}_j = \{i\}} \pi_i\|\theta_i - \theta^*_{j}\| +  2\sum_{j:|\bar{\mathcal{V}_j}|>1}\sum_{i \in \bar{\mathcal{V}}_j}\pi_i\|\theta_i - \theta_j^*\|^2\\
    \nonumber
    &\hspace{1cm} + 2\sum_{i \in \mathcal{M}_{\mathrm{mac}}} \pi_i\|\theta_i - \theta^*_{c(i)}\|^2 + 2\sum_{i \in \mathcal{M}_{\mathrm{far}}} \pi_i\|\theta_i - \theta^*_{c(i)}\|^2\\
    &\hspace{1cm} +2\left(C_0^2\Delta^2_{\mathrm{sep}}\left( \pi_{\mathcal{M}_{\mathrm{far}}} +\pi_{\mathcal{M}_{\mathrm{mac}}}+\sum_{j=1}^k|\Delta\pi_j| \right) + 4R^2\sum_{m=1}^{k_0}|\Delta\Pi_m|\right)
\label{eqn:dung_thm_4_2_multivariate_global_final_W2_prelim_bound}
\end{align}
Let 
\begin{align}
\nonumber
    S_2(G,G_*) &= \frac{\Delta_{\mathrm{sep}}}{2}\sum_{j:|\bar{\mathcal{V}_j}|=1,\mathcal{V}_j = \{i\}} \pi_i\|\theta_i - \theta^*_{j}\| +  2\sum_{j:|\bar{\mathcal{V}_j}|>1}\sum_{i \in \bar{\mathcal{V}}_j}\pi_i\|\theta_i - \theta_j^*\|^2\\
    \nonumber
    &\hspace{0.5cm} + 2\sum_{i \in \mathcal{M}_{\mathrm{mac}}} \pi_i\|\theta_i - \theta^*_{c(i)}\|^2 + 2\sum_{i \in \mathcal{M}_{\mathrm{far}}} \pi_i\|\theta_i - \theta^*_{c(i)}\|^2\\
    &\hspace{0.5cm} +2\left(C_0^2\Delta^2_{\mathrm{sep}}\left( \pi_{\mathrm{far}} +\pi_{\mathrm{mac}}+\sum_{j=1}^k|\Delta\pi_j| \right) + 4R^2\sum_{m=1}^{k_0}|\Delta\Pi_m|\right)
    \label{eqn:dung_thm_4_2_voronoi_loss_def}
\end{align}
we prove a stronger result
\begin{equation*}
    d_H(p_G,p_{G_*})\geq C_{\mathrm{global},5}\Delta_{\mathrm{sep}}^{2s_{\max}-2}S_1(G,G_*)
\end{equation*}
\emph{Step 2 - Bounding variance $\sum_{\mathcal{M}_{\mathrm{mic}} \cup \mathcal{M}_{\mathrm{mac}} \cup \mathcal{M}_{\mathrm{far}}} \pi_i \|\theta_i-\theta_{c(i)}\|^2$: } In this part, we utilize the variance extractor test function define in Section \ref{sec:variance_test_function}:
\begin{equation*}
     P_{\mathrm{var}}(\theta) : = \prod_{l=1}^{k_*} \|\theta - \theta_l^*\|_2^2.
\end{equation*}

Using Lemma \ref{lemma:Hellinger_to_Polynomial} and Lemma \ref{lemma:variance_test_function} and the fact that $\int P_{\mathrm{var}}dG_*(\theta) =0$, we have 
\begin{align*}
    \left|\int P_{\mathrm{var}}dG(\theta)\right| = \left|\int P_{\mathrm{var}}d\nu(\theta)\right| \leq C_{\mathrm{poly}}\|P_{\mathrm{var}}\|_{\infty} d_H(p_G,p_{G_*})\leq C_{\mathrm{var}}C_{\mathrm{poly}}d_H(p_G,p_{G_*}),
\end{align*}
which implies
\begin{align}
\sum_{i \in \mathcal{M}_{\mathrm{mic}}} \pi_i P_{\mathrm{var}}(\theta_i) + \sum_{i \in \mathcal{M}_{\mathrm{mac}}} \pi_i P_{\mathrm{var}}(\theta_i) + \sum_{i \in \mathcal{M}_{\mathrm{far}}} \pi_i P_{\mathrm{var}}(\theta_i) \leq C_{\mathrm{var}}C_{\mathrm{poly}} d_H(p_{G},p_{G_*}). \label{eqn:dung_thm_4_2_exact_global_multivariate_multi_key_equation_first_1}
\end{align}
Using the similar argument as in Proof of Theorem \ref{theorem:exact_multi_group} for multivariate global setting, we have 
\begin{align}
\label{eqn:dung_thm_4_2_multivariate_global_p_var_at_micro_estimation}
    P_{\mathrm{var}}(\theta_i) \geq C_{\mathrm{mic}} \cdot \Delta_{\mathrm{sep}}^{2s_{\max}-2} \|\theta_i-\theta^*_{c(i)}\|_2^2 \quad \text{for all } i \in \mathcal{M}_{\mathrm{mic}},
\end{align}
\begin{align}
\label{eqn:dung_thm_4_2_multivariate_global_p_var_at_macro_estimation}
P_{\mathrm{var}}(\theta_i)\geq C_{\mathrm{mac}}\cdot\Delta_{\mathrm{sep}}^{2s_{\max}-2}\|\theta_i - \theta_{c(i)}^*\|_2^2 \quad \text{for all } i \in \mathcal{M}_{\mathrm{mac}},
\end{align}
\begin{align}  
\label{eqn:dung_thm_4_2_multivariate_global_f_var_at_far_estimation}
P_{\mathrm{var}}(\theta_i) \geq C_{\mathrm{far}} \cdot \Delta^{-2s_{\max}}_{\mathrm{sep}}\|\theta_i-\theta^*_{c(i)}\|_2^2 \quad \text{for all } i \in \mathcal{M}_{\mathrm{far}}.
\end{align}
Plugging in these estimations of $P_{\mathrm{var}}$ in equations~\eqref{eqn:dung_thm_4_2_multivariate_global_p_var_at_micro_estimation}, \eqref{eqn:dung_thm_4_2_multivariate_global_p_var_at_macro_estimation}, and \eqref{eqn:dung_thm_4_2_multivariate_global_f_var_at_far_estimation} into equation \eqref{eqn:dung_thm_4_2_exact_global_multivariate_multi_key_equation_first_1}, we have
\begin{equation}
\label{eqn:dung_thm_4_2_multivarite_global_variation_bound}
    \sum_{\mathcal{M}_{\mathrm{mic}} \cup \mathcal{M}_{\mathrm{mac}}\cup \mathcal{M}_{\mathrm{far}}} \pi_i \|\theta_i-\theta^*_{c(i)}\|_2^2 \leq C_{\mathrm{var},1}\Delta_{\mathrm{sep}}^{-(2s_{\max}-2)}d_H(p_{G},p_{G_*}),
\end{equation}
where $C_{\mathrm{var},1} = C_{\mathrm{var}}\cdot C_{\mathrm{poly}}\cdot \max\{C^{-1}_{\mathrm{mic}},C^{-1}_{\mathrm{mac}},C^{-1}_{\mathrm{far}}\}$.

\vspace{0.5 em}
\noindent
\emph{Step 3 - Bounding macro- and far-related quantities.} At this step, the bounds for the quantities related to $\mathcal{M}_{\mathrm{mac}}$ and $\mathcal{M}_{\mathrm{far}}$ are established. 

\emph{Step 3.1 -  Macro High Moment $\sum_{\mathcal{M}_{\mathrm{mac}}} \pi_i \|\theta_i-\theta^*_{c(i)}\|_2^{2s_{\max}}$.} Using the same argument as in univariate global setting for Theorem \ref{theorem:exact_multi_group}, we have $P_{\mathrm{var}}(\theta_i) \geq C^{-1}_{\mathrm{mac,moment}}\|\theta_i -\theta^*_{c(i)}\|^{2s_{\max}}_2$. As a result, we have 
\begin{equation}
\label{eqn:dung_thm_4_2_multivariate_global_macro_high_moment_estimate}
    \sum_{i\in \mathcal{M}_{\mathrm{mac}}} \pi_i \|\theta_i-\theta^*_{c(i)}\|_2^{2s_{\max}} \leq C_{\mathrm{mac,moment}} \cdot d_H(p_{G},p_{G_*}),
\end{equation}
where $C_{\mathrm{mac,moment}} = C_{\mathrm{var}}\cdot C_{\mathrm{poly}}\cdot \max\left\{1,\left(\frac{3}{4}D_0\right)^{-2k_*}\right\}\cdot\max\{1,(2R)^{k_*}\}$. 

\emph{Step 3.2 - Global Far Mass $\pi_{\mathrm{far}}$.} We have for each $l$, $\|\theta_i - \theta_l^*\|_2 \geq \|\theta_i-\theta^*_{c(i)}\|_2 > \frac{D_0}{4}$, thus we have $P_{\mathrm{var}} \geq \left(\frac{D_0}{4}\right)^{2k_*}$. Using the same argument as in univariate global setting in Theorem \ref{theorem:exact_multi_group}, we have 
\begin{align}
    \sum_{i \in \mathcal{M}_{\mathrm{far}}} \pi_i \leq C_{\mathrm{far,mass}} d_H(p_{G},p_{G_*}),
    \label{eqn:dung_thm_4_2_multivariate_global_far_mass}
\end{align}
where $C_{\mathrm{far,mass}} = \left(\frac{D_0}{4}\right)^{-2k_*} \cdot C_{\mathrm{var}}\cdot C_{\mathrm{poly}}$.   

\emph{Step 3.3 - Macro Mass $\pi_{\mathrm{mac}}$:} For any $i \in \mathcal{M}_{\mathrm{mac}}$, since $\|\theta_i-\theta^*_{c(i)}\|_2 > \Delta_{\mathrm{sep}}/4$, using the estimation in equation~\eqref{eqn:dung_thm_4_2_multivariate_global_macro_high_moment_estimate}, we have
\begin{align}
\label{eqn:dung_thm_4_2_multivariate_global_macro_mass}
    \sum_{\mathcal{M}_{\mathrm{mac}}} \pi_i \le C_{\mathrm{mac,mass}}\Delta_{\mathrm{sep}}^{-(2s_{\max})} d_H(p_{G},p_{G_*}).
\end{align}
where $C_{\mathrm{mac,mass}} = 4^{2k_*} C_{\mathrm{mac,moment}}$.

\emph{Step 3.4 - Bounding cluster mass discrepancy $|\Delta \Pi_{m}|$:} The proof utilizes the cluster-wise mass extractor function $P_{m}$ in Section \ref{sec:cluster_wise_test_function}. Using the same argument with Taylor expansion as in multivariate global part, there exists $\xi_i \in \bar{B}(0,R)$ such that 
\begin{align*}
    \int P_m(x)d\nu(x) &= \Delta\Pi_m - \sum_{c(i) \in\mathcal{C}_m}\pi_{i}\mathbf{1}_{\{i \in\mathcal{M}_{\mathrm{far}}\}}\\
    & \hspace{1cm }+ \frac{1}{2} \sum_{i \in \mathcal{M}_{\mathrm{mic}} \cup \mathcal{M}_{\mathrm{mac}}} \pi_i (\theta_i-\theta^*_{c(i)})^{\top}D^2P_m(\xi_i)(\theta_i-\theta^*_{c(i)}) + \sum_{i \in \mathcal{M}_{\mathrm{far}}} \pi_i P_m(\theta_i).
\end{align*} 
As a result, we have 
\begin{align*}
    |\Delta\Pi_m| &\leq \left|\int P_m(x)d\nu(x)\right| + \sum_{c(i) \in\mathcal{C}_m}\pi_{i}\mathbf{1}_{\{i \in\mathcal{M}_{\mathrm{far}}\}} \\ &\hspace{2cm}+ \frac{1}{2} \sum_{i \in \mathcal{M}_{\mathrm{mic}} \cup \mathcal{M}_{\mathrm{mac}}} \pi_i \|D^2P_m(\xi_i)\|_{\mathrm{op}}\|\theta_i-\theta^*_{c(i)}\|_2^2  + \sum_{i \in \mathcal{M}_{\mathrm{far}}} \pi_i P_m(\theta_i). 
\end{align*}
Using Lemma \ref{lemma:Hellinger_to_Polynomial}  and the bound from Lemma \ref{lemma:dung_bound_for_cluster_mass_extracting}, we have 
\begin{align*}
    |\Delta \Pi_m| &\le C_{\mathrm{norm},P}C_{\mathrm{poly}}d_H(p_{G},p_{G_*})\\&\hspace{1cm}+ \frac{1}{2} C_{P,2} \left( \sum_{\mathcal{M}_{\mathrm{mic}} \cup \mathcal{M}_{\mathrm{mac}}} \pi_i \|\theta_i-\theta^*_{c(i)}\|_2^2 \right)+(C_{\mathrm{norm},P} + 1) \pi_{\mathrm{far}}.
\end{align*}
Using the estimations in equations \eqref{eqn:dung_thm_4_2_multivarite_global_variation_bound} and \eqref{eqn:dung_thm_4_2_multivariate_global_far_mass}, and applying the same argument as in the multivariate global part, we have 
\begin{align}
|\Delta \Pi_m| &\leq C_{\Delta\Pi} \Delta_{\max}^{-(2s_{\max}-2)}d_H(p_{G},p_{G_*}),
\label{eqn:dung_thm_4_2_multivariate_global_sum_delta_pi_estimation}
\end{align}
where $C_{\Delta\Pi}$ is defined as 
\begin{align*}
    C_{\Delta\Pi} &= C_{\mathrm{norm},P}C_{\mathrm{poly}}\max\{(2R)^{2k_*-2},1\}+ \frac{1}{2} C_{P,2}C_{\mathrm{var},1} \\
    &\hspace{1cm} + (C_{\mathrm{norm},P} + 1) C_{\mathrm{far,mass}} \max\{(2R)^{2k_*-2},1\}. 
\end{align*}

\emph{Step 4 - Bounding mass discrepancy $ |\Delta \pi_j|$.} In this section, we use the function $\bar{E}_j(\theta)$ such that $\bar{E}_j(\theta^*_l) = \delta_{jl}$ and $\nabla \bar{E}'_{j}(\theta_l^*) = 0$, which is constructed in Section \ref{sec:dung_mass_extractor_multicluster}: 
\begin{equation*}
    \bar{E}_{j}(\theta) = \ell_{j,\text{micro}}^2(\theta) [\bar{A}_j + \langle \bar{B}_j,\theta - \theta_{j}^{*}\rangle]P_{\text{macro}}(\theta),
\end{equation*}
where we define $\ell_{j,\text{micro}}(\theta) = \prod_{q \in \mathcal{C}_{m} \neq j} \frac{\|\theta - \theta_q^*\|_2}{\|\theta_j^* - \theta_q^*\|_2}$, $P_{\text{macro}}(\theta) = \prod_{p \neq m} \prod_{q \in \mathcal{C}_p} \left( \frac{\|\theta - \theta_q^*\|_2}{2R} \right)^{2s_{\max}}$ as in Section \ref{sec:dung_mean_extractor_multicluster}, and $\bar{A}_j = 1/ P_{\text{macro}}(\theta_{j}^{*})$,
\begin{align*}
    \bar{B}_j & = -\bar{A}_j[2\nabla\ell_{j,\text{micro}}(\theta_j^*) + (P_{\text{macro}}(\theta^*_j))^{-1}\nabla P_{\text{macro}}(\theta^*_j)].
\end{align*}

We assume that $j\in \mathcal{C}_m$. Then, using the same argument as in univariate global part of Theorem \ref{theorem:exact_multi_group}, we have 
\begin{align}
\nonumber
    \Delta\pi_j &= \int \bar{E}_j(\theta)d\nu(\theta) - \underbrace{\left(\sum_{l \in \mathcal{C}_m}\sum_{i\in \bar{\mathcal{V}}_l}\pi_i(\bar{E}_j(\theta_i) -\bar{E}_j(\theta^*_l))\right)}_{\text{Case 4.1}} - \underbrace{\sum_{\substack{i \in \mathcal{M}_{\mathrm{mic}}\\ c(i) \notin \mathcal{C}_m }}\pi_i\bar{E}_j(\theta_i)}_{\text{Case 4.2}}\\
    &\hspace{1cm } -\underbrace{\sum_{\substack{i \in \mathcal{M}_{\mathrm{mac}}\\ c(i) \in \mathcal{C}_m }}\pi_i\bar{E}_j(\theta_i)}_{\text{Case 4.3}} - \underbrace{\sum_{\substack{i \in \mathcal{M}_{\mathrm{mac}}\\ c(i) \notin \mathcal{C}_m }}\pi_i\bar{E}_j(\theta_i)}_{\text{Case 4.4}} -  \underbrace{\sum_{{i\in \mathcal{M}_{\mathrm{far}}}}\pi_i\bar{E}_j(\theta_i)}_{\text{Case 4.5}}.
\label{eqn:dung_thm_4_2_multivariate_global_delta_pi_as_sum_of_test_function}
\end{align}
For the integral term, applying the same argument based on Lemma \ref{lemma:Hellinger_to_Polynomial} together with Lemma \ref{lemma:dung_multicluster_mass_test_function}, we obtain the estimate
\begin{align}
    \left|\int \bar{E}_j(\theta)\, d\nu(\theta)\right|
    \leq
    C_{\bar{E},\mathrm{int}}
    \Delta_{\mathrm{sep}}^{-(2s_{\max}-1)}
    d_H(p_G,p_{G_*}).
\label{eqn:dung_thm_4_2_multivariate_global_integral_of_E_bound}
\end{align}

We now estimate the weighted sum of the terms $\bar{E}_j(\theta_i)$ according to the geometric location of $\theta_i$ relative to its nearest reference point and associated cluster.

\emph{Case 4.1 - Near set outside $\mathcal{C}_m$.} For each $i \in \bar{\mathcal{V}}_l$ with $l \in \mathcal{C}_m$, by Taylor's expansion with Lagrange remainder, Lemma \ref{lemma:dung_multicluster_mass_test_function}, and the estimation in equation \eqref{eqn:dung_thm3.2_multivarite_global_variation_bound}, we obtain, by the same argument as in the proof of the univariate global case,
\begin{align}
\nonumber
    \left|\sum_{l \in \mathcal{C}_m}\sum_{i\in \bar{\mathcal{V}}_l}\pi_i(\bar{E}_j(\theta_i) -\bar{E}_j(\theta^*_l))\right| &\leq C_{\bar{E},2}\Delta^{-2}_{\mathrm{sep}}\sum_{i\in\bar{\mathcal{V}}_l, l\in\mathcal{C}_m}\pi_{i}\|\theta_i-\theta^*_l\|_2^2 \\
    &\leq C_{\mathrm{near,in,mass}}\Delta^{-2s_{\max}}_{\mathrm{sep}}d_H(p_{G},p_{G_*})
\label{eqn:dung_thm_4_2_multivariate_global_sum_near_f-f}
\end{align}
where $C_{\mathrm{near,in,mass}} = C_{\bar{E},2}C_{\mathrm{var},1}$.

\emph{Case 4.2 - Near set outside $\mathcal{C}_m$.} For index $i \in \mathcal{M}_{\mathrm{mic}}$ that do not belong to the cluster $\mathcal{C}_m$, Lemma \ref{lemma:dung_bound_for_cluster_mass_extracting} gives
\begin{equation*}
    |\bar{E}_j(\theta_i)|
    \leq
    C_{\mathrm{cross},\bar{E}}
    \Delta_{\mathrm{sep}}^{-1}
    \|\theta_i-\theta^*_{c(i)}\|_2^2 .
\end{equation*}
Following the same argument as in the univariate and multivariate global part in Theorem \ref{theorem:exact_multi_group}, this bound implies
\begin{align}
|
\sum_{\substack{i \in \mathcal{M}_{\mathrm{mic}}\\ c(i) \notin \mathcal{C}_m}}
\pi_i \bar{E}_j(\theta_i)
|
&\leq
C_{\mathrm{cross},\bar{E}}
\Delta_{\mathrm{sep}}^{-1}
\sum_{\substack{i \in \mathcal{M}_{\mathrm{mic}}\\ c(i) \notin \mathcal{C}_m}}
\pi_i \|\theta_i-\theta^*_{c(i)}\|_2^2
\nonumber\\
&\leq
C_{\mathrm{near,out,mass}}
\Delta_{\mathrm{sep}}^{-(2s_{\max}-1)}
d_H(p_G,p_{G_*}),
\label{eqn:dung_thm_4_2_multivariate_global_near_not_m_value_estimation}
\end{align}
where $C_{\mathrm{near,out,mass}}
=
C_{\mathrm{cross},\bar{E}}C_{\mathrm{var},1}$.

\emph{Case 4.3 - Macro set inside $\mathcal{C}_m$.} For index $i$ in $\mathcal{M}_{\text{macro}}$ such that $c(i) \in \mathcal{C}_m$, using the same argument as in univariate and multivariate global part in Theorem \ref{theorem:exact_multi_group}, we have 
\begin{equation*}
    |\bar{E}_j(\theta_l)| \leq C_{\bar{E},\mathrm{mac,in,mass}}\Delta^{-2s_{\max}}_{\mathrm{sep}}\|\theta_l - \theta^*_{c(l)}\|_2^{2s_{\max}}.
\end{equation*}
Using the high moment estimation in equation \eqref{eqn:dung_thm_4_2_multivariate_global_macro_high_moment_estimate}, we have
\begin{align}
\nonumber
\left|\sum_{i \in \mathcal{M}_{\mathrm{mac}},c(i)\in\mathcal{C}_m} \pi_i \bar{E}_j(\theta_i)\right| &\leq C_{\bar{E},\mathrm{mac,in,mass}}\Delta^{-2s_{\max}}_{\mathrm{sep}}\sum_{i \in \mathcal{M}_{\mathrm{mac}},c(i)\in\mathcal{C}_m}\pi_{i}\|\theta_i - \theta_{c(i)}\|^{2s_{\max}}\\
&\leq C_{\mathrm{mac,in,mass}} \Delta^{-2s_{\max}}_{\mathrm{sep}}d_H(p_{G},p_{G_*}),
\label{eqn:dung_thm_4_2_multivariate_global_macro_m_set_sum_estimation}
\end{align}
where $C_{\mathrm{mac,in,mass}} = C_{\bar{E},\mathrm{mac,in,mass}}\cdot C_{\mathrm{mac,moment}}$.

\emph{Case 4.4 - Macro set outside $\mathcal{C}_m$.} For index $i \in \mathcal{M}_{\text{macro}}$ but $i \notin \mathcal{C}_m$, using the same argument as in univariate and multivariate global part in Theorem \ref{theorem:exact_multi_group}, we have $\ell^2_{j,\text{micro}}(\theta_l) \leq (2R)^{2(s_m-1)}\Delta^{-2(s_m-1)}_{\mathrm{sep}}$, and $|P_{\text{macro}}(\theta_l)|\leq (2R)^{-2s_{\max}}\|\theta_l-\theta^*_{c(l)}\|^{2s_{\max}}$, and for degree-one component, $|\bar{A}_j + \langle \bar{B}_j,\theta_i - \theta_j^*\rangle| \leq 2R(\bar{C}_A +\bar{C}_B)\Delta^{-1}_{\mathrm{sep}}$. Consequently, we have
\begin{equation*}
    |\bar{E}_j(\theta_l)| \leq C_{\bar{E},\mathrm{mac,out,mass}}\Delta_{\mathrm{sep}}^{-(2s_{m} - 1)} \|\theta_{l} - \theta_{c(l)}^{*}\|_2^{2s_{\max}},
\end{equation*}
Therefore, thanks to the high moment estimation in equation \eqref{eqn:dung_thm_4_2_multivariate_global_macro_high_moment_estimate}, we have 
\begin{align}
\nonumber
\left|\sum_{i \in \mathcal{M}_{\mathrm{mac}},c(i)\notin\mathcal{C}_m} \pi_i \bar{E}_j(\theta_i)\right| &\leq C_{\bar{E},\mathrm{mac,out,mass}}\Delta^{-(2s_{\max}-1)}_{\mathrm{sep}}\sum_{i \in \mathcal{M}_{\mathrm{mac}},c(i)\notin\mathcal{C}_m}\pi_{i}\|\theta_i - \theta_{c(i)}\|^{2s_{\max}}\\
&\leq C_{\mathrm{mac,out,mass}} \Delta^{-(2s_{\max}-1)}_{\mathrm{sep}}d_H(p_{G},p_{G_*}),
\label{eqn:dung_thm_4_2_multivariate_global_macro_not_m_set_sum_estimation}
\end{align}
where $C_{\mathrm{mac,out,mass}}  = C_{\bar{E},\mathrm{mac,out,mass}}\cdot C_{\mathrm{mac,moment}}$. 

\emph{Case 4.5 - Far set $\mathcal{M}_{\mathrm{far}}$}. 
For the centers $i$ in $\mathcal{M}_{\mathrm{far}}$, as $\|\theta_i -\theta^*_{c(i)}\|_2\leq 2R$, using the same argument as in univariate and multivariate global part in Theorem \ref{theorem:exact_multi_group}, we have 
\begin{align*}
|\bar{E}_j(\theta_i)| \leq (2R)^{2s_m-1}(\bar{C}_A + \bar{C}_B)\Delta^{-(2s_m-1)}_{\mathrm{sep}}.
\end{align*}
Using estimation for global far mass in equation~\eqref{eqn:dung_thm_4_2_multivariate_global_far_mass} and the same argument as in univariate and multivariate global part in Theorem \ref{theorem:exact_multi_group}, we have 
\begin{align}
\nonumber
\left|\sum_{i \in \mathcal{M}_{\mathrm{far}}} \pi_i \bar{E}_j(\theta_i)\right| &\leq (2R)^{2s_m-1}(\bar{C}_A + \bar{C}_B)\Delta^{-(2s_{m}-1)}_{\mathrm{sep}}\sum_{i \in \mathcal{M}_{\mathrm{far}}}\pi_{i}\\
&\leq C_{\mathrm{far,mass,test}} \Delta^{-(2s_{\max}-1)}_{\mathrm{sep}}d_H(p_{G},p_{G_*}).
    \label{eqn:dung_thm_4_2_multivariate_global_far_set_sum_estimation_of_E}
\end{align}
Combining the estimations from equations 
\eqref{eqn:dung_thm_4_2_multivariate_global_integral_of_E_bound}, \eqref{eqn:dung_thm_4_2_multivariate_global_sum_near_f-f}, \eqref{eqn:dung_thm_4_2_multivariate_global_near_not_m_value_estimation}, \eqref{eqn:dung_thm_4_2_multivariate_global_macro_m_set_sum_estimation}, \eqref{eqn:dung_thm_4_2_multivariate_global_macro_not_m_set_sum_estimation}, and \eqref{eqn:dung_thm_4_2_multivariate_global_far_set_sum_estimation_of_E}, as in the proof of the univariate and multivariate global part in Theorem~\ref{theorem:exact_multi_group}, we have 
\begin{align}
\label{eqn:dung_thm_4_2_multivariate_global_delta_pi_j_estimation}
    |\Delta\pi_j| \leq C_{\mathrm{mass}}\cdot \Delta^{-2s_{\max}}_{\mathrm{sep}}d_H(p_G,p_{G_*}),
\end{align}
where 
\begin{align*}
    C_{\mathrm{mass}} &= 2R\left[C_{\bar{E},\mathrm{int}} +  C_{\mathrm{near,out,mass}} + C_{\mathrm{mac,in,mass}} + C_{\mathrm{mac,out,mass}} + C_{\mathrm{far,mass,test}}\right] \\
    &\hspace{1cm}  + C_{\mathrm{near,in,mass}}. 
\end{align*}

\emph{Step 5 - Bounding spare mass discrepancy $\sum_{j=1:|\bar{\mathcal{V}}_j| = 1}^{k_*}\sum_{i\in \bar{\mathcal{V}}_j}\pi_i\|\theta_i - \theta^*_{j}\|$.}  Consider an index $j \in [k_*]$ such that $\bar{\mathcal{V}}_j$ contains exactly one element $\theta_i$. For this term, we consider the mean extractor test function $\bar{H}_{j,i}$ defined in Section \ref{sec:non_multi_cluster_mean_test_function} given by 
$$\bar{H}_{j,i}(\theta) = \ell_{j,\text{micro}}^2(\theta) [\langle \bar{v}_{j,i},\theta - \theta_{j}^{*}\rangle]P_{\text{macro}}(\theta),$$ 
where we define $\ell_{j,\text{micro}}(\theta) = \prod_{q \in \mathcal{C}_{m} \neq j} \frac{\|\theta - \theta_q^*\|_2}{\|\theta_j^* - \theta_q^*\|_2}$, $P_{\text{macro}}(\theta) = \prod_{p \neq m} \prod_{q \in \mathcal{C}_p} \left( \frac{\|\theta - \theta_q^*\|_2}{2R} \right)^{2s_{\max}}$, and $$\bar
v_{j,i} = P^{-1}_{\text{macro}}(\theta_j^*)\frac{\theta_i-\theta_j^*}{\|\theta_i-\theta_j^*\|},$$
when $\theta_i\neq \theta_j^*$ and $\bar{v}_{j,i} = P^{-1}_{\text{macro}}(\theta_j^*)(1,0,\ldots,0)^{\top}$ otherwise. This test function satisfies $\bar{H}_{j,i}(\theta_l^*) = 0$ for all $l$, $\nabla \bar{H}_{j,i}(\theta_l^*) = 0$ for all $l\neq i$, and $\langle \nabla \bar{H}_{j,i}(\theta_i^*), \theta_i - \theta_j^*\rangle = \|\theta_i - \theta_j^*\|_2$.

For any center $\theta_l \in \mathcal{M}_{\mathrm{near}}$, by applying Taylor expansion for 
$\bar{H}_{j,i}(\theta_l)$ exactly around its true center $\theta_{c(l)}^*$, we have 
\begin{align*}
\bar{H}_{j,i}(\theta_l) &= \delta_{jl}v_{j,i}^{\top}(\theta_{l} - \theta_{c(l)}^{*}) + \frac{1}{2} (\theta_{l} - \theta_{c(l)}^{*})^{\top}D^2\bar{H}_{j,i}(\xi_l) (\theta_{l} - \theta_{c(l)}^{*}) \\
&= \delta_{jl}\|\theta_i-\theta^*_{c(i)}\| + \frac{1}{2} (\theta_{l} - \theta_{c(l)}^{*})^{\top}D^2\bar{H}_{j,i}(\xi_l) (\theta_{l} - \theta_{c(l)}^{*}).
\end{align*}
Thus, we arrive at $$ \int \bar{H}_{j,i}(\theta) d\nu(\theta) = \pi_{i}\|\theta_{i} - \theta_{c(i))}^{*}\| + \frac{1}{2} \sum_{l\in \mathcal{M}_{\mathrm{near}}} \pi_{l} (\theta_{l} - \theta_{c(l)}^{*})^{\top} D^2\bar{H}_{j,i}(\xi_l)(\theta_{l} - \theta_{c(l)}^{*})+ \sum_{l \in \mathcal{M}_{\mathrm{far}}} \pi_l \bar{H}_{j,i}(\theta_l). ,$$
which indicates that
\begin{align}
\nonumber
    \pi_{i}\|\theta_{i} - \theta_{c(i)}^{*}\| &= \int \bar{H}_{j,i}(\theta)d\nu(\theta) - \underbrace{\left(\sum_{n \in \mathcal{C}_m}\sum_{l\in \bar{\mathcal{V}}_n}\pi_i(\bar{H}_{j,i}(\theta_l) -\bar{H}_{j,i}(\theta^*_l))\right)}_{\text{Case 5.1}}\\
    &\hspace{1cm } -\underbrace{\sum_{\substack{l \in \mathcal{M}_{\mathrm{mac}}\\ c(l) \in \mathcal{C}_m }}\pi_l\bar{H}_{j,i}(\theta_l)}_{\text{Case 5.2}} - \underbrace{\sum_{\substack{l \in \mathcal{M}_{\mathrm{mac}}\\ c(l) \notin \mathcal{C}_m }}\pi_l\bar{H}_{j,i}(\theta_l)}_{\text{Case 4.3}} -  \underbrace{\sum_{{l\in \mathcal{M}_{\mathrm{far}}}}\pi_l\bar{H}_{j,i}(\theta_l)}_{\text{Case 4.5}}.
\label{eqn:dung_thm_4_2_first_moment_estimation}
\end{align}
For the integral function, using Lemma \ref{lemma:mean_test_function} and Lemma \ref{lemma:Hellinger_to_Polynomial}, we have
\begin{align}
\nonumber
\left| \int \bar{H}_{j,i}(\theta) d\nu(\theta) \right| &\le C_{\mathrm{poly}}\|\bar{H}_{j,i}\|_{\infty}d_{H}(p_G,p_{G_*})\\
&\leq C_{\bar{H},\mathrm{int}} \Delta_{\mathrm{sep}}^{-(2s_{\max}-1)} d_H(p_{G},p_{G_*}),\label{eqn:dung_thm_4.2_multivariate_global_mean_unique_point_test_function_dual_bound}
\end{align}
where $C_{\bar{H},\mathrm{int}} = C_{\mathrm{poly}}C_{\text{norm},\bar{H}}$. 

We now estimate the weighted sum of the terms $\bar{H}_{j,i}(\theta_i)$ by distinguishing the geometric location of $\theta_i$ relative to its nearest reference point and its associated cluster.

\emph{Case 5.1 - Near error.} For the near error, using Lemma \ref{lemma:dung_multicluster_mean_test_function} and the estimationin equation~\eqref{eqn:dung_thm_4_2_multivarite_global_variation_bound}, we obtain that
\begin{align}
\frac{1}{2}\sum_{n=1}^{k_*} \sum_{l \in \bar{\mathcal{V}}_n} \pi_l \|D^2\bar{H}_{j,i}(\xi_l)\|_{\mathrm{op}}\|\theta_l-\theta_{c(l)}^*\|^2 \leq C_{\mathrm{near,mean}}\Delta_{\mathrm{sep}}^{-(2s_{\max}-1)} d_H(p_{G},p_{G_*}),\label{eqn:dung_thm_4_2_global_multivariate_sum_of_second_derivative_of_H}
\end{align}
where $C_{\mathrm{near,mean}} = \frac{1}{2} C_{\bar{H},2} C_{\mathrm{var,1}}$
\emph{Case 5.2 - Macro set inside $\mathcal{C}_m$.} For center $\theta_l$ indexed by  $l \in \mathcal{M}_{\text{macro}}$ such that $c(l) \in \mathcal{C}_m$, using the same argument as for the derivation of 
equation~\eqref{eqn:ell_j_micro_macro_in_cluster}, we have 
\begin{equation*}
    |\ell_{j,\text{micro}}(\theta_l)|^2 \leq \Delta^{-2(s_m-1)}_{\mathrm{sep}}(1+4C_0)^{2(s_m-1)}\|\theta_l - \theta^*_{c(l)}\|^{2(s_m-1)}. 
\end{equation*}
For the polynomial of degree 1, as 
$\|\theta_l - \theta_j^*\|_2\leq (1+4C_0)\|\theta_l-\theta^*_{c(l)}\|_2$ which is proved in equation~\eqref{eqn:dung_thm_3_2_univariate_global_bound_for_distance_inside_cluster}, we have 
\begin{equation*}
|\langle\bar{v}_{j,i},\theta_{l} - \theta^*_{j}\rangle| \leq \bar{C}_{\bar{v}}(1+4C_0)\|\theta_l-\theta^*_{c(l)}\|. 
\end{equation*}
Thus, as $|P_{\mathrm{macro}}(\theta_l)| \leq 1$, we achieve the following estimation for $|\bar{H}_{j,i}(\theta_l)|$. 
\begin{align*}
    |\bar{H}_{j,i}(\theta_l)| \leq \bar{C}_{\bar{v}}(1+4C_0)^{2s_m-1}\Delta^{-2(s_{m}-1)}_{\mathrm{sep}}\|\theta_l - \theta^*_{c(l)}\|^{2s_{m}-1}.
\end{align*}
Moreover, as $\|\theta_l - \theta^*_{c(l)}\|\geq \Delta_{\mathrm{sep}}/4$, we have 
\begin{align*}
    1&\leq 4^{2s_{\max}-2s_m+1}\|\theta_l - \theta^*_{c(l)}\|^{2s_{\max}-2s_m+1}\cdot \Delta^{-(2s_{\max}-2s_m+1)}_{\mathrm{sep}}\\
    &\leq 4^{2k_*+1}\|\theta_l - \theta^*_{c(l)}\|^{2s_{\max}-2s_m+1}\cdot \Delta^{-(2s_{\max}-2s_m+1)}_{\mathrm{sep}}. 
\end{align*}
Thus, we have 
\begin{equation*}
    |\bar{H}_{j,i}(\theta_l)| \leq C_{\bar{H},\mathrm{mac,in,mean}}\Delta^{-(2s_{\max}-1)}_{\mathrm{sep}}\|\theta_l - \theta^*_{c(l)}\|^{2s_{\max}},
\end{equation*}
where $C_{\bar{H},\mathrm{mac,in,mean}} = 4^{2k_*+1}\bar{C}_{\bar{v}}(1+4C_0)^{2k_*-1}$
Therefore, thanks to the high moment estimation in equation \eqref{eqn:dung_thm_4_2_multivariate_global_macro_high_moment_estimate}, we have 
\begin{align}
\nonumber
\left|\sum_{i \in \mathcal{M}_{\mathrm{mac}},c(i)\in\mathcal{C}_m} \pi_i \bar{H}_{j,i}(\theta_i)\right| &\leq C_{\bar{H},\mathrm{mac,in,mean}}\Delta^{-(2s_{\max}-1)}_{\mathrm{sep}}\sum_{i \in \mathcal{M}_{\mathrm{mac}},c(i)\in\mathcal{C}_m}\pi_{i}\|\theta_i - \theta_{c(i)}\|^{2s_{\max}}\\
&\leq C_{\mathrm{mac,in,mean}} \Delta^{-(2s_{\max}-1)}_{\mathrm{sep}}d_H(p_{G},p_{G_*}),
\label{eqn:dung_thm_4_2_macro_m_set_sum_estimation}
\end{align}
where $c  = C_{\bar{H},\mathrm{mac,in,mean}}\cdot C_{\mathrm{mac,moment}}$.

\emph{Case 5.3 - Macro set outside $\mathcal{C}_m$.} For index $l \in\mathcal{M}_{\text{macro}}$ such that $c(l)\notin \mathcal{C}_m$, we similarly evaluate each term $P_{\text{macro}}(\theta_l)$, $l^2_{j,\text{micro}}(\theta_l)$, and component of degree 1 separately. Indeed, we have for $|\ell^2_{j,\text{micro}}(\theta_l)| \leq (2R)^{2(s_m-1)}\Delta^{-2(s_m-1)}_{\mathrm{sep}}$ due to the fact that $\|\theta_l - \theta^*_{q}\| \leq 2R$. For $P_{\text{macro}}$, noting that $\|\theta_l - \theta^*_{q}\| \leq 2R$, we have 
\begin{equation*}
    |P_{\text{macro}}(\theta_l)|\leq (2R)^{-2s_{\max}}\|\theta_l-\theta^*_{c(l)}\|^{2s_{\max}}.
\end{equation*}
For component of degree 1, we have 
\begin{equation*}
    |\langle \bar{v}_{j,i},\theta_l - \theta^*_{i}\rangle| \leq 2R\bar{C}_{\bar{v}}. 
\end{equation*}
Consequently, as $\Delta_{\mathrm{sep}} \leq 2R$, we have
\begin{align*}
    |\bar{H}_{j,i}(\theta_l)| &\leq \bar{C}_{\bar{v}} (2R)^{2(s_{m} - 1) - 2s_{\max}+ 1} \Delta_{\mathrm{sep}}^{-(2s_{m} - 2)} \|\theta_{l} - \theta_{c(l)}^{*}\|^{2s_{\max}}\\
    &\leq \bar{C}_{\bar{v}} (2R)^{2(s_{m} - 1) - 2s_{\max}+ 1} (2R)^{2s_{\max}-2s_m}\Delta_{\mathrm{sep}}^{-(2s_{\max} - 2)} \|\theta_{l} - \theta_{c(l)}^{*}\|^{2s_{\max}}.
\end{align*}
As a result, we obtain the following bound for $ |\bar{H}_{j,i}(\theta_l)|$: 
\begin{equation*}
    |\bar{H}_{j,i}(\theta_l)| \leq C_{\bar{H},\mathrm{mac,out,mean}}\Delta_{\mathrm{sep}}^{-(2s_{\max} - 2)} \|\theta_{l} - \theta_{c(l)}^{*}\|^{2s_{\max}},
\end{equation*}
where $C_{\bar{H},\mathrm{mac,out,mean}} = \bar{C}_{\bar{v}}\cdot (2R)^{-1}$. Therefore, thanks to the high moment estimation in equation \eqref{eqn:dung_thm_4_2_multivariate_global_macro_high_moment_estimate}, we have 
\begin{align}
\nonumber
\left|\sum_{i \in \mathcal{M}_{\mathrm{mac}},c(i)\notin\mathcal{C}_m} \pi_i \bar{H}_{j,i}(\theta_l)\right| &\leq C_{\bar{H},\mathrm{mac,out,mean}}\Delta^{-(2s_{\max}-2)}_{\mathrm{sep}}\sum_{i \in \mathcal{M}_{\mathrm{mac}},c(i)\notin\mathcal{C}_m}\pi_{i}\|\theta_i - \theta^*_{c(i)}\|^{2s_{\max}}\\
&\leq C_{\mathrm{mac,out,mean}}\Delta^{-(2s_{\max}-2)}_{\mathrm{sep}}d_H(p_{G},p_{G_*}),
\label{eqn:dung_thm_4_2_macro_not_m_set_sum_estimation}
\end{align}
where $C_{\mathrm{mac,out,mean}}  = C_{\bar{H},\mathrm{mac,out,mean}}C_{\mathrm{mac,moment}}$. 

\emph{Case 5.4 - Far set.} For the centers $i \in \mathcal{M}_{\mathrm{far}}$, as  $|\theta_i -\theta^*_{c(i)}|\leq 2R$, it is straightforward to verify that $|\ell^2_{j,\text{macro}}(\theta_i)| \leq (2R)^{2(s_m-1)}\Delta^{-(2s_m-2)}_{\mathrm{sep}}$, $|P_{\text{macro}}(\theta_i)| \leq 1$. In addition, for the component of degree 1, as in Case 5.3, we have 
\begin{equation*}
|\langle\bar{v}_{j,i},\theta_l - \theta^*_{j}\rangle| \leq 2R\bar{C}_{\bar{v}}.
\end{equation*}
Using all the above results together, we have 
\begin{align*}
    |\bar{H}_{j,i}(\theta_i)| \leq (2R)^{2s_m-1}\bar{C}_{\bar{v}}\Delta^{-2(s_m-1)}_{\mathrm{sep}} \leq (2R)^{2s_{\max}-1}\bar{C}_{\bar{v}}\Delta^{-2(s_{\max}-1)}_{\mathrm{sep}}. 
\end{align*}
Using the estimation for far mass in equation~\eqref{eqn:dung_thm_4_2_multivariate_global_far_mass}, we have  
\begin{align}
\nonumber
\left|\sum_{l \in \mathcal{M}_{\mathrm{far}}} \pi_l \bar{H}_{j,i}(\theta_l)\right| &\leq (2R)^{2s_{\max}-1}\bar{C}_{\bar{v}}\Delta^{-2(s_{\max}-1)}_{\mathrm{sep}}\sum_{i \in \mathcal{M}_{\mathrm{far}}}\pi_{i}\\
&\leq C_{\mathrm{far,mean}} \Delta^{-(2s_{\max}-1)}_{\mathrm{sep}}d_H(p_{G},p_{G_*}),
    \label{eqn:dung_thm_4_2_multivariate_global_far_set_sum_estimation}
\end{align}
where $C_{\mathrm{far,mean}} = \bar{C}_{\bar{v}}C_{\mathrm{far,mass}}\cdot\max\{1,(2R)^{2k_*-1}\}$. 

Combining the estimation in equations~\eqref{eqn:dung_thm_4.2_multivariate_global_mean_unique_point_test_function_dual_bound}, \eqref{eqn:dung_thm_4_2_global_multivariate_sum_of_second_derivative_of_H}, \eqref{eqn:dung_thm_4_2_macro_m_set_sum_estimation}, \eqref{eqn:dung_thm_4_2_macro_not_m_set_sum_estimation} and \eqref{eqn:dung_thm_4_2_multivariate_global_far_set_sum_estimation}, we have 
\begin{align}
\label{eqn:dung_thm4_1_multivariate_global_unique_near_first_moment_bound_2}
    \pi_i\|\theta_i-\theta_{c(i)}^*\|\leq C_{\mathrm{mean}}\Delta_{\mathrm{sep}}^{-(2s_{\max}-1)} d_H(p_{G},p_{G_*})
\end{align}
where 
\begin{equation*}
C_{\mathrm{mean}}= C_{\bar{H},\mathrm{int}}  +C_{\mathrm{near,mean}} +  C_{\mathrm{mac,in,mean}} + C_{\mathrm{far,mean}} + 2R\cdot C_{\mathrm{mac,out,mean}}. 
\end{equation*}
\newline 

\emph{Step 6 - Bounding $W_2^2(G,G_*)$ and conclusion.} Combining the result from equations~\eqref{eqn:dung_thm_4_2_multivarite_global_variation_bound}, \eqref{eqn:dung_thm_4_2_multivariate_global_sum_delta_pi_estimation}, \eqref{eqn:dung_thm_4_2_multivariate_global_delta_pi_j_estimation} and \eqref{eqn:dung_thm4_1_multivariate_global_unique_near_first_moment_bound_2} and plugging back into equation~\eqref{eqn:dung_thm_4_2_multivariate_global_final_W2_prelim_bound}, we have 
\begin{align*}
    W_2^2(G,G_*)&\leq S_2(G,G_*)\leq C^{-1}_{\mathrm{global},5} \Delta_{\mathrm{sep}}^{-(2s_{\max}-2)} d_H(p_{G},p_{G_*}),
\end{align*}
where 
\begin{align*}
C^{-1}_{\mathrm{global},5} = &\frac{1}{2}C_{\mathrm{mean}} + C_{\mathrm{var},1} + 8R^2k_*\cdot C_{\Delta\Pi}\\
    &\hspace{1cm}+ 2C_0^2(C_{\mathrm{mac,mass}} +\max\{1,(2R)^{2k_*-2}\}C_{\mathrm{far,mass}} +  C_{\text{zero\_moment}}). 
\end{align*}
As a result, we have 
\begin{equation*}
    d_H(p_{G},p_{G_*}) \geq C_{\mathrm{global},5}\Delta_{\mathrm{sep}}^{2s_{\max}-2} S_2(G,G_*) \geq C_{\mathrm{global},5}\Delta_{\mathrm{sep}}^{2s_{\max}-2} W_2^2(G,G_*). 
\end{equation*}

\subsection{Proof of Theorem~\ref{theorem:no_group_overspecified}}
\label{sec:proof_theorem:no_group_overspecified}
In this proof, for each atom $\theta_i$, we denote $$c(i) = \arg\min_{1\leq j\leq k_*}\|\theta_i-\theta_j^*\|$$ be its absolute closest true center (we pick arbitrarily one of the centers when there exists several centers sharing minimal distance). For each $j \in [k_*]$, let $\tilde{\pi}_j = \sum_{i:c(i)=j}\pi_i$ and $\Delta\tilde{\pi}_j = \tilde{\pi}_j-\pi_j$. Let $\mathcal{V}_j = \{l: c(k) = \theta_j^*\}$ and $\bar{\mathcal{V}}_j = \{l: c(l) = \theta_j^*, \text{ and } \|\theta_l - \theta_j^*\|\leq \Delta_{\mathrm{sep}}/4 \}$. 

\emph{Step 1 - Wasserstein decomposition.} Consider the intermediate measure $$G' = \sum_{j=1}^k\pi_j\delta_{\theta^*_{c(j)}} = \sum_{j=1}^{k_*}\tilde{\pi}_j\delta_{\theta^*_{j}},$$ using triangle inequality for Wasserstein distance, we have 
\begin{align}
\label{eqn:dung_thm_4_3_multivariate_global_W2_prelim_split}
    W_2^2(G, G_*) \leq(W_2(G, G') + W_2(G', G_*))^2 \leq 2W_2^2(G, G') + 2W^2_2(G', G_*).
\end{align}
For $W_2^2(G, G')$, by considering the transportation plan $\rho_{jj} = \pi_j$ for all $1\leq j\leq k$, which implies
\begin{align}
\label{eqn:dung_thm_4_3_multivariate_global_W2_prelim_first_bound}
    W_2^2(G, G') \leq \sum_{i=1}^{k} \pi_i\|\theta_i-\theta^*_{c(i)}\|^2 = \sum_{j=1}^{k_*}\sum_{i \in \bar{\mathcal{V}}_j}\pi_i\|\theta_i - \theta_j^*\|^2.
\end{align}

For $W_2^2(G', G_*)$, we consider the transportation plan such that for each $j$, we keep $\min\{\sum_{i\in\mathcal{V}_j}\pi_i,\pi^*_j\}$ for the center $\theta_j^*$ while moving $|\Delta\pi_j|$. Then, the total mass transported is exactly $\frac{1}{2} \sum |\Delta \pi_j|$, and  largest squared distance between any true centers does not exceed $4R^2$, we have 
\begin{align}
\label{eqn:dung_thm_4_3_local_W2_prelim_second_bound}
    W_2^2(G', G_*) \leq 4R^2 \sum_{j=1}^{k_*} |\tilde{\Delta} \pi_j|. 
\end{align}
Plugging the results in equations~\eqref{eqn:dung_thm_4_3_multivariate_global_W2_prelim_first_bound} and \eqref{eqn:dung_thm_4_3_local_W2_prelim_second_bound} into equation~\eqref{eqn:dung_thm_4_3_multivariate_global_W2_prelim_split}, we have 
\begin{align}
W_2^2(G, G_*) &\leq  2\sum_{i=1}^{k}\pi_i\|\theta_i - \theta_{c(i)}^*\|^2 + 8R^2 \sum_{i=1}^{k_*} |\tilde{\Delta} \pi_i|\\
&\leq \frac{\Delta_{\mathrm{sep}}}{2} \sum_{j:|\bar{\mathcal{V}}_j|=1,\bar{\mathcal{V}}_j=\{i\}}\pi_i\|\theta_i-\theta^*_{c(i)}\| + 2\sum_{j:|\bar{\mathcal{V}}_j|\neq 1}\sum_{i \in \mathcal{V}_j}\pi_i\|\theta_i - \theta_{c(i)}^*\|^2+8R^2 \sum_{i=1}^{k_*} |\tilde{\Delta} \pi_i|.
\label{eqn:dung_thm_4_3_multivariate_global_W2_decomposition}
\end{align}
Let 
\begin{equation}
\label{eqn:dung_thm_4_3_voronoi_loss}
    S_3(G,G_*) := \frac{\Delta_{\mathrm{sep}}}{2} \sum_{j:|\bar{\mathcal{V}}_j|=1,\bar{\mathcal{V}}_j=\{i\}}\pi_i\|\theta_i-\theta^*_{c(i)}\| + 2\sum_{j:|\bar{\mathcal{V}}_j|\neq 1}\sum_{i \in \mathcal{V}_j}\pi_i\|\theta_i - \theta_{c(i)}^*\|^2+8R^2 \sum_{i=1}^{k_*} |\tilde{\Delta} \pi_i|, 
\end{equation}
we prove a stronger result
\begin{equation*}
    d_H(p_G,p_{G_*})\geq C_{\mathrm{global},6}\Delta_{\mathrm{sep}}^{4k_*-3}S_3(G,G_*). 
\end{equation*}
\newline 

\emph{Step 2 - Bounding variance $\sum_{i=1}^{k_*}\pi_i\|\theta_i-\theta^*_{c(i)}\|^2$.} We consider the globally positive witness function defined in Section \ref{sec:variance_test_function}: 
\begin{align*}
    P_{\mathrm{var}}(\theta) = \prod_{l=1}^{k_*} \|\theta - \theta_l^*\|^2.
\end{align*}
Using the same argument as in the univariate global and multivariate global part of Theorem \ref{theorem:exact_no_group}, we arrive at 
\begin{align}
    \sum_{j = 1}^{k_{*}} \sum_{i \in \mathcal{V}_{j}} \pi_{i} \|\theta_{i} - \theta_{j}^{*}\|^2 \leq C_{\mathrm{var},1} \Delta_{\mathrm{sep}}^{-(2k_* - 2)} d_H(p_{G},p_{G_*}). \label{eqn:dung_thm__4_3_exact_no_group_global_second_moment_bound} 
\end{align}

\emph{Step 3 - Bounding mass discrepancy $|\tilde{\Delta} \pi_j|$. } To obtain an upper bound for $\sum_{j = 1}^{k_{*}} |\tilde{\Delta} \pi_j|$, we consider the witness function which is defined in Section \ref{sec:point_wise_mass_extractor_non_multicluster}: 
\begin{align*}
    E_{j}(\theta) : = \ell_j^2(\theta) \left[ \bar{A}_j + \langle \bar{B}_j, \theta - \theta_j^*\rangle\right], \quad \text{where} \quad \ell_j(\theta) = \prod_{q \neq j} \frac{\|\theta - \theta_q^*\|}{\|\theta_j^* - \theta_q^*\|},
\end{align*}
and $\bar{A}_j = 1$, $\bar{B}_j = - 2\nabla\ell_j(\theta_j^*)$. This function satisfies the condition that $E_j(\theta^*_i) = \delta_{jl}$ and $\nabla E_j(\theta^*_i) = 0$. By means of Taylor expansion for $E_j(\theta_l)$ exactly around its true center $\theta_l^*$, we have $E_j(\theta_l) = \delta_{jl} + \frac{1}{2}(\theta_{l} - \theta_{c(l)}^{*})^{\top}D^2E_j(\xi_l) (\theta_{l} - \theta_{c(l)}^{*})$, where $\xi_l$ is between $\theta_l$ and $\theta_{c(l)}^*$ which implies that
$$ \int E_j(\theta) d\nu(\theta) = \left(\sum_{i \in \mathcal{V}_{j}} \pi_{i} - \pi_{j}^{*}\right) + \frac{1}{2}\sum_{l=1}^{k_*} \sum_{i \in \mathcal{V}_l} \pi_{i} (\theta_{l} - \theta_{c(l)}^{*})^{\top}D^2E_j(\xi_l) (\theta_{l} - \theta_{c(l)}^{*}).$$ 
Applying the triangle inequality and Lemma \ref{lemma:Hellinger_to_Polynomial} leads to
\begin{align}
    |\tilde{\Delta} \pi_j| &\le \left| \int E_j(\theta) d\nu(\theta) \right| + \frac{1}{2}\sum_{l=1}^{k_*} \sum_{i \in \mathcal{V}_l} \pi_{i} \left|(\theta_{i} - \theta_{l}^{*})^{\top}D^2E_j(\xi_i) (\theta_{i} - \theta_{l}^{*})^2\right|\nonumber\\
    &\leq C_{\mathrm{poly}}\|E_j\|_{\infty}d_H(p_{G},p_{G_*})+ \frac{1}{2}\max_{\theta \in [-R,R]} \|D^2E_{j}(\theta)\|_{\mathrm{op}} \sum_{l=1}^{k_*} \sum_{i \in \mathcal{V}_l} \pi_{i}  \|\theta_{i} - \theta_{l}^{*}\|^2, \label{eqn:dung_thm_4_3_exact_no_group_global_first_moment_bound}
\end{align}
Using Lemma \ref{lemma:mass_test_function} and the estimation in equation~\eqref{eqn:dung_thm__4_3_exact_no_group_global_second_moment_bound}, we have 
\begin{align}
|\tilde{\Delta} \pi_j| & \le \big[ (2R)^{2k_{*} -2} C_{\mathrm{poly}}{C}_{\text{norm},E} + \frac{1}{2} C_{E,2}C_{\mathrm{var},1} \big] \Delta_{\mathrm{sep}}^{-(4k_* - 3)} d_H(p_{G},p_{G_*}) \nonumber \\
& :=  C_{\mathrm{mass}} \Delta_{\mathrm{sep}}^{-(4k_* - 3)} d_H(p_{G},p_{G_*}). \label{eqn:dung_thm_4_3_multivariate_global_delta_small_pi_étimation}
\end{align}

\emph{Step 4 - Bounding $\sum_{j=1:|\bar{\mathcal{V}}_j| = 1}^{k_*}\sum_{i\in \bar{\mathcal{V}}_j}\pi_i\|\theta_i - \theta^*_{j}\|$. } Consider an index $j \in [k_*]$ such that $\bar{\mathcal{V}}_j$ contains exactly one element $\theta_i$. For this term, we consider the mean extractor test function $H_{j,i}$ defined in Section \ref{sec:non_multi_cluster_mean_test_function}: 
\begin{align*}
    H_{j,i}(\theta) : = \ell_j^2(\theta) \left[\langle v_{j,i}, \theta - \theta_j^*\rangle\right],  \quad \text{where} \quad \ell_j(\theta) : = \prod_{q \neq j} \frac{\|\theta - \theta_q^*\|_2}{\|\theta_j^* - \theta_q^*\|_2}, 
\end{align*}
and $v_{j,i} = {(\theta_i-\theta_j^*)}/{\|\theta_i-\theta_j^*\|}$ if $\theta_i \neq \theta_j^*$ and $v_{j,i}=(1, 0,\ldots ,0)^\top$ otherwise. We can point out that $H_{j,i}(\theta_l^*) = 0$ and $\langle \nabla H_{j,i}(\theta_j^*), \theta_i - \theta_j^*\rangle = \|\theta_i - \theta_j^*\|_2$ 
For any center $\theta_l$, by applying Taylor expansion for 
$H_{j,i}(\theta_l)$ exactly around its true center $\theta_{c(l)}^*$, we have 
\begin{align*}
H_{j,i}(\theta_l) &= \delta_{jl}v_{j,i}^{\top}(\theta_{l} - \theta_{c(l)}^{*}) + \frac{1}{2} (\theta_{l} - \theta_{c(l)}^{*})^{\top}D^2H_{j,i}(\xi_l) (\theta_{l} - \theta_{c(l)}^{*}) \\
&= \delta_{jl}\|\theta_i-\theta^*_{c(i)}\| + \frac{1}{2} (\theta_{l} - \theta_{c(l)}^{*})^{\top}D^2H_{j,i}(\xi_l) (\theta_{l} - \theta_{c(l)}^{*}).
\end{align*}
Thus, we arrive at $$ \int H_{j,i}(\theta) d\nu(\theta) = \pi_{i}\|\theta_{i} - \theta_{c(i))}^{*}\| + \frac{1}{2} \sum_{l=1}^{k} \pi_{l} (\theta_{l} - \theta_{c(l)}^{*})^{\top} D^2H_{i,i}(\xi_l)(\theta_{l} - \theta_{c(l)}^{*}),$$
which indicates that
\begin{align}
\label{eqn:dung_thm_4_3_first_moment_estimation}
    \pi_{i}\|\theta_{i} - \theta_{c(i)}^{*}\| &\leq \underbrace{\left|\int H_{j,i}(\theta) d\nu(\theta)\right|}_{\text{Test Function Dual Bound}} + \frac{1}{2} \underbrace{\sum_{l=1}^{k} \pi_{l} \|D^2H_{j,i}(\xi_l)\|_{\mathrm{op}} \|\theta_{l} - \theta_{c(l)}^{*}\|_2^2}_{\text{Error}}
\end{align} 
For the test function dual bound, using Lemma \ref{lemma:mean_test_function} and Lemma \ref{lemma:Hellinger_to_Polynomial}, we have
\begin{align}
\nonumber
\left| \int H_{j,i}(\theta) d\nu(\theta) \right| &\le C_{\mathrm{poly}}\|H_{j,i}\|_{\infty}d_H(p_G,p_{G_*})\\
&\leq C_{\mathrm{poly}}C_{\text{norm},H} \Delta_{\mathrm{sep}}^{-(2k_*-2)} d_H(p_{G},p_{G_*}). \label{eqn:dung_thm_4_3_multivariate_global_mean_unique_point_test_function_dual_bound}
\end{align}
For the error, using Lemma \ref{lemma:mean_test_function} and the estimation in equation~\eqref{eqn:dung_thm__4_3_exact_no_group_global_second_moment_bound}, we obtain that
\begin{align}
\frac{1}{2}\sum_{l=1}^{k}  \pi_l \|D^2H_{j,i}(\xi_l)\|_{\mathrm{op}}\|\theta_l-\theta_{c(l)}^*\|^2 \leq \frac{1}{2} C_{2,E} C_{\text{var,4}} \Delta_{\mathrm{sep}}^{-(4k_*-4)} d_H(p_{G},p_{G_*}).\label{eqn:dung_thm_4_3_global_multivariate_sum_of_second_derivative_of_H}
\end{align}
Plugging the results from equations \eqref{eqn:dung_thm_4_3_multivariate_global_mean_unique_point_test_function_dual_bound} and \eqref{eqn:dung_thm_4_3_global_multivariate_sum_of_second_derivative_of_H} into equation~\eqref{eqn:dung_thm_4_3_first_moment_estimation}, we have 
\begin{align}
\label{eqn:dung_thm_4_3_multivariate_global_unique_near_first_moment_bound}
    \pi_i\|\theta_i-\theta_{c(i)}^*\|\leq C_{\mathrm{mean}}\Delta_{\mathrm{sep}}^{-(4k_*-4)} d_H(p_{G},p_{G_*})
\end{align}
where 
\begin{equation*}
C_{\mathrm{mean}}= (2R)^{2k_*-2}C_{\mathrm{poly}}C_{\mathrm{norm},H} +\frac{1}{2} C_{E,2} C_{\mathrm{var,1}}.
\end{equation*}

\emph{Step 5 - Bounding $W_2^2(G,G_*)$ and conclusion.} Plugging the results from equations~\eqref{eqn:dung_thm__4_3_exact_no_group_global_second_moment_bound}, \eqref{eqn:dung_thm_4_3_multivariate_global_delta_small_pi_étimation}, \eqref{eqn:dung_thm_4_3_multivariate_global_unique_near_first_moment_bound} into equation~\eqref{eqn:dung_thm_4_3_multivariate_global_W2_decomposition}, we have  
\begin{align*}
    W_2^2(G,G_*)&\leq S_3(G,G_*)\leq C^{-1}_{\mathrm{global},6} \Delta_{\mathrm{sep}}^{-(4k_*-3)} d_H(p_{G},p_{G_*}),
\end{align*}
where 
\begin{equation*}
    C^{-1}_{\mathrm{global},6} = 2\cdot(2R)^{2k_*-1}C_{\mathrm{var},1} +  8R^2k_*\cdot C_{\mathrm{mass}}+ 2R^2k_*\cdot C_{\mathrm{mean}}. 
\end{equation*}
As a result, we have 
\begin{equation*}
    d_H(p_{G},p_{G_*}) \geq C_{\mathrm{global},6}\Delta_{\mathrm{sep}}^{4k_*-3} S_3(G,G_*) \geq C_{\mathrm{global},6}\Delta_{\mathrm{sep}}^{4k_*-3} W_2^2(G,G_*). 
\end{equation*}


\section{Test Functions}
\label{section:test_functions}
Let $\mathcal{V}$ be the vector space spanned by all true centers $\{\theta_1^*,\ldots,\theta_{k_*}^{*}\}$ and $\{\theta_1,\ldots,\theta_k\}$. Through the proof, as we construct test functions in $\mathcal{V}$, by changing the coordinate, we can suppose that $\mathcal{V}$ is exactly the embedded space $\mathbb{R}^{d_*}$ in $\mathbb{R}^{d}$. In this case, $d_*\leq \min\{k+k_*,d\}$. 

\subsection{Variance extractor test function}
\label{sec:variance_test_function}
For a family of center $\{\theta^*_1,\ldots,\theta^*_{k_*}\}$ belong to the closed ball $\bar{B}(0,R)$ such that $\min_{1\leq j,l\leq {k_*}}\|\theta^*_i - \theta^*_l\|\geq \Delta_{\mathrm{sep}}$, we construct the variance extractor test function 
\begin{align*}
    P_{\mathrm{var}}(\theta) : = \prod_{l=1}^{k_*} \|\theta - \theta_l^*\|^2.
\end{align*}
\begin{lemma}
    \label{lemma:variance_test_function}
    Given the construction of the test function $P_{\mathrm{var}}$, we have
    \begin{align*}
        \|P_{\mathrm{var}}\|_{\infty}\leq C_{\mathrm{var}}:=(2R)^{2k_*}.
    \end{align*}
\end{lemma}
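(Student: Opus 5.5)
The bound follows from the definition of the sup norm in the Notation paragraph: $\|P_{\mathrm{var}}\|_{\infty}$ is a supremum over $\theta\in\mathcal{S}$ with $\|\theta\|\le R$, i.e.\ over the closed ball $\bar B(0,R)$ intersected with the relevant subspace. The plan is to bound each factor of the product pointwise on this set and then multiply.

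First, fix any $\theta$ with $\|\theta\|\le R$. Each true center satisfies $\|\theta_l^*\|\le R$ by Assumption (A.1). The triangle inequality then gives
\begin{align*}
\|\theta-\theta_l^*\|\le \|\theta\|+\|\theta_l^*\|\le 2R
\end{align*}
for every $l\in[k_*]$. Squaring yields $\|\theta-\theta_l^*\|^2\le (2R)^2$.

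Second, $P_{\mathrm{var}}(\theta)=\prod_{l=1}^{k_*}\|\theta-\theta_l^*\|^2$ is a product of $k_*$ nonnegative factors, each at most $(2R)^2$. Hence
\begin{align*}
0\le P_{\mathrm{var}}(\theta)\le (2R)^{2k_*},
\end{align*}
which also gives $|P_{\mathrm{var}}(\theta)|\le (2R)^{2k_*}$. Taking the supremum over the admissible $\theta$ gives $\|P_{\mathrm{var}}\|_\infty\le C_{\mathrm{var}}=(2R)^{2k_*}$.

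There is no real obstacle. The only point to check is that the sup-norm domain is indeed restricted to $\|\theta\|\le R$, as the paper's definition of $\|\cdot\|_\infty$ (and Lemma~\ref{lemma:Hellinger_to_Polynomial}) specifies, so that the fitted atoms' locations play no role. It is also worth recording that $P_{\mathrm{var}}$ is a genuine polynomial of degree $2k_*$, since $\|\theta-\theta_l^*\|^2$ is quadratic in the coordinates. This is what allows Lemma~\ref{lemma:Hellinger_to_Polynomial} to be applied to it with $D=2k_*$.
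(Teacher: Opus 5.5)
Your proposal is correct and follows the paper's proof: bound each factor $\|\theta-\theta_l^*\|^2$ by $(2R)^2$ via the triangle inequality on $\bar B(0,R)$, then take the product over the $k_*$ factors. Your final step is stated as an inequality, which is the correct form; the paper writes the supremum as an equality with $(2R)^{2k_*}$, but only the upper bound is needed.
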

\begin{proof}[Proof of Lemma~\ref{lemma:variance_test_function}]
Note that for any $\theta, \theta_l^* \in B(0,R)$, we have $\|\theta - \theta^*_l\|_2\leq 2R$. As a result, we have 
\begin{equation*}
    \|P_{\mathrm{var}}\|_{\infty} \leq \sup_{\theta \in B(0,R)}|P_{\mathrm{var}}(\theta)| = (2R)^{2k_*} := C_{\mathrm{var}}. 
\end{equation*}
\end{proof}
\subsection{Mean extractor test function}
\label{sec:mean_test_function}

\subsubsection{Non multi-cluster setting} 
\label{sec:non_multi_cluster_mean_test_function}
For any $1 \leq i,j \leq k_*$ we construct the mean  extractor test function, that is, an interpolated Hermite polynomial $H_{j,i}(\theta)$ such that $H_{j,i}(\theta_l^*) = 0$ and $\nabla H_{j,i}(\theta_l^*) = 0$ for all $l\neq j$. The formula of this polynomial can be expressed as
\begin{align*}
    H_{j,i}(\theta) : = \ell_j^2(\theta) \left[\langle v_{j,i}, \theta - \theta_j^*\rangle\right],  \quad \text{where} \quad \ell_j(\theta) : = \prod_{q \neq j} \frac{\|\theta - \theta_q^*\|_2}{\|\theta_j^* - \theta_q^*\|_2}, 
\end{align*}
and $v_{j,i} = {(\theta_i-\theta_j^*)}/{\|\theta_i-\theta_j^*\|}$ if $\theta_i \neq \theta_j^*$ and $v_{j,i}=(1, 0,\ldots ,0)^\top$ otherwise. It can be checked that $H_{j,i}$ is of degree $2k_*-1$ and $H_{j,i}(\theta_l^*) = 0$ for all true center $\theta_l^*$. For the gradient of $H_{j,i}$, it is easy to check that $\nabla H_{j,i}(\theta^*_l) = 0$ for all $l \neq j$. To achieve  $\nabla H_{j,i}(\theta^*_j)$, we first calculate 
\begin{equation}
\label{eqn:lemma_ell_function_gradient}
    \nabla \ell_j(\theta^*_j) =\left(\sum_{q\neq j}  \frac{\nabla|_{\theta=\theta^*_j}\|\theta - \theta_q^*\|_2}{\|\theta_j^* - \theta_q^*\|_2}\right)\underbrace{\ell_j(\theta^*_j)}_{=1} = \sum_{q\neq j} \frac{\theta^*_j-\theta^*_q}{\|\theta_j^*-\theta_q^*\|_2^2}. 
\end{equation}
which leads to an estimation to be used later 
\begin{equation}
\label{eqn:lemma_ell_function_gradient_estimation}
    \|\nabla \ell_j(\theta^*_j)\|_2 \leq \sum_{q\neq j} \frac{\|\theta^*_j-\theta^*_q\|}{\|\theta_j^*-\theta_q^*\|_2^2} = \sum_{q\neq j} \frac{1}{\|\theta_j^*-\theta_q^*\|_2}\leq\frac{k_*-1}{\Delta_{\mathrm{sep}}}. 
\end{equation}
Thus, we have 
\begin{align*}
    \nabla H_{j,i}(\theta^*_j) = 2\ell_{j}(\theta_j^*)\nabla\ell_j(\theta^*_j)\left[\langle v_{j,i}, \theta_j^* - \theta_j^*\rangle\right] + \ell^2_j(\theta^*_j)v_{j,i} = v_{j,i}. 
\end{align*}
\begin{lemma}
\label{lemma:mean_test_function}
(a) (One-cluster assumption) Given the construction of the test function $H_{j,i}$, under one-cluster assumption, for any $1\leq l\leq k_*$, we have 
\begin{align*}
\|H_{j,i}\|_{\infty} \le C_{\mathrm{norm},H} \Delta_{\mathrm{sep}}^{-(2k_*-2)}\\
\max_{\theta \in \mathcal{V}_l: \|\theta - \theta_{l}^{*}\| \le \Delta_{\mathrm{sep}}/4}\|D^2{H}_{j,i}(\theta)\|_{\mathrm{op}}\leq C_{H,1}\Delta_{\mathrm{sep}}^{-1}.
\end{align*}

(b) (No-cluster assumption) Under no-cluster assumption, we have for any $1\leq l \leq k_*$
\begin{align*}
\|H_{j,i}\|_{\infty} \le C_{\mathrm{norm},H} \Delta_{\mathrm{sep}}^{-(2k_*-2)}\\
\max_{\theta\in \bar{B}(0,R)}\|D^2{H}_{j,i}(\theta)\|_{\mathrm{op}}\leq C_{H,2}\Delta_{\mathrm{sep}}^{-(2k_*-2)}.
\end{align*}
\end{lemma}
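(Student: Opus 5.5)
The approach is to work directly from the factorization $H_{j,i}(\theta)=\ell_j^2(\theta)\,\langle v_{j,i},\theta-\theta_j^*\rangle$, where $\ell_j^2(\theta)=\prod_{q\neq j}\|\theta-\theta_q^*\|^2/\|\theta_j^*-\theta_q^*\|^2$ is a genuine polynomial of degree $2k_*-2$. The linear factor has unit-norm coefficient, so $|\langle v_{j,i},\theta-\theta_j^*\rangle|\le\|\theta-\theta_j^*\|\le 2R$ on $B(0,R)$. For the sup norm, valid in both (a) and (b), I would bound each numerator $\|\theta-\theta_q^*\|^2\le(2R)^2$ and each denominator $\|\theta_j^*-\theta_q^*\|^2\ge\Delta_{\mathrm{sep}}^2$. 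This gives
\[
\|H_{j,i}\|_\infty\le 2R\,(2R)^{2k_*-2}\Delta_{\mathrm{sep}}^{-(2k_*-2)},
\]
so one may take $C_{\mathrm{norm},H}=(2R)^{2k_*-1}$.

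For the Hessian bound in (b), I would expand $D^2H_{j,i}$ with the product rule over the $2k_*-1$ factors. These are the $k_*-1$ quadratics $Q_q(\theta)=\|\theta-\theta_q^*\|^2$ and the linear factor $L(\theta)=\langle v_{j,i},\theta-\theta_j^*\rangle$, all scaled by the constant $\prod_{q\ne j}\|\theta_j^*-\theta_q^*\|^{-2}\le\Delta_{\mathrm{sep}}^{-(2k_*-2)}$. On $\bar B(0,R)$ each factor and its first and second derivatives are bounded by constants depending only on $R$. The bounds are $|Q_q|\le 4R^2$, $\|\nabla Q_q\|\le 4R$, $D^2Q_q=2I$, $|L|\le 2R$, $\|\nabla L\|=1$, and $D^2L=0$. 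Every term in the expansion is therefore at most $C(R,k_*)\Delta_{\mathrm{sep}}^{-(2k_*-2)}$, and summing the $O(k_*^2)$ terms gives $C_{H,2}$.

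For the local Hessian bound in (a), I would use the one-cluster geometry to cancel the denominators against the numerators. Fix $\theta$ with $\|\theta-\theta_l^*\|\le\Delta_{\mathrm{sep}}/4$. For every $q$ we then have $\|\theta-\theta_q^*\|\le C_0\Delta_{\mathrm{sep}}+\Delta_{\mathrm{sep}}/4\le(C_0+1)\Delta_{\mathrm{sep}}$, since all centers lie within $C_0\Delta_{\mathrm{sep}}$ of each other. Write $H_{j,i}=\prod_{q\ne j}r_q(\theta)^2\cdot L(\theta)$ with normalized factors $r_q(\theta)=\|\theta-\theta_q^*\|/\|\theta_j^*-\theta_q^*\|$. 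Each normalized quadratic $r_q^2$ then satisfies $|r_q^2|\le (C_0+1)^2$, $\|\nabla r_q^2\|\le 2(C_0+1)\Delta_{\mathrm{sep}}^{-1}$, and $\|D^2 r_q^2\|=2\|\theta_j^*-\theta_q^*\|^{-2}\le 2\Delta_{\mathrm{sep}}^{-2}$. The linear factor satisfies $|L|\le\|\theta-\theta_j^*\|\le(C_0+1)\Delta_{\mathrm{sep}}$, $\|\nabla L\|=1$, and $D^2L=0$.

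In the product-rule expansion of $D^2H_{j,i}$, every term takes two derivatives in total. Each derivative on a quadratic factor costs $\Delta_{\mathrm{sep}}^{-1}$, while a derivative on $L$ costs nothing but removes the $\Delta_{\mathrm{sep}}$ gain that $L$ would otherwise supply. The possible terms are therefore:
\begin{itemize}
\item both derivatives on quadratics with $L$ underived, giving $\Delta_{\mathrm{sep}}^{-2}\cdot\Delta_{\mathrm{sep}}=\Delta_{\mathrm{sep}}^{-1}$;
\item one derivative on a quadratic and one on $L$, giving $\Delta_{\mathrm{sep}}^{-1}\cdot 1$;
\item both derivatives on $L$, which gives $0$.
\end{itemize}
Every term is thus $O(\Delta_{\mathrm{sep}}^{-1})$, with constants depending only on $C_0$ and $k_*$, and this yields $C_{H,1}$.

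The main obstacle is this scale-counting step in (a). Its delicate point is that the derivative of $L$ leaves no spare $\Delta_{\mathrm{sep}}$, so the bound depends on the fact that $L$ contributes at most one derivative and on the uniform cluster-scale bounds for the normalized quadratic factors.
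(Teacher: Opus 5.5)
Your proposal is correct and follows essentially the same route as the paper. The sup-norm bound is identical, and your product-rule scale counting in (a) is an unrolled version of the paper's rescaling $z=(\theta-\theta_l^*)/\Delta_{\mathrm{sep}}$, $H_{j,i}(\theta)=\Delta_{\mathrm{sep}}\tilde H_{j,i}(z)$, with bounded $z$-derivatives and the chain rule.

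The one difference is in (b). The paper expands $\ell_j^2$ into monomials with coefficients of size $\Delta_{\mathrm{sep}}^{-(2k_*-2)}$, bounds Hessian entries, and pays a factor $d_*$ to pass to the operator norm. Your product rule on the quadratic factors gives the operator-norm bound directly with a dimension-free constant, which is a minor but genuine simplification.
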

\begin{proof}[Proof of Lemma~\ref{lemma:mean_test_function}]
(a) In the case of one-cluster assumption, for any $\theta\in \bar{B}(0,R)$, we have $\|\theta-\theta_q^*\|\leq 2R$ for all $q$. Thus, 
\begin{equation}
\label{eqn:dung_lemma_7.2_estimation_of_ell}
    \ell^2_j(\theta) \leq (2R)^{2k_*-2}\Delta^{-(2k_*-2)}_{\mathrm{sep}}.
\end{equation}

In addition, we have 
\begin{equation}
    \label{eqn:dung_lemma_7.2_estimation_of_monomial}
    |\langle v_{j,i},\theta-\theta_j^*\rangle|\leq \|v_{j,i}\|_2\cdot \|\theta-\theta_j^*\|_2 \leq 2R. 
\end{equation}
Combining the estimation in equations~\eqref{eqn:dung_lemma_7.2_estimation_of_ell} and \eqref{eqn:dung_lemma_7.2_estimation_of_monomial}, we have
\begin{equation*}
    |H_{j,i}(\theta)| \leq (2R)^{2k_*-1}\Delta^{-(2k_*-2)}_{\mathrm{sep}}:= C_{\text{norm},H} \Delta_{\mathrm{sep}}^{-(2k_*-2)}.
\end{equation*}
We now proceed to bound $\|D^2H_{j,i}(\theta)\|_{\mathrm{op}}$, for any $\theta \in \mathcal{V}_{l}$ and $\|\theta - \theta_{l}^{*}\| \leq \Delta_{\mathrm{sep}}/4$. We perform the following change of coordinates $z = \frac{\theta - \theta_{l}^*}{\Delta_{\mathrm{sep}}}$. For $\theta \in \mathcal{V}_{l}$ and $\|\theta - \theta_{l}^{*}\| \leq \Delta_{\mathrm{sep}}/4$, we have $\|z\| \leq 1/4$. Let $c_{lq} = \frac{\theta_l^* - \theta_q^*}{\Delta_{\mathrm{sep}}}$ for any $1 \leq l \leq k_{*}$. From the assumption, $\|c_{lq}\| \le C_0$. Also $\|c_{jq}\| \ge 1$ for $q \neq j$. Therefore, we obtain that 
\begin{align}
    \label{eqn:dung_lemma_7_2_coordinate_change_polynomial}
    \ell_j(\theta) = \prod_{q\neq j} \frac{\|\theta_l^*-\theta_q^* + \Delta_{\mathrm{sep}}z\|}{\|\theta^*_{j}-\theta_q^*\|} = \prod_{q\neq j}\frac{\|c_{lq}+z\|}{\|c_{jq}\|} = \tilde{\ell}_j(z). 
\end{align}
It is clear that $\tilde{\ell}^2_j(z)$ is a polynomial in $z$ whose coefficients are determined strictly by bounds $C_0$ and $1$. Hence $\tilde{\ell}^2_j(z)$ and its derivatives are bounded by an absolute constant $C_{\ell}(R, k_*, C_0)$, for any $\|z\|\leq 1/4$.

We have $H_{j,i}(\theta) =   \tilde{\ell}_j^{2}(z) \Delta_{\mathrm{sep}}\langle v_{j,i},z + c_{lj}\rangle  = \Delta_{\mathrm{sep}} \tilde{H}_{j,i}(z)$. Note that the second-order derivative of $\tilde{H}_{j,i}(z)$ with respect to $z$ are bounded by $C_{\tilde{H},\mathrm{a}}(R, k_*, C_0)$.
By the chain rule, we have $\frac{d^k}{d\theta^k} {H}_{j,i}(\theta) = \Delta_{\mathrm{sep}}^{-|k|+1}\cdot \frac{d^k}{dz^k}\tilde{H}_{j,i}(z)$ for any multi-index $k$. Thus, we have for any $u,v\in [1,d]$,
\begin{align*}
\max_{\theta \in \mathcal{V}_l: \|\theta - \theta_{l}^{*}\| \leq \Delta_{\mathrm{sep}}/4} \left|\frac{\partial^2}{\partial\theta^u\partial\theta^v}{H}_{j,i}(\theta)\right|&\le \Delta_{\mathrm{sep}}^{-1} C_{\tilde{H},\mathrm{a}}. 
\end{align*}
As a result, noting that $\|M\|_{\mathrm{op}}\leq d_*\max_{1\leq i,j\leq d_{*}}\{|M_{ij}|\}$ for any symmetric matrix $M$, where $d_*$ here is the dimension of $M$, we have 
\begin{align*}
    \max_{\theta \in \mathcal{V}_l: \|\theta - \theta_{l}^{*}\| \leq \Delta_{\mathrm{sep}}/4}\|D^2{H}_{j,i}(\theta)\|_{\mathrm{op}}\leq d_*C_{\tilde{H},\mathrm{a}}\Delta_{\mathrm{sep}}^{-1} :=  C_{H,1}\Delta_{\mathrm{sep}}^{-1}. 
\end{align*}

(b) In the case of no-cluster assumption, for $\|H_{j,i}\|_{\infty}$, we use the same argument as in part (a) to achieve identical result. For $D^2H_{j,i}$, from the definition of $\Delta_{\mathrm{sep}}$, it is clear that the denominator of the polynomial $\ell_j$ is at least $\Delta_{\mathrm{sep}}^{k_*-1}$, thus, we can write $\ell^2_j(\theta)$ as  $\ell^2_j(\theta) = \Delta_{\mathrm{sep}}^{-2(k_*-1)} \sum_{m\succeq 0, |m|\leq 2k_*-2} d_m \theta^m$. As a result, we can express $\ell^2_{j}$ as 
\begin{align}
\label{eqn:dung_lemma_another_representation_for_ell_j}
    \ell_j^2(\theta) = \Delta_{\mathrm{sep}}^{-(2k_*-2)} \sum_{m \succeq 0,|m|\leq 2k_*-2} e_m \theta^m \text{ such that }|e_m| \le R^{2k_*-2-|m|}.\prod_{i=1}^{d}\binom{2k_*-2}{m_i}.
\end{align} 
Therefore, the polynomial $H_{j,i}(\theta) = \ell_j^2(\theta) \langle v_{j,i}, \theta - \theta_j^*\rangle  $ can be rewritten as $H_{j,i}(\theta) = \sum_{m \succeq 0,|m|\leq 2k_*-1} h_m \theta^m$ with 
\begin{align*}
    |h_m| \le \Delta_{\mathrm{sep}}^{-(2k_*-2)} \tilde{h}_m,
\end{align*}
for any $m \succeq 0,|m|\leq 2k_*-1$, where $$\tilde{h}_m = R^{2k_*-1-|m|}\prod_{i=1}^d\binom{2k_*-1}{m_i}.$$
As a result, the second order partial derivatives of $H_{j,i}$ can be bounded as  

\begin{align*}
    \max_{\theta \in \bar{B}(0,R)} \left|\frac{\partial^2}{\partial\theta^u\partial\theta^v}H_{j}(\theta)\right| &\le \sum_{0\leq |m| \leq 2k_*-1}(m_{u}m_v -\mathbf{1}_{\{u=v\}}m_u)|h_m|R^{|m|-2}\\ &=\left(\sum_{0\leq |m| \leq 2k_*-1}(m_um_v -\mathbf{1}_{\{u=v\}}m_u)R^{|m|-2}\tilde{h}_m\right)\Delta_{\mathrm{sep}}^{-(2k_*-2)}\\
    &:= C_{H,\mathrm{partial}} \Delta_{\mathrm{sep}}^{-(2k_*-2)}. 
\end{align*}
As a result, noting that $\|M\|_{\mathrm{op}}\leq d_*\max_{1\leq i,j\leq d_{*}}\{|M_{ij}|\}$ for any symmetric matrix $M$, where $d_*$ here is the dimension of $M$, we have 
\begin{align*}
    \max_{\theta \in \bar{B}(0,R)}\|D^2{H}_{j,i}(\theta)\|_{\mathrm{op}}\leq d_*C_{H,\mathrm{partial}}\Delta_{\mathrm{sep}}^{-(2k_*-2)} :=  C_{H,2}\Delta_{\mathrm{sep}}^{-(2k_*-2)}. 
\end{align*}
\end{proof} 

\subsubsection{Multi-cluster setting}
\label{sec:dung_mean_extractor_multicluster}
For multi-cluster setting, we construct the mean extractor test function, that is, an interpolated Hermite polynomial $\bar{H}_{j,i}(\theta)$ such that $\bar{H}_{j,i}(\theta_l^*) = 0$ and $\nabla H_{j,i}(\theta_l^*) = 0$ for all $l\neq j$. The formula of $\bar{H}_{j,i}$ is given by
$$\bar{H}_{j,i}(\theta) = \ell_{j,\text{micro}}^2(\theta) [\langle \bar{v}_{j,i},\theta - \theta_{j}^{*}\rangle]P_{\text{macro}}(\theta),$$ 
where we define $\ell_{j,\text{micro}}(\theta) = \prod_{q \in \mathcal{C}_{m} \neq j} \frac{\|\theta - \theta_q^*\|_2}{\|\theta_j^* - \theta_q^*\|_2}$, $P_{\text{macro}}(\theta) = \prod_{p \neq m} \prod_{q \in \mathcal{C}_p} \left( \frac{\|\theta - \theta_q^*\|_2}{2R} \right)^{2s_{\max}}$, and $$\bar
v_{j,i} = P^{-1}_{\text{macro}}(\theta_j^*)\frac{\theta_i-\theta_j^*}{\|\theta_i-\theta_j^*\|},$$
when $\theta_i\neq \theta_j^*$ and $\bar{v}_{j,i} = P^{-1}_{\text{macro}}(\theta_j^*)(1,0,\ldots,0)^{\top}$ otherwise. It can be checked that $\bar{H}_{j,i}(\theta_l^*) = 0$ for all true center $\theta_l^*$. For the gradient of $\bar{H}_{j,i}$, it is easy to check that $\nabla \bar{H}_{j,i}(\theta^*_l) = 0$ for all $l \neq j$. To achieve  $\nabla \bar{H}_{j,i}(\theta^*_j)$, we have 
\begin{align*}
    \nabla \bar{H}_{j,i}(\theta^*_j) &= \nabla(\ell_{j,\text{micro}}^2(\theta^*_j) P_{\text{macro}}(\theta^*_j))[\langle \bar{v}_{j,i},\theta_j^*-\theta_j^*\rangle] +\ell_{j,\text{micro}}^2(\theta^*_j) P_{\text{macro}}(\theta^*_j)\bar{v}_{j,i}\\
    &= P_{\text{macro}}(\theta^*_j)\bar{v}_{j,i} = \frac{\theta_i-\theta_j^*}{\|\theta_i-\theta_j^*\|}. 
\end{align*}

\begin{lemma}
\label{lemma:dung_multicluster_mean_test_function}
Given the construction of the test function $\bar{H}_{j,i}$,  we have: 
\begin{itemize}
    \item [(a)] (Supremum norm bound) 
\begin{align*}
\|\bar{H}_{j,i}\|_{\infty} \le C_{\mathrm{norm},\bar{H}} \Delta_{\mathrm{sep}}^{-(2s_{m}-2)}
\end{align*}

\item [(b)] (Local Hessian bound) For any $1\leq l\leq k_*$ such that $\theta_l^* \in \mathcal{C}_m$, we have 
\begin{equation*}
    \max_{\theta \in \mathcal{V}_l: \|\theta - \theta_{l}^{*}\| \le \Delta_{\mathrm{sep}}/4}\|D^2\bar{H}_{j,i}(\theta)\|_{\mathrm{op}}\leq C_{\bar{H}}\Delta_{\mathrm{sep}}^{-1}.
\end{equation*}
Otherwise, for any $1\leq l\leq k_*$ such that to cluster $\theta_l^*\notin \mathcal{C}_m$, we have for $\theta \in \mathcal{V}_l: \|\theta - \theta_{l}^{*}\| \le \Delta_{\mathrm{sep}}/4$, 
\begin{equation*}
   |\bar{H}_{j,i}(\theta)|\leq C_{\mathrm{cross},\bar{H}}\|\theta - \theta_{l}^{*}\|_2^{2}.
\end{equation*}
\end{itemize}
\end{lemma}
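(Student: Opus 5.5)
The plan is to factor $\bar{H}_{j,i}$ into three pieces: the micro Lagrange factor $\ell_{j,\text{micro}}^2$, the linear factor $\langle \bar{v}_{j,i},\theta-\theta_j^*\rangle$, and the macro factor $P_{\text{macro}}$. Each piece will be bounded separately on the relevant region, using the two-scale geometry of Definition~\ref{def:multi.cluster}. This mirrors the proof of Lemma~\ref{lemma:mean_test_function}(a), with the macro factor as the new ingredient.

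\textbf{Part (a).} Take $\theta\in\bar{B}(0,R)$. Every factor of $P_{\text{macro}}$ has the form $\|\theta-\theta_q^*\|/(2R)\le 1$, so $|P_{\text{macro}}(\theta)|\le 1$. Each of the $s_m-1$ micro factors has numerator at most $2R$ and denominator at least $\Delta_{\mathrm{sep}}$, so
\[
\ell_{j,\text{micro}}^2(\theta)\le (2R)^{2s_m-2}\Delta_{\mathrm{sep}}^{-(2s_m-2)}.
\]
For the linear factor we need a lower bound on $P_{\text{macro}}(\theta_j^*)$. Every $q$ outside $\mathcal{C}_m$ satisfies $\|\theta_j^*-\theta_q^*\|\ge D_0$, hence
\[
P_{\text{macro}}(\theta_j^*)\ge \min\{1,(D_0/2R)^{2s_{\max}k_*}\}.
\]
This gives $\|\bar v_{j,i}\|\le \bar C_{\bar v}$ with $\bar C_{\bar v}$ depending only on $R,k_*,D_0$, and therefore $|\langle\bar v_{j,i},\theta-\theta_j^*\rangle|\le 2R\,\bar C_{\bar v}$. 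Multiplying the three bounds yields (a).

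\textbf{Part (b), case $\theta_l^*\in\mathcal{C}_m$.} Rescale as in Lemma~\ref{lemma:mean_test_function}(a): set $z=(\theta-\theta_l^*)/\Delta_{\mathrm{sep}}$ with $\|z\|\le 1/4$. The micro part becomes $\tilde\ell_j(z)=\prod_{q\in\mathcal{C}_m\setminus\{j\}}\|c_{lq}+z\|/\|c_{jq}\|$ with $\|c_{lq}\|\le C_0$ and $\|c_{jq}\|\ge 1$. The linear part becomes $\Delta_{\mathrm{sep}}\langle\bar v_{j,i},z+c_{lj}\rangle$.

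The macro factor, written in $z$, is $\tilde P(z)=P_{\text{macro}}(\theta_l^*+\Delta_{\mathrm{sep}}z)$. Its $z$-derivatives are $\Delta_{\mathrm{sep}}^{|\alpha|}$ times $\theta$-derivatives of a polynomial whose coefficients on $\bar B(0,R)$ are bounded by constants depending on $R,k_*,D_0$. Since $\Delta_{\mathrm{sep}}\le 2R$, $\tilde P$ has $C^2$-norm bounded by a constant on $\|z\|\le 1/4$.

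So $\bar H_{j,i}(\theta)=\Delta_{\mathrm{sep}}\tilde H(z)$ with $\|D_z^2\tilde H\|$ bounded by a constant, and the chain rule gives $D_\theta^2\bar H_{j,i}=\Delta_{\mathrm{sep}}^{-1}D_z^2\tilde H$. Converting entrywise bounds to the operator norm costs a factor $d_*$, which gives $C_{\bar H}\Delta_{\mathrm{sep}}^{-1}$.

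\textbf{Part (b), case $\theta_l^*\notin\mathcal{C}_m$.} Here $\theta_l^*$ is one of the centres appearing in $P_{\text{macro}}$, which contains the factor $(\|\theta-\theta_l^*\|/2R)^{2s_{\max}}$. All remaining macro factors are at most $1$. Since $s_{\max}\ge1$ and $\|\theta-\theta_l^*\|\le 2R$,
\[
(\|\theta-\theta_l^*\|/2R)^{2s_{\max}}\le (2R)^{-2}\|\theta-\theta_l^*\|^2 .
\]
For the micro factor, $\theta$ lies within $\Delta_{\mathrm{sep}}/4\le D_0/16$ of $\theta_l^*$, hence stays at distance at least $D_0/2$ from every point of $\mathcal{C}_m$. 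The numerators are at most $2R$ but the denominators are only $\ge\Delta_{\mathrm{sep}}$, so this crude estimate gives $\ell_{j,\text{micro}}^2(\theta)\le(2R)^{2s_m-2}\Delta_{\mathrm{sep}}^{-(2s_m-2)}$. That is not constant.

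This is the main obstacle: the statement asks for $C_{\mathrm{cross},\bar H}$ with no $\Delta_{\mathrm{sep}}$ dependence. The approach is to compensate using the surplus macro power. Every other macro factor evaluated near $\theta_l^*$ is bounded, but the factor for $\theta_l^*$ itself carries exponent $2s_{\max}\ge 2s_m$. That leaves $\|\theta-\theta_l^*\|^{2s_{\max}-2}\le(\Delta_{\mathrm{sep}}/4)^{2s_m-2}$ available, which exactly cancels $\Delta_{\mathrm{sep}}^{-(2s_m-2)}$. The linear factor is bounded by $2R\bar C_{\bar v}$ as in (a). Combining these,
\[
|\bar H_{j,i}(\theta)|\le C_{\mathrm{cross},\bar H}\|\theta-\theta_l^*\|^2 ,
\]
which is why the exponent $2s_{\max}$ was built into $P_{\text{macro}}$.
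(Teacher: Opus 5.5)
Your proposal is correct and follows the paper's proof essentially step for step: the same three-factor split of $\bar H_{j,i}$, the same rescaling $z=(\theta-\theta_l^*)/\Delta_{\mathrm{sep}}$ for the in-cluster Hessian bound, and the same use of the surplus exponent $2s_{\max}\ge 2s_m$ in $P_{\text{macro}}$ to cancel the $\Delta_{\mathrm{sep}}^{-(2s_m-2)}$ blow-up of $\ell_{j,\text{micro}}^2$ in the cross-cluster case. One small cleanup concerns your inequality $\|\theta-\theta_l^*\|^{2s_{\max}-2}\le(\Delta_{\mathrm{sep}}/4)^{2s_m-2}$, which silently assumes $\Delta_{\mathrm{sep}}\le 4$: bound the leftover factor $(\Delta_{\mathrm{sep}}/4)^{2(s_{\max}-s_m)}$ by a constant using $\Delta_{\mathrm{sep}}\le 2R$, as the paper does, and drop your earlier display that had already discarded the surplus power down to $(2R)^{-2}\|\theta-\theta_l^*\|^2$.
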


\begin{proof}[Proof of Lemma \ref{lemma:dung_multicluster_mean_test_function}]
The first step is to bound $\|\bar{v}_{j,i}\|$. We have 
\begin{align}
\|\bar{v}_{j,i}\|_2 \le \left( \frac{2R}{D_{0}} \right)^{2s_{\max}(k_* - s_m)} \leq (2R/D_0)^{2k^2_*}:= \bar{C}_{\bar{v}}.
\label{eqn:dung_estimation_for_norm_of_bar_v}
\end{align}

(a) For any $\theta\in \bar{B}(0,R)$, we have $\|\theta-\theta_q^*\|\leq 2R$ for all $q$. Thus, 
\begin{equation}
\label{eqn:dung_lemma_7.3_estimation_of_ell_micro}
    \ell^2_{j,\text{micro}}(\theta) \leq (2R)^{2s_m-2}\Delta^{-(2s_m-2)}_{\mathrm{sep}}.
\end{equation}
In addition, we have 
\begin{equation}
    \label{eqn:dung_lemma_7.3_estimation_of_monomial_factor}
    |\langle v_{j,i},\theta-\theta_j^*\rangle|\leq \|v_{j,i}\|_2\cdot \|\theta-\theta_j^*\|_2 \leq 2R\bar{C}_{\bar{v}}. 
\end{equation}
Combining the estimation in equations~\eqref{eqn:dung_lemma_7.3_estimation_of_ell_micro} and \eqref{eqn:dung_lemma_7.3_estimation_of_monomial_factor}, and note that $|P_{\text{macro}}(\theta)|\leq 1$, we have
\begin{equation*}
    |\bar{H}_{j,i}(\theta)| \leq \bar{C}_{\bar{v}}(2R)^{2s_m-1}\Delta^{-(2s_m-2)}_{\mathrm{sep}}\leq C_{\text{norm},\bar{H}} \Delta_{\mathrm{sep}}^{-(2s_m-2)},
\end{equation*}
where $C_{\text{norm},\bar{H}}:=\bar{C}_{\bar{v}}\max\{(2R)^{2k_*-1},1\}.$

(b) Now, we move to bound $\|D^2\bar{H}_{j,i}(\theta)\|_{\mathrm{op}}$, for any $\theta \in \mathcal{V}_{l}$ and $|\theta - \theta_{l}^{*}| \leq \Delta_{\mathrm{sep}}/4$. We perform the following change of coordinates $z = \frac{\theta - \theta_{l}^*}{\Delta_{\mathrm{sep}}}$. For $\theta \in \mathcal{V}_{l}$ and $\|\theta - \theta_{l}^{*}\| \leq \Delta_{\mathrm{sep}}/4$, we have $\|z\|\leq 1/4$. Let $c_{lq} = \frac{\theta_l^* - \theta_q^*}{\Delta_{\mathrm{sep}}}$ for any $l,q\in \mathcal{C}_m$ and $l\neq q$. From the assumption, $\|c_{lq}\| \le C_0$. Also $\|c_{jq}\| \ge 1$ for $q \in \mathcal{C}_m\neq j$. Therefore, we obtain that 
\begin{align}
\label{eqn:dung_lemma_7_3_coordinate_change_polynomial_ell_macro}
    \ell_{j,\text{micro}}(\theta) = \prod_{q\in \mathcal{C}_m\neq j} \frac{\|\theta_l^*-\theta_q^* + \Delta_{\mathrm{sep}}z\|}{\|\theta^*_{j}-\theta_q^*\|} = \prod_{q\in \mathcal{C}_m\neq j}\frac{\|c_{lq}+z\|}{\|c_{jq}\|} = \tilde{\ell}_{j,\text{micro}}(z). 
\end{align}
Regarding the polynomial $P_{\text{macro}}(\theta)$, we have
\begin{equation}
\label{eqn:dung_p_macro_change_of_variable}
    P_{\text{macro}}(\theta) = P_{\text{macro}}(\theta_l^* + \Delta_{\mathrm{sep}} z) = \prod_{p \neq m} \prod_{q \in \mathcal{C}_p}  \left(\frac{\|\theta_l^* - \theta_q^* + \Delta_{\mathrm{sep}} z\|}{2R}\right) ^{2s_{\max}}:=\tilde{P}_{\text{macro}}(z).
\end{equation}
Because $\|\theta\|,\|\theta_q^*\| \le R$ , we have $\frac{\|\theta - \theta_q^* \|}{2R} \leq 1$. Furthermore, by taking derivatives of $\tilde{P}_{\text{macro}}(z)$ with respect to $z$, we can extract chain-rule scaling factors of $\frac{\Delta_{\mathrm{sep}}}{2R} \le 1$. Therefore, $\tilde{P}_{\text{macro}}(z)$ and its derivatives are bounded by an absolute constant $C_{\text{macro}}(R, k_*, s_{\max})$.
We have 
\begin{equation*}
    \bar{H}_{j,i}(\theta) = \Delta_{\mathrm{sep}}\tilde{\ell}^2_{j,\text{micro}}(z)\tilde{P}_{\text{macro}}(z)\langle v_{j,i},z+c_{lj}\rangle:= \Delta_{\mathrm{sep}}{\hat{H}}_{j,i}(z).
\end{equation*}
From the argument above, the second-order partial derivatives of $\hat{H}_{j,i}(z)$ has coefficients bounded by $C_{\hat{H},\mathrm{a}}(R,k_*,C_0)$. Using the chain rule, we have $\frac{d^k}{d\theta^k} \bar{H}_{j,i}(\theta) = \Delta_{\mathrm{sep}}^{-|k|+1}\cdot \frac{d^k}{dz^k}\hat{H}_{j,i}(z)$ for any multi-index $k$. Thus, we have for any $u,v\in [1,d]$,
\begin{align*}
\max_{\theta \in \mathcal{V}_l: \|\theta - \theta_{l}^{*}\| \leq \Delta_{\mathrm{sep}}/4} \left|\frac{\partial^2}{\partial\theta^u\partial\theta^v}\bar{H}_{j,i}(\theta)\right|&\le \Delta_{\mathrm{sep}}^{-1} \bar{C}_{\bar{H},\mathrm{a}}. 
\end{align*}
As a result, noting that $\|M\|_{\mathrm{op}}\leq d_*\max_{1\leq i,j\leq d_{*}}\{|M_{ij}|\}$ for any symmetric matrix $M$, where $d$ denote the dimension of $M$, we have 
\begin{align*}
    \max_{\theta \in \mathcal{V}_l: \|\theta - \theta_{l}^{*}\| \leq \Delta_{\mathrm{sep}}/4}\|D^2\bar{H}_{j,i}(\theta)\|_{\mathrm{op}}\leq d_*\bar{C}_{\tilde{H},\mathrm{a}}\Delta_{\mathrm{sep}}^{-1} :=  C_{\bar{H}}\Delta_{\mathrm{sep}}^{-1}. 
\end{align*}

For the case when $l \notin \mathcal{C}_m$, we have 
\begin{align}
\label{eqn:dung_lemma_mean_extractor_global_bound_P_macro_outside}
    |P_{\text{macro}}(\theta)| &\leq (2R)^{-2s_{\max}} \|\theta - \theta_{l}^{*}\|_2^{2s_{\max}},\\
\label{eqn:dung_lemma_mean_extractor_global_bound_l_micro_outside}
    |\ell_{j,\text{micro}}^2(\theta)| &\leq (2R)^{2(s_{m} - 1)} \Delta_{\mathrm{sep}}^{-2(s_{m} - 1)},
\end{align}
for any $l \notin \mathcal{C}_{m}$. By combining these bounds with the estimation in equation \eqref{eqn:dung_lemma_7.3_estimation_of_monomial_factor}, we have 
\begin{align*}
\nonumber
    |\bar{H}_{j,i}(\theta)| & \leq |\ell_{j,\text{micro}}^2(\theta)| \cdot |P_{\text{macro}}(\theta)|\cdot |\langle \bar{v}_{j,i},\theta-\theta_j^*\rangle|  \\
    \nonumber
    & \leq (2R)^{2s_{m} - 2s_{\max}-1} \bar{C}_{\bar{v}}\Delta_{\mathrm{sep}}^{-2(s_{m} - 1)}\|\theta - \theta_{l}^{*}\|_2^{2s_{\max}}
\end{align*}
Since $\|\theta - \theta_{l}^{*}\| \leq \Delta_{\mathrm{sep}}/4$, $\|\theta - \theta_{l}^{*}\|_2^{2s_{\max}} \leq \left(\frac{\Delta_{\mathrm{sep}}}{4}\right)^{2(s_{\max} - 1)}\|\theta - \theta_{l}^{*}\|_2^{2}$. As a result, we further attain that
\begin{align*}
|\bar{H}_{j,i}(\theta_{l})|
&\leq \bar{C}_{\bar{v}} (2R)^{2s_{m} - 2s_{\max} - 1} \cdot 4^{2(1-s_{\max})}\Delta_{\mathrm{sep}}^{2(s_{\max} - s_m)} \|\theta - \theta_{l}^{*}\|_2^{2}\\ 
& \leq\bar{C}_{\bar{v}} (2R)^{2s_{m} - 2s_{\max} - 1} \cdot 4^{2(1-s_{\max})}(2R)^{2(s_{\max} - s_m)} \leq C_{\text{cross}}\|\theta - \theta_{l}^{*}\|_2^{2}.
\end{align*}
for any $l \notin \mathcal{C}_{m}$. Thus, $|\bar{H}_{j,i}(\theta_{l})|\leq C_{\text{cross},\bar{H}}\|\theta - \theta_{l}^{*}\|_2^{2}$, where
$$
C_{\mathrm{cross},\bar{H}} = \bar{C}_{\bar{v}}\cdot \max\{(2R)^{2(k^*-1)-1},1\}\cdot \max\{(2R)^{2k^*-2},1\}. 
$$
\end{proof}

\subsection{Point-wise mass test function}
\label{sec:prior_transition_test_function}

\subsubsection{Non multi-cluster setting}
\label{sec:point_wise_mass_extractor_non_multicluster}
For any $1 \leq j \leq k_*$, we construct the point-wise mass  extractor test function, that is, an interpolated Hermite polynomial $E_j$ such that $E_j(\theta_l^*) = \delta_{jl}$ and $\nabla E_j(\theta_l^*) = 0$ for all $1\leq l\leq k_*$. $E_j$ is a polynomial of degree $2k_*-1$ defined as
\begin{align*}
    E_j(\theta) : = \ell_j^2(\theta) \left[ \bar{A}_j + \langle\bar{B}_j,\theta - \theta_j^*\rangle\right], \quad \text{where} \quad \ell_j(\theta) = \prod_{q \neq j} \frac{\|\theta - \theta_q^*\|_2}{\|\theta_j^* - \theta_q^*\|_2},
\end{align*}
and $\bar{A}_j = 1$, $\bar{B}_j = - 2\nabla\ell_j(\theta_j^*)$. We can rewrite $E_j(\theta) : = \ell_j^2(\theta) \left[ u_0 + \langle u_1,\theta\rangle \right]$, where $u_0 = \bar{A}_j - \langle \bar{B}_j,\theta_j^*\rangle $, and $u_1 = \bar{B}_j$. 
From the estimation in equation~\eqref{eqn:lemma_ell_function_gradient_estimation}, we have \begin{align}
    \label{eq:bound_Bj}
    \|\bar{B}_j\|_2\leq \Delta^{-1}_{\mathrm{sep}}(2k_*-2):=\Delta^{-1}_{\mathrm{sep}}C_B.
\end{align} 
It is obvious to check that $E_j(\theta_l^*) = \delta_{jl}$ and $\nabla E_j'(\theta_l^*) = 0$ for all $1\leq l\leq k_*$ and $l\neq j$. To verify that $\nabla E_j(\theta_j^*) = 0$ too, noting that $\ell_j(\theta_j^*) = 1$, we write 
\begin{align*}
    \nabla E_j(\theta^*_j) = 2\ell_j(\theta_j^*)\nabla \ell_j(\theta_j^*)[\bar{A}_j + \langle \bar{B}_j,\theta_j^*-\theta_j^*\rangle] +\ell^2_j(\theta_j^*)\bar{B}_j = 2\nabla\ell_j(\theta_j) - 2\nabla\ell_j(\theta_j) = 0.
\end{align*}

\begin{lemma}
\label{lemma:mass_test_function}
(a) (One cluster assumption) Given the construction of the test function $E_{j}$, the following properties hold for any $1\leq l\leq k_*$ \begin{align*}
\|E_j\|_{\infty} \le  {C}_{\mathrm{norm},E} \Delta_{\mathrm{sep}}^{-(2k_*-1)}\\
\max_{\theta \in \mathcal{V}_l: |\theta - \theta_{l}^{*}| \leq \Delta_{\mathrm{sep}}/4} \|D^2 E_j(\theta)\|_{\mathrm{op}} \le C_{E,1} \Delta_{\mathrm{sep}}^{-2}.  
\end{align*}
(b) (No cluster assumption) Under no-cluster assumption, we have for any $1\leq l \leq k_*$
\begin{align*}
\|E_j\|_{\infty} \le  {C}_{\mathrm{norm},E} \Delta_{\mathrm{sep}}^{-(2k_*-1)}\\
\max_{\theta\in \bar{B}(0,R)} \|D^2 E_j(\theta)\|_{\mathrm{op}} \le C_{E,2} \Delta_{\mathrm{sep}}^{-(2k_*-1)}.  
\end{align*}

\end{lemma}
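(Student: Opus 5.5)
I will treat the two parts with the same two-step scheme used for $H_{j,i}$ in Lemma~\ref{lemma:mean_test_function}: bound the three factors of $E_j$ separately for the sup-norm, and use a rescaling argument for the Hessian. The basic estimate is the bound on $\nabla\ell_j(\theta_j^*)$ from equation~\eqref{eqn:lemma_ell_function_gradient_estimation}. It gives $\|\bar{B}_j\|\le C_B\Delta_{\mathrm{sep}}^{-1}$ with $C_B=2k_*-2$, as recorded in \eqref{eq:bound_Bj}.

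\emph{Sup-norm.} Take $\theta\in\bar{B}(0,R)$, so $\|\theta-\theta_q^*\|\le 2R$ for every $q$. Each factor of $\ell_j$ has denominator at least $\Delta_{\mathrm{sep}}$. Hence $\ell_j^2(\theta)\le (2R)^{2k_*-2}\Delta_{\mathrm{sep}}^{-(2k_*-2)}$, and this holds under either assumption. For the linear factor,
\[
|\bar{A}_j+\langle\bar{B}_j,\theta-\theta_j^*\rangle|\le 1+2RC_B\Delta_{\mathrm{sep}}^{-1}\le (2R+2RC_B)\Delta_{\mathrm{sep}}^{-1},
\]
using $\Delta_{\mathrm{sep}}\le 2R$. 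Multiplying the two bounds gives $\|E_j\|_\infty\le C_{\mathrm{norm},E}\Delta_{\mathrm{sep}}^{-(2k_*-1)}$ with $C_{\mathrm{norm},E}=(2R)^{2k_*-1}(1+C_B)$, again in both parts (a) and (b). I will read $\ell_j^2$ as the genuine polynomial $\prod_{q\ne j}\|\theta-\theta_q^*\|^2/\|\theta_j^*-\theta_q^*\|^2$, exactly as the paper does.

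\emph{Local Hessian, part (a).} Substitute $z=(\theta-\theta_l^*)/\Delta_{\mathrm{sep}}$ with $\|z\|\le 1/4$, and set $c_{lq}=(\theta_l^*-\theta_q^*)/\Delta_{\mathrm{sep}}$. The one-cluster assumption gives $\|c_{lq}\|\le C_0$, and separation gives $\|c_{jq}\|\ge1$. As in \eqref{eqn:dung_lemma_7_2_coordinate_change_polynomial}, $\ell_j(\theta)=\tilde\ell_j(z)=\prod_{q\ne j}\|c_{lq}+z\|/\|c_{jq}\|$. Write $\bar{B}_j=\Delta_{\mathrm{sep}}^{-1}\tilde{b}_j$ with $\|\tilde{b}_j\|\le C_B$. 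The linear factor then becomes $1+\langle\tilde{b}_j,z+c_{lj}\rangle$, whose coefficients are bounded by constants depending only on $C_0$ and $k_*$. So $E_j(\theta)=\tilde{E}_j(z)$ is a polynomial in $z$ of fixed degree $2k_*-1$ with bounded coefficients. Its second derivatives on $\|z\|\le1/4$ are therefore bounded by some $C(k_*,C_0)$. The chain rule gives $\partial^2_\theta=\Delta_{\mathrm{sep}}^{-2}\partial^2_z$, and $\|M\|_{\mathrm{op}}\le d_*\max|M_{uv}|$ then yields $C_{E,1}\Delta_{\mathrm{sep}}^{-2}$.

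This rescaling is the step I expect to need the most care. The extra $\Delta_{\mathrm{sep}}^{-1}$ compared with $H_{j,i}$ appears because $E_j$ has no prefactor $\Delta_{\mathrm{sep}}$ after rescaling. One must also check that $\tilde{b}_j$ is genuinely $O(1)$, which follows from \eqref{eqn:lemma_ell_function_gradient}.

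\emph{Global Hessian, part (b).} Without cluster structure, rescaling no longer controls the coefficients. Instead I will expand in monomials, following \eqref{eqn:dung_lemma_another_representation_for_ell_j}: $\ell_j^2(\theta)=\Delta_{\mathrm{sep}}^{-(2k_*-2)}\sum_{|m|\le2k_*-2}e_m\theta^m$ with $|e_m|$ bounded by constants in $R$ and $k_*$. Multiplying by $u_0+\langle u_1,\theta\rangle$ with $|u_0|\le 1+RC_B\Delta_{\mathrm{sep}}^{-1}\lesssim\Delta_{\mathrm{sep}}^{-1}$ and $\|u_1\|\le C_B\Delta_{\mathrm{sep}}^{-1}$ gives coefficients of size $\lesssim\Delta_{\mathrm{sep}}^{-(2k_*-1)}$. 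Differentiating twice termwise and bounding $|\theta^{m-2}|\le R^{|m|-2}$ gives entrywise Hessian bounds $\lesssim\Delta_{\mathrm{sep}}^{-(2k_*-1)}$ on $\bar{B}(0,R)$. The operator-norm step is then the same as in part (a), and produces $C_{E,2}$.
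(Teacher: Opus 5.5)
Your proposal is correct and follows essentially the same route as the paper. The sup-norm comes from bounding $\ell_j^2$ and the linear factor separately, which does not depend on the cluster assumption. The local Hessian in (a) uses the rescaling $z=(\theta-\theta_l^*)/\Delta_{\mathrm{sep}}$ with $\|c_{lq}\|\le C_0$ and $\|c_{jq}\|\ge 1$. The global Hessian in (b) comes from a monomial-coefficient expansion of $\ell_j^2(\theta)\,(u_0+\langle u_1,\theta\rangle)$. Your constant $C_B=2k_*-2$ is in fact the consistent one; the paper's part (a) effectively uses $k_*-1$ there, which only changes constants.
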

\begin{proof}[Proof of Lemma~\ref{lemma:mass_test_function}]
(a) From the estimation in equation~\eqref{eqn:dung_lemma_7.2_estimation_of_ell}, we have $\ell^2_j(\theta) \leq (2R)^{2k_*-2}\Delta^{-(2k_*-2)}_{\mathrm{sep}}$. 
In addition, we have as $\Delta_{\mathrm{sep}} \leq 2R$
\begin{equation}
\label{eqn:dung_lemma_7.3_estimation_of_remainder_term}
|\bar{A}_j + \langle \bar{B}_j, \theta-\theta_j^*\rangle| \leq 1 + \|\bar{B}_j\|_2\cdot \|\theta-\theta_j^*\|_2 \leq 1 + \frac{2R(k_*-1)}{\Delta_{\mathrm{sep}}}\leq \frac{2Rk_*}{\Delta_{\mathrm{sep}}}. 
\end{equation}
Thus, we have 
\begin{equation*}
    |E_j(\theta)|\leq (2R)^{2k_*-1}k_*\cdot \Delta^{-(2k_*-1)}_{\mathrm{sep}} := \bar{C}_{\text{norm},E}\Delta^{-(2k_*-1)}_{\mathrm{sep}}. 
\end{equation*}

We now proceed to bound $\|D^2E_j(\theta)\|_{\mathrm{op}}$, for any $\theta \in \mathcal{V}_l$ and $\|\theta - \theta_{l}^{*}\|_2 \leq \Delta_{\mathrm{sep}}/4$. Let $z = \frac{\theta - \theta_{l}^*}{\Delta_{\mathrm{sep}}}$ and $c_{lq} = \frac{\theta_l^* - \theta_q^*}{\Delta_{\mathrm{sep}}}$, for any $1 \leq l \leq k_{*}$, then $\|z\|\leq 1/4$, $\|c_{lq}\|\leq C_0$ from the assumption, and $\|c_{jq}\|\geq 1$ for $q\neq j$. Noting that $\nabla\ell_j(\theta^*_j)=\sum_{q\neq j}\frac{\theta^*_j-\theta^*_q}{\|\theta^*_j-\theta^*_q\|^2}$, we can rewrite the linear factor in the representation of $E_j(\theta)$ as $$\bar{A}_j + \langle\bar{B}_j,\theta - \theta_j^*\rangle =  1  - 2 \sum_{q \ne j} \frac{1}{\|c_{jq}\|^2}\langle c_{jq}, c_{lj} + z \rangle = \tilde{L}(z).$$
We also obtain that 
\begin{equation*}
    \ell_j(\theta) = \prod_{q\neq j} \frac{\|\theta_l^*-\theta_q^* + \Delta_{\mathrm{sep}}z\|}{\|\theta^*_{j}-\theta_q^*\|} = \prod_{q\neq j}\frac{\|c_{lq}+z\|}{\|c_{jq}\|} = \tilde{\ell}_j(z). 
\end{equation*}
The polynomials $\tilde{L}(z)$ and $\tilde{\ell}^2(z)$ and their partial derivatives with respect to $z$ are strictly bounded by $C_{\ell}(R, k_*, C_0)$. From the definitions of $\tilde{\ell}_{j}(\cdot)$ in equation~\eqref{eqn:dung_lemma_7_2_coordinate_change_polynomial} and $\tilde{L}(\cdot)$, we have $E_j(\theta) = \tilde{\ell}_j^{2}(z) \tilde{L}(z) = \tilde{E}_j(z)$. As a result, the function $\tilde{E}_j(z)$ and its first two partial derivatives with respect to $z$ are bounded by $C_{\tilde{E}}(R, k_*, C_0)$ for any $\|z\|_2\leq 1/4$ . 

By the chain rule, we have $\frac{d^k}{d\theta^k} E_j(\theta) = \Delta_{\mathrm{sep}}^{-|k|}\cdot \frac{d^k}{dz^k}\tilde{E}_j(z)$ for any multi-index $k$. Thus, we have for any $u,v\in [1,d]$,
\begin{align*}
\max_{\theta \in \mathcal{V}_l: \|\theta - \theta_{l}^{*}\| \leq \Delta_{\mathrm{sep}}/4} \left|\frac{\partial^2}{\partial\theta^u\partial\theta^v}E_j(\theta)\right|&\le \Delta_{\mathrm{sep}}^{-2} \bar{C}_{2,E}. 
\end{align*}
As a result, noting that $\|M\|_{\mathrm{op}}\leq d_*\max_{1\leq i,j\leq d_*}\{|M_{ij}|\}$ for any symmetric matrix $M$, where $d$ denote the dimension of $M$, we have 
\begin{align*}
    \max_{\theta \in \mathcal{V}_l: \|\theta - \theta_{l}^{*}\| \leq \Delta_{\mathrm{sep}}/4}\|D^2 E_j(\theta)\|_{\mathrm{op}}\leq d_*\bar{C}_{2,E}\Delta_{\mathrm{sep}}^{-2} :=  C_{E,2}\Delta_{\mathrm{sep}}^{-2}. 
\end{align*}

(b) From the definition of $\bar{A}_j$ and the estimation in equation \eqref{eq:bound_Bj} for $\bar{B}_j$, we have the following estimations for $u_0$ and $u_1$: 
\begin{align*}
    \|u_1\|_2 & \le \Delta_{\mathrm{sep}}^{-1} C_B, \\
    \|u_0\|_2 & \le \Delta_{\mathrm{sep}}^{-1}(2R  + R C_B) := \Delta_{\mathrm{sep}}^{-1} C_{A}.
\end{align*} 
From the definition of $E_j$ and equation \eqref{eqn:dung_lemma_another_representation_for_ell_j}, we can express $E_j(\theta)$ as  
$E_{j}(\theta) = \sum_{m \succeq 0,|m|\leq 2k_*-1} w_m \theta^m$ with 
\begin{align*}
    |w_m| \le \Delta_{\mathrm{sep}}^{-(2k_*-1)} \tilde{w}_m,
\end{align*}
for any $m \succeq 0, |m| \leq 2 k_{*} - 1$, where $$\tilde{w}_m = C_AR^{2k_*-2-|m|}\prod_{i=1}^d\binom{2k_*-2}{m_i}  + C_BR^{2k_*-3-|m|}\prod_{i=1}^d\binom{2k_*-2}{m_i}\sum_{i=1}^d\binom{2k_*-2}{m_i}^{-1}\binom{2k_*-2}{m_i+1}.$$
As a result, the second order partial derivatives of $E_{j}$ can be bounded as  
\begin{align*}
    \max_{\theta \in \bar{B}(0,R)} \left|\frac{\partial^2}{\partial\theta^u\partial\theta^v}E_{j}(\theta)\right| &\le \sum_{0\leq |m| \leq 2k_*-1}(m_{u}m_v -\mathbf{1}_{\{u=v\}}m_u)|w_m|R^{|m|-2}\\ &=\left(\sum_{0\leq |m| \leq 2k_*-1}(m_um_v -\mathbf{1}_{\{u=v\}}m_u)R^{|m|-2}\tilde{w}_m\right)\Delta_{\mathrm{sep}}^{-(2k_*-1)}\\
    &:= C_{\mathrm{partial},E} \Delta_{\mathrm{sep}}^{-(2k_*-1)}. 
\end{align*}

As a result, noting that $\|M\|_{\mathrm{op}}\leq d_*\max_{1\leq i,j\leq d_{*}}\{|M_{ij}|\}$ for any symmetric matrix $M$, where $d_*$ denote the dimension of $M$, we have 
\begin{align*}
    \max_{\theta \in \bar{B}(0,R)}\|D^2{E}_{j}(\theta)\|_{\mathrm{op}}\leq d_*C_{\mathrm{partial},E}\Delta_{\mathrm{sep}}^{-(2k_*-1)} :=  C_{E,2}\Delta_{\mathrm{sep}}^{-(2k_*-1)}. 
\end{align*}
\end{proof} 

\subsubsection{Multi-cluster setting} 
\label{sec:dung_mass_extractor_multicluster}
For multi-cluster setting, we construct the mass extractor test function, that is, a polynomial
$\bar{E}_{j}$ such that $\bar{E}_j(\theta_l^*) = \delta_{jl}$ and $\nabla \bar{E}_{j}(\theta^*_l) = 0$ for all $l$. The formula of $\bar{E}_{j}$ is given by 
$$\bar{E}_{j}(\theta) = \ell_{j,\text{micro}}^2(\theta) [\bar{A}_j + \langle \bar{B}_j,\theta - \theta_{j}^{*}\rangle]P_{\text{macro}}(\theta),$$ 
where we define $\ell_{j,\text{micro}}(\theta) = \prod_{q \in \mathcal{C}_{m} \neq j} \frac{\|\theta - \theta_q^*\|_2}{\|\theta_j^* - \theta_q^*\|_2}$, $P_{\text{macro}}(\theta) = \prod_{p \neq m} \prod_{q \in \mathcal{C}_p} \left( \frac{\|\theta - \theta_q^*\|_2}{2R} \right)^{2s_{\max}}$ as in Section \ref{sec:dung_mean_extractor_multicluster}, and $\bar{A}_j = 1/ P_{\text{macro}}(\theta_{j}^{*})$,
\begin{align*}
    \bar{B}_j & = -\bar{A}_j[2\nabla\ell_{j,\text{micro}}(\theta_j^*) + (P_{\text{macro}}(\theta^*_j))^{-1}\nabla P_{\text{macro}}(\theta^*_j)]\\
    &= -\bar{A}_j \left(2\sum_{q \in \mathcal{C}_m \setminus \{j\}} \frac{\theta_j^* - \theta_q^*}{\|\theta_j^* - \theta_q^*\|^2_2}+ 2s_{\max}\sum_{p \neq m} \sum_{q \in \mathcal{C}_p} \frac{\theta_j^* - \theta_q^*}{\|\theta_j^* - \theta_q^*\|^2_2} \right).
\end{align*}
It can be checked that $\bar{E}_j(\theta_l^*) = \delta_{jl}$ for all true center $\theta_l^*$. For the gradient of $\bar{E}_{j}$, it is easy to check that $\nabla \bar{E}_{j}(\theta^*_l) = 0$ for all $l \neq j$. To achieve  $\nabla \bar{E}_{j}(\theta^*_j)$, noting that $\ell_{j,\text{micro}}(\theta^*_j) = 1$, we have 
\begin{align*}
    \nabla \bar{E}_j(\theta^*_j) &= \nabla(\ell_{j,\text{micro}}^2(\theta^*_j) P_{\text{macro}}(\theta^*_j))[\bar{A}_j + \langle \bar{B}_j,\theta_{j}^{*} - \theta_{j}^{*}\rangle] + P_{\text{macro}}(\theta^*_j)\bar{B}_j\\
    &= 2\bar{A}_jP_{\text{macro}}(\theta^*_j)\nabla\ell_{j,\text{micro}}(\theta^*_j) + \bar{A}_j\nabla P_{\text{macro}}(\theta^*_j)+ P_{\text{macro}}(\theta^*_j)\bar{B}_j = 0. 
\end{align*}
\begin{lemma}
\label{lemma:dung_multicluster_mass_test_function}
Given the construction of the test function $\bar{E}_{j}$,  we have: 
\begin{itemize}
    \item [(a)] (Supremum norm bound) 
\begin{align*}
\|\bar{E}_{j}\|_{\infty} \le C_{\mathrm{norm},\bar{E}} \Delta_{\mathrm{sep}}^{-(2s_{m}-1)}
\end{align*}

\item [(b)] (Local Hessian bound) For any $1\leq l\leq k_*$ such that $\theta_l^* \in \mathcal{C}_m$, we have 
\begin{equation*}
    \max_{\theta \in \mathcal{V}_l: \|\theta - \theta_{l}^{*}\| \le \Delta_{\mathrm{sep}}/4}\|D^2\bar{E}_{j}(\theta)\|_{\mathrm{op}}\leq C_{\bar{E},2}\Delta_{\mathrm{sep}}^{-2}.
\end{equation*}
Otherwise, for any $1\leq l\leq k_*$ such that to cluster $\theta_l^*\notin \mathcal{C}_m$ such that $\theta \in \mathcal{V}_l: \|\theta - \theta_{l}^{*}\| \le \Delta_{\mathrm{sep}}/4$, we have 
\begin{equation*}
    |\bar{E}_{j}(\theta)|\leq C_{\mathrm{cross},\bar{E}}\Delta_{\mathrm{sep}}^{-1}\|\theta-\theta_l^*\|_2^2.
\end{equation*}
\end{itemize}
\end{lemma}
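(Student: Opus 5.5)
The plan is to mirror the proof of Lemma~\ref{lemma:dung_multicluster_mean_test_function}, since $\bar{E}_j$ has the same product structure $\ell_{j,\text{micro}}^2\,P_{\text{macro}}$ and differs only in the affine factor $\bar{A}_j+\langle\bar{B}_j,\theta-\theta_j^*\rangle$. The first step is to bound the coefficients. Every $q$ outside $\mathcal{C}_m$ satisfies $\|\theta_j^*-\theta_q^*\|\ge D_0$, so $P_{\text{macro}}(\theta_j^*)\ge (D_0/2R)^{2s_{\max}(k_*-s_m)}$, and therefore $|\bar{A}_j|\le (2R/D_0)^{2k_*^2}=:\bar{C}_A$. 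In the bracket defining $\bar{B}_j$, the intra-cluster sum is at most $(s_m-1)/\Delta_{\mathrm{sep}}$ and the inter-cluster sum is at most $k_*/D_0\le k_*/(4C_0\Delta_{\mathrm{sep}})$, using $\Delta_{\mathrm{sep}}\le D_0/(4C_0)$. Together these give $\|\bar{B}_j\|\le \bar{C}_B\Delta_{\mathrm{sep}}^{-1}$.

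For part (a), take $\theta\in\bar B(0,R)$. Then $\ell_{j,\text{micro}}^2(\theta)\le (2R)^{2s_m-2}\Delta_{\mathrm{sep}}^{-(2s_m-2)}$ and $|P_{\text{macro}}(\theta)|\le 1$. The affine factor satisfies $|\bar{A}_j+\langle\bar{B}_j,\theta-\theta_j^*\rangle|\le \bar{C}_A+2R\bar{C}_B\Delta_{\mathrm{sep}}^{-1}\le (2R\bar{C}_A+2R\bar{C}_B)\Delta_{\mathrm{sep}}^{-1}$. Multiplying the three bounds gives $C_{\mathrm{norm},\bar{E}}\Delta_{\mathrm{sep}}^{-(2s_m-1)}$.

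For the local Hessian bound in part (b) with $l\in\mathcal{C}_m$, rescale $z=(\theta-\theta_l^*)/\Delta_{\mathrm{sep}}$ with $\|z\|\le1/4$. As in \eqref{eqn:dung_lemma_7_3_coordinate_change_polynomial_ell_macro} and \eqref{eqn:dung_p_macro_change_of_variable}, $\ell_{j,\text{micro}}$ becomes $\tilde\ell_{j,\text{micro}}(z)$, a polynomial in $z$ whose coefficients depend only on $C_0$ and on the lower bound $\|c_{jq}\|\ge1$. Likewise $P_{\text{macro}}$ becomes $\tilde{P}_{\text{macro}}(z)$; its $z$-derivatives carry factors $\Delta_{\mathrm{sep}}/2R\le1$ and are therefore bounded. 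For the affine factor, write
\[
\bar{A}_j+\langle\bar{B}_j,\theta-\theta_j^*\rangle=\bar{A}_j+\langle\Delta_{\mathrm{sep}}\bar{B}_j,\,c_{lj}+z\rangle=:\tilde{L}(z),
\]
where $\|\Delta_{\mathrm{sep}}\bar{B}_j\|\le\bar{C}_B$ and $\|c_{lj}\|\le C_0$. Then $\bar{E}_j(\theta)=\tilde{E}_j(z)$ with $\tilde{E}_j$ and its second derivatives bounded by a constant on $\|z\|\le1/4$. The chain rule gives $\partial^2_\theta\bar{E}_j=\Delta_{\mathrm{sep}}^{-2}\partial^2_z\tilde{E}_j$, and the bound $\|M\|_{\mathrm{op}}\le d_*\max_{i,j}|M_{ij}|$ yields $C_{\bar{E},2}\Delta_{\mathrm{sep}}^{-2}$.

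The cross bound (the case $l\notin\mathcal{C}_m$) is where I expect the main care to be needed. Here $\theta_l^*$ is a root of $P_{\text{macro}}$ of order $2s_{\max}$, so $|P_{\text{macro}}(\theta)|\le (2R)^{-2s_{\max}}\|\theta-\theta_l^*\|^{2s_{\max}}$, exactly as in \eqref{eqn:dung_lemma_mean_extractor_global_bound_P_macro_outside}. The remaining factors satisfy $\ell_{j,\text{micro}}^2\le(2R)^{2s_m-2}\Delta_{\mathrm{sep}}^{-(2s_m-2)}$, and the affine factor is at most $2R(\bar{C}_A+\bar{C}_B)\Delta_{\mathrm{sep}}^{-1}$. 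Together these give
\[
|\bar{E}_j(\theta)|\lesssim \Delta_{\mathrm{sep}}^{-(2s_m-1)}\|\theta-\theta_l^*\|^{2s_{\max}}.
\]
To recover the stated form, use $\|\theta-\theta_l^*\|\le\Delta_{\mathrm{sep}}/4$ to trade $2s_{\max}-2$ powers, since $\|\theta-\theta_l^*\|^{2s_{\max}-2}\le 4^{-(2s_{\max}-2)}\Delta_{\mathrm{sep}}^{2s_{\max}-2}$. Because $s_m\le s_{\max}$, the resulting power $\Delta_{\mathrm{sep}}^{2s_{\max}-2s_m-1}$ is at most $(2R)^{2s_{\max}-2s_m}\Delta_{\mathrm{sep}}^{-1}$. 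This gives $C_{\mathrm{cross},\bar{E}}\Delta_{\mathrm{sep}}^{-1}\|\theta-\theta_l^*\|^2$ with a constant depending only on $R,k_*,C_0,D_0$.
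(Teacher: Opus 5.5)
Your proposal is correct and follows essentially the same route as the paper. It uses the same bounds on $\bar{A}_j$ and $\|\bar{B}_j\|$, the same three-factor product estimate for the sup norm, and the same rescaling $z=(\theta-\theta_l^*)/\Delta_{\mathrm{sep}}$ with the chain rule for the local Hessian. For the cross bound it makes the same trade of $2s_{\max}-2$ powers of $\|\theta-\theta_l^*\|$, followed by $\Delta_{\mathrm{sep}}^{2s_{\max}-2s_m}\le(2R)^{2s_{\max}-2s_m}$. The only cosmetic difference is that you absorb the inter-cluster part of $\bar{B}_j$ via $\Delta_{\mathrm{sep}}\le D_0/(4C_0)$, whereas the paper uses $\Delta_{\mathrm{sep}}\le 2R$; either works.
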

\begin{proof}[Proof of Lemma \ref{lemma:dung_multicluster_mass_test_function}] 
The first step is to bound $\bar{A}_j$ and $\|\bar{B}_j\|_2$. Since  $\|\theta_j^* - \theta_q^*\| \ge D_0$ for any $q \in \mathcal{C}_{p}$ and $p \neq m$, we have 
$$ |P_{\text{macro}}(\theta_j^*)| \ge \prod_{p \neq m} \prod_{q \in \mathcal{C}_p} \left( \frac{D_0}{2R} \right)^{2s_{\max}} = \left( \frac{D_0}{2R} \right)^{2s_{\max}(k_* - s_m)}.$$
Therefore, we have an upper bound of $\bar{A}_{j}$ as follows: 
$$|\bar{A}_j| \le  \left( \frac{2R}{D_{0}} \right)^{2s_{\max}(k_* - s_m)}\leq \left( \frac{2R}{D_{0}} \right)^{2k_*^2} := \bar{C}_{A}.$$
Now, moving to $\bar{B}_{j}$, an application of the triangle inequality yields that
\begin{align*}
\|\bar{B}_j\|_2 & \leq |\bar{A}_j| \left(  \sum_{q \in \mathcal{C}_m \setminus \{j\}} \frac{2}{\|\theta_j^* - \theta_q^*\|_2} + \sum_{p \neq m} \sum_{q \in \mathcal{C}_p} \frac{2s_{\max}}{\|\theta_j^* - \theta_q^*\|_2} \right) \\
& \leq \bar{C}_A \left(  \frac{2(s_m - 1)}{\Delta_{\mathrm{sep}}} + \frac{2s_{\max}(k_* - s_m)}{D_0}\right),
\end{align*}
where the second inequality is due to the fact that $\|\theta_{j}^{*} - \theta_{q}^{*}\| \geq \Delta_{\mathrm{sep}}$, for all $q \in \mathcal{C}_{m} \setminus \{j\}$, and $\|\theta_{j}^{*} - \theta_{q}^{*}\| \geq D_{0}$, for any $q \in \mathcal{C}_{p}$ and $p \neq m$. Since $\Delta_{\mathrm{sep}} \le 2R$, we can further bound $\bar{B}_j$ as
\begin{align*}
    \|\bar{B}_j\|_2 & \leq \Delta_{\mathrm{sep}}^{-1} \cdot \bar{C}_A \left(2(s_m - 1)+ \frac{2s_{\max}(k_* - s_m)R}{D_0}  \right) \\
    &\leq \Delta_{\mathrm{sep}}^{-1} \cdot \bar{C}_A \left(2(k_* - 1)+ \frac{4k^2_*R}{D_0}  \right):=\bar{C}_{B} \Delta_{\mathrm{sep}}^{-1}.
\end{align*}
(a)
We have
\begin{align}
\nonumber
    |\bar{A}_{j} + \langle \bar{B}_{j},\theta-\theta_j^*\rangle|&\leq |\bar{A}_j| +  \|\bar{B}_{j}\|_2\cdot \|\theta-\theta_j^*\|_2 \leq \bar{C}_A + R\bar{C}_B\Delta^{-1}_{\mathrm{sep}}\\
    &\leq R(2\bar{C}_A + \bar{C}_B)\Delta^{-1}_{\mathrm{sep}}.
\label{eqn:dung_lemma_7.5_mass_extractor_multicluster_linear_factor_estimation}
\end{align}
Combining the above estimation with estimation in equation~\eqref{eqn:dung_lemma_7.3_estimation_of_ell_micro}, and note that $|P_{\text{macro}}(\theta)|\leq 1$, we have
\begin{align*}
    |\bar{E}_j(\theta)| &\leq R(2R)^{2s_m-2}(2\bar{C}_A + \bar{C}_B)\Delta^{-(2s_m-1)}_{\mathrm{sep}}\\
    &\leq \max\{1,(2R)^{2k_*-2}\}\Delta^{-(2s_m-1)}_{\mathrm{sep}}:= C_{\text{norm,}\bar{E}}
    \Delta_{\mathrm{sep}}^{-(2s_m-1)}.
\end{align*}
(b) Now, we move to bound $\|D^2\bar{E}_{j}(\theta)\|_{\mathrm{op}}$, for any $\theta \in \mathcal{V}_{l}$ and $\|\theta - \theta_{l}^{*}\|_2 \leq \Delta_{\mathrm{sep}}/4$. As in the proof of Lemma \ref{lemma:dung_multicluster_mean_test_function}, we perform the following change of coordinates $z = \frac{\theta - \theta_{l}^*}{\Delta_{\mathrm{sep}}}$. For $\theta \in \mathcal{V}_{l}$ and $\|\theta - \theta_{l}^{*}\| \leq \Delta_{\mathrm{sep}}/4$, we have $\|z\|\leq 1/4$. Let $c_{lq} = \frac{\theta_l^* - \theta_q^*}{\Delta_{\mathrm{sep}}}$ for any $l,q\in \mathcal{C}_m$ and $l\neq q$. From the assumption, $|c_{lq}| \le C_0$. Also $|c_{jq}| \ge 1$ for $q \in \mathcal{C}_m\neq j$.
We represent the linear term $\bar{A}_j + \langle \bar{B}_j, \theta - \theta_j^*\rangle$ as 
\begin{align*}
    \bar{A}_j + \langle \bar{B}_j, \theta - \theta_j^*\rangle&= \bar{A}_j \left[ 1 -\left\langle 2\sum_{q \in \mathcal{C}_m \setminus \{j\}} \frac{c_{jq}}{\|c_{jq}\|^2} + 4s_{\max}\Delta_{\mathrm{sep}}\sum_{p \neq m} \sum_{q \in \mathcal{C}_p} \frac{\theta_j^* - \theta_q^*}{\|\theta_j^* - \theta_q^*\|^2_2} ,c_{lj} + z\right\rangle \right] \\
&:= \tilde{L}_{1}(z).
\end{align*}
Note that $\tilde{L}_{1}(z)$ is a polynomial in $z$ with bounded coefficients. Thus, $\tilde{L}_{1}(z)$
and its derivatives are strictly bounded by $C_L(R, k_*, D_{0},C_0)$. Combining this result with the change of variable performed in equations \eqref{eqn:dung_lemma_7_3_coordinate_change_polynomial_ell_macro} and \eqref{eqn:dung_p_macro_change_of_variable}, we have 
\begin{equation*}
    \bar{E}_{j}(\theta) = \tilde{\ell}^2_{j,\text{micro}}(z)\tilde{P}_{\text{macro}}(z)\tilde{L}_1(z):= \hat{E}_{j}(z).
\end{equation*}
From the argument above, the second-order partial derivatives of $\hat{E}_{j}(z)$ has coefficients bounded by $C_{\hat{E},\mathrm{a}}(R,k_*,C_0)$. Using the chain rule, we have $\frac{d^k}{d\theta^k} \bar{E}_j(\theta) = \Delta_{\mathrm{sep}}^{-|k|}\cdot \frac{d^k}{dz^k}\hat{E}_{j}(z)$ for any multi-index $k$. Thus, we have for any $u,v\in [1,d_*]$,
\begin{align*}
\max_{\theta \in \mathcal{V}_l: \|\theta - \theta_{l}^{*}\| \leq \Delta_{\mathrm{sep}}/4} \left|\frac{\partial^2}{\partial\theta^u\partial\theta^v}\bar{E}_{j}(\theta)\right|&\le \Delta_{\mathrm{sep}}^{-2} C_{\mathrm{partial},\bar{E}}. 
\end{align*}
As a result, noting that $\|M\|_{\mathrm{op}}\leq d_*\max_{1\leq i,j\leq d_{*}}\{|M_{ij}|\}$ for any symmetric matrix $M$, where $d$ denote the dimension of $M$, we have 
\begin{align*}
    \max_{\theta \in \mathcal{V}_l: \|\theta - \theta_{l}^{*}\| \leq \Delta_{\mathrm{sep}}/4}\|D^2\bar{E}_{j}(\theta)\|_{\mathrm{op}}\leq d_*C_{\mathrm{partial},\bar{E}}\Delta_{\mathrm{sep}}^{-2} :=  C_{\bar{E}}\Delta_{\mathrm{sep}}^{-2}. 
\end{align*}
(c) For the case when $l \notin \mathcal{C}_m$, combining the estimation in equations \eqref{eqn:dung_lemma_7.5_mass_extractor_multicluster_linear_factor_estimation}, \eqref{eqn:dung_lemma_mean_extractor_global_bound_P_macro_outside} and \eqref{eqn:dung_lemma_mean_extractor_global_bound_l_micro_outside}, we have 
\begin{align*}
\nonumber
    |\bar{E}_{j}(\theta)| & \leq |\ell_{j,\text{micro}}^2(\theta)| \cdot |P_{\text{macro}}(\theta)|\cdot |\bar{A}_{j} + \langle \bar{B}_{j},\theta-\theta_j^*\rangle|  \\
    \nonumber
    & \leq R(2\bar{C}_A + \bar{C}_B)(2R)^{2s_{m} - 2s_{\max}-2} \Delta_{\mathrm{sep}}^{-(2s_{m} - 1)}\|\theta - \theta_{l}^{*}\|_2^{2s_{\max}}. 
\end{align*}
Since $\|\theta - \theta_{l}^{*}\| \leq \Delta_{\mathrm{sep}}/4$, we have $\|\theta - \theta^*_l\|^{2(s_{\max}-1)}\leq (\Delta_{\mathrm{sep}}/4)^{2(s_{\max}-1)}$. In addition, $(\Delta_{\mathrm{sep}})^{2s_{\max}-2s_m} \leq (2R)^{2s_{\max}-2s_m}$. Thus, we further attain that
\begin{align*}
|\bar{E}_{j}(\theta)| & \leq R(2\bar{C}_A + \bar{C}_B) (2R)^{-2} \cdot 4^{2(1-s_{\max})}\Delta_{\mathrm{sep}}^{-1} \|\theta - \theta_{l}^{*}\|_2^{2}\\ 
& \leq R(2\bar{C}_A + \bar{C}_B)(2R)^{-2} \cdot\max\{(2R)^{2k_*},1\}\Delta_{\mathrm{sep}}^{-1}\|\theta - \theta_{l}^{*}\|_2^{2} \\
&\leq C_{\text{cross},\bar{E}} \Delta_{\mathrm{sep}}^{-1} \|\theta - \theta_{l}^{*}\|_2^{2},
\end{align*}
for any $l \notin \mathcal{C}_{m}$, where
$$
C_{\mathrm{cross},\bar{E}} = (4R)^{-1}(2\bar{C}_A + \bar{C}_B)\cdot \max\{(2R)^{2k^*},1\}. 
$$
\end{proof}

\subsection{Cluster-wise mass test function}
\label{sec:cluster_wise_test_function} We now construct test function which can isolate a cluster of data with the remainder. For a data including $k_*$ centers in $\mathbb{R}^d$, partitioned into $k_0$ disjoint topological clusters $\mathcal{C}_1,\ldots\mathcal{C}_{k_0}$, we recall the following condition imposed on these clusters.
\begin{itemize}
    \item \textit{Inter-cluster assumption}: For any $\theta_j^*\neq \theta_q^* \in \mathcal{C}_m$, we have $\|\theta_j^* - \theta_q^*\|_2\leq C_0\Delta_{\mathrm{sep}}$. 
    \item \textit{Intra-cluster assumption}: For any $\theta_j^* \in \mathcal{C}_m$ and $\theta_q^* \notin \mathcal{C}_m$, we have $\|\theta_j^* - \theta_q^*\|_2\geq D_0$. 
\end{itemize}
Recall that to make sure these clusters are isolated from each other, we impose the rigid topological ordering $\Delta_{\mathrm{sep}} \leq D_0/(4C_0)$. 

For any $1 \leq m \leq k_*$, we construct the cluster-wise mass  extractor test function, that is, an interpolated Hermite polynomial $P_m(\theta)$ such that $P_m(\theta^*_q) = \mathbf{1}_{\{\theta^*_q\in \mathcal{C}_m\}}$ and $\nabla P_m(\theta^*_q) = 0$ for all $q\in[k_*]$. Without loss of generality, we can suppose that $\theta_1^* \in \mathcal{C}_m$. We seek a radial function depending only on the squared distance from $\theta_1^*$, i.e.,
\[
P_m(\theta)=Q_m\left(\|\theta-\theta_1^*\|^2\right),
\]
where the choice of $Q_m$ will be determined later.

\textbf{Step 1: Smooth step function.} We build a function $\Phi \in C^{\infty}(\mathbb{R})$ such that $\Phi(r)\leq 1$ for all $r$, $\Phi(r) = 1$ for $0\leq r \leq D_0^2/2$, and $\Phi(r) = 0$ for $r \geq D_0^2$. In fact, consider the standard function 
\begin{equation*}
    g(t) := \begin{cases}
        \exp\left(-\dfrac{1}{t}\right), \text{ if } t > 0,\\
        0, \text{ if } t\leq 0. 
    \end{cases}
\end{equation*}
and let $S(t) = g(1-t)/(g(t)+g(1-t))$ for $t \in \mathbb{R}$. Then, it is obvious to check that $S=1$ in $(-\infty,0]$ and $S=0$ in $[1,\infty)$. Then, choose $\Phi(r) = S(2r/D^2_0-1)$, then this function satisfy 
\begin{itemize}
    \item $\Phi(r)\leq 1$ for all $r$.
    \item $\Phi(r) = 1$ for $0\leq r \leq D_0^2/2$.
    \item $\Phi(r) = 0$ for $r \geq D_0^2$. 
\end{itemize}
A direct  consequence of this construction is that $\Phi(r_i) = \mathbf{1}_{\{i\leq l\}}$ and $\Phi'(r_i) = 0$ for $1\leq i\leq k$. In addition, we define $\bar{M}_n = \sup_{t\leq 4R^2}|\Phi^{(n)}(t)|$, then it is obvious to check that $\bar{M}_n$ is bounded constant depending only on $D_0$. 

\textbf{Step 2: Interpolation polynomial $Q_m$ based on $\Phi$.} Consider $0\leq r_1\leq \ldots\leq r_k\leq 4R^2$ be increasing sequence of positive reals such that there exists $l$ satisfying $r_1,\ldots,r_l \leq C_0^2\Delta^2_{\mathrm{sep}}$ and $r_{l+1},\ldots,r_{k} \geq D_0^2$. Then, we construct a polynomial $Q_m(r)$ with degree less than $2k$ such that $$Q_m(r_i) = \mathbf{1}_{\{i\leq l\}} \text{ and } Q_m'(r_i) = 0, 1\leq i\leq k.$$  By taking out the coincident points in $r_1,\ldots,r_k$, without loss of generality, we can suppose that $0\leq r_1<\ldots < r_k$.
Let $\mathcal{Z} = (z_1, z_2, \dots, z_{2k})$ denote the ordered multi-set of these $2k$ interpolation nodes. Then, we define 
\begin{align*}
    z_{2i} = z_{2i - 1} = r_{i}, \quad \text{for any } 1 \leq i \leq k. 
\end{align*} 
The coefficients of the Newton polynomial are the divided differences of $\Phi_m(x)$ evaluated over the elements of $\mathcal{Z}$. Because $\mathcal{Z}$  contains adjacent identical elements, the standard recursive formula $\frac{\Phi_{m}(b) - \Phi_{m}(a)}{b - a}$ becomes $0/0$ when $a = b$ are adjacent in $\mathcal{Z}$. We  resolve this issue by invoking the confluent divided difference procedure. For any single element $z_{i}$, we evaluate
$$ [z_{i}] = \Phi(z_{i}).$$
Then, for any two consecutive elements $z_{2i - 1} = z_{2i} = r_i$ for some $l$, we define
$$[z_{2i - 1}, z_{2i}] = \lim_{x \to r_i} \frac{\Phi(x) - \Phi(r_i)}{x - r_i} = \Phi'(r_i). $$
On the other hand, for any two consecutive elements $z_{2i} \neq z_{2i+1}$, we perform the standard divided difference procedure
$$[z_{2i}, z_{2i+1}] = \frac{\Phi(z_{2i+1}) - \Phi(z_{2i})}{z_{2i+1} - z_{2i}}.$$
Since no true center is repeated more than twice in $\mathcal{Z}$, the first node $z_i$ and the last node $z_{i+l}$ of any subsequence $(z_i, \dots, z_{i+l})$ of length 3 or greater are guaranteed to be distinct, namely $z_{i+l} \neq z_i$ when $l \geq 3$. Therefore, for any $l \geq 3$ we can perform the standard divided differences procedure
$$ [z_i, \dots, z_{i+l}] = \frac{[z_{i+1}, \dots, z_{i+l}] - [z_i, \dots, z_{i+l-1}]}{z_{i+l} - z_i}.$$

Based on that confluent divided difference procedure, the explicit form of the polynomial $P_{m}$ via the Newton interpolation procedure is given by
\begin{align}
\label{eqn:dung_divided_difference}
    Q_m(r) : = \sum_{j=0}^{2k-1} [z_1, \dots, z_{j+1}] \prod_{i=1}^j (r - z_i),
\end{align}
where we impose the convention that $\prod_{i=1}^j (r - z_i) = 1$ when $j = 0$. We can verify that $Q_m(r_i) = \Phi(r_i)$ and $Q_m'(r_i) = \Phi'(r_i)$ for all $1\leq l\leq k_*$. 

\textbf{Step 3: Construction of $P_m$.} Consider the cluster $\mathcal{C}_m$ contains center $\theta_1^*$. Let $\theta_2^*,\ldots\theta^*_{k}$ be the points in our data in the order such that for $r_k:=\|\theta^*_k - \theta^*_{1}\|^2_2$, we have $$0=r_1<\ldots< r_l \leq C^2_0\Delta^2_{\mathrm{sep}}< D^2_0\leq r_{l+1}< \ldots < r_k\leq 4R^2.$$
From previous step, we have that $Q_m$ is of degree at most $2k$ such that $Q_m(r_i) = \mathbf{1}_{\{i\leq l\}}$ and $Q'_m(r_i)=0$. Let $P_m(\theta) = Q_m(\|\theta-\theta^*_1\|^2)$, then it is obvious that $P_m$ is a polynomial of $\theta$. It is obvious that $P_m(\theta^*_i) = \mathbf{1}_{\{i\leq l\}}$, and 
\begin{equation*}
    \nabla P_m(\theta^*_i) = 2Q_m'(r_i)\cdot (\theta_i^*-\theta_1^*) = 0. 
\end{equation*}

\begin{lemma}
\label{lemma:dung_bound_for_cluster_mass_extracting}
The polynomial $P_m$ defined above satisfies 
\begin{itemize}
    \item [(a)](Supremum norm bound) 
    \begin{equation*}
    \|P_m\|_{\infty}\leq C_{\mathrm{norm},P}. 
    \end{equation*}
    \item [(b)] (Local Hessian bound) 
\begin{equation*}
    \max_{\theta\in B(0,R)}\|D^2P_m(\theta)\|_{\mathrm{op}} \leq C_{P,2}. 
\end{equation*}
    \end{itemize}
\end{lemma}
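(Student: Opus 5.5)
The plan is to reduce both bounds to control of the Newton coefficients of $Q_m$, and to control those through the smooth step $\Phi$ that $Q_m$ interpolates. By the Hermite–Genocchi formula, the confluent divided difference $[z_1,\dots,z_{n+1}]$ equals $\frac{1}{n!}\Phi^{(n)}(\xi)$ for some $\xi$ in the convex hull of the nodes. This requires only that $\Phi\in C^\infty$ and that the nodes lie in $[0,4R^2]$. Hence
\[
\bigl|[z_1,\dots,z_{n+1}]\bigr|\le \frac{\bar M_n}{n!},
\]
uniformly in the node configuration, and in particular independently of $\Delta_{\mathrm{sep}}$. This is the key point. Although the nodes $r_1,\dots,r_l$ are crowded into $[0,C_0^2\Delta_{\mathrm{sep}}^2]$, the function being interpolated is flat there, so no inverse powers of $\Delta_{\mathrm{sep}}$ appear. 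The macroscopic gap $D_0$ enters only through $\bar M_n$, which depends on $D_0$ alone since $\Phi(r)=S(2r/D_0^2-1)$.

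For part (a), write $Q_m$ in its Newton form~\eqref{eqn:dung_divided_difference}. For $\theta\in\bar B(0,R)$ we have $r=\|\theta-\theta_1^*\|^2\in[0,4R^2]$, and every node satisfies $z_i\in[0,4R^2]$, so $|r-z_i|\le 4R^2$. Therefore
\[
|P_m(\theta)|\le \sum_{n=0}^{2k-1}\frac{\bar M_n}{n!}(4R^2)^n=:C_{\mathrm{norm},P}.
\]
Here $k\le k_*$, so the constant depends only on $R$, $k_*$ and $D_0$. After removing coincident radii the number of distinct nodes only drops, which causes no harm.

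For part (b), differentiate the composition. We have
\[
\nabla P_m=2Q_m'(r)(\theta-\theta_1^*),\qquad D^2P_m=2Q_m'(r)I+4Q_m''(r)(\theta-\theta_1^*)(\theta-\theta_1^*)^\top,
\]
so $\|D^2P_m\|_{\mathrm{op}}\le 2|Q_m'(r)|+16R^2|Q_m''(r)|$. Differentiating the Newton basis products $\prod_{i\le n}(r-z_i)$ once or twice by the product rule gives at most $n$, respectively $n(n-1)$, terms. Each term is bounded by $(4R^2)^{n-1}$, respectively $(4R^2)^{n-2}$. Combining this with the coefficient bound $\bar M_n/n!$ yields a constant $C_{P,2}(R,k_*,D_0)$.

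The main obstacle is justifying the Hermite–Genocchi/mean-value representation for the confluent divided differences, since the paper defines them recursively with repeated nodes. I would handle it by approximation. Perturb each double node $r_i$ into two distinct nodes $r_i$ and $r_i+\epsilon$, apply the classical mean-value theorem for divided differences, $[x_0,\dots,x_n]=\Phi^{(n)}(\xi)/n!$, and then let $\epsilon\to0$. Continuity of divided differences of a $C^n$ function in the nodes shows that the limit coincides with the confluent recursion used in the paper. A second point to check is that $\bar M_n=\sup_{t\le 4R^2}|\Phi^{(n)}(t)|$ is finite. This holds because $S$ is smooth, $S=1$ on $(-\infty,0]$ and $S=0$ on $[1,\infty)$, so each derivative of $S$ is continuous and compactly supported, hence bounded. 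Equivalently, $\bar M_n=(2/D_0^2)^n\|S^{(n)}\|_\infty$, which depends only on $D_0$.
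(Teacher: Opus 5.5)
Your proposal is correct and follows the paper's argument. The paper likewise bounds the confluent divided differences of the smooth step $\Phi$ by $\sup|\Phi^{(n)}|/n!$, using a Rolle-type argument on $\Phi-\varpi_n$ where you use Hermite--Genocchi or perturbation, so the coefficients of $Q_m$ are free of $\Delta_{\mathrm{sep}}$; it then bounds $P_m$ and its Hessian on the ball. Your choice to stay in Newton form and use the radial chain rule $D^2P_m=2Q_m'(r)I+4Q_m''(r)(\theta-\theta_1^*)(\theta-\theta_1^*)^\top$ is a mild improvement: it avoids the paper's unstated conversion to monomial coefficients and removes the dimension factor $d_*$ from $C_{P,2}$.
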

\begin{proof}
First, we show that the coefficients of $Q_m$ is less than a constant $C_{\mathrm{cluster}}$ depending only on $D_0$. We start with establishing upper bounds for $[z_1, \dots, z_{j+1}]$, for any $0 \leq j \leq 2k_{*} - 1$. We will demonstrate by induction that $$[z_1,\ldots,z_{n}] \leq \bar{M}_{n}:=\frac{1}{(n-1)!}\sup_{0 \leq x \leq 4R^2}|\Phi^{(n)}(x)|$$ and for any $n \geq 1$. When $j = 0$, from the definition of $[z_{1}]$, we have that $|[z_{1}]| = |\Phi_{m}(z_{1})| \leq \bar{M}_{0}$. Assume that the inequality holds for $j \geq 0$, we show that it also holds for $j + 1$ using the technique from Cauchy \cite{cauchy1885oeuvres}.
Let $\varpi_{n}(r) = \sum_{j=0}^{n-1}[z_1,\ldots,z_{j+1}]\prod_{i=1}^{j}(r - z_i)$, where $1\leq n \leq 2k$, and consider
$\gamma:\mathbb{R}\to \mathbb{R}$, $\gamma(\theta) = \Phi(\theta) - \varpi_n(\theta)$, then it can be validated that
\begin{align*}
    &\gamma(z_i) = 0, \quad \text{if } i\in[1,n],\\
    &\gamma'(z_i) = 0, \quad \text{if } i \leq n/2.
\end{align*}
Let $q_1 <\ldots < q_r$ be different points of $z_1,\ldots,z_{n}$ in increasing order. Then, Rolle's theorem indicates that for $1\leq i \leq r-1$, there exists a point $\xi_i \in (q_{i},q_{i+1})$ such that $\gamma'(\xi_i) = 0$. In addition, for $1\leq i \leq r$, we also have that
\begin{align*}
    &\gamma'(q_i) = 0, \quad \text{when } n \text{ is even,}\\
    &\gamma'(q_i) = 0, \quad \text{when } n \text{ is odd, except at most one point.}
\end{align*}
In these two cases, we observe that $\gamma'$ has at least $n-1$ zero points. By recurrently applying Rolle's theorem to $\gamma'$, there exists a point $\zeta \in \left[\min_{1\leq i\leq n}z_i, \max_{1\leq i\leq n}z_i\right]$ such that the $n-1$-th derivative $\gamma^{(n-1)}(\zeta) = 0$. In other words, we have
\begin{equation*}
    |[z_1,\ldots,z_{n}]| = \dfrac{1}{(n-1)!}\left|\dfrac{d^{n-1}}{dr^{n-1}}\varpi_n(\zeta)\right| = \dfrac{1}{(n-1)!}\left|\Phi^{(n-1)}(\zeta)\right| \leq \dfrac{\bar{M}_{n-1}}{(n-1)!}.
\end{equation*}

(a) Suppose that $Q_{m}(r) = \sum_{i=0}^{2k-1}w_ir^{i}$ where $|w_i|\leq C_{\text{cluster}}$. Then, we have for $\theta \in B(0,R)$, 
    \begin{equation*}
        |P_m(\theta)| \leq \sum_{i=0}^{2k-1}|w_i|\|\theta-\theta^*_1\|^{2i}\leq C_{\text{cluster}}\sum_{i=0}^{2k-1}(2R)^{2i} \leq 2kC_{\text{cluster}}\max\{1,(2R)^{4k-2}\}. 
    \end{equation*}
    Thus, $\|P_m\|_{\infty} \leq C_{\mathrm{norm},P}$, where $C_{\mathrm{norm},P} := 2kC_{\text{cluster}}\max\{1,(2R)^{4k-2}\}$. 
    
    (b) For the second derivative of $P_m$, we have for any index $i\neq j$
    \begin{align*}
        \left|\dfrac{\partial^2}{\partial\theta^i\partial\theta^j}P_m(\theta)\right|&\leq \sum_{l=2}^{2k-1}4l(l-1)|w_l|\|\theta-\theta_l^*\|_2^{2l-4}|\theta^{i}\theta^{j}|\leq C_{\text{cluster}}\sum_{l=1}^{2k-1}4l(l-1)R^{2l-2} \\
        &\leq 12C_{\text{cluster}}k^3\max\{1,(2R)^{4k-4}\},
    \end{align*}
    and for $i$ 
     \begin{align*}
        \left|\dfrac{\partial^2}{\partial\theta^i\partial\theta^i}P_m(\theta)\right|&\leq \sum_{l=2}^{2k-1}(2l)|w_l|\|\theta-\theta_l^*\|_2^{2l-4}(|\theta^{i}|^2 +(2l-2)\|\theta-\theta_l\|_2^2)\leq C_{\text{cluster}}\sum_{l=1}^{2k-1}(2l)(2l-1)R^{2l-2} \\
        &\leq 12C_{\text{cluster}}k^3\max\{1,(2R)^{4k-4}\}. 
    \end{align*}
    Combining these estimations, noting that $\|M\|_{\mathrm{op}}\leq d_{*}\max_{1\leq i,j\leq d_{*}}\{|M_{ij}|\}$, by choosing $C_{P,2}:=12d_*\cdot C_{\text{cluster}}k^3\max\{1,(2R)^{4k-4}\}$ we have 
    \begin{equation*}
\|D^2P_m(\theta)\|_{\mathrm{op}} \leq C_{P,2}. 
    \end{equation*}
\end{proof}
\section{Density Estimation Rate}
\label{sec:density_estimation_rate}
In this section, we provide proof for Proposition \ref{prop:density_estimation_rate} about the density estimation rate in Gaussian mixture models. We follow the schema from \cite{Vandegeer}, which utilize the ingredient from point process estimation and covering theory. 

\subsection{Basic notations for covering theory}
Before proceeding with the detailed argument, we first recall several basic notions from covering theory. For a detailed and systematic reference, please refer to \cite{Vandegeer}. 

For a metric space $(\mathcal{P}, d)$, an \textit{$\varepsilon$-net} of $(\mathcal{P}, d)$ is a collection of balls of radius $\varepsilon$ whose union covers $\mathcal{P}$. The \textit{covering number} $N(\varepsilon, \mathcal{P}, d)$ is defined as the minimal cardinality of such an $\varepsilon$-net, while the corresponding \textit{entropy number} is given by
$H(\varepsilon, \mathcal{P}, d)=
\log N(\varepsilon, \mathcal{P}, d).$

When $\mathcal{P}$ is a family of densities in Euclidean space $\mathbb{R}^{d}$, the metric $d$ is typically induced by the $\mathcal{L}^2(m)$ norm, where $m$ denotes the Lebesgue measure. In this setting, whenever no ambiguity arises, we abbreviate
$N(\varepsilon,\mathcal{P},\|\cdot\|_{\mathcal{L}^{2}(m)})$ and $H(\varepsilon,\mathcal{P},\|\cdot\|_{\mathcal{L}^{2}(m)})$ by
$N(\varepsilon,\mathcal{P},m)$ and $H(\varepsilon,\mathcal{P},m),$
respectively.

The \textit{bracketing number} $N_B(\varepsilon, \mathcal{P}, d)$ is defined as the smallest integer $n$ such that there exists $n$ pairs of functions
$\{(\underline{g}_i,\overline{g}_i)\}_{i=1}^n$
satisfying
$d(\underline{g}_i,\overline{g}_i) < \varepsilon$, and for every $f \in \mathcal{P}$, there exists some $i \in \{1,\dots,n\}$ such that
$\underline{g}_i \le f \le \overline{g}_i$. The associated \textit{bracketing entropy} is then defined by
$H_B(\varepsilon,\mathcal{P},d)=
\log N_B(\varepsilon,\mathcal{P},d)$. Similarly, when the metric is induced by the $\mathcal{L}^2(m)$ norm, we write $N_B(\varepsilon,\mathcal{P},m)$ and $H_B(\varepsilon,\mathcal{P},m)$
for the corresponding bracketing number and bracketing entropy.

In our problem, we denote $\Gamma = \Pi \times \Theta$, where the simplex of dimension $k-1$ is denoted by $\Pi := \{c_1,\ldots,c_k:c_i \geq 0\ , c_1+\ldots +c_k = 1\}$, and the expert space $\Theta = \{\theta\in \mathbb{R}^d:\|\theta\| \leq R\}$. To analyze the covering numbers and the convergence rates of our estimator, we consider
$\mathcal{D}(\Gamma) =\{G:G = \sum_{i=1}^k\pi_i \delta_{\theta_i},(\pi_1\cdots\pi_k) \in \Pi, \theta_i \in \Theta\}$ and $\mathcal{P}(\Gamma) = \{p_{G}:G \in \mathcal{D}(\Gamma)\}$, where we recall that 
\begin{equation*}
    p_{G}(x) = \sum_{i=1}^k \pi_i f(x|\theta_i, \Sigma),
\end{equation*}
here $f(\cdot|\theta,\Sigma) $ is multivariate Gaussian density of mean $\theta$ and covariance $\Sigma$. 

In addition, for the true underlying measure $G_* \in \mathcal{P}(\Gamma)$ to be estimated, we introduce the following notation:
\begin{equation*}
    \overline{p}_{G} = (p_{G} + p_{G_*})/2, \quad \quad   \overline{\mathcal{P}}(\Gamma) =\{\overline{p}_{G}: p_G \in \mathcal{P}(\Theta)\}
\end{equation*}
\begin{equation*}
    \overline{\mathcal{P}}^{1/2}(\Gamma) =\{\overline{p}^{1/2}_{G}: p_G \in \mathcal{P}(\Gamma)\},\quad \quad 
    \overline{\mathcal{P}}^{1/2}(\Gamma,\varepsilon) =\{\overline{p}^{1/2}_{G}: p_G \in \mathcal{P}(\Gamma),h(\overline{p}_{G},p_{G_*})\leq \varepsilon\}.
\end{equation*}
To quantify the connection between local bracket entropy and the complexity of a class of distribution, we define the \textit{bracket entropy integral}
\begin{equation*}
    \mathcal{J}_B\left(\varepsilon,  \overline{\mathcal{P}}^{1/2}(\Gamma,\varepsilon),m\right) = \left(\int_{\varepsilon^2/2^{13}}^{\varepsilon} H_B^{1/2}(u, \overline{\mathcal{P}}^{1/2}(\Gamma,\varepsilon),m)du\right)\vee \varepsilon,
\end{equation*}
where $u\vee \varepsilon = \max\{u,\varepsilon\}$.

\subsection{Proof for density estimation}

The main idea of the proof is to derive an upper bound for the bracketing entropy integral appearing in Lemma \ref{lemma:dung_bracket_integral_entropy_bound}. This constitutes the key ingredient for controlling the supremum of the empirical processes $\mu_n$ associated with the empirical distribution in Lemma \ref{lemma:dung_concentration_rate_of_empirical_process}. We then reduce the problem of bounding the Hellinger distance to that of controlling the corresponding empirical processes, whose supremum is analyzed separately over dyadic shells. This approach is commonly referred to as the \textit{peeling technique}.

\begin{lemma}
\label{lemma:dung_bracket_integral_entropy_bound}
    For a universal constant $J$, there exists $N \geq 0$ such that for all $n\geq N$ and $\varepsilon \geq (d\log(n)/n)^{1/2}$, we have 
    \begin{equation}
\label{eqn:bracket_entropy_estimation}
    \mathcal{J}_B\left(\varepsilon,  \overline{\mathcal{P}}^{1/2}(\Gamma,\varepsilon),m\right)\leq J\sqrt{n}\varepsilon^2.  
    \end{equation}
\end{lemma}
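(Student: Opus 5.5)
The plan is to bound the bracketing entropy of the whole class $\overline{\mathcal{P}}^{1/2}(\Gamma)$, which contains the localized class $\overline{\mathcal{P}}^{1/2}(\Gamma,\varepsilon)$. We aim for an estimate of the form
\begin{equation*}
H_B\left(u,\overline{\mathcal{P}}^{1/2}(\Gamma,\varepsilon),m\right)\le H_B\left(u,\overline{\mathcal{P}}^{1/2}(\Gamma),m\right)\le C\, d\log(1/u),
\end{equation*}
valid for all $0<u\le 1/2$. Here $C$ depends only on $k$, $R$ and $\Sigma$; the dependence on $d$ is linear because the parameter $(\pi,\theta_1,\dots,\theta_k)$ lives in a space of dimension $k-1+kd$.

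\textbf{Step 1: bracketing $\mathcal{P}(\Gamma)$.} Take an $\eta$-net of $\Gamma$ of cardinality at most $(C/\eta)^{k(d+1)}$, using the $\ell_1$ metric on the weights and the Euclidean metric on the atoms. For a net point $(\pi',\theta')$, define an envelope bracket as follows. By the Lipschitz property of the Gaussian density in its mean, for $\|\theta-\theta'\|\le\eta$ we have $|f(x|\theta)-f(x|\theta')|\le \eta\, H(x)$. The envelope $H$ is a sup-type function built from a Gaussian with slightly inflated covariance and a polynomial factor; it is integrable, with $\|H\|_{L^1}\lesssim \sqrt d$. Setting $\underline g=\max(p_{G'}-\eta H',0)$ and $\overline g=p_{G'}+\eta H'$, where $H'$ also absorbs the weight error, gives brackets of $L^1$ size $O(\sqrt d\,\eta)$.

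\textbf{Step 2: transfer to square roots and the Hellinger metric.} Averaging with $p_{G_*}$ and taking square roots preserves the ordering of the brackets. The elementary inequality $(\sqrt a-\sqrt b)^2\le|a-b|$ then shows that the $L^2$-size of a square-root bracket is at most the square root of the $L^1$-size of the original bracket. Choosing $\eta=u^2/(C\sqrt d)$ therefore yields the entropy estimate above.

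\textbf{Step 3: integration.} Integrating the entropy bound over the range in the definition of $\mathcal{J}_B$ gives
\begin{equation*}
\mathcal{J}_B\le \int_{\varepsilon^2/2^{13}}^{\varepsilon}\sqrt{C d\log(1/u)}\,du\vee\varepsilon\le \varepsilon\sqrt{C d\log(2^{13}/\varepsilon^2)}\vee\varepsilon.
\end{equation*}
Now use $\varepsilon\ge (d\log n/n)^{1/2}$, so that $\log(1/\varepsilon^2)\le\log n$ for $n\ge N$. This gives $\varepsilon\sqrt{d\log n}\le \sqrt n\,\varepsilon^2$, and the remaining term satisfies $\varepsilon\le\sqrt n\varepsilon^2$ once $\sqrt n\varepsilon\ge1$. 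Together these yield the claim with $J$ depending on $C$.

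\textbf{Main obstacle.} The hard step is the envelope bracket in Step 1, and specifically keeping its dependence on $d$ no worse than polynomial inside a logarithm. I would first try the standard trick from Ghosal–van der Vaart: truncate to $\|x\|\le R+\sqrt{d}+ c\sqrt{\log(1/u)}$. Inside the ball, use a uniform Lipschitz bound. Outside the ball, use the Gaussian tail of the envelope $\sup_{\|\theta\|\le R} f(x|\theta)$. The factors of $d$ produced this way appear only inside the logarithm, so the entropy remains $O(d\log(1/u))$ up to constants depending on $k$ and $R$.
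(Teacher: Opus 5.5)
Your proposal is correct and has the same skeleton as the paper's proof: a net of $\Gamma$, $L^1$ brackets for $\mathcal{P}(\Gamma)$, passage to square roots of averaged densities, and integration of $\sqrt{d\log(1/u)}$ using $\varepsilon^2\ge d\log(n)/n$. Where you genuinely differ is the bracket construction.

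\begin{itemize}
\item \textbf{The paper.} It takes a sup-norm net $p_1,\dots,p_N$ of the densities. It then uses additive bands $[\max\{p_i-\eta,0\},\min\{p_i+\eta,H\}]$ clipped by a single global envelope $H$. Integrating $\min\{2\eta,H\}$ gives width $\eta(\log(1/\eta))^{d/2}$, which the paper absorbs via $\log(1/\varepsilon)\sim\log(1/\eta)$. It passes to $\overline{\mathcal{P}}^{1/2}$ through $h^2\le\|\cdot\|_1$ and van de Geer's Lemma 4.2.
\item \textbf{Your route.} The multiplicative brackets $p_{G'}\pm\eta H'$ need no truncation and give width $O(\sqrt d\,\eta)$ directly. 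Transforming the bracket endpoints themselves (average with $p_{G_*}$, take square roots, use $(\sqrt a-\sqrt b)^2\le|a-b|$) is also cleaner than citing a Hellinger inequality stated for densities rather than bracket endpoints.
\end{itemize}

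Three points to tighten.

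\begin{itemize}
\item \textbf{Centre the envelope at the net point.} Take $\Sigma=I_d$, $\phi$ the standard normal density and $Z\sim N(0,I_d)$, and set $H_{\theta'}(x)=\sup_{\|v\|\le\eta}\|x-\theta'-v\|\,\phi(x-\theta'-v)$. Then $\|H_{\theta'}\|_{L^1}\le\mathbb{E}[(\|Z\|+\eta)e^{\eta\|Z\|}]\lesssim\sqrt d$ whenever $\eta\lesssim d^{-1/2}$, and your choice $\eta=u^2/(C\sqrt d)$ ensures this. A single envelope valid over the whole ball $\|\theta\|\le R$ is instead exponentially large in $R\sqrt d$, so $\|H\|_{L^1}\lesssim\sqrt d$ needs the centring.
\item \textbf{Your ``main obstacle'' fallback does not deliver what you claim.} A uniform sup-norm Lipschitz bound on a ball of radius at least $\sqrt d$ costs the ball's volume times $(2\pi)^{-d/2}$, which is $\gtrsim e^{d/2}/\sqrt d$. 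This forces $\log(1/\eta)\gtrsim d$ and hence an entropy of order $d^2$, not $O(d\log(1/u))$. The paper's step $\log(1/\varepsilon)\sim\log(1/\eta)$ similarly hides an additive $\tfrac d2\log\log(1/\eta)$. So if $J$ is to be free of $d$, rely on the localized envelope, not on truncation.
\item \textbf{The extra $\log d$.} With $\eta=u^2/(C\sqrt d)$ you actually get $H_B(u)\lesssim d\log(\sqrt d/u^2)$, not $C\,d\log(1/u)$ uniformly in $u\le 1/2$. This is harmless in Step~3, since $\varepsilon^2\ge d\log(n)/n$ gives $\log d+\log(1/\varepsilon^2)\le\log n$.
\end{itemize}
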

\begin{proof} 
\emph{Step 1 - Bounding bracketing number and bracketing entropy.} In this step, we derive estimations for the covering number $N$ and the bracketing entropy $H_B$ associated with the class $\mathcal{P}(\Gamma)$ under consideration. 
\begin{enumerate}
        \item [(1)] $\log N(\varepsilon,\mathcal{P}(\Gamma),\|\cdot\|_{\infty}) \lesssim d\log(1/\varepsilon)$.
        \item [(2)] $H_B(\varepsilon,\mathcal{P}(\Gamma),h) \lesssim d\log(1/\varepsilon)$.
\end{enumerate}
For (1), consider $\mathcal{E}_{\varepsilon}(\Gamma)$ be $\varepsilon$-net of $\Gamma$, i.e. a set such that for each $x \in \Theta$, there exists $\overline{x} \in \mathcal{E}_{\varepsilon}(\Gamma)$ such that $\|x-\overline{x} \|_2 \leq d\varepsilon$. Using similar argument in proof of Lemma 6 in \cite{ho2022gaussian}, we observe that a vigilant choice of $\varepsilon$-net can lead to $\log |\mathcal{E}_{\varepsilon}(\Gamma)| \lesssim d\log(1/\varepsilon)$. 

For the density space $\mathcal{P}(\Gamma)$, we consider the set $$\mathcal{E}_{\varepsilon}(\mathcal{P}(\Gamma)):= \{p_G:G = \sum_{i=1}^k \lambda_i\delta_{\theta_i},\ (\lambda,\theta_1,\ldots,\theta_k) \in \mathcal{E}_{\varepsilon}(\Gamma)\}.$$ Consider $p_G \in \mathcal{P}(\Theta)$, choose $G' \in \mathcal{E}_{\varepsilon}(\Gamma)$ such that $\|G-G'\|_2 \leq \varepsilon$. Then, using the fact that Lipschitz property is preserved through the sum and (inner) product of bounded function, we have $\|p_G-p_{G'}\|_\infty \lesssim \|G-G'\|_2 \leq \varepsilon$ uniformly. Thus, by reasonably scaling $\varepsilon$, we have $\mathcal{E}_{\varepsilon}(\mathcal{P}(\Gamma))$ is an $\varepsilon$-net of $\mathcal{P}(\Gamma)$. As a result, we have $\log N(\varepsilon,\mathcal{P}(\Gamma),\|\cdot\|_{\infty}) \lesssim d\log(1/\varepsilon)$. 

For (2), let $\eta \lesssim \varepsilon$ which will be chosen later, from Part 1, we construct an $\eta$-net of size $N$, namely $p_1,\ldots,p_N$. 
    
Noting that for any parameter $\theta \in \Theta = \bar{B}(0,R)$, we achieve a general bound for $f(\cdot|\theta,\Sigma)$
\begin{equation*}
    f(x|\theta,\Sigma) = (2\pi)^{-d/2}(\det\Sigma)^{-1/2}\exp\left(-\frac{1}{2}(x-\theta)^\top\Sigma^{-1}(x-\theta)\right) \leq (2\pi)^{-d/2}\lambda^{-d/2}_{\min}. 
\end{equation*}
In addition, as $\|\theta\|\leq R$, for any $\|x\| \geq 2R$, we have 
\begin{equation*}
    \|x-\theta\| \geq \|x\|-\|\theta\| \geq \|x\|-R\geq \|x\|/2. 
\end{equation*}
In addition, since the maximum eigenvalue of $\Sigma$ is less than $\lambda_{\max}$, we have 
\begin{equation*}
    (x-\theta)^{\top}\Sigma^{-1}(x-\theta) \geq \frac{\|x-\theta\|^2}{\lambda_{\max}} \geq \dfrac{\|x\|^2}{4\lambda_{\max}}. 
\end{equation*}
Thus, the Gaussian density satisfies the tail bound 
\begin{equation*}
    f(x|\theta,\Sigma) \leq (2\pi)^{-d/2}\lambda^{-d/2}_{\min}\exp\left(-\frac{\|x\|^2}{8\lambda_{\max}}\right). 
\end{equation*}
Thus, as $p_G$ is a mixture of Gaussian whose means belong to $\bar{B}(0,R)$ and variance $\Sigma$, the function $H:\mathbb{R}^d\to \mathbb{R}_{+}$ defined as
    \begin{equation*}
        H(x) = 
        \begin{cases}
            A, \text{ when } \|x\| \leq 2R,\\
            Ae^{-c\|x\|^2}, \text{ when }\|x\|> 2R,
        \end{cases}
    \end{equation*}
where $A = (2\pi\lambda_{\min})^{-d/2}$, $c = (8\lambda_{\max})^{-1}$, dominates all distributions in $\{p_{G},G \in \mathcal{P}(\Gamma)\}$, i.e. $p_G(x) \leq H(x),\forall G \in \mathcal{P}(\Gamma)$. For $i \in [1,N]$, we construct the bracket $[p^{L}_i, p^{U}_i]$ as 
    \begin{equation*}
            p^{L}_i(x) = \max\{f_i(x)-\eta,0\},\quad
            p^{U}_i(x) = \min\{f_i(x)+\eta,H(x)\}.
    \end{equation*}
    Then, we can easily verify that family of brackets $[p^{L}_i, p^{U}_i]$ cover the family of distribution: for each distribution $p_G$, there exists $i$ such that $p^{L}_i(\cdot) \leq p_G(\cdot) \leq p^{U}_i(\cdot)$. In addition, $0 \leq p^{U}_i(x) - p^{L}_i(x) \leq \min\{2\eta,H(x)\}$, which leads to 
\begin{equation}
\label{eqn:difference_between_bracket}
    \int_{\mathbb{R}^d} (p^{U}_i(x) - p^{L}_i(x))dx \leq \int_{\mathbb{R}^d}\min\{2\eta,H(x)\} dx:=I_{\eta}.
    \end{equation}
Now it is necessary to evaluate $I_{\eta}$ in the RHS of equation \eqref{eqn:difference_between_bracket}. Define $r_\eta$ by
$Ae^{-cr_\eta^2}=2\eta,$ or equivalently,
$$r_\eta^2
=
\frac{1}{c}\log\frac{A}{2\eta}
=
8\lambda_{\max}\log\frac{A}{2\eta}.$$
For $\eta < \frac{A}{2}\exp\left(-\frac{R^2}{16\lambda^2_{\max}}\right)$ , we have $r_\eta>2R$. Hence,
$$I_\eta=\int_{\|x\|\le r_\eta}2\eta\,dx+\int_{\|x\|>r_\eta}Ae^{-c\|x\|^2}\,dx.$$
For the first term,
$$\int_{\|x\|\le r_\eta}2\eta\,dx=2\eta\,\mathrm{Vol}(B(0,r_\eta))=2\eta v_d r_\eta^d = 2\eta v_d
\left(
8\lambda_{\max}\log\frac{A}{2\eta}
\right)^{d/2},$$
where $v_d$ denotes the volume of the unit ball in $\mathbb R^d$. For the second term, using the standard Gaussian tail estimate,
$$\int_{\|x\|>r_\eta}Ae^{c\|x\|^2}\,dx\lesssim\eta r_\eta^{d-2}\lesssim\eta\left(\log(1/\eta)\right)^{(d-2)/2}.
$$
Consequently,
$$\int_{\mathbb R^d}\min\{2\eta,H(x)\}\,dx \lesssim \eta\left(\log(1/\eta)\right)^{d/2}
$$
as $\eta < \frac{A}{2}\exp\left(-\frac{R^2}{16\lambda^2_{\max}}\right)$. This result implies that there exists a constant $c'$ such that 
\begin{equation*}
H_B(c'\eta(\log(1/\eta))^{d/2}) \leq \log(N) \lesssim d\log(1/\eta).
\end{equation*}
Thus, for $\varepsilon = c'\eta(\log(1/\eta))^{d/2}$, which implies that $\log(1/\varepsilon) \sim \log(1/\eta)$, we have $$ H_B(\varepsilon,\mathcal{P}(\Gamma),\|\cdot\|_1) \lesssim d\log(1/\varepsilon).$$ Also remember that $h^2 \leq \|\cdot\|_1$, we have for $\varepsilon \leq c_\varepsilon < 1/2$ sufficiently small, $$H_B(\varepsilon,\mathcal{P}(\Gamma),h) \lesssim d\log(1/\varepsilon).$$ 
When $1/2 > \varepsilon > c_\varepsilon > 0$, as $H_B(\varepsilon,\mathcal{P}(\Gamma),\|\cdot\|_1)$ is a decreasing function with respect to $\varepsilon$, we have $H_B(\varepsilon,\mathcal{P}(\Gamma),\|\cdot\|_1) \leq H_B(c_\varepsilon,\mathcal{P}(\Gamma),h)$, and $\log(1/\varepsilon) > \log(2)$, thus we also have $H_B(\varepsilon,\mathcal{P}(\Gamma),h) \lesssim d\log(1/\varepsilon)$. 
\newline 

\emph{Step 2 - Bounding bracket entropy integral. } We have the following estimation for $H_B$ term:
    \begin{align*}
        H_B(u, \overline{\mathcal{P}}^{1/2}(\Gamma,\varepsilon),m)& \overset{(a)}{\leq} H_B(u, \overline{\mathcal{P}}^{1/2}(\Gamma),m)  \overset{(b)}{=} H_B(\dfrac{u}{\sqrt{2}}, \overline{\mathcal{P}}(\Gamma),h)\\
        &\overset{(c)}{\leq} H_B(u, {\mathcal{P}}(\Gamma),h) \overset{(d)}{\lesssim}d \log(1/u). 
    \end{align*}
    Here, $(a)$ is due to $\overline{\mathcal{P}}^{1/2}(\Gamma,\varepsilon) \subset \overline{\mathcal{P}}^{1/2}(\Gamma)$, $(b)$ is from the definition of Hellinger distance, $(c)$ is due to the inequality $h^2\left(\frac{f_*+f_1}{2},\frac{f_*+f_2}{2} \right) \leq \frac{1}{2}h^2(f_1,f_2)$ for $f_*,f_1,f_2$ are densities, (Lemma 4.2, \cite{Vandegeer}). As a result, 
    \begin{equation*}
        \mathcal{J}_B\left(\varepsilon,  \overline{\mathcal{P}}^{1/2}(\Gamma,\varepsilon),m\right) \lesssim \sqrt{d} \int_{\varepsilon^2/2^{13}}^{\varepsilon}\sqrt{\log(1/u)}du \lesssim \sqrt{d}\varepsilon \sqrt{\log\dfrac{2^{13}}{\varepsilon^2}}\lesssim \sqrt{n}\varepsilon^2,
    \end{equation*}
    given that $\varepsilon\geq (d\log(n)/n)^{1/2}$ and $n$ sufficiently large. This proves equation \eqref{eqn:bracket_entropy_estimation}. 
\end{proof}

Before giving the proof for the consistency of MLE, we recall the following lemma, whose proof is a direct consequence of Theorem 5.11 and estimation (7.7), \cite{Vandegeer}.

\begin{lemma}
\label{lemma:dung_concentration_rate_of_empirical_process}
Let $R > 0$ and $k \geq 1$. Assume $C_1 < \infty$. Then, for any sufficiently large constant $C$, for all $n \in \mathbb{N}$ and any $t > 0$ satisfying
\begin{align}
t &\leq (8\sqrt{n}R) \wedge \left( {C_1 \sqrt{n} R^2} \right), \label{eqn:cond1} \\
t &\geq C^2 (C_1 + 1) \left( R \,\vee\, \int_{t/(2^6 \sqrt{n})}^{R} 
H_B^{1/2}\left(\frac{u}{\sqrt{2}}, \, \overline{\mathcal{P}}^{1/2}(\Gamma, R), m \right) du \right), \label{eqn:cond2}
\end{align}
we have the concentration bound
\begin{align}
\label{eqn:concentration_of_empirical_process}
\mathbb{P}_{G_*} \left(
\sup_{ h(\bar{p}_{G},\, p_{G_*}) \leq R}
|\mu_n(G)| \geq t
\right)
\leq C \exp\left(
- \frac{t^2}{C^2 (C_1 + 1) R^2}
\right).
\end{align}
Here, for $P_n$ be the empirical distribution from the sample $X_1,\ldots,X_n$ and $P$ is measure induced from density $p_{G_*}$, the empirical process $\mu_n$ is defined as 
\begin{equation}
\label{eqn:empirical_process}
    \mu_n := \left\{\mu_{n}(G) = \sqrt{n}\int \dfrac{1}{2}\boldsymbol{1}_{p_{G_*}> 0}\log (\overline{p}_G/p_{G_*})\ d(P_n -P):G\in \mathcal{D}(\Gamma)\right\}
\end{equation}
\end{lemma}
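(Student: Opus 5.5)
This lemma is cited as a direct consequence of Theorem 5.11 and estimate (7.7) in \cite{Vandegeer}, so the plan is to check that our class fits van de Geer's framework and then translate her hypotheses into our notation. Theorem 5.11 is a Bernstein-type uniform deviation inequality for empirical processes indexed by a class $\mathcal{G}$. It requires two things of every $g\in\mathcal{G}$: a uniform Bernstein-norm bound $\rho_K(g)\le R$, and a bracketing entropy integral condition on $\mathcal{G}$ in the Bernstein norm. We take the index class to be
\begin{equation*}
g_G=\tfrac12\mathbf{1}_{p_{G_*}>0}\log(\bar p_G/p_{G_*}),
\end{equation*}
with $G$ ranging over mixing measures with $h(\bar p_G,p_{G_*})\le R$. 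The process $\mu_n(G)$ in \eqref{eqn:empirical_process} is exactly $\sqrt n\int g_G\,d(P_n-P)$, so the left side of \eqref{eqn:concentration_of_empirical_process} is the supremum van de Geer controls.

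The key input is estimate (7.7) of \cite{Vandegeer}, which rests on the convexity trick of using $\bar p_G=(p_G+p_{G_*})/2$ instead of $p_G$. Because $\bar p_G/p_{G_*}\ge 1/2$, the function $g_G$ is bounded below by $-\tfrac12\log 2$. Hence the Bernstein norm with constant $K=1$ satisfies $\rho_1(g_G)\lesssim h(\bar p_G,p_{G_*})\le R$. The same computation applied to brackets controls the Bernstein-bracketing entropy of $\{g_G\}$. Concretely, a bracket $[\bar p^{L\,1/2},\bar p^{U\,1/2}]$ of $L^2(m)$-size $u$ induces a bracket $[\log(\bar p^L/p_{G_*}),\log(\bar p^U/p_{G_*})]/2$ of Bernstein size $\lesssim u$. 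This gives
\begin{equation*}
H_{B,K}(u,\{g_G\})\le H_B\big(u/\sqrt2,\overline{\mathcal{P}}^{1/2}(\Gamma,R),m\big)
\end{equation*}
up to absolute constants.

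With these two facts, we plug into Theorem 5.11 with $K=1$ and Bernstein radius proportional to $R$. Its hypotheses become our conditions \eqref{eqn:cond1}, which is the upper range $t\lesssim \sqrt n R^2/K$, and \eqref{eqn:cond2}, which is the lower range $t$ at least a constant times the entropy integral from $t/(2^6\sqrt n)$ to $R$ (or $R$ itself). The constant $C_1$ is van de Geer's free parameter trading these two ranges; the restriction $t\le 8\sqrt n R$ arises from the boundedness of $g_G$. The conclusion is the stated sub-Gaussian tail $C\exp\{-t^2/(C^2(C_1+1)R^2)\}$.

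The main obstacle is the bookkeeping. Van de Geer's theorem is phrased with a generalized Bernstein norm, and her lower integration limit and constants are in her own normalization. Converting these to the Hellinger/$L^2(m)$ normalization used here requires care with the factors $\sqrt2$ and $2^6$. I would first verify the Bernstein-norm comparison $\rho_1(g_G)\le c\,h(\bar p_G,p_{G_*})$ explicitly, using the elementary inequality for $e^{|x|}-1-|x|$ on $x\ge-\tfrac12\log2$, and then read off the constants.
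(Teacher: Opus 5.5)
Your proposal is correct and is exactly the paper's route: the paper proves this lemma only by citing Theorem 5.11 of \cite{Vandegeer} with $K=1$ for the class $\{\tfrac12\mathbf{1}_{p_{G_*}>0}\log(\bar p_G/p_{G_*})\}$, together with estimate (7.7) there, which converts Bernstein norms and Bernstein bracketing entropy into Hellinger distance and $H_B(u/\sqrt2,\overline{\mathcal{P}}^{1/2}(\Gamma,R),m)$; the constant-factor rescaling of the radius is absorbed into $C$. Two small corrections for the write-up: $g_G$ is only bounded below, so $t\le 8\sqrt n R$ is simply a hypothesis of Theorem 5.11 that you carry over rather than a consequence of boundedness; and in the bracket comparison you must take lower brackets with $\bar p^L\ge p_{G_*}/2$ (replace $\bar p^L$ by $\bar p^L\vee p_{G_*}/2$, which still brackets every $\bar p_G$), since that is what yields $\rho_1(g^U-g^L)\le\sqrt2\,\|\sqrt{\bar p^U}-\sqrt{\bar p^L}\|_{L^2(m)}$ via $e^x-1-x\le\tfrac12(e^x-1)^2$ for $x\ge0$.
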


\begin{proof}[Proof of Proposition \ref{prop:density_estimation_rate}]
We prove that for all $\delta \geq \delta_{n}:= (d\log(n)/n)^{1/2}$, \begin{equation}
\label{eqn:tail_estimation}\mathbb{P}_{G_*}\left(h(p_{\widehat{G}_n},p_{G_*}) > \delta\right)\leq c\exp\left(-\dfrac{n\delta^2}{cd}\right),\  \forall G_* \in \mathcal{D}(\Gamma). 
\end{equation}

To prove this claim, with $\mu_n$ defined in equation~\eqref{eqn:empirical_process}, we first observe that 
\begin{equation*}
    \dfrac{1}{16}h^2(p_{\widehat{G}_n},p_{G_*}) \overset{(i)}{\leq} h^2(\overline{p}_{\widehat{G}_n},p_{G_*}) \overset{(ii)}{\leq} \dfrac{1}{\sqrt{n}}\mu_n(\widehat{G}_n). 
\end{equation*}
where $(i)$ is due to Lemma 4.2, $(ii)$ is due to Lemma 4.1, \cite{Vandegeer}. As a result, we have 
\begin{align*}
\mathbb{P}_{G_*}\left(h(p_{\widehat{G}_n},p_{G_*}) > \delta\right) &\leq  \mathbb{P}_{G_*}\left(\mu_n(\widehat{G}_n) - \sqrt{n}h^2(\overline{p}_{\widehat{G}_n},p_{G_*})\geq 0,\ h(\overline{p}_{\widehat{G}_n},p_{G_*}) > \delta\right)\\
&\leq \mathbb{P}_{G_*}\left(\sup_{G:h(\overline{p}_G,p_{G_*}) \geq \delta/4} [\mu_n(G)-\sqrt{n}h^2(\overline{p}_G,p_{G_*})] \geq 0\right)
\end{align*}
Now it is our time to use peeling technique as in the proof of Theorem 7.4, \cite{Vandegeer}. For any $\delta >0$ and $S:= \lceil \log_2(1/\delta)\rceil + 1$ be the smallest number satisfying $2^{S+1}\delta/4 >1$, we have 
\begin{align*}
\mathbb{P}_{G_*}\left(h(p_{\widehat{G}_n},p_{G_*}) > \delta\right) &\leq \sum_{s = 0}^S\mathbb{P}_{G_*}\left(\sup_{G:h(\overline{p}_G,p_{G_*}) \geq \delta/4} [\mu_n(G)-\sqrt{n}h^2(\overline{p}_G,p_{G_*})] \geq 0\right)\\
&\leq \sum_{s = 0}^S\mathbb{P}_{G_*} \left(\sup_{G:2^s\delta/4 \leq h(\overline{p}_G,p_{G_*}) \leq 2^{s+1}\delta/4} |\mu_n(G)| \geq \sqrt{n}2^{2s}(\delta/4)^2 \right).
\end{align*}
Thus, we have 
\begin{align*}
\mathbb{P}_{G_*}\left(h(p_{\widehat{G}_n},p_{G_*}) > \delta\right)\leq  \sum_{s = 0}^S\mathbb{P}_{G_0} \left(\sup_{G: h(\overline{p}_G,p_{G_*}) \leq 2^{s+1}\delta/4} |\mu_n(G)| \geq \sqrt{n}2^{2s}(\delta/4)^2 \right).
\end{align*}
At this stage, in order to apply Lemma \ref{lemma:dung_concentration_rate_of_empirical_process}, we verify its assumptions. We set
$R = 2^{s+1}\delta$, $C_1 = 15$,
and $t = n^{1/2}2^{2s}(\delta/4)^2 = (n^{1/2}R^2)/2^6$. With these choices, the condition in equation~\eqref{eqn:cond1} is immediately satisfied for every $s = 0,1,\ldots,S$. 


It remains to verify the condition in equation \eqref{eqn:cond2}. First, by change of variable, we have 
\begin{align*}
    \int^R_{t/(2^6\sqrt{n})}H_B^{1/2}\left(u/\sqrt{2},\overline{\mathcal{P}}^{1/2}(\Gamma,R),m\right)du &= \sqrt{2}\int^{R/\sqrt{2}}_{R^2/2^{12+1/2}}H_B^{1/2}\left(u,\overline{\mathcal{P}}^{1/2}(\Gamma,R),m\right)du\\
    &\leq  2\int^{R}_{R^2/2^{13}}H_B^{1/2}\left(u,\overline{\mathcal{P}}^{1/2}(\Gamma,R),m\right)du. 
\end{align*}
Consequently, we have 
\begin{align*}
R&\vee \int^R_{t/(2^6\sqrt{n})}H_B^{1/2}\left(u/\sqrt{2},\overline{\mathcal{P}}^{1/2}(\Gamma,R),m\right)du\\
&\leq 2\mathcal{J}_B\left(R,\overline{\mathcal{P}}^{1/2}(\Gamma,R),m\right)
\leq 2dJ\sqrt{n} R^2 = 2^7dJt. 
\end{align*}

As all conditions are justified, we can apply Lemma \ref{lemma:dung_concentration_rate_of_empirical_process}, which implies that
\begin{equation*}    \mathbb{P}_{G_*}\left(h(p_{\widehat{G}_n},p_{G_*}) > \delta\right) \leq C \sum_{s=0}^\infty \exp\left(-\dfrac{2^{2s}n\delta^2}{2^7Jd}\right) \leq c\exp\left(-\dfrac{n\delta^2}{cd}\right) \end{equation*}
for some universal constant $c$ and for $n$ sufficiently large. 
\end{proof}

\begin{lemma}
\label{lemma:small_wasserstein_implies_center_in_voronoi_cell} 
Suppose that $G = \sum_{i=1}^{k_*}\pi_i\delta_{\theta_{i}}$ and $G_* = \sum_{i=1}^{k_*}\pi^*_i\delta_{\theta^*_{i}}$ be two discrete measures in $\mathbb{R}^d$ with exactly $k_*$ mass for each of them, such that 

\begin{itemize}
    \item [(i)] (Non-vanishing mass) $\pi^*_{\min} = \min_{1\leq i \leq k_*}\pi^*_{i} > 0$.
    \item [(ii)] (Separation condition) For each $i\neq j$, we have $\|\theta_i - \theta_j\|\geq \Delta_{\mathrm{sep}}$.  
\end{itemize}
Then, if $W_1(G,G_*) < \pi^*_{\min}\Delta_{\mathrm{sep}}/4$, then for each $j \in [k_*]$, there exist $c(j)\in [k_*]$ such that $\|\theta_j - \theta^*_{c(j)}\|\leq \Delta_{\mathrm{sep}}/4$. 
\end{lemma}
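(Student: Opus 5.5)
The plan is to argue from the side of the true atoms and then use a counting argument to reach every fitted atom. Throughout I will use, besides hypothesis (ii) on the fitted atoms, the standing meaning of $\Delta_{\mathrm{sep}}$ from Assumption (A.3): it is also the minimum distance between distinct true atoms $\theta^*_l$. If the true atoms were allowed to nearly coincide, the injectivity step below would fail, so I take both separations as available.

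\emph{Step 1 (every true atom has a nearby fitted atom).} Let $q=(q_{il})$ be an optimal coupling of $G$ and $G_*$. Fix a true atom $\theta^*_l$ and suppose no fitted atom lies within distance $\Delta_{\mathrm{sep}}/4$ of it. The column $l$ of $q$ carries total mass $\sum_i q_{il}=\pi^*_l\geq \pi^*_{\min}$. Every unit of that mass travels a distance strictly greater than $\Delta_{\mathrm{sep}}/4$, because there are finitely many atoms and so the strict inequality is uniform. Hence
\[
W_1(G,G_*)\geq \sum_i q_{il}\|\theta_i-\theta^*_l\| > \pi^*_{\min}\Delta_{\mathrm{sep}}/4,
\]
which contradicts the hypothesis. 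So for each $l$ there is a fitted index $\phi(l)$ with $\|\theta_{\phi(l)}-\theta^*_l\|\leq \Delta_{\mathrm{sep}}/4$.

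\emph{Step 2 ($\phi$ is well defined).} The fitted index $\phi(l)$ is unique. If two fitted atoms $\theta_i\neq\theta_{i'}$ were both within $\Delta_{\mathrm{sep}}/4$ of $\theta^*_l$, the triangle inequality would give $\|\theta_i-\theta_{i'}\|\leq \Delta_{\mathrm{sep}}/2<\Delta_{\mathrm{sep}}$, contradicting (ii).

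\emph{Step 3 ($\phi$ is injective).} If $\phi(l)=\phi(l')=i$ with $l\neq l'$, then $\|\theta^*_l-\theta^*_{l'}\|\leq \Delta_{\mathrm{sep}}/2<\Delta_{\mathrm{sep}}$, which contradicts the separation of the true atoms.

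\emph{Step 4 (conclusion by counting).} The map $\phi:[k_*]\to[k_*]$ is injective between sets of the same size $k_*$, so it is a bijection. Therefore every fitted index $j$ equals $\phi(l)$ for a unique $l$, and setting $c(j)=\phi^{-1}(j)$ gives $\|\theta_j-\theta^*_{c(j)}\|\leq\Delta_{\mathrm{sep}}/4$. In addition, $\theta_j$ lies in the Voronoi cell of $\theta^*_{c(j)}$: any other true atom is at distance at least $3\Delta_{\mathrm{sep}}/4$ from $\theta_j$. This is exactly the one-atom-per-Voronoi-cell structure that the local bounds rely on.

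The main obstacle is Step 3. It is the only place where the separation of the true atoms is needed, and with the fitted-atom separation (ii) alone the counting in Step 4 breaks. A fitted atom of tiny weight can sit far from every true atom while costing almost nothing in $W_1$, provided two true atoms share a single nearby fitted atom. I would therefore state clearly in the proof that the separation of the true atoms through $\Delta_{\mathrm{sep}}$ is being used, and that (ii) supplies the uniqueness in Step 2.
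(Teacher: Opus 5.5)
Your proof is correct and is essentially the paper's argument run in the contrapositive direction: the paper supposes some fitted atom lies outside the disjoint balls $B_j=\{\theta:\|\theta-\theta_j^*\|\le\Delta_{\mathrm{sep}}/4\}$, uses pigeonhole to produce an empty ball, and then bounds $W_1(G,G_*)\ge\pi^*_{\min}\Delta_{\mathrm{sep}}/4$ exactly as in your Step 1, and your Steps 3--4 are that same pigeonhole phrased as injectivity. You are also right that the argument needs separation of the true atoms (the paper uses it implicitly when it asserts the $B_j$ are disjoint, although the lemma's condition (ii) is written for the fitted atoms), and your Step 2, the only place (ii) enters, can be dropped, since any choice of $\phi(l)$ is already injective by Step 3.
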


\begin{proof}[Proof of Lemma \ref{lemma:small_wasserstein_implies_center_in_voronoi_cell}]
For each $j \in [k_*]$, set $B_{j} = \{\theta:\|\theta-\theta^*_j\|_2\leq \Delta_{\mathrm{sep}}/4\}$. Then from the separation condition, the balls $\{B_j\}_{1\leq j\leq k_*}$ are pair-wise disjoint. Suppose that there exists a support point $\theta_i$ in $G$ such that $\theta_i \notin \cup_{j=1}^{k_*}B_j$, then there exist a ball $B_j$ not containing any support points $\theta_l$ of $G$. Then, each map transporting $G_*$ to $G$ takes at least a distance of $\Delta_{\mathrm{sep}}/4$ to move $\theta^*_{j}$ to any point of the support $G$. As a result, the Wasserstein distance between $G_*$ and $G$ can be estimated as 
\begin{equation*}
    W_1(G,G_*) = W_1(G_*,G) \geq \pi_j \Delta_{\mathrm{sep}}/4 \geq \pi^*_{\min}\Delta_{\mathrm{sep}}/4. 
\end{equation*}
This implies that for $W_1(G,G_*) \leq \pi^*_{\min}\Delta_{\mathrm{sep}}/4$, each support point of $G$ belongs to exactly one of the balls $B_j$'s. 
    
\end{proof}

\bibliographystyle{plain}
\bibliography{references}
\end{document}